\numberwithin{equation}{section}
\renewcommand\l@subsection{\@tocline{2}{0pt}{2pc}{5pc}{}}
\theoremstyle{plain}
\newtheorem{thm}{Theorem}[section]
\newtheorem*{thm*}{Theorem}
\newtheorem{coro}[thm]{Corollary}
\newtheorem{conj}[thm]{Conjecture}
\newtheorem{prop}[thm]{Proposition}
\newtheorem{lemm}[thm]{Lemma}
\theoremstyle{definition}
\newtheorem{deff}[thm]{Definition}
\newtheorem{examp}[thm]{Example}
\theoremstyle{remark}
\newtheorem{rema}[thm]{Remark}
\newcommand\twoscript[2]{\substack{{#1} \\ {#2}}}
\newcommand\rmi{\mathrm{i}}
\newcommand\rme{\mathrm{e}}
\newcommand\uhp{\mathfrak{H}}
\newcommand\tp{\mathrm{T}}
\newcommand\legendre[2]{\genfrac{(}{)}{}{}{#1}{#2}}
\newcommand\tbtmat[4]{\left(\begin{smallmatrix}{#1} & {#2} \\ {#3} & {#4}\end{smallmatrix}\right)}
\newcommand\tbtMat[4]{\left(\begin{matrix}{#1} & {#2} \\ {#3} & {#4}\end{matrix}\right)}
\newcommand*\abs[1]{\lvert#1\rvert}
\newcommand\etp[1]{\mathfrak{e}\left(#1\right)}
\newcommand\vol[1]{\mathop{\mathrm{vol}}\left(#1\right)}
\newcommand\sgn[1]{\mathop{\mathrm{sgn}}\left(#1\right)}
\newcommand\tr{\mathop{\mathrm{tr}}}
\newcommand\ord{\mathop{\mathrm{ord}}}
\newcommand\rank{\mathop{\mathrm{rank}}}
\newcommand\Hom{\mathop{\mathrm{Hom}}}
\newcommand\numZ{\mathbb{Z}}
\newcommand\numQ{\mathbb{Q}}
\newcommand\numR{\mathbb{R}}
\newcommand\numC{\mathbb{C}}
\newcommand\numgeq[2]{\mathbb{#1}_{\geq #2}}
\newcommand\slZ{\mathrm{SL}_2(\mathbb{Z})}
\begin{document}

\title[Quadratic Gauss sums over $\numZ^n/c\numZ^n$]{Quadratic Gauss sums over $\numZ^n/c\numZ^n$: explicit formulas, a duality theorem, and applications to Weil representations and cubic hypersurfaces over $\mathbb{F}_p$}

\author{Xiao-Jie Zhu}
\address{School of Mathematical Sciences,
Key Laboratory of MEA(Ministry of Education) \& Shanghai Key Laboratory of PMMP,
East China Normal University}
\email{zhuxiaojiemath@outlook.com}

\subjclass[2020]{Primary 11L05, 11D25; Secondary 11D09, 11F27, 11G25, 11R18}

\keywords{Gauss sum, duality theorem, Weil representation, quadratic hypersurface, Markoff equation, Rosenberger variation, finite quadratic module, Hecke Gauss sum}

\begin{abstract}
We provide explicit formulas for quadratic Gauss sums over $\numZ^n/c\numZ^n$, which generalize some of the existing formulas, e.g., Skoruppa and Zagier's (for $n=2$), and Iwaniec and Kowalski's (for arbitrary $n$). We then give four main applications. As the first application, we prove a duality theorem, which relates a sum over a subgroup of $\numZ^n/c\numZ^n$ to another sum over the orthogonal complement. This allows us to give explicit formulas for quadratic Gauss partial sums over certain subgroups. As the second application, we give an explicit formula for the coefficients of Weil representations of $\mathrm{Mp}_2(\numZ)$, which has the advantage, compared to Scheithauer's, Str\"omberg's, and Boylan and Skoruppa's formulas, that it involves neither local data nor limits of theta series. As the third application, we provide an explicit formula for the number of solutions of an arbitrary quadratic congruence in $n$ variables modulo a prime, as well as an efficient formula modulo a general integer. As the final application, we give a uniform formula for the number of solutions of the Diophantine equation $dxyz=Q(x,y,z)$ over $\mathbb{F}_p$, where $Q$ is an arbitrary integral quadratic form with some weak restrictions. This generalizes a result of Baragar regarding the Markoff equation. There is also a formula relating Hecke Gauss sums to sums over $\numZ^n/c\numZ^n$, an explicit formula for Hecke Gauss sums over quadratic fields, and an example of computing a Hecke Gauss sum over a cyclotomic field.
\end{abstract}

\maketitle

\section{Introduction}
Let $G$ be a positive definite even integral\footnote{A symmetric matrix $G$ is said to be even integral if all entries of $G$ are in $\numZ$ and the entries at the diagonal are in $2\numZ$.} symmetric matrix of size $n\in\numgeq{Z}{1}$, let $r\in\numQ$, $\mathbf{w}, \mathbf{x}\in\numQ^n$, and let $\mathbf{t}=(t_1,t_2,\dots,t_n)\in\numgeq{Z}{1}^n$. The main object of this paper is the sum
\begin{equation}
\label{eq:GaussSumDef}
\mathfrak{G}_{G}(r,\mathbf{t};\mathbf{w},\mathbf{x}):=\sum_{v_1=0}^{t_1-1}\dots\sum_{v_n=0}^{t_n-1}\etp{r\cdot\left(\frac{1}{2}(\mathbf{v}+\mathbf{w})^\tp\cdot G\cdot (\mathbf{v}+\mathbf{w})+(\mathbf{v}+\mathbf{w})^\tp\cdot\mathbf{x}\right)},
\end{equation}
where $\mathbf{v}=(v_1,v_2,\dots,v_n)$, $\etp{a}:=\exp(2\uppi\rmi a)$ and where vectors are interpreted as column vectors and $\mathbf{v}^\tp$ is the matrix transpose. Moreover, if $r=a/c$ with $a\in\numZ$, $c\in\numgeq{Z}{1}$ and $a,c$ coprime, set
\begin{align}
\mathfrak{G}_{G}(r;\mathbf{w},\mathbf{x})=\mathfrak{G}_{G}(a/c;\mathbf{w},\mathbf{x})&:=\mathfrak{G}_{G}(r,(c,c,\dots,c);\mathbf{w},\mathbf{x}),\label{eq:GaussSumDef2}\\
\mathfrak{G}_{G}(r;\mathbf{w})=\mathfrak{G}_{G}(a/c;\mathbf{w})&:=\mathfrak{G}_{G}(a/c;\mathbf{w},\mathbf{0}).\notag%\label{eq:GaussSumDef3}
\end{align}
We call $\mathfrak{G}_{G}(r,\mathbf{t};\mathbf{w},\mathbf{x})$ a \emph{(quadratic) Gauss sum} in $n$ variables for if $n=1$, $G=(2)$, $\mathbf{w}=\mathbf{x}=\mathbf{0}$, then
\begin{equation*}
\mathfrak{G}_{(2)}(a/c;0,0)=\sum_{v=0}^{c-1}\etp{\frac{av^2}{c}}
\end{equation*}
is the classical quadratic Gauss sum, which plays an important role in number theory, especially in the theory of quadratic forms, Diophantine equations over finite fields and their associated zeta functions; cf. \cite[Chapter 1]{BEW98}.

The book just mentioned is concerned with Gauss, Jacobi and closely related sums and gives the most extensive and systematic treatment of these subjects. Theorem 1.2.2 of this book is a reciprocity theorem for a kind of generalized Gauss sum
\begin{equation*}
S(a,b,c):=\sum_{n=0}^{\abs{c}-1}\rme^{\uppi\rmi(an^2+bn)/c}.
\end{equation*}
This sum can also be interpreted as a one-dimensional case of $\mathfrak{G}_{G}(r,\mathbf{t};\mathbf{w},\mathbf{x})$. For instance, if $a$, $c$ are coprime integers with $c>0$ and $a$ odd, then
\begin{equation*}
S(a,b,c)=\mathfrak{G}_{(2)}\left(\frac{a}{2c};0,\frac{b}{a}\right).
\end{equation*}

Let us mention another type of Gauss sums that can be treated as a special case of $\mathfrak{G}_{G}(r,\mathbf{t};\mathbf{w},\mathbf{x})$, namely, the quadratic Gauss sums over finite fields. They are defined by (cf. \cite[p. 11]{BEW98})
\begin{equation*}
g(\beta):=\sum_{x\in \mathbb{F}_q}\etp{\frac{\tr(\beta x^2)}{p}},\qquad (\beta\in\mathbb{F}_q)
\end{equation*}
where $p$ is a prime, $q=p^r$ is a power of $p$, $\mathbb{F}_q$ is the finite field of $q$ elements and $\tr\colon\mathbb{F}_q\to\mathbb{F}_p$ is the trace map. Let $(e_1,e_2\dots,e_r)$ be an $\mathbb{F}_p$-basis of $\mathbb{F}_q$. Set $\widetilde{G}=(2\tr(\beta e_ie_j))_{1\leq i,j\leq r}$, which is a symmetric matrix of size $r$ over $\mathbb{F}_p$. Let $G$ be a lifting of $\widetilde{G}$ in the ring of integral matrices. We can require that $G$ is still symmetric. If $p>2$ we can also require that $G$ is positive definite and even integral. In this setting we have
\begin{equation}
\label{eq:GaussSumFq}
g(\beta)=\mathfrak{G}_{G}(1/p;0,0).
\end{equation}
This sum plays an important role in many problems in number theory, for instance, in determining the zeta functions of diagonal quadratic projective hypersurfaces; cf. the $m=2$ case of \cite[Theorem 2, p. 162]{IR90}.

Many other types of quadratic Gauss sums, besides the above ones mentioned, can be regarded as special cases of \eqref{eq:GaussSumDef}. We claim another two. One is the Hecke Gauss sum for number fields. See Section \ref{subsec:hecke_gauss_sums} for this account. Another one is the quadratic Gauss sum with summation range being a matrix ring over a finite field (cf. \cite{Kur04}), which is a special case of the Gauss character sum introduced in \cite{Sai91}.

To further illustrate the importance of \eqref{eq:GaussSumDef}, note that special cases of \eqref{eq:GaussSumDef} show up in the transformation equations of theta series associated with lattices, and consequently in the functional equations and residues of corresponding Epstein zeta functions or $L$-functions. See \cite[Lemma 7.4]{Zhu23} for the appearance in theta series, and \cite[Theorems 2 and 3]{CS76} for that in $L$-functions. See also the more recent work of Hu and Lao \cite[Theorem 1.1]{HL24}, where a special case of \eqref{eq:GaussSumDef} shows up in the main term of the mean value of certain shifted convolution sum.

\subsection{The main theorem}
Our main theorem gives, under certain weak restrictions, an explicit formula for $\mathfrak{G}_{G}(r,\mathbf{t};\mathbf{w},\mathbf{x})$, which comprises known formulas of special cases in the literature and has the advantage that one need not first calculate any local data. By Proposition \ref{prop:ttoc} we need only focus on \eqref{eq:GaussSumDef2}. Of course, one can assume that $\mathbf{w}=\mathbf{0}$ without loss of generality, but it is more natural to state the duality theorem (Theorem \ref{thm:duality1}) using the parameter $\mathbf{w}$.
\begin{thm}
\label{thm:main}
Let $G$ be a positive definite even integral symmetric matrix of size $n\in\numgeq{Z}{1}$. Let $c$ be a positive integer and $a$ be a nonzero integer coprime to $c$. Let $D=\det(G)$ and $N$ be the level of $G$ (which means the least positive integer such that $N\cdot G^{-1}$ is even integral). Suppose that
\begin{equation*}
G\cdot\mathbf{w}\in\numZ^n,\qquad a\cdot\mathbf{x}\in\numZ^n.
\end{equation*}
Then:

(1) If $\gcd(N,c)=1$, we have
\begin{multline}
\label{eq:GGacwxWhenNccoprime}
\mathfrak{G}_{G}(a/c;\mathbf{w},\mathbf{x})=c^{n/2}\cdot\legendre{\abs{a}}{c}_K^n\cdot\legendre{D}{c}_K\cdot\etp{\frac{n\cdot\sgn a\cdot(1-c_0)}{8}}\\
\times\etp{-\frac{a}{2c}\mathbf{x}^\tp G^{-1}\mathbf{x}+\frac{ac'}{2}(\mathbf{w}+G^{-1}\mathbf{x})^\tp G (\mathbf{w}+G^{-1}\mathbf{x})},
\end{multline}
where $c_0$ is the odd part of $c$, and $c'$ is any solution of $cc'\equiv 1\bmod{aN}$ and where $\legendre{\cdot}{\cdot}_K$ is the Kronecker-Jacobi symbol.

(2) If $N\mid c$, we have\footnote{All fractional powers of a complex number are understood to be the principal branch. Hence $\sqrt{\rmi}=\etp{1/4}$ and $\rmi^{n/2}=\etp{n/8}$.}
\begin{equation}
\label{eq:GGacwxWhenNdivc}
\mathfrak{G}_{G}(a/c;\mathbf{w},\mathbf{x})=\delta\cdot c^{n/2}\cdot D^{1/2}\cdot\rmi^{n/2}\cdot \mu_G(a,c) \cdot\etp{-\frac{a'a^2\mathbf{x}^\tp G^{-1}\mathbf{x}}{2c}},
\end{equation}
where $a'$ is any solution of $aa'\equiv 1\bmod{cN}$, $\delta=1$ if $a(\mathbf{w}+G^{-1}\mathbf{x})\in\numZ^n$ and $\delta=0$ otherwise, and where
\begin{equation}
\label{eq:mainB}
\mu_G(a,c)=\begin{dcases}
\legendre{-\sgn a\cdot c}{\abs{a}}_K^n\cdot\legendre{D}{a}_K\cdot\etp{\frac{n\cdot\sgn a\cdot(1-a)}{8}} & \text{if }2\nmid a,\\
\legendre{a}{D}_K & \text{if }2\mid a.
\end{dcases}
\end{equation}
\end{thm}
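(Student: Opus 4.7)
The plan is to reduce the general Gauss sum to a pure quadratic sum by completing the square, decompose it multiplicatively over prime-power moduli via the Chinese Remainder Theorem, and evaluate each local factor using classical one-variable Gauss sum formulas; Case~(2) will then be deduced from Case~(1) through a matrix reciprocity identity. First, setting $\mathbf{y}=\mathbf{w}+G^{-1}\mathbf{x}$, the algebraic identity
\begin{equation*}
\tfrac{1}{2}(\mathbf{v}+\mathbf{w})^\tp G(\mathbf{v}+\mathbf{w}) + (\mathbf{v}+\mathbf{w})^\tp\mathbf{x}
= \tfrac{1}{2}(\mathbf{v}+\mathbf{y})^\tp G(\mathbf{v}+\mathbf{y}) - \tfrac{1}{2}\mathbf{x}^\tp G^{-1}\mathbf{x}
\end{equation*}
extracts the global factor $\etp{-\frac{a}{2c}\mathbf{x}^\tp G^{-1}\mathbf{x}}$; the hypotheses $G\mathbf{w},\,a\mathbf{x}\in\numZ^n$ imply $aG\mathbf{y}\in\numZ^n$, so the residual sum $S(\mathbf{y}):=\sum_{\mathbf{v}\bmod c}\etp{\frac{a}{2c}(\mathbf{v}+\mathbf{y})^\tp G(\mathbf{v}+\mathbf{y})}$ is genuinely well-defined on $(\numZ/c\numZ)^n$ and depends on $\mathbf{y}$ only modulo $\numZ^n$.

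For Case~(1), note that $\det(NG^{-1})=N^n/D\in\numZ$ gives $D\mid N^n$, so $\gcd(N,c)=1$ forces $\gcd(aD,c)=1$ and hence $aG$ is invertible modulo $c$. An integer shift $\mathbf{v}\mapsto\mathbf{v}+\mathbf{k}$ with $aG(\mathbf{y}+\mathbf{k})\in c\numZ^n$, which exists by invertibility, annihilates the linear cross-term in the expansion of $S(\mathbf{y})$, producing the factorization $S(\mathbf{y})=\etp{\frac{ac'}{2}\mathbf{y}^\tp G\mathbf{y}}\cdot S(\mathbf{0})$ with $cc'\equiv 1\pmod{aN}$; the identification of the phase uses the congruence $(1-qaN)/c\equiv c'\pmod{aN}$ where $q$ is chosen so that $qaN\equiv 1\pmod{c}$. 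I would then split $S(\mathbf{0})$ by CRT over the prime-power components $p^{v_p(c)}\mid c$. On each prime-power block $G$ is invertible mod $p^e$ and can be diagonalized (standard for odd $p$, using the classical even-integral normal forms for $p=2$), reducing $S(\mathbf{0})$ to a product of one-variable Gauss sums $\sum_{v\bmod p^e}\etp{bv^2/p^e}$. Assembling the classical evaluations of these and applying quadratic reciprocity to consolidate local quadratic residue symbols yields $\legendre{\abs{a}}{c}_K^n\cdot\legendre{D}{c}_K$ together with the phase $\etp{n\sgn{a}(1-c_0)/8}$.

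For Case~(2), where $N\mid c$ makes $G$ singular mod $c$ so the shift argument fails, I would establish a reciprocity identity---either via Poisson summation on $\numZ^n$ applied to a suitable Gaussian, or equivalently via the modular transformation of the theta function $\sum_{\mathbf{v}\in\numZ^n}\etp{\frac{\tau}{2}(\mathbf{v}+\mathbf{y})^\tp G(\mathbf{v}+\mathbf{y})}$ under $\tau\mapsto-1/(N\tau)$. This recasts $S(\mathbf{y})$ as a dual matrix Gauss sum with the roles of $a$ and $c$ essentially interchanged; since $N\mid c$ and $\gcd(a,c)=1$ together yield $\gcd(N,\abs{a})=1$, the dual sum falls under the hypotheses of Case~(1), to which the result of paragraph two now applies. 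The leading coefficient $D^{1/2}\rmi^{n/2}$ arises as the Weil-index scaling of the Poisson (or modular) transform. The vanishing indicator $\delta$ comes from orthogonality analysis: writing $\mathbf{v}=cG^{-1}\mathbf{u}$ for $\mathbf{u}\in\numZ^n$ parametrizes the kernel of $G\bmod c$ (which has order $c^n/D$ when $N\mid c$), and the linear character $\mathbf{v}\mapsto\etp{\mathbf{v}^\tp(aG\mathbf{y})/c}$ restricted to this kernel equals $\mathbf{u}\mapsto\etp{a\mathbf{u}^\tp\mathbf{y}}$, which is trivial for all $\mathbf{u}\in\numZ^n$ precisely when $a\mathbf{y}=a(\mathbf{w}+G^{-1}\mathbf{x})\in\numZ^n$.

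The principal obstacle will be the meticulous bookkeeping of Kronecker symbols and $8$th roots of unity as the local factors are assembled via quadratic reciprocity, especially at $p=2$, where the normal forms of even integral symmetric matrices mod $2^e$ are more delicate and where the Gauss sum evaluations themselves pick up additional $2$-adic correction terms. A secondary difficulty will be verifying that the two parity-dependent branches of $\mu_G(a,c)$ in \eqref{eq:mainB} emerge correctly from the reciprocity step: the simplification from the full Jacobi-symbol-and-$8$th-root expression for odd $a$ to the bare Kronecker symbol $\legendre{a}{D}_K$ for even $a$ suggests that the reciprocity transform degenerates qualitatively when $2\mid a$, and this case likely demands a separate direct argument rather than being deducible by specialization of the odd-$a$ case.
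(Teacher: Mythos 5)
Your plan is viable, but it takes a genuinely different route from the paper. The paper never works with $G$ modulo prime powers: it converts $\mathfrak{G}_{G}(a/c;\mathbf{w},\mathbf{x})$ into a coefficient of the Weil representation $\rho_{\underline{L}}$ (Lemma \ref{lemm:GaussSumWeilRepr}), factors that coefficient through two auxiliary sums $\mathfrak{G}_{\underline{L}}(b/d;0)$ and $\mathscr{G}_{D_{\underline{L}}}(-ac,\,\cdot\,)$ via a decomposition in $\mathrm{Mp}_2(\numZ)$ (Proposition \ref{prop:rhoA1yx2}), and evaluates these using a Jordan decomposition of the discriminant module $L^\sharp/L$, Str\"omberg's local formulas, the extension of Milgram's formula (Theorem \ref{thm:MilgramExtension2}), and the reciprocity relation of Lemma \ref{lemm:twoGaussSumRelation}. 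You instead complete the square, split $S(\mathbf{0})$ by CRT, put $G$ into local normal form at each prime, evaluate classical one-variable (and, at $p=2$, binary) Gauss sums, and reassemble by quadratic reciprocity, with a Poisson/theta reciprocity for Case (2). Both routes ultimately rest on the same local input — your $2$-adic normal forms are the mirror image of the paper's Jordan components $\underline{A}_{2^r}^a$, $\underline{B}_{2^r}$, $\underline{C}_{2^r}$ — and both use a reciprocity to reach the $N\mid c$ case. What the paper's route buys is that Milgram's formula fixes the global eighth root of unity and the global Kronecker symbol in one stroke, bypassing the prime-by-prime reciprocity reassembly that is the heaviest part of your plan and is exactly where the Iwaniec--Kowalski argument (your Case (1) strategy with $\mathbf{w}=\mathbf{0}$) stops working for even $c$; it also yields Theorem \ref{thm:WeilCoef} as an immediate byproduct. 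What your route buys is elementarity, and the fact that your Case (1) argument never uses positive definiteness, so the indefinite extension (Theorem \ref{thm:mainIndefinite}) would come for free rather than by the paper's perturbation $G\mapsto G+4N^2ct\cdot I_n$.

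One concrete correction and one warning. In your $\delta$ analysis, the kernel of $G\bmod c$ on $\numZ^n/c\numZ^n$, namely $cG^{-1}\numZ^n/c\numZ^n$, has order $D$ when $N\mid c$, not $c^n/D$ (that is the order of the image); the order $D$ is what produces the prefactor, since $\abs{S(\mathbf{y})}^2=c^n\cdot D\cdot\delta$ once one checks that the quadratic phase $\etp{\frac{a}{2c}\mathbf{t}^\tp G\mathbf{t}}$ is trivial on the kernel. Your shift congruence does check out: with $\mathbf{k}=-qN G^{-1}(aG\mathbf{y})$ and $qaN\equiv1\bmod c$, writing $s=(qaN-1)/c$ one has $cs\equiv-1\bmod{aN}$, hence $cs^2\equiv c'\bmod{aN}$, which is the modulus actually needed because the exponent lies in $\frac{1}{2aN}\numZ$. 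The real risk in executing your plan is the final conversion, after the Poisson step in Case (2), of the Case (1) evaluation of the dual sum (modulus $\abs{a}$, form $NG^{-1}$) into the two branches of $\mu_G(a,c)$ in \eqref{eq:mainB}; the paper needs four separate subcases at the corresponding point of its own argument, and you should expect the same.
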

The proof will be given in Section \ref{sec:main_theorem_explicit_formulas_for_quadratic_gauss_sums_over_numz_n_c_numz_n_} after necessary tools have been introduced in Sections \ref{sec:weil_representations_and_finite_quadratic_modules} and \ref{sec:two_auxiliary_gauss_sums_and_an_extension_of_milgram_s_formula}. During the proof, we establish another formula, which may be of independent interest, as follows.
\begin{thm}
\label{thm:MilgramExtension}
Let $L$ be an even integral lattice in $\numR^n$ equipped with the standard inner product and let $L^\sharp$ be the dual lattice. Set $D=\vol{\numR^n/L}^2=\abs{L^\sharp/L}$. Then for all integers $c$ coprime to $D$, we have
\begin{equation*}
\sum_{\mathbf{v}\in L^\sharp/L}\etp{\frac{c\mathbf{v}^\tp\mathbf{v}}{2}}=\sqrt{D}\cdot\legendre{D}{c}_K\cdot\etp{\frac{nc_0}{8}}
\end{equation*}
where $c_0$ is the odd part of $c$.
\end{thm}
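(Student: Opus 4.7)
The plan is to reduce the sum to a product of local Gauss sums by decomposing the finite quadratic module $M := L^\sharp/L$ (equipped with the quadratic form $q(\mathbf{v}) := \mathbf{v}^\tp\mathbf{v}/2 \bmod \numZ$) into its $p$-primary orthogonal components and further, via Jordan decomposition, into indecomposable blocks. On each block the sum reduces to a classical one-variable quadratic Gauss sum with an explicit closed form, and the full formula should emerge from multiplying the local contributions together.

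Concretely, since distinct $p$-primary components of a finite quadratic module are mutually orthogonal, the sum factors as
\begin{equation*}
\sum_{\mathbf{v}\in L^\sharp/L}\etp{c q(\mathbf{v})} \;=\; \prod_{p\mid D}\sum_{\mathbf{v}\in M_p}\etp{c q(\mathbf{v})},
\end{equation*}
and the hypothesis $\gcd(c, D) = 1$ ensures that $c$ acts as a unit on each $M_p$. Each $M_p$ in turn decomposes as an orthogonal sum of indecomposable Jordan blocks: cyclic ones $(\numZ/p^k\numZ,\,b x^2/p^k)$ for odd $p$, together with the two rank-$2$ indecomposable types for $p=2$. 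On each cyclic block the contribution is the classical quadratic Gauss sum $\sum_{x\bmod p^k}\etp{c b x^2/p^k}$, and similar explicit evaluations are available for the rank-$2$ $2$-adic blocks. Multiplying everything together and invoking the multiplicativity and supplementary laws for $\legendre{\cdot}{\cdot}_K$, the product should collapse to $\sqrt{D}\cdot\legendre{D}{c}_K\cdot\etp{n c_0/8}$, with the $c=1$ case (classical Milgram) providing the baseline identity that pins down the $8$th-root-of-unity factor.

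The main obstacle is the $p=2$ part: the rank-$2$ indecomposables and the supplementary laws for the Kronecker-Jacobi symbol each contribute nontrivially to the total $8$th-root-of-unity, and controlling their interaction so that the answer collapses to the clean form $\etp{nc_0/8}$ (rather than an expression depending on finer invariants of the $2$-part) is the delicate step that truly goes beyond classical Milgram. An alternative route would be to apply Poisson summation to a theta series of $L$ transformed by a matrix $\tbtmat{*}{*}{c}{*}\in\slZ$, but this essentially recasts the problem in terms of the Weil-representation machinery being developed in Section \ref{sec:weil_representations_and_finite_quadratic_modules}, so the Jordan-block route appears both more elementary and more self-contained at this point in the paper.
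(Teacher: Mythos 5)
Your proposal matches the paper's proof essentially step for step: the paper decomposes the discriminant form into indecomposable Jordan blocks (Theorem \ref{thm:indecomposableFQM}), uses multiplicativity of the Gauss sum over orthogonal summands (Proposition \ref{prop:GMmultiplicative}), evaluates each block via Str\"omberg's explicit local formulas (Propositions \ref{prop:GApr}--\ref{prop:GBC2r}), and resolves exactly the ``delicate step'' you flag by the mechanism you hint at — the $c=1$ case of Milgram's formula pins down the rank mod $8$ of a positive definite lattice realizing each indecomposable block (Lemma \ref{lemm:rankMod8OfPositive DefLattice}), and a final application of Milgram's formula identifies $\sum_j n_j$ with $n$ mod $8$. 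The case-by-case check at $p=2$ is carried out in Lemma \ref{lemm:MilgramExtension2Indecomp}, confirming that the eighth roots of unity collapse as you anticipate.
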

When $c=\pm1$, the above formula reduces to the positive definite case of Milgram's formula; cf. \cite[Appendix 4]{MH73}. For the notation and the proof of this theorem (including the indefinite case), see Section \ref{sec:two_auxiliary_gauss_sums_and_an_extension_of_milgram_s_formula}. Note that Scheithauer \cite[Theorem 3.9]{Sch09} and Str\"omberg \cite[Coro. 3.11]{Str13} both gave closed formulas for $\sum_{\mathbf{v}\in L^\sharp/L}\etp{\frac{c\mathbf{v}^\tp\mathbf{v}}{2}}$ where $c\in\numZ$ but their formulas require firstly finding out a Jordan decomposition of $L^\sharp/L$, which makes them not so explicit as Theorem \ref{thm:MilgramExtension}. In addition, when $D$ is odd, Theorem \ref{thm:MilgramExtension} is equivalent to the positive definite case of \cite[Lemma 3.9]{Str13}, while when $D$ is even, Str\"omberg's formula need to first evaluate the signature of the $2$-adic Jordan component.

Some remarks of Theorem \ref{thm:main} are in order. For the formula of $\mathfrak{G}_{G}(a/c;\mathbf{w},\mathbf{x})$ in the case $G$ is indefinite, see Theorem \ref{thm:mainIndefinite}, which is an immediate consequence of Theorem \ref{thm:main}. For the case $\gcd(N,c)\neq 1,\,N$, there is a formula that requires first calculating the Jordan decomposition of $G^{-1}\numZ^n/\numZ^n$, as a consequence of the works of Scheithauer \cite{Sch09} and of Str\"omberg \cite{Str13}. The interested reader can obtain this formula by, e.g., combining \eqref{eq:GaussSumWeilRepr} and \cite[Remark 6.8]{Str13}.

Below is a list of some special cases and analogs of Theorem \ref{thm:main} that appear in the literature.
\begin{enumerate}
	\item Set $G=(2)$ and $\mathbf{w}=\mathbf{x}=\mathbf{0}$; then $N=4$, $D=2$ and Theorem \ref{thm:main} gives
	\begin{equation}
	\label{eq:classicalGauss}
	\sum_{v=0}^{c-1}\etp{\frac{av^2}{c}}=\begin{dcases}
	\sqrt{c}\cdot\legendre{2a}{c}_K\cdot\etp{\frac{1-c}{8}} & \text{if }2\nmid c,\\
	\sqrt{2c}\cdot\legendre{2c}{a}_K\cdot\etp{\frac{a}{8}} & \text{if } 4\mid c.
	\end{dcases}
	\end{equation}
	This is a reformulation of classical results due to Gauss (cf. \cite[\S 1.5]{BEW98}). Gauss also proved that $\sum_{v=0}^{c-1}\etp{\frac{av^2}{c}}=0$ if $c\equiv2\bmod{4}$.
	\item The sum in the case $n=2$ is called a double Gauss sum according to Weber and Jordan around 1870. They also derived some formulas in some special cases. To our best knowledge, the first general explicit formula for such sums (with $\mathbf{w}=\mathbf{x}=\mathbf{0}$) was obtained by Skoruppa and Zagier \cite[Theorem 3]{SZ89}. Their formula deals with all positive integer $c$, with no restriction on $\gcd(N,c)$. Alaca, Alaca and Williams \cite{AAW14}, and Alaca and Doyle \cite{AD17} provided explicit formulas (for $c=p^\alpha$, $p$ an odd prime) in a different form, together with applications to determining the numbers of solutions of certain quadratic congruences.
	\item The sum considered by Callahan and Smith \cite[eq. (6)]{CS76} (also see Stark \cite{Sta67}) is, in our notation, $\mathfrak{G}_{G}(a/c;\mathbf{0},\mathbf{x})$. They did not give an explicit formula but reduce the calculation of $\mathfrak{G}_{G}(a/c;\mathbf{0},\mathbf{x})$ to $\mathfrak{G}_{G}(1/c;\mathbf{0},\mathbf{0})$ under the restriction $a\cdot\mathbf{x}\in\numZ^n$, $c$ is odd and $\gcd(N,c)=1$. Iwaniec and Kowalski \cite[Lemma 20.13]{IK04} gave an explicit formula under the same restriction, as a lemma to the Kloosterman circle method. Note that their proof does not work if $c$ is even.
	\item A formula for $\mathfrak{G}_{G}(a/c;\mathbf{w},\mathbf{0})$ was given when $G\cdot\mathbf{w}\in\numZ^n$, $c$ is odd and $\gcd(N,c)=1$ in \cite[p. 572]{CS17}. The method is essentially the same as \cite[Lemma 20.13]{IK04}.
\end{enumerate}

We give four applications of Theorem \ref{thm:main}.

\subsection{A duality theorem}
Under certain assumptions (see Lemma \ref{lemm:independentRepr}) $\mathfrak{G}_{G}(a/c;\mathbf{w},\mathbf{x})$ can be seen as a sum with summation range $\numZ^n/c\numZ^n$. Sometimes one is interested in a subsum (partial sum), namely, a sum $\sum_{\mathbf{v}\in H}$ where $H$ is a subset of $\numZ^n/c\numZ^n$. The precise definition is
\begin{equation}
\label{eq:defGaussH}
\mathfrak{G}_{G}^H(r;\mathbf{w}):=\sum_{\mathbf{v}\in H}\etp{r\cdot\left(\frac{1}{2}(\mathbf{v}+\mathbf{w})^\tp\cdot G\cdot (\mathbf{v}+\mathbf{w})\right)}.
\end{equation}
Provided that $G\cdot\mathbf{w}\in\numZ^n$ and $H\subseteq\numZ^n/c\numZ^n$, this sum is well-defined (see Corollary \ref{coro:welldefinedGaussH}). Moreover, we define
\begin{equation}
\label{eq:Hbot}
H^{\bot}:=\left\{\mathbf{v}\in\numZ^n/c\numZ^n\colon\mathbf{v}^\tp\mathbf{w}=0+c\numZ^n\text{ for all }\mathbf{w}\in H\right\},
\end{equation}
and $G^\bot=N\cdot G^{-1}$ where $N$ is the level of $G$. (For more details on the bilinear map module $\numZ^n/c\numZ^n$ and its Fourier analysis, see Section \ref{sec:weil_representations_and_finite_quadratic_modules}.) As the first application of Theorem \ref{thm:main}, we obtain:
\begin{thm}
\label{thm:duality1}
Let $G$, $D$, $N$, $n$, $a$, $c$, $c_0$, $\mathbf{w}$ be as in Theorem \ref{thm:main}, except that $G$ is not required to be positive definite but required merely to be nonsingular. Let $H$ be a subgroup of $\numZ^n/c\numZ^n$ (not only a subset). Then if $\gcd(N,c)=1$, we have
\begin{equation}
\label{eq:duality1}
\frac{1}{\sqrt{\abs{H}}}\cdot\mathfrak{G}_{G}^H(a/c;\mathbf{w})=\gamma\cdot\frac{1}{\sqrt{\abs{H^\bot}}}\cdot\mathfrak{G}_{G^\bot}^{H^\bot}(a^\bot/c;-aG\mathbf{w})
\end{equation}
where $a^\bot$ is any integer satisfying $a^\bot aN\equiv-1\bmod{c}$ and where $\gamma$ is a root of unity given by
\begin{equation*}
\gamma=\legendre{\abs{a}}{c}_K^n\cdot\legendre{D}{c}_K\cdot\etp{\frac{n\cdot\sgn a\cdot(1-c_0)}{8}}\cdot\etp{\frac{a}{2c}\mathbf{w}^\tp G\mathbf{w}}.
\end{equation*}
\end{thm}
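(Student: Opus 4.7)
The plan is to use Fourier inversion on $\numZ^n/c\numZ^n$ to unfold the partial Gauss sum into a full one, apply Theorem~\ref{thm:main} (or its indefinite analog mentioned in the remarks), and then repackage the result as a restricted Gauss sum with respect to the dual data $(G^\bot,a^\bot,H^\bot,-aG\mathbf{w})$. Under the pairing $(\mathbf{u},\mathbf{v})\mapsto\etp{\mathbf{u}^\tp\mathbf{v}/c}$ that identifies $\numZ^n/c\numZ^n$ with its character group, the annihilator of $H$ is exactly $H^\bot$ and $\abs{H}\abs{H^\bot}=c^n$, while orthogonality yields $\mathbf{1}_H(\mathbf{v})=\abs{H^\bot}^{-1}\sum_{\mathbf{u}\in H^\bot}\etp{\mathbf{u}^\tp\mathbf{v}/c}$. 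Substituting into \eqref{eq:defGaussH} and rewriting the Fourier phase as $\mathbf{u}^\tp\mathbf{v}/c=(a/c)(\mathbf{v}+\mathbf{w})^\tp(\mathbf{u}/a)-(a/c)\mathbf{w}^\tp(\mathbf{u}/a)$ collapses the inner $\mathbf{v}$-sum into a full Gauss sum:
\begin{equation*}
\mathfrak{G}_{G}^H(a/c;\mathbf{w})=\frac{1}{\abs{H^\bot}}\sum_{\mathbf{u}\in H^\bot}\etp{-\frac{\mathbf{u}^\tp\mathbf{w}}{c}}\,\mathfrak{G}_G(a/c;\mathbf{w},\mathbf{u}/a),
\end{equation*}
in which each $\mathfrak{G}_G(a/c;\mathbf{w},\mathbf{u}/a)$ is well-defined because $a\cdot(\mathbf{u}/a)=\mathbf{u}\in\numZ^n$.

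Next I would invoke Theorem~\ref{thm:main}(1), which is available because $\gcd(N,c)=1$. The $\mathbf{u}$-independent root of unity $\legendre{\abs{a}}{c}_K^n\legendre{D}{c}_K\etp{n\sgn a(1-c_0)/8}$ pulls outside the sum, and the factor $c^{n/2}$ combines with $\abs{H^\bot}^{-1}$ via $\abs{H}\abs{H^\bot}=c^n$ into the normalization $\abs{H^\bot}^{-1/2}\cdot\abs{H}^{1/2}$ needed by \eqref{eq:duality1}. The substantive work is the rearrangement of the exponent. Writing $cc'-1=kaN$ with $k\in\numZ$ (possible since $cc'\equiv 1\bmod aN$) and $c'=(1+kaN)/c$, a direct expansion of
\begin{equation*}
-\frac{a}{2c}\mathbf{x}^\tp G^{-1}\mathbf{x}+\frac{ac'}{2}(\mathbf{w}+G^{-1}\mathbf{x})^\tp G(\mathbf{w}+G^{-1}\mathbf{x})-\frac{\mathbf{u}^\tp\mathbf{w}}{c}\quad\text{at }\mathbf{x}=\mathbf{u}/a
\end{equation*}
collapses, after cancellation of the $k$-independent part, to the clean form
\begin{equation*}
\frac{a}{2c}\mathbf{w}^\tp G\mathbf{w}+\frac{k}{2c}(\mathbf{u}+aG\mathbf{w})^\tp G^\bot(\mathbf{u}+aG\mathbf{w}),
\end{equation*}
so that the Fourier offset, the cross term $c'\mathbf{w}^\tp\mathbf{u}$, and the pure quadratic term conspire to complete a perfect square in the dual variable $\mathbf{u}+aG\mathbf{w}$ with respect to $G^\bot=NG^{-1}$.

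Two final adjustments identify the resulting sum with $\mathfrak{G}_{G^\bot}^{H^\bot}(a^\bot/c;-aG\mathbf{w})$. First, $kaN=cc'-1\equiv -1\equiv a^\bot aN\pmod{c}$ together with $\gcd(aN,c)=1$ forces $k\equiv a^\bot\pmod{c}$; since $G^\bot$ is even integral (by definition of level) and $\mathbf{u}+aG\mathbf{w}\in\numZ^n$, the quantity $(\mathbf{u}+aG\mathbf{w})^\tp G^\bot(\mathbf{u}+aG\mathbf{w})$ is an even integer, so inside $\etp{\cdot}$ the coefficient $k/(2c)$ may be replaced by $a^\bot/(2c)$ without change. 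Second, the substitution $\mathbf{u}\mapsto-\mathbf{u}$ preserves the subgroup $H^\bot$ and, by evenness of the quadratic form, sends $\mathbf{u}+aG\mathbf{w}$ to $-(\mathbf{u}-aG\mathbf{w})$ inside the summand. The sum then equals $\etp{\frac{a}{2c}\mathbf{w}^\tp G\mathbf{w}}\cdot\mathfrak{G}_{G^\bot}^{H^\bot}(a^\bot/c;-aG\mathbf{w})$, and combining with the prefactor reproduces $\gamma$ exactly as stated.

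The main obstacle is the exponent identity in the second step: the verification that the Fourier shift $-\mathbf{u}^\tp\mathbf{w}/c$, the cross term $c'\mathbf{w}^\tp\mathbf{u}$, and the coefficient $\frac{cc'-1}{2ac}$ of $\mathbf{u}^\tp G^{-1}\mathbf{u}$ conspire---via $cc'-1=kaN$---into a single perfect square $\frac{k}{2c}(\mathbf{u}+aG\mathbf{w})^\tp G^\bot(\mathbf{u}+aG\mathbf{w})$. Once this algebraic identity is in place, the remaining passages ($k\equiv a^\bot\bmod c$ via even integrality of $G^\bot$, the reflection $\mathbf{u}\mapsto-\mathbf{u}$, and the counting $\abs{H}\abs{H^\bot}=c^n$) are routine, and the indefinite case is handled uniformly by invoking the indefinite analog of Theorem~\ref{thm:main}.
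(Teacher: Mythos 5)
Your proposal is correct and follows essentially the same route as the paper: the character-orthogonality expansion of $\mathbf{1}_H$ is exactly the finite Poisson summation step of Lemma \ref{lemm:GGUforGeneralc}, after which both arguments insert the indefinite form of the main theorem (Theorem \ref{thm:mainIndefinite}) and complete the square in the dual variable, with $cc'-1=kaN$ and $k\equiv a^\bot\bmod c$ absorbed via the even integrality of $G^\bot$. The only cosmetic difference is your sign convention $\mathbf{x}=\mathbf{u}/a$ instead of $-a^{-1}\mathbf{y}$, which costs you the harmless extra reflection $\mathbf{u}\mapsto-\mathbf{u}$ at the end.
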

The proof is provided in Section \ref{sec:application_i_a_duality_theorem_and_gauss_subsums}, where a more involved formula under the assumption $N\mid c$ is given and proved as well (see Theorem \ref{thm:duality2}). Examples and interesting corollaries can be found in Section \ref{sec:intermission_examples_of_the_main_theorem_and_the_duality_theorem}.

\subsection{A formula for Weil representations}
The Weil representations of the metaplectic group of a symplectic space $W$ are introduced by Weil \cite{Wei64} in 1964. We are concerned with $W=\numZ$. In this case, with each positive definite even integral lattice $\underline{L}$, one can associate a Weil representation $\rho_{\underline{L}}\colon\mathrm{Mp}_2(\numZ)\to\mathrm{GL}(\numC[L^\sharp/L])$. For the notations, definitions and basic properties, see Section \ref{sec:weil_representations_and_finite_quadratic_modules}. Good references are \cite[\S 1.1]{Bru02}, \cite[\S3]{Sko08} and \cite[\S 14.5]{CS17}. Weil representations (with $W=\numZ$) play an important role in the theory of vector-valued modular forms, Borcherds products and modular forms on orthogonal groups; cf., e.g., \cite{Bor98,Sch09,WW25}.

Shintani \cite[Prop. 1.6]{Shi75} first expressed the coefficients of $\rho_{\underline{L}}(\gamma)$ in terms of \eqref{eq:GaussSumDef} for all $\gamma\in\mathrm{Mp}_2(\numZ)$, where he did not further simplify \eqref{eq:GaussSumDef}. Borcherds \cite[\S 3]{Bor00} pointed out the zero coefficients of $\rho_{\underline{L}}(\gamma)$. Scheithauer \cite[Theorem 4.7]{Sch09} calculated the coefficients under the restriction that $\rho_{\underline{L}}$ factors through $\slZ$ and Str\"omberg \cite[Theorem 6.4]{Str13} obtained a formula without this restriction.

The formulas of Scheithauer and Str\"omberg mentioned above both depend on a (not necessarily unique) Jordan decomposition of the finite quadratic module $L^\sharp/L$, a fact that makes their formulas not very elementary and explicit. Boylan \cite[Theorem 3.1, p. 82]{Boy15} gave another formula, which, although does not contain any local data, involves a limit of certain theta series. (Boylan's result is concerned with lattices over rings of algebraic integers.) As our second application of Theorem \ref{thm:main}, we give an explicit formula without referring to any local data and limits of theta series, under the assumption $\gcd(N,c)=1$:
\begin{thm}
\label{thm:WeilCoef}
Let $\underline{L}=(L,B)$ be a positive definite even integral lattice\footnote{The notations are given in \S \ref{subsec:lattices_and_weil_representations}.} of rank $n\in\numgeq{Z}{1}$ and set $Q(x)=\frac{1}{2}B(x,x)$. Let $A=\tbtmat{a}{b}{c}{d}\in\slZ$ with $c>0$, $a\neq0$ and $\gcd(N,c)=1$ where $N$ is the level of $\underline{L}$. Set $D=\det(\underline{L})=\abs{L^\sharp/L}$. Then for $w,v\in L^\sharp/L$ we have
\begin{multline*}
\rho_{\underline{L}}\left(A,1\right)_{w,v}=\frac{1}{\sqrt{D}}\cdot(-\rmi)^{\frac{n}{2}}\cdot\legendre{\abs{a}}{c}_K^n\cdot\legendre{D}{c}_K\cdot\etp{\frac{n\cdot\sgn a\cdot(1-c_0)}{8}}\\
\times\etp{\frac{b+c'}{a}Q(v)-c'B(w,v)+ac'Q(w)},
\end{multline*}
where $c_0$ is the odd part of $c$, and where $c'$ is any solution of $cc'\equiv 1\bmod{aN}$.
\end{thm}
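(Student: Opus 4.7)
The plan is to combine the Shintani-type formula expressing $\rho_{\underline{L}}(A, 1)_{w, v}$ as a Gauss sum of the type considered in this paper (equation \eqref{eq:GaussSumWeilRepr}) with Theorem \ref{thm:main}(1). Since $\gcd(N, c) = 1$, case (1) of Theorem \ref{thm:main} applies directly and the entire sum in the Shintani formula collapses to a closed-form phase, which is precisely the form of Theorem \ref{thm:WeilCoef}.

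First, I would fix a $\numZ$-basis of $L$, so that $L$ is identified with $\numZ^n$ equipped with Gram matrix $G$ and $L^\sharp = G^{-1}\numZ^n$. In these coordinates, Shintani's formula has the schematic shape
\[
\rho_{\underline{L}}(A, 1)_{w, v} = \frac{(-\rmi)^{n/2}}{c^{n/2}\sqrt{D}} \cdot \etp{\Phi_0(w, v)} \cdot \mathfrak{G}_G(a/c; \mathbf{w}', \mathbf{x}'),
\]
where $\Phi_0(w, v)$ is an explicit quadratic in $w, v$ and $\mathbf{w}', \mathbf{x}' \in \numQ^n$ are built from $w$, $v$, and the entries of $A$. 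The integrality hypotheses $G\mathbf{w}' \in \numZ^n$ and $a\mathbf{x}' \in \numZ^n$ required by Theorem \ref{thm:main} hold automatically, since $w, v \in L^\sharp$ implies $Gw, Gv \in \numZ^n$.

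Next, I would substitute the evaluation from Theorem \ref{thm:main}(1). The factor $c^{n/2}$ there cancels the $c^{-n/2}$ in the Shintani normalization, and the Kronecker-symbol and $8$th-root-of-unity factors, combined with $(-\rmi)^{n/2}/\sqrt{D}$, produce exactly the prefactor
\[
\frac{1}{\sqrt{D}} \cdot (-\rmi)^{n/2} \cdot \legendre{\abs{a}}{c}_K^n \cdot \legendre{D}{c}_K \cdot \etp{\tfrac{n \sgn a (1 - c_0)}{8}}
\]
of Theorem \ref{thm:WeilCoef}. What remains is to match the quadratic phase produced by Theorem \ref{thm:main}(1), together with $\Phi_0(w, v)$, against the target $\etp{\tfrac{b+c'}{a}Q(v) - c'B(w, v) + ac'Q(w)}$.

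This final phase-matching is the main obstacle. Writing $cc' = 1 + kaN$ for some $k \in \numZ$ and invoking $ad - bc = 1$, one derives the key identity $c(b+c')/a = d + kN$, which in particular confirms that $(b+c')/a \in \numZ$ and supplies the ``replacement rule'' needed to convert $d/c$-type coefficients (which arise naturally after expanding $(\mathbf{w}' + G^{-1}\mathbf{x}')^{\tp} G (\mathbf{w}' + G^{-1}\mathbf{x}')$ in Theorem \ref{thm:main}(1)) into the $(b+c')/a$-type coefficients that appear in the target. Combined with the level identities $NQ(L^\sharp) \subset \numZ$ and $NB(L^\sharp, L^\sharp) \subset \numZ$, which allow integer shifts to be absorbed into the exponential, a direct collection of the $Q(v)$-, $B(w, v)$-, and $Q(w)$-contributions reproduces the claimed formula. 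The bookkeeping is elementary but requires care.
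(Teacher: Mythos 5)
Your proposal is correct and follows essentially the same route as the paper: identify $L$ with $\numZ^n$, invoke Lemma \ref{lemm:GaussSumWeilRepr} (the Shintani formula recast as $\mathfrak{G}_G(a/c;\mathbf{w},\mathbf{x})$ with $\mathbf{x}=-a^{-1}G\mathbf{v}_1$), apply Theorem \ref{thm:main}(1), and match phases via $ad-bc=1$ and $cc'\equiv1\bmod{aN}$. The final phase identity is in fact an exact rational identity (the coefficient of $Q(v)$ collapses to $\frac{ad-1+cc'}{ac}=\frac{b+c'}{a}$), so the level identities you invoke for absorbing integer shifts are not actually needed, but this does not affect correctness.
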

The proof is provided in Section \ref{sec:application_ii_explicit_formulas_for_coefficients_of_weil_representations_of_slz_}, where a formula in the case $N\mid c$, and a translation of these formulas into the language of modular transformation equations of theta functions associated with lattices can be found.

\subsection{Affine quadratic hypersurfaces over $\mathbb{F}_p$} Let $G$ be a nonsingular even integral symmetric matrix of size $n\in\numgeq{Z}{1}$. Let $\mathbf{v}\in\numZ^n$, $m$ be an integer, and $p$ be a prime. We consider the congruence
\begin{equation}
\label{eq:affineQuadratic}
\frac{1}{2}\mathbf{x}^\tp G\mathbf{x}+\mathbf{v}^\tp\mathbf{x}\equiv m\bmod{p}.
\end{equation}
Let $r_{G,\mathbf{v}}(m)$ denote the number of $\mathbf{x}\in\mathrm{A}^n(\mathbb{F}_p):=\numZ^n/p\numZ^n$ such that \eqref{eq:affineQuadratic} holds. We are concerned with explicit formulas for $r_{G,\mathbf{v}}(m)$. When $G$ is diagonal, this problem is widely studied in the literature. Indeed, a special case of the well-known Hasse-Davenport relation (cf. \cite[p. 162]{IR90}) together with known explicit formula for Gauss sums over finite fields (cf. \cite[Theorem 11.5.4]{BEW98}) would imply an explicit formula for $r_{G,\mathbf{v}}(m)$. Li and Ouyang \cite{LO18} gave formulas and an algorithm for the number of solutions of a diagonal equation modulo a prime power. For the quadratic case of their results, see \cite[Theorem 4.4]{LO18}.

To illustrate the power of Theorem \ref{thm:main} (actually, of Theorem \ref{thm:mainIndefinite}), we first give a formula for $r_{G,\mathbf{v}}(m)$ which is valid for possibly non-diagonal $G$, and for all $\mathbf{v}$ and $m$ (see Theorem \ref{thm:affineQuadraticmodulusc}) where the modulus $p$ can be a general positive integer. The arithmetic complexity of this formula (with $G$, $\mathbf{v}$ and $m$ fixed) is $\Theta(p)$. By specializing we deduce succinct formulas for $p$ being prime provided that $p\nmid \det(G)$:
\begin{thm}
\label{thm:rGvm}
Let $G$ be a nonsingular even integral symmetric matrix of size $n\in\numgeq{Z}{1}$, let $p$ be a prime, and let $\mathbf{v}$, $m$ be as above. Let $N$ be the level of $G$, $D=\det(G)$ and $G^\bot=N\cdot G^{-1}$. Suppose that $p\nmid D$ (which is equivalent to $p\nmid N$). Then:

(1) If $2\mid n$ and $p>2$, then
\begin{equation*}
r_{G,\mathbf{v}}(m)=p^{n-1}+\legendre{D}{p}_K\cdot\etp{\frac{n(1-p)}{8}}\cdot p^{\frac{n}{2}-1}(\delta p-1)
\end{equation*}
where $\delta=1$ if $Nm+\frac{1}{2}\mathbf{v}^\tp G^\bot\mathbf{v}\equiv0\bmod{p}$, and $\delta=0$ otherwise.

(2) If $2\mid n$ and $p=2$, then
\begin{equation*}
r_{G,\mathbf{v}}(m)=2^{n-1}+\legendre{D}{2}_K\cdot(-1)^{Nm+\frac{1}{2}\mathbf{v}^\tp G^\bot\mathbf{v}}\cdot2^{\frac{n}{2}-1}.
\end{equation*}

(3) If $2\nmid n$, then we have $2\mid N$ (namely, $2\mid D$), and hence $p>2$ and
\begin{equation*}
r_{G,\mathbf{v}}(m)=p^{n-1}+\legendre{2D}{p}_K\cdot\legendre{-t_m}{p}_K\cdot\etp{\frac{(n+1)(1-p)}{8}}\cdot p^{\frac{n-1}{2}}
\end{equation*}
where $t_m\in\numZ$ is any integer with $Nt_m\equiv Nm+\frac{1}{2}\mathbf{v}^\tp G^\bot\mathbf{v}\bmod p$.
\end{thm}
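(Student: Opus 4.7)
The plan is to linearise via orthogonality of additive characters modulo $p$, reducing $r_{G,\mathbf{v}}(m)$ to an average of Gauss sums
\[
r_{G,\mathbf{v}}(m)=p^{n-1}+\frac{1}{p}\sum_{a=1}^{p-1}\etp{-\tfrac{am}{p}}\,\mathfrak{G}_G(a/p;\mathbf{0},\mathbf{v}),
\]
the $a=0$ term contributing $p^{n-1}$. For each $a\in\{1,\dots,p-1\}$ the hypotheses of Theorem \ref{thm:main}(1) hold, since $\gcd(a,p)=\gcd(N,p)=1$ and $a\mathbf{v}\in\numZ^n$, so applying it with $\mathbf{w}=\mathbf{0}$, $\mathbf{x}=\mathbf{v}$, $c=p$ will evaluate each inner sum in closed form.

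The core computation will be to simplify the quadratic phase produced by \eqref{eq:GGacwxWhenNccoprime} so that the auxiliary modular inverse $c'$ disappears from the final answer. Setting $Q^\bot(\mathbf{v}):=\tfrac12\mathbf{v}^\tp G^\bot\mathbf{v}\in\numZ$ (so that $\mathbf{v}^\tp G^{-1}\mathbf{v}=2Q^\bot(\mathbf{v})/N$) and writing $pc'-1=aN\ell$ for some $\ell\in\numZ$ as permitted by the defining relation of $c'$, the phase equals $\etp{Q^\bot(\mathbf{v})a^2\ell/p}$. Reducing $aN\ell\equiv-1\pmod p$ (valid since $p\nmid N$) gives $a^2\ell\equiv -aN^{-1}\pmod p$, whence the phase becomes $\etp{-aN^{-1}Q^\bot(\mathbf{v})/p}$. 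Combining with $\etp{-am/p}$ produces $\etp{-at_m/p}$, with $t_m$ being exactly the integer defined in case (3). I expect this phase simplification to be the only substantive obstacle in the proof.

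What remains is a case analysis. When $n$ is even and $p>2$, we have $\legendre{a}{p}_K^n=1$ and $\sum_{a=1}^{p-1}\etp{-at_m/p}=\delta p-1$ with $\delta=1$ if $Nm+Q^\bot(\mathbf{v})\equiv 0\bmod p$ and $\delta=0$ otherwise (equivalently, $t_m\equiv 0\bmod p$); assembling gives case (1). When $n$ is even and $p=2$ only $a=1$ contributes, the factor $\etp{n(1-c_0)/8}$ is trivial because $c_0=1$, and $(-1)^{t_m}=(-1)^{Nm+Q^\bot(\mathbf{v})}$ because $N$ is odd; this gives case (2). When $n$ is odd, reducing $G$ modulo $2$ produces a symmetric matrix over $\mathbb{F}_2$ with vanishing diagonal, i.e.\ an alternating form, whose rank is necessarily even; hence $2\mid D$, so $p\nmid D$ forces $p>2$. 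The surviving inner sum $\sum_{a=1}^{p-1}\legendre{a}{p}\etp{-at_m/p}$ vanishes when $p\mid t_m$ (in agreement with $\legendre{-t_m}{p}_K=0$ in the claimed formula) and otherwise factors, via the substitution $b\equiv -at_m\pmod p$, as $\legendre{-t_m}{p}$ times the classical quadratic Gauss sum $\sqrt p\,\legendre{2}{p}\etp{(1-p)/8}$ from \eqref{eq:classicalGauss}; assembling $\legendre{D}{p}\legendre{2}{p}=\legendre{2D}{p}$, $\etp{n(1-p)/8}\etp{(1-p)/8}=\etp{(n+1)(1-p)/8}$, and dividing by $p$ yields case (3).
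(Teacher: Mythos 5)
Your proposal is correct and follows essentially the same route as the paper: the paper first proves a general-modulus formula (Theorem \ref{thm:affineQuadraticmodulusc}) by expanding $r_{G,\mathbf{v}}$ in additive characters of $\numZ/c\numZ$ and inserting the main theorem, and its specialization to $c=p$ is exactly your displayed identity, after which the case analysis (geometric sum for $2\mid n$, classical quadratic character sum for $2\nmid n$, and the same reduction of the phase to $\etp{-at_m/p}$) is identical. One small correction: since $G$ is only assumed nonsingular here, you must invoke Theorem \ref{thm:mainIndefinite} rather than Theorem \ref{thm:main}(1), which requires positive definiteness; the formula \eqref{eq:GGacwxWhenNccoprime} is unchanged in that setting, so nothing else in your argument is affected.
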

The proof is provided in Section \ref{sec:application_iii_number_of_points_of_arbitrary_affine_quadratic_hypersurfaces_over_prime_finite_fields}, where formulas of $r_{G,\mathbf{v}}(m)$ for more general moduli are discussed.

\subsection{Generalized Markoff equations over $\mathbb{F}_p$}
\label{subsec:generalized_markoff_equations}
Let $a_{11}$, $a_{22}$, $a_{33}$, $a_{12}$, $a_{13}$, $a_{23}$, $d$ be integers. We consider the Diophantine equation
\begin{equation}
\label{eq:gMarkoff}
a_{11}x^2+a_{22}y^2+a_{33}z^2+a_{12}xy+a_{13}xz+a_{23}yz=dxyz.
\end{equation}
When $a_{11}=a_{22}=a_{33}=1$, $a_{12}=a_{13}=a_{23}=0$ and $d=3$, this is called the Markoff equation. Together with the associated notions Markoff numbers and Markoff graphs, the Markoff equation arises in many areas of mathematics, e.g., in Lobachevsky geometry and symplectic geometry; cf., e.g., \cite{Aig13,Via16}. In 1991, Baragar \cite{Bar91} conjectured that the Markoff graph over $\mathbb{F}_p$ is connected. Significant progress toward this conjecture was made in 2016 by Bourgain, Gamburd, and Sarnak \cite{BGS16}. Recently, Chen \cite{Che24} proved that Baragar's conjecture is true for all but finitely many primes $p$. See \cite{Mar25} for a one-page proof of Chen's theorem.

If $a_{11}=a_{22}=a_{33}=1$, $a_{12}=a_{13}=a_{23}=0$ and $d\neq0$, then \eqref{eq:gMarkoff} is called a Hurwitz equation; if merely $a_{12}=a_{13}=a_{23}=0$, then \eqref{eq:gMarkoff} is called a Rosenberger variation; cf. \cite[p. 1 and 57]{Bar91}. Here we shall consider \eqref{eq:gMarkoff} in most generality. As is the case of the Markoff equation we have seen above, it is important to study the equation \eqref{eq:gMarkoff} over $\mathbb{F}_p$. Our final application of Theorem \ref{thm:main} is a uniform formula for the solution number of \eqref{eq:gMarkoff} over $\mathbb{F}_p$. Set
\begin{equation*}
G=\begin{pmatrix}
2a_{11} & a_{12} & a_{13} \\
a_{12} & 2a_{22} & a_{23} \\
a_{13} & a_{23} & 2a_{33}
\end{pmatrix},
\end{equation*}
and set
$$A_{11}=4a_{22}a_{33}-a_{23}^2,\quad A_{22}=4a_{11}a_{33}-a_{13}^2, \quad A_{33}=4a_{11}a_{22}-a_{12}^2,\quad D=\det(G).$$
\begin{thm}
\label{thm:solnumbergMarkoff}
Suppose that $p$ is a prime with $p\nmid a_{11}a_{22}a_{33}d$.

(1) If $p\nmid D$, then $p>2$, and the number of solutions of \eqref{eq:gMarkoff} mod $p$ equals
\begin{equation*}
p^2+\left(\legendre{-A_{11}}{p}_K+\legendre{-A_{22}}{p}_K+\legendre{-A_{33}}{p}_K\right)\cdot p+1.
\end{equation*}

(2) If $2<p\mid D$ and at least two of $A_{11}, A_{22}, A_{33}$ are divisible by $p$, then the solution number is
\begin{equation*}
p^2+1.
\end{equation*}

(3) If $2<p\mid D$ and at most one of $A_{11}, A_{22}, A_{33}$ is divisible by $p$, then the solution number is
\begin{equation*}
p^2+\left(\legendre{-A_{11}}{p}_K+\legendre{-A_{22}}{p}_K+\legendre{-A_{33}}{p}_K-\legendre{-A_{jj}}{p}_K\right)\cdot p+1,
\end{equation*}
where $1\leq j\leq3$ is any index such that $p\nmid A_{jj}$.

(4) If $2=p\mid D$, then the solution number is
\begin{equation*}
\begin{dcases}
5 &\text{ if }a_{12}\equiv a_{13}\equiv a_{23}\equiv0\bmod{2}\\
1 &\text{ if }a_{12}\equiv a_{13}\equiv a_{23}\equiv1\bmod{2}\\
3 &\text{ otherwise.}
\end{dcases}
\end{equation*}
\end{thm}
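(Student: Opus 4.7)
The plan is to reduce the count to quadratic form counts in two variables by fixing $z$, then sum the fiber counts. For each $z_0\in\mathbb{F}_p$, the equation becomes
\[
\frac{1}{2}\mathbf{x}^\tp G_{z_0}\mathbf{x} + \mathbf{v}_{z_0}^\tp\mathbf{x} + a_{33}z_0^2 \equiv 0\pmod p
\]
in $\mathbf{x} = (x, y)^\tp$, where $G_z = \tbtmat{2a_{11}}{a_{12}-dz}{a_{12}-dz}{2a_{22}}$ and $\mathbf{v}_z = (a_{13}z, a_{23}z)^\tp$. For $z_0$ with $p\nmid\det(G_{z_0})$, Theorem \ref{thm:rGvm}(1) (applied with $n=2$ and using $\etp{(1-p)/4} = \legendre{-1}{p}_K$ for odd $p$) gives the fiber count
\[
r_{z_0} = p + \legendre{-\det(G_{z_0})}{p}_K\bigl(\delta_{z_0}p - 1\bigr).
\]
A direct evaluation of the vanishing condition $Nm + \frac{1}{2}\mathbf{v}^\tp G^\bot\mathbf{v}\equiv 0\pmod p$ shows it is equivalent to $z_0^2 q(z_0)\equiv 0\pmod p$, where $q(z) := a_{33}d^2 z^2 + d(a_{13}a_{23} - 2a_{12}a_{33})z - D/2$. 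The finitely many singular values $z_0\in S := \{z\in\mathbb{F}_p: p\mid \det(G_z)\}$ are handled by completing the square in $x$ (possible since $p\nmid 2a_{11}$), giving $r_{z_0} = p$ in general position.

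Two key discriminant identities, both verified by direct expansion, drive the sum. First, $\det(G_z) = A_{33} + 2a_{12}dz - d^2 z^2$ has discriminant $16a_{11}a_{22}d^2\pmod p$ as a polynomial in $z$, which is nonzero by hypothesis, so $|S|\in\{0,2\}$ with explicit roots $z = (a_{12}\pm 2\sqrt{a_{11}a_{22}})/d$. Second, $q(z)$ has discriminant $d^2 A_{11}A_{22}\pmod p$. Invoking the classical identity $\sum_{z\in\mathbb{F}_p}\legendre{F(z)}{p}_K = -\legendre{\mathrm{lead}(F)}{p}_K$ for a quadratic $F$ of nonvanishing discriminant mod $p$, and summing $r_{z_0}$ over $\mathbb{F}_p$, yields for part (1)
\[
N = p^2 + 1 + pT,
\]
where $T := \sum_{z_0}\legendre{-\det(G_{z_0})}{p}_K$ runs over $z_0 = 0$ together with the roots of $q$ in $\mathbb{F}_p$, each summand interpreted as $0$ when the root falls in $S$.

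The algebraic heart is a factorization obtained by substituting $q(z_0^\pm) = 0$ into $\det(G_{z_0^\pm})$ and simplifying through the auxiliary identity $a_{33}D - 2a_{33}^2 A_{33} = -2a_{33}(a_{11}a_{23}^2 + a_{13}^2 a_{22} - a_{12}a_{13}a_{23})$:
\[
-\det(G_{z_0^\pm}) = -\frac{(a_{23}\sqrt{A_{22}} \pm a_{13}\sqrt{A_{11}})^2}{4a_{33}^2}\quad\text{in }\overline{\mathbb{F}_p},
\]
where $z_0^\pm$ are the two roots of $q$. A three-way case analysis on whether each of $\legendre{A_{11}}{p}_K$ and $\legendre{A_{22}}{p}_K$ equals $+1$, $-1$, or $0$ shows $\sum_\pm\legendre{-\det(G_{z_0^\pm})}{p}_K = \legendre{-A_{11}}{p}_K + \legendre{-A_{22}}{p}_K$, with each summand interpreted as $0$ when the corresponding root is not in $\mathbb{F}_p$. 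Combined with the $z_0 = 0$ contribution $\legendre{-\det(G_0)}{p}_K = \legendre{-A_{33}}{p}_K$, this gives $T = \sum_{j=1}^3\legendre{-A_{jj}}{p}_K$ in part (1).

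The main obstacle is the case analysis for parts (2)--(4). When $p\mid D$, the constant term of $q$ vanishes so $0$ becomes a root of $q$; moreover, $\mathrm{disc}(q) = d^2 A_{11}A_{22}$ itself becomes divisible by $p$ as soon as two or more $A_{jj}$'s are. For part (3) (at most one $A_{jj}$ divisible), one tracks which of the three surviving $\legendre{-A_{jj}}{p}_K$ summands actually contribute to $T$, producing the $-\legendre{-A_{jj}}{p}_K$ correction; here the theorem's assertion that the formula is independent of the chosen $j$ rests on the rank-one structure of $\mathrm{adj}(G)\bmod p$, which forces $\chi(A_{jj})$ to be constant among the non-vanishing indices. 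For part (2) (two or three $A_{jj}$'s divisible), $q$ becomes so degenerate that $T = 0$, giving solution number $p^2+1$. Part (4) ($p = 2$) requires Theorem \ref{thm:rGvm}(2) instead of (1), with the four sub-cases matched directly to the parity patterns of $(a_{12}, a_{13}, a_{23})\bmod 2$ by direct enumeration. Finally, sporadic non-generic sub-cases within part (1)---in particular when $a_{11}a_{23}^2 \equiv a_{22}a_{13}^2\pmod p$ forces a root of $q$ to lie in $S$---produce balancing corrections that preserve the uniform formula, verified via the matching $\legendre{A_{11}}{p}_K = \legendre{A_{22}}{p}_K$ that constraint implies.
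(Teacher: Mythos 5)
Your proposal follows essentially the same route as the paper: fiber the variety over $z$, apply Theorem \ref{thm:rGvm}(1) with $n=2$ to the fibers with $p\nmid\det(G_z)$, reduce the $\delta$-condition to the quadratic $q(z)$ with discriminant $d^2A_{11}A_{22}$, handle the singular fibers by completing the square, and finish with the classical Legendre-sum identity. Your discriminant computations and the product identity for $\det(G_{z_0^+})\det(G_{z_0^-})$ agree with the paper's (which packages the same information as congruences for $Z_0Z_1$ rather than your $\sqrt{A_{11}},\sqrt{A_{22}}$ factorization — a nicer formulation, in my view), and your identification of $a_{11}a_{23}^2\equiv a_{22}a_{13}^2\bmod p$ as the critical non-generic condition is exactly the dividing line ($a_{11}A_{11}\equiv a_{22}A_{22}$) that organizes the paper's fifteen cases.

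Two caveats. First, the displayed identity $N=p^2+1+pT$ with every singular fiber contributing exactly $p$ is only valid generically: completing the square shows a singular fiber contributes $p\bigl(1+\legendre{-A_{22}z^2}{p}_K\bigr)$ whenever the linear term $-da_{13}z^2+A_{23}z$ also vanishes mod $p$, so the correct master formula carries an extra term $\sum_{z\in S}(r_z-p)$. You acknowledge this at the end, but the verification that these corrections exactly compensate the terms lost from $T$ — and the corresponding bookkeeping for parts (2) and (3) when $0$ becomes a root of $q$ or $q$ degenerates — is where essentially all of the labor lies (the paper's Lemma \ref{lemm:cases1-5rz} plus the case-by-case analysis), and your proposal asserts rather than carries out this balancing; the one justification offered, that $\legendre{A_{11}}{p}_K=\legendre{A_{22}}{p}_K$ in the non-generic situation, is a necessary ingredient but not by itself the computation. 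Second, your plan for part (4) via Theorem \ref{thm:rGvm}(2) is incomplete as stated: for $p=2$ one has $\det(G_z)\equiv(a_{12}+z)^2\bmod 2$, so exactly one of the two fibers is singular mod $2$, and the completing-the-square lemma is an odd-$p$ argument; the paper sidesteps this by direct enumeration of the $8$ points of $\mathbb{F}_2^3$, which you should do as well.
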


The proof is provided in Section \ref{sec:application_iv_number_of_solutions_of_generalized_markoff_equations_over_prime_finite_fields}. In the case of Markoff equation, Theorem \ref{thm:solnumbergMarkoff} implies that its solution number is $p^2+(-1/p)_K\cdot 3p+1$ over $\mathbb{F}_p$ where $p>3$. This was first obtained by Baragar \cite[p. 121]{Bar91}.

\subsection{Other results, and notations}
\label{subsec:Other_results_and_notations}
Other results, besides the above mentioned ones, are collected below:
\begin{itemize}
	\item Lemma \ref{lemm:twoGaussSumRelation} relates Gauss sums over $\numZ^n/c\numZ^n$ to Gauss sums over the discriminant modules of lattices.
	\item Lemma \ref{lemm:GGUforGeneralc} gives a formula for \eqref{eq:defGaussH} where no assumption on $\gcd(N,c)$ is required.
	\item In \S\ref{subsec:Anlattices} we present an example showing how our main theorem looks like for the root lattices $A_n$.
	\item Theorem \ref{thm:GaussSubsumExplicit} gives an explicit formula for \eqref{eq:defGaussH} when $H^\bot$ is cyclic.
	\item Proposition \ref{prop:HeckeGaussToMatrixGauss} relates Hecke Gauss sums (Gauss sums over number fields) to $\mathfrak{G}_{G}(a/c;\mathbf{0})$.
	\item Theorem \ref{thm:explicitHeckeGaussQuadratic} gives an explicit formula for Hecke Gauss sums over quadratic fields under certain restrictions. (The first explicit formula was given by Boylan and Skoruppa \cite{BS10}.)
	\item Proposition \ref{prop:HeckeGaussCyclotomic} shows explicitly the relation between Hecke Gauss sums over cyclotomic fields and $\mathfrak{G}_{G}(a/c;\mathbf{0})$. In particular, the matrix $G$ is determined.
	\item In Example \ref{examp:Q13HeckeGaussSum}, we compute the exact value of a particular Hecke Gauss sum over $\numQ(\zeta_{13})$.
	\item We provide an equivalent form of Theorem \ref{thm:WeilCoef}, namely, Theorem \ref{thm:thetaTransformation}, where explicit Fourier expansions of the actions of modular matrices on Jacobi theta series of lattice index are presented.
\end{itemize}

All notations are introduced at their first appearance, except the following ones. The greatest common divisor of two integers $a$, $b$ are denoted by $\gcd(a,b)$, or merely $(a,b)$. For a real number $x$, we define $\sgn{x}=1$ if $x>0$, $\sgn{x}=-1$ if $x<0$, and $\sgn{x}=0$ if $x=0$. For a finite set $S$, let $\abs{S}$ denote its cardinality. For sets $A$ and $B$, set $A\setminus B=\{x\in A\colon x\not\in B\}$. For an abelian group $G$ and a subgroup $H$, $G/H$ denotes the quotient group. The notation $\numgeq{Z}{a}$ denotes the set of integers greater than or equal to $a$. For an $m\times n$ matrix $A$, define $A\numZ^n:=\{A\mathbf{x}\colon\mathbf{x}\in\numZ^n\}\subseteq\numZ^m$. (When a vector is involved in a matrix multiplication, it is interpreted as a column vector.) The Kronecker-Jacobi symbol is used throughout the paper. It is defined by the following sentences:
\begin{itemize}
\item $\legendre{m}{p}_K$ is the usual Legendre symbol if $p$ is an odd prime.
\item $\legendre{m}{2}_K$ equals $0$ if $2 \mid m$, and equals $(-1)^{(m^2-1)/8}$ if $2 \nmid m$.
\item $\legendre{m}{-1}_K$ equals $1$ if $m \geq 0$, and equals $-1$ otherwise.
\item $\legendre{m}{1}_K=1$ by convention.
\item $\legendre{m}{n}_K$ is defined to make it a complete multiplicative function of $n \in \numZ\setminus\{0\}$.
\item $\legendre{m}{0}_K=0$ if $m \neq \pm1$, and $\legendre{\pm1}{0}_K=1$.
\end{itemize}

Throughout the paper, we use the computer algebra system SageMath \cite{Sage} to do the computations and check the theorems.

\tableofcontents

\section{Weil representations and finite quadratic modules}
\label{sec:weil_representations_and_finite_quadratic_modules}
In this section we recall some basic definitions and facts on lattices, Weil representations, finite bilinear map modules, finite quadratic modules, and Fourier analysis on finite abelian groups. They serve as tools to prove Theorems \ref{thm:main}--\ref{thm:rGvm}. For references on lattices, finite quadratic modules and Weil representations, cf. e.g. \cite{Nik79}, \cite[\S 1.1]{Bru02}, \cite[\S 3]{Sko08}, \cite{Str13}, \cite{ES17}, \cite[\S 14.3 and 14.5]{CS17}, and \cite[\S2, 3]{MS25}. For references on Fourier analysis on finite abelian groups, cf. e.g. \cite[Part One]{Ter99}, \cite[Chap. 4]{Nat00} 

\subsection{Lattices and Weil representations}
\label{subsec:lattices_and_weil_representations}
By a \emph{lattice} $\underline{L}=(L,B)$, we understand a free $\numZ$-module $L$ of finite rank $n\in\numgeq{Z}{1}$ equipped with a nondegenerate symmetric bilinear form $B\colon L\times L\to\numR$. If an ambient space $\numR^n$ is given, where an inner product (nondegenerate symmetric bilinear form) $B$ is equipped, we say a subset $L\subseteq \numR^n$ is a \emph{lattice} in $(\numR^n,B)$ if $L$ is a $\numZ$-submodule of $\numR^n$ such that there is a $\numZ$-basis $(e_1,e_2,\dots,e_n)$ of $L$ that is simultaneously a $\numR$-basis of $\numR^n$. These two languages can be exchanged. The ambient space $\numR^n$ can also be replaced by $\numQ^n$, which will be used in \S\ref{subsec:hecke_gauss_sums}.

Throughout the paper, we set $Q(x)=\frac{1}{2}B(x,x)$, which is a quadratic form on $L$. The lattice $\underline{L}=(L,B)$ is called \emph{integral} if $B(L,L)\subseteq\numZ$ and is called \emph{even integral} if $Q(L)\subseteq\numZ$. If it is integral but not even then it is called odd. (Some authors say type II and type I instead of even and odd respectively.) It is called \emph{positive definite} if $Q(x)>0$ for all $0\neq x\in L$.

If $L$ is defined as a lattice in $(\numR^n,B)$, then the \emph{dual lattice} is defined as $L^\sharp:=\{x\in\numR^n\colon B(x,y)\in\numZ \text{ for all }y\in L\}$. It turns out that $L^\sharp$ is again a lattice in $(\numR^n,B)$, and $L$ is integral if and only if $L\subseteq L^\sharp$. For an even integral lattice $L$, the \emph{level} $N$ is defined as the least positive integer such that $N\cdot Q(x)\in\numZ$ for all $x\in L^\sharp$. To check this inclusion for all $x\in L^\sharp$, it suffices to check it for all $x\in L^\sharp/L$, which is a finite abelian group called the \emph{discriminant module} of $L$. Let $E$ be the exponent of $L^\sharp/L$, namely, $E$ is the least positive integer such that $E\cdot L^\sharp\subseteq L$ and let $D=\abs{L^\sharp/L}$. Then $N$, $D$ and $E$ have the same set of prime factors. This follows from the structure theory of finite abelian groups and \cite[Remark 14.3.23]{CS17}.

If $G$ is a positive definite even integral symmetric matrix of size $n\in\numgeq{Z}{1}$, then $B_G\colon(\mathbf{x},\mathbf{y})\mapsto\mathbf{x}^\tp G\mathbf{y}$ defines a (positive definite) inner product on $\numR^n$, and $\numZ^n$ is a positive definite even integral lattice in $(\numR^n, B_G)$. The dual lattice is then $G^{-1}\numZ^n:=\{G^{-1}\mathbf{v}\colon\mathbf{v}\in\numZ^n\}$, and the discriminant module is $G^{-1}\numZ^n/\numZ^n$. We have $\det(G)=\abs{G^{-1}\numZ^n/\numZ^n}$ and the level of $\numZ^n$ (as a lattice in $(\numR^n, B_G)$) equals the level of $G$. The above discussion remains true if $G$ is not assumed to be positive definite but merely nondegenerate.

Let $\underline{L}=(L,B)$ be a positive definite even integral lattice. (Although one can define Weil representations for arbitrary even integral lattices, or for finite quadratic modules, we need only the positive definite case.) Let $\numC[L^\sharp/L]$ be the group algebra, whose elements are maps from $L^\sharp/L$ to $\numC$. For $x\in L^\sharp/L$, let $\delta_x\colon L^\sharp/L\to\numC$ be the map $\delta_x(y)=\delta_{x,y}$, where the right-hand side is the Kronecker-delta. Then $(\delta_x)_{x\in L^\sharp/L}$ is a $\numC$-basis of $\numC[L^\sharp/L]$. The algebra $\numC[L^\sharp/L]$ is a unital associative algebra over $\numC$, and becomes a Hilbert space under the inner product $\langle \sum_{x}c_x\delta_x,\sum_{x}d_x\delta_x\rangle=\sum_xc_x\overline{d_x}$.

Let $\slZ$ be the group of integral $2\times2$ matrices of determinant $1$. The double cover $\mathrm{Mp}_2(\numZ)$ is the group of $(A,\varepsilon)$ where $A\in\slZ$ and $\varepsilon\in\{\pm1\}$. The composition is given by
$$\left(\tbtmat{a_1}{b_1}{c_1}{d_1},\varepsilon_1\right)\cdot\left(\tbtmat{a_2}{b_2}{c_2}{d_2},\varepsilon_1\right)=\left(\tbtmat{a_3}{b_3}{c_3}{d_3},\varepsilon_1\varepsilon_2\delta\right)$$
where $\tbtmat{a_3}{b_3}{c_3}{d_3}=\tbtmat{a_1}{b_1}{c_1}{d_1}\cdot\tbtmat{a_2}{b_2}{c_2}{d_2}$ and
\begin{equation*}
\delta=\frac{\sqrt{c_1(a_2\tau+b_2)/(c_2\tau+d_2)+d_1}\sqrt{c_2\tau+d_2}}{\sqrt{c_1(a_2\tau+b_2)+d_1(c_2\tau+d_2)}}
\end{equation*}
with $\Im\tau>0$. It turns out that $\delta$ is independent of the choice of $\tau$; cf., e.g., \cite[Theorem 4.1]{Str13} or \cite[\S 2]{Zhu25}. Alternatively, $\mathrm{Mp}_2(\numZ)$ is the nontrivial central extension of $\slZ$ by the group $\{\pm1\}$. The group $\mathrm{Mp}_2(\numZ)$ has a presentation with generators $\widetilde{T}=\left(\tbtmat{1}{1}{0}{1},1\right)$, $\widetilde{S}=\left(\tbtmat{0}{-1}{1}{0},1\right)$ and relations $\widetilde{S}^8=(I,1)$, $(\widetilde{S}\widetilde{T})^3=\widetilde{S}^2$, $\widetilde{S}^4\widetilde{T}=\widetilde{T}\widetilde{S}^4$. See \cite[Lemma 5.2]{Zhu23_2} (with $D=2$ there) for a detailed proof. (In fact, there in the case $D=2$ the third relation can be discarded since it can be generated by the first two.)

Now let $\underline{L}=(L,B)$ be a positive definite even integral lattice and define $t\in\mathrm{GL}(\numC[L^\sharp/L])$ and $s\in\mathrm{GL}(\numC[L^\sharp/L])$ by $t\delta_x=\etp{Q(x)}\delta_x$ and
\begin{equation}
\label{eq:Weils}
s\delta_x=\frac{(-\rmi)^{n/2}}{\sqrt{\abs{L^\sharp/L}}}\sum_{y\in L^\sharp/L}\etp{-B(x,y)}\delta_y.
\end{equation}
A direct calculation shows that $s^8=\mathrm{id}$, $(st)^3=s^2$, $s^4t=ts^4$. (In the calculation of $s^2$, one may use Milgram's formula; cf. \cite[p. 127]{MH73}.) Hence if we define $\rho_{\underline{L}}(\widetilde{T})=t$ and $\rho_{\underline{L}}(\widetilde{S})=s$ then it extends uniquely to a representation $\rho_{\underline{L}}\colon\mathrm{Mp}_2(\numZ)\to\mathrm{GL}(\numC[L^\sharp/L])$, called the Weil representation associated with $\underline{L}$. It is a unitary representation, namely, we have $\langle\rho_{\underline{L}}(\gamma)v,\langle\rho_{\underline{L}}(\gamma)w\rangle=\langle v,w\rangle$ for all $\gamma\in \mathrm{Mp}_2(\numZ)$ and all $v,w\in \numC[L^\sharp/L]$.

For $y,x\in L^\sharp/L$ and $\gamma\in\mathrm{Mp}_2(\numZ)$, we define $\rho_{\underline{L}}(\gamma)_{y,x}\in\numC$ by
\begin{equation*}
\rho_{\underline{L}}(\gamma)\delta_x=\sum_{y\in L^\sharp/L}\delta_y\cdot\rho_{\underline{L}}(\gamma)_{y,x}.
\end{equation*}
In other words, $(\rho_{\underline{L}}(\gamma)_{y,x})_{y,x\in L^\sharp/L}$ is the matrix representation of $\rho_{\underline{L}}(\gamma)$ with respect to the basis $(\delta_y)_{y\in L^\sharp/L}$. Furthermore, if $y,x\in L^\sharp$, then we define $\rho_{\underline{L}}(\gamma)_{y,x}:=\rho_{\underline{L}}(\gamma)_{y+L,x+L}$. The following formula is essentially due to Shintani \cite[Prop. 1.6]{Shi75}.
\begin{prop}[Shintani]
Let $\underline{L}=(L,B)$ be a positive definite even integral lattice of rank $n$. Let $A=\tbtmat{a}{b}{c}{d}\in\slZ$ with $c\neq0$ and let $y, x\in L^\sharp/L$. Then
\begin{equation}
\label{eq:WeilReprShintani}
\rho_{\underline{L}}(A,1)_{y,x}=(-\rmi\sgn{c})^{\frac{n}{2}}\abs{L^\sharp/L}^{-\frac{1}{2}}\abs{c}^{-\frac{n}{2}}\cdot\sum_{t\in L/cL}\etp{\frac{aQ(y+t)-B(y+t,x)+dQ(x)}{c}}.
\end{equation}
\end{prop}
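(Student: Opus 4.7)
The plan is to verify the formula by induction on $\abs{c}$, reducing to the explicit action of the generators $\widetilde{T}$ and $\widetilde{S}$ of $\mathrm{Mp}_2(\numZ)$.

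First I would check the formula directly for $A = S$. Here $a = d = 0$, $c = 1$, and $L/cL = \{0\}$, so the right-hand side of \eqref{eq:WeilReprShintani} collapses to $(-\rmi)^{n/2}\abs{L^\sharp/L}^{-1/2}\etp{-B(y,x)}$, which matches $\rho_{\underline{L}}(\widetilde{S})_{y,x}$ read off from \eqref{eq:Weils}. Next, I would verify that the right-hand side transforms the same way as the left-hand side under left and right multiplication by $\widetilde{T}^k$. Replacing $A$ by $T^kA$ substitutes $a \mapsto a+kc$ and, using $Q(y+t) \equiv Q(y) \bmod{\numZ}$ for $t \in L$, introduces a factor $\etp{kQ(y)}$; similarly $A \mapsto AT^k$ produces $\etp{kQ(x)}$. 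On the representation side, since the metaplectic cocycle $\delta(A_1, A_2)$ evaluates to $1$ whenever $A_1$ or $A_2$ is a power of $T$, we have $(T^k, 1)(A, 1) = (T^kA, 1)$ and $(A, 1)(T^k, 1) = (AT^k, 1)$ in $\mathrm{Mp}_2(\numZ)$, and the identical phases emerge from $\rho_{\underline{L}}(\widetilde{T})\delta_x = \etp{Q(x)}\delta_x$.

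Then I induct on $\abs{c}$. For $\abs{c} = 1$, $\widetilde{T}$-covariance reduces $A$ to $\pm S$; the case $A = -S$ is handled by directly computing $\rho_{\underline{L}}((-S, 1))$ from the relation $(-S, 1) = (-I, -1)(S, 1)$ together with the easily checked identities $\rho_{\underline{L}}(\widetilde{S}^2)\delta_x = (-\rmi)^n\delta_{-x}$ and $\rho_{\underline{L}}(\widetilde{S}^4) = (-1)^n\mathrm{id}$, and the factor $\sgn{c}$ appearing in the formula is precisely what this computation produces. For $\abs{c} > 1$, choose $k \in \numZ$ with $a' := a - kc$ satisfying $\abs{a'} < \abs{c}$, and set $A' = S^{-1}T^{-k}A$. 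Then $A'$ has lower-left entry $-a'$ of smaller absolute value, so by the inductive hypothesis the formula holds for $(A', 1)$. One recovers the formula for $(A, 1) = (T^k, 1)(S, 1)(A', \epsilon)$ in $\mathrm{Mp}_2(\numZ)$ by applying $\widetilde{T}$-covariance and the base case for $\widetilde{S}$, where the sign $\epsilon = \delta(S, A')^{-1}$ is determined by the metaplectic composition law.

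The main obstacle is the careful bookkeeping of the metaplectic cocycle $\delta$ through the induction: the correction $\epsilon = \delta(S, A')^{-1} \in \{\pm 1\}$ depends on the sign of $c_{A'}$ and on the branch cuts of the complex square roots defining $\delta$, and it contributes an extra $(-1)^n$ to $\rho_{\underline{L}}((A', \epsilon))$ whenever $\epsilon = -1$. Tracking these signs at every induction step, combined with the overall sign of $c$ and the residue of $n$ modulo $8$, is required to show that they conspire exactly into the prefactor $(-\rmi\sgn{c})^{n/2}$ appearing in \eqref{eq:WeilReprShintani}.
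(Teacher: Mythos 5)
Your route is genuinely different from the one the paper relies on: the paper does not prove this proposition in the text but cites proofs (Shintani's original one and those in the referenced books) that read off $\rho_{\underline{L}}(A,1)_{y,x}$ from the modular transformation of the theta series attached to $\underline{L}$, i.e., ultimately from Poisson summation. Your plan --- verify $\widetilde{S}$ directly, establish $\widetilde{T}$-covariance of both sides, and run the Euclidean algorithm on $\abs{c}$ --- is a legitimate alternative skeleton, and the pieces you actually verify are correct: the cocycle $\delta$ is trivial against powers of $T$, $\rho_{\underline{L}}(\widetilde{S}^2)\delta_x=(-\rmi)^n\delta_{-x}$, $(-S,1)=(-I,-1)(S,1)$, and $\gcd(a,c)=1$ forces $a'=a-kc\neq0$ when $\abs{c}>1$, so the lower-left entry of $A'$ is a valid nonzero induction parameter.

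The gap is in the inductive step itself, which you compress into ``applying $\widetilde{T}$-covariance and the base case for $\widetilde{S}$.'' Writing $(A,1)=(T^k,1)(S,1)(A',\epsilon)$ and inserting the inductive formula for $(A',1)$ yields, up to prefactors,
\begin{equation*}
\sum_{z\in L^\sharp/L}\etp{-B(y,z)}\sum_{t\in L/c'L}\etp{\frac{a''Q(z+t)-B(z+t,x)+d''Q(x)}{c'}}
=\sum_{w\in L^\sharp/c'L}\etp{\frac{a''Q(w)-B(w,x+c'y)+d''Q(x)}{c'}},
\end{equation*}
where $A'=\tbtmat{a''}{b''}{c'}{d''}$ and $c'=-a'$. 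To close the induction you must convert this sum over $L^\sharp/c'L$ with denominator $c'$ into the asserted sum over $L/cL$ with denominator $c=a''$, and extract the prefactor $(-\rmi\sgn{c})^{n/2}\abs{c}^{-n/2}$. That conversion is a lattice version of Gauss--Landsberg--Schaar reciprocity: it is not a formal substitution (completing the square introduces the vector $(x+c'y)/a''$, which does not lie in $L^\sharp$, so no change of variables on $L^\sharp/c'L$ accomplishes it), and it carries essentially all of the arithmetic content of the proposition. Until this resummation is supplied --- by Poisson summation on the lattice or by an independent reciprocity lemma --- the induction does not close. By contrast, the cocycle bookkeeping you single out as the main obstacle is routine: $\epsilon=\delta(S,A')\in\{\pm1\}$ and contributes only a factor $(-1)^n$ through $\rho_{\underline{L}}(I,-1)=(-1)^n\,\mathrm{id}$.
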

A brief proof can be found in \cite[Eq. (7.9)]{Zhu23} and a detailed proof in \cite[Theorem 14.3.7]{CS17}. Both proofs and Shintani's original proof regard $\rho_{\underline{L}}(\gamma)_{y,x}$ as coefficients of the transformation equations of theta series associated with $\underline{L}$; cf. \cite[Theorem 5.1]{Zhu23} or \cite[\S 3.5]{Boy15}.

\subsection{Finite quadratic modules}
This subsection contains some tools needed for the proofs of Theorems \ref{thm:MilgramExtension} and \ref{thm:duality1}. Let $M$ be a $\numZ$-module equipped with a symmetric $\numZ$-bilinear map $M\times M\rightarrow N$ where $N$ is a fixed $\numZ$-module. Set $\underline{M}=(M,B)$, which is called a symmetric bilinear map $\numZ$-module. We say $\underline{M}$ is \emph{nondegenerate} if the map $M\rightarrow\Hom(M,N)$, $m_1\mapsto(m_2\mapsto B(m_1,m_2))$ is injective; it is called \emph{strongly nondegenerate} if this map is bijective. Let $H$ be a submodule of $M$; we define $H^\bot=\{m\in M\colon B(m,H)=0\}$, which is also a submodule of $M$. Two subsets $S_1$, $S_2$ of $M$ are called \emph{orthogonal} if $B(S_1,S_2)=0$. Let $(M_1,B_1)$ and $(M_2,B_2)$ be symmetric bilinear map $\numZ$-modules. They are called \emph{isometrically isomorphic} if there is a $\numZ$-linear isomorphism $\sigma\colon M_1\rightarrow M_2$ such that $B_1(x,y)=B_2(\sigma(x),\sigma(y))$ for all $x,y\in M_1$. This is denoted by $(M_1,B_1)\cong(M_2,B_2)$.
\begin{examp}
\label{examp:ZncZn}
Let $M=\numZ^n/c\numZ^n$ where $c$ and $n$ are positive integers. We equip it with the bilinear map $B\colon\numZ^n/c\numZ^n\times \numZ^n/c\numZ^n\rightarrow \numZ/c\numZ$ defined by
\begin{equation*}
B((v_1,v_2,\dots,v_n)+c\numZ^n, (w_1,w_2,\dots,w_n)+c\numZ^n)=v_1w_1+v_2w_2+\dots v_nw_n+c\numZ.
\end{equation*}
It is immediate that $(\numZ^n/c\numZ^n,B)$ is nondegenerate. Indeed, it is strongly nondegenerate since we have
\begin{equation*}
\abs{\Hom(\numZ^n/c\numZ^n,\numZ/c\numZ)}=\abs{\widehat{\numZ^n/c\numZ^n}}=\abs{\numZ^n/c\numZ^n},
\end{equation*}
where $\widehat{M}$ is the dual group of $M$. Note that the first equality follows from the isomorphism
\begin{align*}
\Hom(\numZ^n/c\numZ^n,\numZ/c\numZ)&\rightarrow\widehat{\numZ^n/c\numZ^n}\\
\sigma &\mapsto (x\mapsto\etp{\sigma(x)/c}).
\end{align*}
Now let $H$ be a submodule of $\numZ^n/c\numZ^n$. Then $H^\bot$ we just defined is equivalent to \eqref{eq:Hbot}.
\end{examp}

\begin{lemm}
\label{lemm:HHbot}
Let $\underline{M}=(M,B\colon M\times M\rightarrow N)$ be a nondegenerate symmetric bilinear map $\numZ$-module with the restrictions $\abs{M}<+\infty$ and $N$ can be embedded into $\numC^\times$, the multiplicative group of nonzero complex numbers. Then for each submodule $H\subseteq M$ we have
\begin{equation*}
\abs{H}\cdot\abs{H^\bot}=\abs{M}\qquad\text{and}\qquad H^{\bot\bot}=H.
\end{equation*}
\end{lemm}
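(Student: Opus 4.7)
The plan is to transport the abstract bilinear pairing $B$ into the familiar setting of Pontryagin duality for finite abelian groups, where the analogous statements about annihilators are classical.

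First I would fix an embedding $\chi\colon N\hookrightarrow\numC^\times$, which exists by hypothesis. Composing with $B$ yields a pairing $M\times M\to\numC^\times$, $(m_1,m_2)\mapsto\chi(B(m_1,m_2))$, and hence a homomorphism
\begin{equation*}
\phi\colon M\longrightarrow\widehat{M},\qquad \phi(m_1)(m_2):=\chi(B(m_1,m_2)),
\end{equation*}
where $\widehat{M}=\Hom(M,\numC^\times)$ is the character group. Since $\chi$ is injective, $\phi(m_1)$ is the trivial character precisely when $B(m_1,\cdot)=0$, and so by nondegeneracy $\phi$ is injective. From $\lvert M\rvert<\infty$ and the standard identity $\lvert\widehat{M}\rvert=\lvert M\rvert$, we conclude that $\phi$ is an isomorphism of finite abelian groups.

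Next I would reinterpret $H^\bot$ through $\phi$. By construction,
\begin{equation*}
H^\bot=\{m\in M\colon\phi(m)|_H\equiv 1\}=\phi^{-1}(H^{\mathrm{ann}}),
\end{equation*}
where $H^{\mathrm{ann}}:=\{\psi\in\widehat{M}\colon\psi|_H=1\}$ is the annihilator of $H$ in $\widehat{M}$. At this point I would invoke the standard fact from Fourier analysis on finite abelian groups (e.g., \cite[Part One]{Ter99} or \cite[Chap. 4]{Nat00}) that $\lvert H^{\mathrm{ann}}\rvert=\lvert M\rvert/\lvert H\rvert$, which immediately gives $\lvert H\rvert\cdot\lvert H^\bot\rvert=\lvert M\rvert$.

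Finally, for $H^{\bot\bot}=H$, the inclusion $H\subseteq H^{\bot\bot}$ is tautological from the symmetry of $B$: if $h\in H$ and $m\in H^\bot$ then $B(h,m)=B(m,h)=0$. Applying the cardinality identity just established, first to $H$ and then to $H^\bot$, yields
\begin{equation*}
\lvert H^{\bot\bot}\rvert=\frac{\lvert M\rvert}{\lvert H^\bot\rvert}=\lvert H\rvert,
\end{equation*}
so the inclusion is an equality. The only mild obstacle is ensuring that the annihilator formula from Pontryagin duality applies verbatim; this is standard and can simply be quoted, so there is really no essential difficulty in the argument.
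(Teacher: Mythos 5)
Your proof is correct, but it takes a different route from the paper. The paper does not argue directly: it first reduces to the case $N=\numQ/\numZ$ by replacing $N$ with the submodule $N_0$ generated by the values $B(m_1,m_2)$, observing that $N_0$ is a finite torsion group, hence (being embeddable in $\numC^\times$) embeddable in $\numQ/\numZ$, and then cites external references for the $\numQ/\numZ$ case. You instead compose $B$ directly with the given embedding $N\hookrightarrow\numC^\times$ to get an injective homomorphism $\phi\colon M\to\widehat{M}$, use $\abs{\widehat{M}}=\abs{M}$ to see $\phi$ is bijective, identify $H^\bot$ with $\phi^{-1}$ of the annihilator of $H$ in $\widehat{M}$, and quote the standard annihilator cardinality formula; the second identity then follows from the tautological inclusion $H\subseteq H^{\bot\bot}$ plus counting. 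This is self-contained where the paper's proof is a reduction-plus-citation, and it avoids the detour through $N_0$ and $\numQ/\numZ$ entirely (the only point worth making explicit is that $\chi(B(m,h))=1$ iff $B(m,h)=0$, which is exactly the injectivity of the embedding, and that $\phi(m)$ is a character by bilinearity). What the paper's formulation buys is compatibility with the quadratic-module references it leans on elsewhere; what yours buys is a complete two-paragraph proof requiring nothing beyond finite Pontryagin duality. Both are valid.
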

\begin{proof}
When $N=\numQ/\numZ$ (which can be embedded into $\numC^\times$ by $x\mapsto\etp{x}$), proofs can be found in, e.g., \cite[Prop. 1.7]{Boy15} or \cite[Prop. 3.10]{Zhu24}, where quadratic map modules are considered. However, those proofs can be easily adapted for bilinear map modules. For general $N$, let $N_0$ be the submodule of $N$ generated by $B(m_1,m_2)$, $m_1,m_2\in M$. Since $\abs{M}<+\infty$, $N_0$ is a finitely generated $\numZ$-module. Since $B(m_1,m_2)$ are all torsion element, $N_0$ is a torsion module. Hence $N_0$ is a finite module. This fact, together with the fact $N_0$ can be embedded into $\numC^\times$, implies that $N_0$ can be embedded into $\numQ/\numZ$. Let $\sigma$ denote this embedding. Then $(M,B'\colon M\times M\rightarrow\numQ/\numZ)$ is a nondegenerate symmetric bilinear map $\numZ$-module, where $B'(m_1,m_2)=\sigma(B(m_1,m_2))$. Since $H^\bot$ is independent of the choice of $B$ and $B'$, the conclusion follows from the special case $N=\numQ/\numZ$, which has already been proved.
\end{proof}
\begin{rema}
In the above lemma, if additionally $N$ is a submodule of $\numQ/\numZ$, then $\underline{M}$ is nondegenerate would imply that it is strongly nondegenerate; cf., e.g., \cite[Corollary 3.9]{Zhu24}.
\end{rema}

\begin{coro}
\label{coro:HHbotequalcn}
Let $H$ be a subgroup of $\numZ^n/c\numZ^n$, where $n,c$ are positive integers. Then
$$
\abs{H}\cdot\abs{H^\bot}=c^n.
$$
\end{coro}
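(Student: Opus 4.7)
The plan is to obtain this directly as a specialization of Lemma \ref{lemm:HHbot}. Specifically, I would apply that lemma to the symmetric bilinear map $\numZ$-module $\underline{M} = (\numZ^n/c\numZ^n, B)$ constructed in Example \ref{examp:ZncZn}, where $B$ takes values in $N = \numZ/c\numZ$.

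First I would verify the hypotheses of the lemma: $M = \numZ^n/c\numZ^n$ is finite of order $c^n$, and the target $N = \numZ/c\numZ$ embeds into $\numC^\times$ via the character $x + c\numZ \mapsto \etp{x/c}$. Nondegeneracy (in fact strong nondegeneracy) of $\underline{M}$ is already established in Example \ref{examp:ZncZn}. It only remains to observe that the abstract $H^\bot = \{m \in M : B(m,H) = 0\}$ from the lemma agrees with the orthogonal complement defined in \eqref{eq:Hbot}; this identification is made at the end of Example \ref{examp:ZncZn}.

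Granted these points, Lemma \ref{lemm:HHbot} yields $\abs{H} \cdot \abs{H^\bot} = \abs{M} = c^n$, which is exactly the claim. There is no real obstacle here, since all the work was done in proving Lemma \ref{lemm:HHbot}; this corollary is merely a packaging of that result in the concrete setting of $\numZ^n/c\numZ^n$ that will be used throughout the applications.
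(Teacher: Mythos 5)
Your proof is correct and follows exactly the paper's own route: the paper likewise specializes Lemma \ref{lemm:HHbot} to the bilinear map module of Example \ref{examp:ZncZn}, and your verification of the hypotheses (finiteness, the embedding of $\numZ/c\numZ$ into $\numC^\times$, and the agreement of the two definitions of $H^\bot$) is precisely what makes that specialization legitimate.
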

\begin{proof}
We let $(M,B)$ be the one discussed in Example \ref{examp:ZncZn}, and then apply Lemma \ref{lemm:HHbot}.
\end{proof}

Let $M$ be a finite $\numZ$-module and let $Q\colon M\rightarrow\numQ/\numZ$ be a quadratic map, which means $Q(rx)=r^2Q(x)$ for all $r\in\numZ$ and $x\in M$ and the associated map $B_Q\colon M\times M\rightarrow\numQ/\numZ$, $(x,y)\mapsto Q(x+y)-Q(x)-Q(y)$ is $\numZ$-bilinear. The bilinear map $B_Q$ is automatically symmetric. We call the pair $\underline{M}=(M,Q)$ a \emph{finite quadratic module} if $B_Q$ is nondegenerate. The structure theory of finite quadratic modules was developed by Skoruppa \cite{Sko25}.
\begin{thm}
\label{thm:indecomposableFQM}
Each finite quadratic module is isometrically isomorphic to the orthogonal direct sum\footnote{For basic definitions and properties of external or internal orthogonal direct sums, cf. \cite[p. 608]{Zhu24}} of some of the following indecomposable modules:
\begin{align*}
\underline{A}_{p^r}^a &:= \left(\mathbb{Z}/p^r\mathbb{Z},\,x+p^r\mathbb{Z} \mapsto \frac{a}{p^r}x^2+\mathbb{Z}\right) \, \text{for prime }p>2, \\
\underline{A}_{2^r}^a &:= \left(\mathbb{Z}/2^r\mathbb{Z},\,x+2^r\mathbb{Z} \mapsto \frac{a}{2^{r+1}}x^2+\mathbb{Z}\right) \, \text{for } p=2, \\
\underline{B}_{2^r}   &:= \left(\mathbb{Z}/2^r\mathbb{Z} \times \mathbb{Z}/2^r\mathbb{Z},\,(x+2^r\mathbb{Z},y+2^r\mathbb{Z}) \mapsto \frac{x^2+xy+y^2}{2^r}+\mathbb{Z} \right), \\
\underline{C}_{2^r}   &:= \left(\mathbb{Z}/2^r\mathbb{Z} \times \mathbb{Z}/2^r\mathbb{Z},\,(x+2^r\mathbb{Z},y+2^r\mathbb{Z}) \mapsto \frac{xy}{2^r}+\mathbb{Z}\right).
\end{align*}
\end{thm}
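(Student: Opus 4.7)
The plan is to establish this Jordan-type decomposition in three stages: first reduce to $p$-primary modules, then handle odd primes by splitting off cyclic summands, and finally deal with $p=2$, where two-dimensional indecomposables also appear.

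\emph{Stage 1 (reduction to one prime).} Decompose the finite abelian group $M$ into its $p$-primary components $M = \bigoplus_p M_p$. For $x \in M_p$ and $y \in M_q$ with $p \neq q$, the value $B_Q(x,y) \in \numQ/\numZ$ is annihilated both by a power of $p$ and by a power of $q$, hence vanishes. Moreover $Q|_{M_p}$ takes values in the $p$-part of $\numQ/\numZ$. Consequently $(M,Q)$ is the orthogonal direct sum of the finite quadratic modules $(M_p, Q|_{M_p})$, each of which inherits nondegeneracy (since cross-pairings were already zero). It therefore suffices to treat $p$-primary $M$.

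\emph{Stage 2 (odd $p$).} Induct on $|M|$. Pick $x \in M$ of maximal order $p^r$. Since $2$ is a unit mod $p^r$, the orders of $Q(x)$ and of $B_Q(x,x) = 2Q(x)$ agree. Nondegeneracy together with maximality of $r$ forces $B_Q(x,\cdot)$ to surject onto $\frac{1}{p^r}\numZ/\numZ$; in particular, after replacing $x$ by a unit multiple if necessary, we may assume $B_Q(x,x)$ generates $\frac{1}{p^r}\numZ/\numZ$, i.e., $Q(x) = a/p^r$ with $\gcd(a,p)=1$. Then $\langle x \rangle \cong \underline{A}^a_{p^r}$, and the projection $m \mapsto m - k(m) x$, where $k(m) \in \numZ/p^r\numZ$ is the unique solution of $k \cdot B_Q(x,x) = B_Q(m,x)$, yields an orthogonal splitting $M = \langle x \rangle \oplus \langle x \rangle^\bot$. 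Lemma \ref{lemm:HHbot} shows $\langle x \rangle^\bot$ is again nondegenerate, and induction finishes the argument.

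\emph{Stage 3 ($p=2$).} Induct again, but split into two cases. Call an element $x$ of order $2^r$ \emph{primitive} if $Q(x)$ has order $2^{r+1}$, equivalently $B_Q(x,x)$ has order $2^r$. If some element of maximal order in $M$ is primitive, the argument of Stage 2 still works verbatim (only invertibility of $B_Q(x,x)$ on $\langle x\rangle$ was used) and we split off $\underline{A}^a_{2^r}$. Otherwise every element $x$ of maximal order $2^r$ satisfies $B_Q(x,x) \in \frac{1}{2^{r-1}}\numZ/\numZ$. Pick such an $x$, and use nondegeneracy plus maximality to obtain $y$ with $B_Q(x,y)$ of order $2^r$. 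On $U := \langle x,y \rangle$ the Gram matrix of $B_Q$ has the shape $\tbtmat{\alpha}{\beta}{\beta}{\gamma}$ with $\alpha,\gamma \in \frac{1}{2^{r-1}}\numZ/\numZ$ and $\beta$ of order $2^r$. By elementary changes of $\numZ/2^r\numZ$-basis — essentially completion of the square modulo $2^r$ — this reduces to either $\tbtmat{0}{2^{-r}}{2^{-r}}{0}$ (yielding $\underline{C}_{2^r}$) or $\tbtmat{2^{1-r}}{2^{-r}}{2^{-r}}{2^{1-r}}$ (yielding $\underline{B}_{2^r}$), according to the $2$-adic discriminant $\alpha\gamma - \beta^2 \pmod{2^r}$. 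The Gram determinant is then a unit $\cdot 2^{-2r}$, so the analogous orthogonal projection gives $M = U \oplus U^\bot$, and induction concludes.

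The main obstacle is Stage 3: one must (i) show that when no primitive generator of maximal order exists, the chosen pair $(x,y)$ necessarily produces a Gram matrix reducible to exactly one of the two canonical forms, (ii) verify that $\underline{B}_{2^r}$ and $\underline{C}_{2^r}$ are themselves indecomposable and pairwise non-isometric, so the resulting decomposition is genuinely into the listed indecomposables, and (iii) handle the small cases $r=1,2$ where diagonal entries may collapse or the normal forms degenerate. Most of the arithmetic is concentrated in the basis-change step of (i), a $2$-adic completion-of-the-square computation; everything else is formal once that normal form is secured.
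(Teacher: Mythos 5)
The paper itself does not prove this theorem: it refers the reader to Str\"omberg \cite[Prop.\ 2.11]{Str13} and to \cite[Theorem 3.19]{Zhu24}. Your outline is the standard Jordan-splitting argument that those references carry out, and most of it is sound: the reduction to $p$-primary components, the existence (via strong nondegeneracy) of $y$ with $B_Q(x,y)$ of maximal order, and the orthogonal-splitting mechanism once a rank-one or rank-two piece with unit Gram determinant has been found are all correct. Two steps, however, are genuinely wrong as written. First, in Stage 2 you pass from ``$B_Q(x,\cdot)$ surjects onto $\frac{1}{p^r}\numZ/\numZ$'' to ``$B_Q(x,x)$ generates $\frac{1}{p^r}\numZ/\numZ$ after replacing $x$ by a unit multiple.'' A unit multiple $ux$ gives $B_Q(ux,ux)=u^2B_Q(x,x)$, which has the \emph{same} order as $B_Q(x,x)$, so this replacement can never repair a deficient diagonal entry. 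The correct (and standard) fix is to take $y$ with $B_Q(x,y)$ of order $p^r$ and observe that, since $p$ is odd, at least one of $x$, $y$, $x+y$ has $B_Q(\cdot,\cdot)$ of order $p^r$ on itself; your argument needs this substitute.

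Second, and more seriously, Stage 3 classifies the rank-two piece by reducing the Gram matrix of $B_Q$ and reading off its discriminant modulo $2^r$. At $p=2$ the bilinear form does not determine the quadratic form: already $\underline{B}_2$ and $\underline{C}_2$ have \emph{identical} associated bilinear forms (Gram matrix $\tbtmat{0}{1/2}{1/2}{0}$ in both cases, since $B_Q(e_i,e_i)=2Q(e_i)=0$), yet they are not isometric ($\underline{C}_2$ has a nonzero isotropic vector, $\underline{B}_2$ does not). Likewise for $r=2$ your two target discriminants $-1$ and $3$ coincide modulo $2^r=4$. So the invariant you propose cannot separate the two cases, and the completion-of-the-square must be carried out on $Q$ itself (tracking the diagonal values in $\frac{1}{2^{r+1}}\numZ/\numZ$, i.e.\ an Arf-type invariant), not on the $B_Q$-Gram matrix. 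You flag item (iii) as a small-$r$ degeneracy, but the problem is structural at every $r$: after normalizing $B_Q$ you still have a residual $\numZ/2$ ambiguity in each $Q(e_i)$ that must be resolved by further basis changes before you may conclude $U\cong\underline{B}_{2^r}$ or $U\cong\underline{C}_{2^r}$. Until that step is supplied, the $p=2$ case is not proved.
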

Proofs can be found in, e.g., \cite[Prop. 2.11]{Str13} or \cite[Theorem 3.19]{Zhu24}. Note that we say a finite quadratic module $M$ is \emph{indecomposable} if there are no nontrivial submodules $M_1$ and $M_2$ such that $M=M_1\oplus M_2$ and $M_1\perp M_2$. For instance, $\underline{C}_{2^r}$ is indecomposable as one can verify directly, although as a mere abelian group, it can be decomposed as a direct sum of two copies of $\numZ/2^r\numZ$.

\begin{examp}
\label{examp:discmodule}
Let $\underline{L}=(L,B)$ be an even integral lattice and set $Q(v)=\frac{1}{2}B(v,v)$ where $v\in L$. Then $L^\sharp/ L$ is a finite $\numZ$-module since it is finitely generated and all elements are torsion. By abuse of notation, we define $Q\colon L^\sharp/ L\rightarrow\numQ/\numZ$ by $Q(v+L)=Q(v)+\numZ$. The condition ``even integral'' ensures $Q$ is well-defined. The associated bilinear map is $B_Q(v_1+L,v_2+L)=B(v_1,v_2)+\numZ$. It is straightforward that $(L^\sharp/ L, Q)$ is a finite quadratic module. Conversely, Wall \cite{Wal63} proved that each finite quadratic module can be realized as a discriminant module of an even integral lattice. See \cite[\S 4 and 5]{Zhu24} for explicit realizations.
\end{examp}

\subsection{Fourier analysis on finite abelian groups}
\label{subsec:fourier_analysis_on_finite_abelian_groups}
The materials of this subsection are only needed in the proof of Theorem \ref{thm:duality1}. The proofs of all assertions here can be found in, e.g., \cite[\S 4.3 and 4.4]{Nat00}.

Let $M$ be a finite abelian group (whose composition is denoted by addition $+$). As is customary in harmonic analysis, let $L^2(M)$ be the space of all complex-valued functions on $M$, equipped with an inner product $(f_1,f_2):=\sum_{m\in M}f_1(m)\overline{f_2(m)}$. Let $\widehat{M}$ be the dual group that consists of all complex characters of $M$. Then $\widehat{M}$ is (non-canonically) isomorphic to $M$. The well-known orthogonality relations of characters in $\widehat{M}$ implies that $\widehat{M}$ is an orthogonal basis of $L^2(M)$. The \emph{Fourier transform} is the linear transformation from $L^2(M)$ to $L^2(\widehat{M})$ that sends $f\in L^2(M)$ to $\widehat{f}\in L^2(\widehat{M})$, where
\begin{equation*}
\widehat{f}(\chi)=(f,\chi)=\sum_{m\in M}f(m)\overline{\chi(m)}.
\end{equation*}

Let $H$ be a subgroup of $M$ and let
\begin{equation}
\label{eq:defMdualH}
\widehat{M}^{H}=\{\chi\in\widehat{M}\colon\chi(h)=1\text{ for all }h\in H\}.
\end{equation}
\begin{thm}[Poisson summation formula]
\label{thm:PoissonSummationFinite}
Let $M$ and $H$ be as above. If $f\in L^2(M)$ and $x\in M$, then
\begin{equation*}
\frac{1}{\abs{H}}\sum_{y\in H}f(x+y)=\frac{1}{\abs{M}}\sum_{\chi\in\widehat{M}^{H}}\widehat{f}(\chi)\overline{\chi(x)}.
\end{equation*}
\end{thm}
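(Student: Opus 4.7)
The plan is to derive this from Fourier inversion on $M$ together with the classical character-orthogonality over a subgroup. First, I would invoke Fourier inversion: for $f\in L^2(M)$ and $m\in M$,
\begin{equation*}
f(m)=\frac{1}{\abs{M}}\sum_{\chi\in\widehat{M}}\widehat{f}(\chi)\chi(m),
\end{equation*}
which is immediate from the definition $\widehat{f}(\chi)=\sum_{m}f(m)\overline{\chi(m)}$ together with the orthogonality relation $\sum_{\chi\in\widehat{M}}\chi(m)\overline{\chi(m')}=\abs{M}\delta_{m,m'}$. Since $\widehat{M}$ is stated to be an orthogonal basis of $L^2(M)$ just before the theorem, I may take this inversion as given.

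Next I would substitute $m=x+y$ with $y$ ranging over $H$, use the multiplicativity $\chi(x+y)=\chi(x)\chi(y)$, and swap the order of summation:
\begin{equation*}
\sum_{y\in H}f(x+y)=\frac{1}{\abs{M}}\sum_{\chi\in\widehat{M}}\widehat{f}(\chi)\,\chi(x)\sum_{y\in H}\chi(y).
\end{equation*}
The crucial step is to evaluate the inner sum by observing that $\chi|_H$ is a character of the finite abelian group $H$, so
\begin{equation*}
\sum_{y\in H}\chi(y)=
\begin{cases}
\abs{H} & \text{if }\chi|_H\equiv 1,\\
0 & \text{otherwise,}
\end{cases}
\end{equation*}
and by definition \eqref{eq:defMdualH} the first case is exactly $\chi\in\widehat{M}^H$. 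Substituting and dividing by $\abs{H}$ yields the desired identity.

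There is no real obstacle: the whole argument is a two-line calculation once Fourier inversion and the character-sum lemma are in hand. The only bookkeeping to watch is the possible conjugate on $\chi(x)$ in the final sum, which comes from one's convention for the Fourier transform; since $\widehat{M}^{H}$ is closed under $\chi\mapsto\overline{\chi}$, one may reindex the sum by this involution to match whichever form (with $\chi(x)$ or $\overline{\chi(x)}$) is preferred in the statement.
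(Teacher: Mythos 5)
Your main computation is the standard proof, and since the paper itself gives no argument here (it defers to Nathanson), your route --- Fourier inversion followed by the character sum over $H$ --- is exactly the expected one. The problem is the final ``bookkeeping'' step. Reindexing the sum by the involution $\chi\mapsto\overline{\chi}$ turns $\sum_{\chi\in\widehat{M}^{H}}\widehat{f}(\chi)\chi(x)$ into $\sum_{\chi\in\widehat{M}^{H}}\widehat{f}(\overline{\chi})\,\overline{\chi(x)}$, not into $\sum_{\chi\in\widehat{M}^{H}}\widehat{f}(\chi)\,\overline{\chi(x)}$: the involution also acts inside the argument of $\widehat{f}$, and $\widehat{f}(\overline{\chi})\neq\widehat{f}(\chi)$ in general. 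So the two forms are genuinely different sums, and closure of $\widehat{M}^{H}$ under conjugation does not let you pass between them.

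What your argument actually establishes, namely
\begin{equation*}
\frac{1}{\abs{H}}\sum_{y\in H}f(x+y)=\frac{1}{\abs{M}}\sum_{\chi\in\widehat{M}^{H}}\widehat{f}(\chi)\chi(x),
\end{equation*}
is the version consistent with the convention $\widehat{f}(\chi)=\sum_{m}f(m)\overline{\chi(m)}$ adopted just before the theorem. One can test this on $f=\delta_c$ in $M=\numZ/N\numZ$: the left side detects whether $c-x\in H$, while the right side with $\overline{\chi(x)}$ would detect whether $c+x\in H$, so the two forms really do differ for $x\neq 0$. The $\overline{\chi(x)}$ in the statement is correct only under the opposite convention $\widehat{f}(\chi)=\sum_m f(m)\chi(m)$. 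Since the paper applies the theorem only at $x=0$ (in the proof of Lemma \ref{lemm:GGUforGeneralc}), where $\chi(0)=\overline{\chi(0)}=1$, nothing downstream is affected; but your write-up should simply conclude with the $\chi(x)$ form (or adjust the transform convention) rather than appeal to a reindexing that does not do what is claimed.
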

\begin{proof}
See \cite[p. 142]{Nat00}.
\end{proof}

\section{Two auxiliary Gauss sums and an extension of Milgram's formula}
\label{sec:two_auxiliary_gauss_sums_and_an_extension_of_milgram_s_formula}
In this section, we begin with some basic definitions and properties on the main object $\mathfrak{G}_{G}(r,\mathbf{t};\mathbf{w},\mathbf{x})$, and then introduce another two types of Gauss sums, whose evaluations serve as necessary lemmas to Theorem \ref{thm:main}. In particular, we will prove Theorem \ref{thm:MilgramExtension} in the third subsection.

\subsection{Basic properties of $\mathfrak{G}_{G}(r,\mathbf{t};\mathbf{w},\mathbf{x})$}
The notations $\mathfrak{G}_{G}(r,\mathbf{t};\mathbf{w},\mathbf{x})$, $\mathfrak{G}_{G}(r;\mathbf{w},\mathbf{x})$ and $\mathfrak{G}_{G}(r;\mathbf{w})$, and the related notations $n$, $G$, $r$, $\mathbf{t}$, $\mathbf{w}$, $\mathbf{x}$ and $\mathbf{v}$, have been introduced around \eqref{eq:GaussSumDef}. Here we continue to use these notations. We emphasize that $G$ is only assumed to be symmetric and even integral; we do not require positive-definiteness unless explicitly declared. Set $r=a/c$ with $a\in\numZ$, $c\in\numgeq{Z}{1}$ and $a,c$ coprime.
\begin{deff}
\label{deff:intPara}
We say $\mathfrak{G}_{G}(r,\mathbf{t};\mathbf{w},\mathbf{x})$ is \emph{integral-parametric} if
\begin{equation}
\label{eq:intPara}
r\mathbf{t}\in\numZ^n,\qquad G\mathbf{w}\in\numZ^n\quad\text{and}\quad a\mathbf{x}\in\numZ^n.
\end{equation}
\end{deff}

Thus, $\mathfrak{G}_{G}(r;\mathbf{w},\mathbf{x})$ is integral-parametric if and only if $G\mathbf{w}\in\numZ^n$ and $a\mathbf{x}\in\numZ^n$; $\mathfrak{G}_{G}(r;\mathbf{w})$ is integral-parametric if and only if $G\mathbf{w}\in\numZ^n$. Also note that the condition $r\mathbf{t}\in\numZ^n$ in \eqref{eq:intPara} is equivalent to $\mathbf{t}\in c\numZ^n$.

\begin{lemm}
\label{lemm:independentRepr}
Let $\mathfrak{G}_{G}(r,\mathbf{t};\mathbf{w},\mathbf{x})$ be integral-parametric. For each $j=1,2,\dots,n$, let $T_j$ be a complete system of residues modulo $t_j$. Then
\begin{equation*}
\mathfrak{G}_{G}(r,\mathbf{t};\mathbf{w},\mathbf{x})=\sum_{v_1\in T_1}\dots\sum_{v_n\in T_n}\etp{r\cdot\left(\frac{1}{2}(\mathbf{v}+\mathbf{w})^\tp\cdot G\cdot (\mathbf{v}+\mathbf{w})+(\mathbf{v}+\mathbf{w})^\tp\cdot\mathbf{x}\right)}.
\end{equation*}
In other words, the sum $\mathfrak{G}_{G}(r,\mathbf{t};\mathbf{w},\mathbf{x})$ is independent of the choices of the representatives of $v_j\in\numZ/t_j\numZ$.
\end{lemm}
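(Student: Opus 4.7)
The statement is that when the integral-parametric conditions $r\mathbf{t}\in\numZ^n$, $G\mathbf{w}\in\numZ^n$, $a\mathbf{x}\in\numZ^n$ hold, the summand
$$f(\mathbf{v}):=\etp{r\cdot\left(\tfrac{1}{2}(\mathbf{v}+\mathbf{w})^\tp G(\mathbf{v}+\mathbf{w})+(\mathbf{v}+\mathbf{w})^\tp\mathbf{x}\right)}$$
depends on $\mathbf{v}=(v_1,\dots,v_n)\in\numZ^n$ only through the residue classes $v_j\bmod t_j$. Once this is established, rewriting the multiple sum over any complete system $T_1\times\cdots\times T_n$ is immediate. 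The plan is therefore a pure shift-invariance check, which I would reduce to showing that replacing $\mathbf{v}$ by $\mathbf{v}+t_j\mathbf{e}_j$ (for the $j$th standard basis vector $\mathbf{e}_j$) leaves $f(\mathbf{v})$ unchanged; by induction on coordinates this covers any representative change.

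First I would expand the quadratic form: if $\mathbf{v}':=\mathbf{v}+t_j\mathbf{e}_j$, then
\begin{align*}
\tfrac{1}{2}(\mathbf{v}'+\mathbf{w})^\tp G(\mathbf{v}'+\mathbf{w})&=\tfrac{1}{2}(\mathbf{v}+\mathbf{w})^\tp G(\mathbf{v}+\mathbf{w})+t_j\,\mathbf{e}_j^\tp G(\mathbf{v}+\mathbf{w})+\tfrac{1}{2}t_j^2\,G_{jj},\\
(\mathbf{v}'+\mathbf{w})^\tp\mathbf{x}&=(\mathbf{v}+\mathbf{w})^\tp\mathbf{x}+t_j\,x_j.
\end{align*}
So the increment in the argument of $\etp{\cdot}$ is
$$\Delta_j:=r\bigl(t_j\,\mathbf{e}_j^\tp G(\mathbf{v}+\mathbf{w})+\tfrac{1}{2}t_j^2 G_{jj}+t_j\,x_j\bigr).$$
It suffices to show $\Delta_j\in\numZ$.

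Next I would verify integrality of each piece, using the three hypotheses. (i) The vector $G\mathbf{v}+G\mathbf{w}$ lies in $\numZ^n$ (since $G$ is integral and $\mathbf{v}\in\numZ^n$, and $G\mathbf{w}\in\numZ^n$ by assumption), so $\mathbf{e}_j^\tp G(\mathbf{v}+\mathbf{w})\in\numZ$; combined with $rt_j\in\numZ$ this gives the first term an integer. (ii) Because $G$ is even integral, $G_{jj}/2\in\numZ$, hence $\tfrac{1}{2}t_j^2 G_{jj}=t_j\cdot(t_j G_{jj}/2)$ is an integer multiple of $t_j$; multiplied by $r$ it becomes $(rt_j)\cdot(t_j G_{jj}/2)\in\numZ$. (iii) From $r=a/c$ with $\gcd(a,c)=1$ and $rt_j\in\numZ$ we get $c\mid t_j$, so $rt_j=a(t_j/c)$ is an integer multiple of $a$; then $rt_j\cdot x_j=(t_j/c)(ax_j)\in\numZ$ thanks to $a\mathbf{x}\in\numZ^n$. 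Summing, $\Delta_j\in\numZ$, which is precisely what was required.

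The only mildly subtle step is (iii), where one must notice that the hypothesis $a\mathbf{x}\in\numZ^n$ (rather than $\mathbf{x}\in\numZ^n$) forces one to use $c\mid t_j$ to extract a factor of $a$ from $rt_j$; but this divisibility is automatic from $r\mathbf{t}\in\numZ^n$ and coprimality of $a,c$, so there is no real obstacle. The argument is purely formal and uses each integral-parametric hypothesis exactly once.
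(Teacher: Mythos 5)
Your proof is correct and follows essentially the same route as the paper: both arguments reduce the claim to showing that the general term is invariant under shifting $\mathbf{v}$ by elements of $\prod_j t_j\numZ$, using $r\mathbf{t}\in\numZ^n$, $G\mathbf{w}\in\numZ^n$, evenness of $G$, and $a\mathbf{x}\in\numZ^n$ (the last via $c\mid t_j$) to check integrality of the increment. The only cosmetic difference is that you shift one coordinate at a time and iterate, whereas the paper performs the full shift $\Delta\mathbf{v}$ in a single expansion.
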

\begin{proof}
Indeed, a stronger conclusion holds. If $\mathbf{v}_1-\mathbf{v}_2=\Delta\mathbf{v}\in\prod_{j=1}^nt_j\numZ$, then it follows from \eqref{eq:intPara} and $G$ is even integral symmetric that
\begin{align*}
&\etp{r\cdot\left(\frac{1}{2}(\mathbf{v}_1+\mathbf{w})^\tp G(\mathbf{v}_1+\mathbf{w})+(\mathbf{v}_1+\mathbf{w})^\tp\mathbf{x}\right)}\\
=&\etp{r\cdot\left(\frac{1}{2}(\mathbf{v}_2+\mathbf{w})^\tp G(\mathbf{v}_2+\mathbf{w})+\Delta\mathbf{v}^\tp G\mathbf{w}+\frac{1}{2}\Delta\mathbf{v}^\tp G\Delta\mathbf{v}+(\mathbf{v}_2+\mathbf{w})^\tp\cdot\mathbf{x}+\Delta\mathbf{v}^\tp\mathbf{x}}\right)\\
=&\etp{r\cdot\left(\frac{1}{2}(\mathbf{v}_2+\mathbf{w})^\tp G(\mathbf{v}_2+\mathbf{w})+(\mathbf{v}_2+\mathbf{w})^\tp\mathbf{x}\right)}.
\end{align*}
The desired conclusion thus follows.
\end{proof}

\begin{coro}
\label{coro:welldefinedGaussH}
Suppose $\mathfrak{G}_{G}(r;\mathbf{w})$ is integral-parametric. Then for all subsets $H\subseteq\numZ^n/c\numZ^n$, the sum $\mathfrak{G}_{G}^H(r;\mathbf{w})$ (see \eqref{eq:defGaussH}) is well-defined.
\end{coro}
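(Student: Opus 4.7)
The plan is to reduce this corollary directly to the stronger statement established inside the proof of Lemma \ref{lemm:independentRepr}. The phrase ``well-defined'' here means that for each class $\mathbf{v}\in\numZ^n/c\numZ^n$, the summand $\etp{r\cdot\frac{1}{2}(\mathbf{v}+\mathbf{w})^\tp G(\mathbf{v}+\mathbf{w})}$ does not depend on the choice of representative of $\mathbf{v}$ in $\numZ^n$; once that is granted, summing over any subset $H\subseteq\numZ^n/c\numZ^n$ produces an unambiguous value.

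First I would unpack the integral-parametricity hypothesis in the present situation. Since $\mathfrak{G}_G(r;\mathbf{w})=\mathfrak{G}_G(a/c,(c,\dots,c);\mathbf{w},\mathbf{0})$ by the definitions \eqref{eq:GaussSumDef2}, the conditions $r\mathbf{t}\in\numZ^n$ and $a\mathbf{x}\in\numZ^n$ of Definition \ref{deff:intPara} are automatic (the former because $rc=a$, the latter because $\mathbf{x}=\mathbf{0}$), so the hypothesis reduces to $G\mathbf{w}\in\numZ^n$, which is precisely the input required for Lemma \ref{lemm:independentRepr} with $\mathbf{x}=\mathbf{0}$ and $\mathbf{t}=(c,c,\dots,c)$.

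Next I would invoke the representative-independence argument in the proof of Lemma \ref{lemm:independentRepr}: if $\Delta\mathbf{v}\in c\numZ^n$, then expanding $(\mathbf{v}_2+\Delta\mathbf{v}+\mathbf{w})^\tp G(\mathbf{v}_2+\Delta\mathbf{v}+\mathbf{w})$ produces the cross term $\Delta\mathbf{v}^\tp G\mathbf{w}$ (an integer, since $G\mathbf{w}\in\numZ^n$ and $\Delta\mathbf{v}\in\numZ^n$) and the term $\tfrac{1}{2}\Delta\mathbf{v}^\tp G\Delta\mathbf{v}$ (whose product with $r=a/c$ is an integer, since $\Delta\mathbf{v}\in c\numZ^n$ and $G$ is even integral symmetric). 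After multiplication by $r$ each correction term lies in $\numZ$, so the exponential is unchanged and the summand depends only on $\mathbf{v}\bmod c\numZ^n$.

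There is no real obstacle: the argument is purely a specialization of the lemma, and the only thing to notice is that the $\mathbf{x}$-dependent contributions disappear so the hypothesis ``$a\mathbf{x}\in\numZ^n$'' is not needed. Hence $\mathfrak{G}_G^H(r;\mathbf{w})$ is well-defined for every subset $H\subseteq\numZ^n/c\numZ^n$, which completes the proof.
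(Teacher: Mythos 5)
Your proposal is correct and matches the paper's own proof, which simply says to repeat the argument of Lemma \ref{lemm:independentRepr}: the key point in both is that each summand depends only on $\mathbf{v}\bmod c\numZ^n$ because the correction terms $r\,\Delta\mathbf{v}^\tp G\mathbf{w}$ and $\tfrac{r}{2}\Delta\mathbf{v}^\tp G\Delta\mathbf{v}$ are integers under the hypothesis $G\mathbf{w}\in\numZ^n$. Your additional observation that the $\mathbf{x}$-dependent conditions are vacuous here is accurate but not needed beyond what the paper already does.
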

\begin{proof}
We only need to repeat the proof of Lemma \ref{lemm:independentRepr}.
\end{proof}
\begin{rema}
We will encounter a non-integral-parametric $\mathfrak{G}_{G}(r;\mathbf{w})$, in which case \eqref{eq:defGaussH} is not well-defined for general $H$. However, it is still well-defined for some specific nontrivial $H$. See the paragraph followed by Lemma \ref{lemm:nonintegralpara}.
\end{rema}

\begin{prop}
\label{prop:ttoc}
Let $\mathfrak{G}_{G}(r,\mathbf{t};\mathbf{w},\mathbf{x})$ be integral-parametric. Then $\mathfrak{G}_{G}(r;\mathbf{w},\mathbf{x})$ is also integral-parametric, and
\begin{equation}
\label{eq:ttoc}
\mathfrak{G}_{G}(r,\mathbf{t};\mathbf{w},\mathbf{x})=\frac{\prod_{j=1}^nt_j}{c^n}\cdot\mathfrak{G}_{G}(r;\mathbf{w},\mathbf{x}).
\end{equation}
\end{prop}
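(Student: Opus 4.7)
\textbf{Proof proposal for Proposition \ref{prop:ttoc}.}

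The plan is to reduce the sum over $\mathbf{v}\in\prod_j\{0,\dots,t_j-1\}$ to $k_j$ identical copies of the sum over $\mathbf{v}\in\prod_j\{0,\dots,c-1\}$, where $k_j=t_j/c$. First I would unpack the integral-parametric hypothesis: since $r=a/c$ with $\gcd(a,c)=1$, the condition $rt_j=at_j/c\in\numZ$ forces $c\mid t_j$, so there exist $k_j\in\numgeq{Z}{1}$ with $t_j=ck_j$. In particular $r\cdot(c,c,\dots,c)=(a,a,\dots,a)\in\numZ^n$, so the hypotheses $G\mathbf{w}\in\numZ^n$ and $a\mathbf{x}\in\numZ^n$ (which are unchanged) immediately yield that $\mathfrak{G}_{G}(r;\mathbf{w},\mathbf{x})=\mathfrak{G}_{G}(r,(c,c,\dots,c);\mathbf{w},\mathbf{x})$ is integral-parametric.

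Next I would apply Lemma \ref{lemm:independentRepr} to this \emph{auxiliary} sum with $\mathbf{t}=(c,c,\dots,c)$. The lemma then shows that the summand
\[
f(\mathbf{v}):=\etp{r\cdot\left(\tfrac{1}{2}(\mathbf{v}+\mathbf{w})^\tp G(\mathbf{v}+\mathbf{w})+(\mathbf{v}+\mathbf{w})^\tp\mathbf{x}\right)}
\]
is invariant under $\mathbf{v}\mapsto\mathbf{v}+\Delta\mathbf{v}$ for any $\Delta\mathbf{v}\in c\numZ^n$; equivalently, $f$ descends to a well-defined function on $(\numZ/c\numZ)^n$. (Indeed the arithmetic in the proof of Lemma \ref{lemm:independentRepr} only uses that $r\Delta v_j\in\numZ$, that $G\mathbf{w}\in\numZ^n$, that $\tfrac12\Delta\mathbf{v}^\tp G\Delta\mathbf{v}\in\numZ$, and that $\Delta\mathbf{v}^\tp(a\mathbf{x})/c\in\numZ$, all of which hold for $\Delta\mathbf{v}\in c\numZ^n$.)

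Armed with this periodicity, I would split each index set $\{0,1,\dots,ck_j-1\}$ as $\{c u_j+w_j : 0\le u_j\le k_j-1,\ 0\le w_j\le c-1\}$ and rewrite the defining sum as
\[
\mathfrak{G}_{G}(r,\mathbf{t};\mathbf{w},\mathbf{x})
=\sum_{u_1=0}^{k_1-1}\!\!\cdots\!\!\sum_{u_n=0}^{k_n-1}\ \sum_{w_1=0}^{c-1}\!\!\cdots\!\!\sum_{w_n=0}^{c-1} f(c\mathbf{u}+\mathbf{w}).
\]
By the periodicity just established, $f(c\mathbf{u}+\mathbf{w})=f(\mathbf{w})$, so the inner $n$-fold sum equals $\mathfrak{G}_{G}(r;\mathbf{w},\mathbf{x})$ and the outer $n$-fold sum contributes the factor $\prod_j k_j=\prod_j t_j/c^n$, yielding \eqref{eq:ttoc}.

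There is no real obstacle here; the only subtlety is that Lemma \ref{lemm:independentRepr} gives periodicity modulo $t_j$, but we need periodicity modulo the possibly smaller modulus $c$. This is resolved for free by applying the lemma to the auxiliary tuple $(c,c,\dots,c)$, which we have already shown is integral-parametric.
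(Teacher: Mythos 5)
Your proposal is correct and follows essentially the same route as the paper: both arguments rest on Lemma \ref{lemm:independentRepr} to make the summand well-defined modulo $c$ (via the auxiliary tuple $(c,\dots,c)$), and then identify the sum over $\prod_j\numZ/t_j\numZ$ with $\prod_jt_j/c^n$ copies of the sum over $\numZ^n/c\numZ^n$ — the paper phrases this as gluing representative sets, you phrase it as the explicit splitting $v_j=cu_j+w_j$, but these are the same decomposition. Your explicit verification that $c\mid t_j$ and that $\mathfrak{G}_G(r;\mathbf{w},\mathbf{x})$ is integral-parametric is a welcome detail the paper leaves implicit.
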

\begin{proof}
The Gauss sum $\mathfrak{G}_{G}(r;\mathbf{w},\mathbf{x})$ is a sum over $\mathbf{v}\in\numZ^n/c\numZ^n$ by Lemma \ref{lemm:independentRepr}, so $\prod_{j=1}^nt_j/c^n\cdot\mathfrak{G}_{G}(r;\mathbf{w},\mathbf{x})$ can be regarded as a sum of $\prod_{j=1}^nt_j/c^n$ sums over $\numZ^n/c\numZ^n$. We can choose a particular set of representatives for $\numZ^n/c\numZ^n$ in each $\mathfrak{G}_{G}(r;\mathbf{w},\mathbf{x})$ such that they, when glued together, form a set of representatives of $\numZ^n/\prod_{j=1}^n(t_j\numZ)$. Since the general terms of both sides of \eqref{eq:ttoc} are the same, $\prod_{j=1}^nt_j/c^n\cdot\mathfrak{G}_{G}(r;\mathbf{w},\mathbf{x})$ equals the Gauss sum over $\numZ^n/\prod_{j=1}^n(t_j\numZ)$, which, according to Lemma \ref{lemm:independentRepr}, is $\mathfrak{G}_{G}(r,\mathbf{t};\mathbf{w},\mathbf{x})$.
\end{proof}

This proposition reduces the evaluation of integral-parametric $\mathfrak{G}_{G}(r,\mathbf{t};\mathbf{w},\mathbf{x})$ to the evaluation of $\mathfrak{G}_{G}(r;\mathbf{w},\mathbf{x})$, as is treated in Theorem \ref{thm:main}. Furthermore, the following basic fact reduces the problem to the case where $a=\pm1$.
\begin{prop}
\label{prop:GtoaG}
Let $c\in\numgeq{Z}{1}$ and $a\in\numZ$ with $\gcd(a,c)=1$. Suppose that $\mathfrak{G}_{G}(a/c;\mathbf{w},\mathbf{x})$ is integral-parametric. Then $\mathfrak{G}_{\abs{a}G}(\sgn{a}/c;\mathbf{w},\abs{a}\mathbf{x})$ is integral-parametric as well, and\footnote{We define $\sgn{0}=0$. Then this formula allows $a=0$.}
\begin{equation*}
\mathfrak{G}_{G}(a/c;\mathbf{w},\mathbf{x})=\mathfrak{G}_{\abs{a}G}(\sgn{a}/c;\mathbf{w},\abs{a}\mathbf{x}).
\end{equation*}
\end{prop}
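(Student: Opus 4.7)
The plan is to unwind both sides directly from the definition \eqref{eq:GaussSumDef} specialized as in \eqref{eq:GaussSumDef2}, and observe that the relation $\sgn{a}\cdot\abs{a}=a$ makes the exponents coincide term by term.

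First, I would verify that $\mathfrak{G}_{\abs{a}G}(\sgn{a}/c;\mathbf{w},\abs{a}\mathbf{x})$ is legal to write down and is integral-parametric. The matrix $\abs{a}G$ is symmetric and even integral because $G$ is and $\abs{a}\in\numZ$. The pair $(\sgn{a},c)$ is coprime (trivially if $a\neq 0$; the edge case $a=0$ forces $c=1$ by $\gcd(a,c)=1$, and then both sides of the claimed equality collapse to $1$, so I would dispatch this separately at the start). The remaining conditions of Definition \ref{deff:intPara} follow from the hypothesis: $(\abs{a}G)\mathbf{w}=\abs{a}\cdot G\mathbf{w}\in\numZ^n$ since $G\mathbf{w}\in\numZ^n$, and $\sgn{a}\cdot(\abs{a}\mathbf{x})=a\mathbf{x}\in\numZ^n$ by assumption.

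Next, with both sums indexed over $\mathbf{v}\in\numZ^n/c\numZ^n$ (this indexing is valid by Lemma \ref{lemm:independentRepr} once integral-parametricity is established), I would compare the general terms. The term of the right-hand sum at $\mathbf{v}$ is
\begin{equation*}
\etp{\frac{\sgn{a}}{c}\left(\tfrac{1}{2}(\mathbf{v}+\mathbf{w})^\tp(\abs{a}G)(\mathbf{v}+\mathbf{w})+(\mathbf{v}+\mathbf{w})^\tp(\abs{a}\mathbf{x})\right)},
\end{equation*}
and factoring $\abs{a}$ out of the parenthesized expression and merging it with $\sgn{a}/c$ via $\sgn{a}\cdot\abs{a}=a$ turns this into the corresponding term of $\mathfrak{G}_G(a/c;\mathbf{w},\mathbf{x})$. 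Summing over $\mathbf{v}$ gives the claimed identity.

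There is essentially no obstacle here; the only subtlety is being careful that Lemma \ref{lemm:independentRepr} requires integral-parametricity on \emph{both} sides to interpret each sum canonically over $\numZ^n/c\numZ^n$, which is why the integral-parametricity of the right-hand side needs to be checked before invoking the term-by-term comparison. The sign case $a=0$ (i.e.\ $c=1$) is handled by inspection as noted.
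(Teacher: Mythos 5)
Your proposal is correct and matches the paper's (unstated) argument: the paper simply declares this proposition immediate, and your write-up is exactly the routine unwinding of the definition via $\sgn{a}\cdot\abs{a}=a$, together with the easy verification of integral-parametricity and the $a=0$ edge case. Note only that since both sums are defined over the explicit index range $v_j\in\{0,\dots,c-1\}$, the term-by-term identification does not actually require Lemma \ref{lemm:independentRepr}; invoking it is harmless but unnecessary.
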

\begin{proof}
This is immediate.
\end{proof}

The following lemma relates $\mathfrak{G}_{G}(r;\mathbf{w},\mathbf{x})$ to the coefficients of certain Weil representation. It plays a key role in proving Theorem \ref{thm:main}.
\begin{lemm}
\label{lemm:GaussSumWeilRepr}
Suppose that $G$ is a positive definite even integral symmetric matrix of size $n$, and that $\mathfrak{G}_{G}(a/c;\mathbf{w},\mathbf{x})$ is integral-parametric, where $a\in\numZ$, $c\in\numgeq{Z}{1}$ are coprime. Let $\underline{L}=(\numZ^n,B)$ where $B(\mathbf{v},\mathbf{w})=\mathbf{v}^\tp G\mathbf{w}$. Let $b$, $d$ be integers such that $ad-bc=1$ and let $A=\tbtmat{a}{b}{c}{d}$. Then
\begin{equation}
\label{eq:GaussSumWeilRepr}
\mathfrak{G}_{G}(a/c;\mathbf{w},\mathbf{x})=\etp{-\frac{a^2d}{2c}\mathbf{x}^\tp G^{-1}\mathbf{x}}\rmi^{\frac{n}{2}}c^{\frac{n}{2}}\sqrt{\det(G)}\rho_{\underline{L}}(A,1)_{\mathbf{w},-aG^{-1}\mathbf{x}}.
\end{equation}
\end{lemm}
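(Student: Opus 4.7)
The plan is to obtain the identity by applying Shintani's formula \eqref{eq:WeilReprShintani} with the specific inputs $y=\mathbf{w}$ and $x=-aG^{-1}\mathbf{x}$, and then to unwind the resulting expression until it becomes $\mathfrak{G}_G(a/c;\mathbf{w},\mathbf{x})$ up to an explicit prefactor.

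First I would verify that the chosen arguments legitimately represent classes in $L^\sharp/L$. For $\underline{L}=(\numZ^n,B_G)$ the dual lattice is $L^\sharp=G^{-1}\numZ^n$, so $G\mathbf{w}\in\numZ^n$ gives $\mathbf{w}\in L^\sharp$, and the integral-parametric condition $a\mathbf{x}\in\numZ^n$ gives $-aG^{-1}\mathbf{x}\in L^\sharp$. I would also note $|L^\sharp/L|=\det G$, $|c|=c$, and $\sgn c=1$ (since $c\in\numgeq{Z}{1}$).

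Next, with $Q(\mathbf{v})=\tfrac12\mathbf{v}^\tp G\mathbf{v}$, I would expand the three pieces inside the exponent of \eqref{eq:WeilReprShintani}:
\begin{equation*}
aQ(\mathbf{w}+\mathbf{t})=\tfrac{a}{2}(\mathbf{w}+\mathbf{t})^\tp G(\mathbf{w}+\mathbf{t}),\qquad -B(\mathbf{w}+\mathbf{t},-aG^{-1}\mathbf{x})=a(\mathbf{w}+\mathbf{t})^\tp\mathbf{x},
\end{equation*}
and $dQ(-aG^{-1}\mathbf{x})=\tfrac{a^2d}{2}\mathbf{x}^\tp G^{-1}\mathbf{x}$. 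The third piece is independent of $\mathbf{t}$, so it factors out of the sum. The remaining sum runs over $\mathbf{t}\in\numZ^n/c\numZ^n$ with general term $\etp{(a/c)\bigl(\tfrac12(\mathbf{w}+\mathbf{t})^\tp G(\mathbf{w}+\mathbf{t})+(\mathbf{w}+\mathbf{t})^\tp\mathbf{x}\bigr)}$. By Lemma \ref{lemm:independentRepr}, this coincides exactly with $\mathfrak{G}_G(a/c;\mathbf{w},\mathbf{x})$.

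Finally I would solve for $\mathfrak{G}_G(a/c;\mathbf{w},\mathbf{x})$ and simplify the numerical prefactor. Since principal branches satisfy $(-\rmi)^{-n/2}=\etp{n/8}=\rmi^{n/2}$, and $\sqrt{\det G}$ is meaningful because $G$ is positive definite, the prefactor rearranges to $\rmi^{n/2}c^{n/2}\sqrt{\det G}\,\etp{-\tfrac{a^2d}{2c}\mathbf{x}^\tp G^{-1}\mathbf{x}}$, which is precisely \eqref{eq:GaussSumWeilRepr}. There is no substantive obstacle: the argument is a bookkeeping translation between the abstract lattice picture and the matrix picture. The only care required is keeping the principal-branch conventions for fractional powers of $-\rmi$ consistent and confirming that the integral-parametric hypotheses place $\mathbf{w}$ and $-aG^{-1}\mathbf{x}$ in $L^\sharp$ so that Shintani's formula applies.
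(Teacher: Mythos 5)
Your proposal is correct and follows essentially the same route as the paper: both apply Shintani's formula \eqref{eq:WeilReprShintani} with $y=\mathbf{w}$, $x=-aG^{-1}\mathbf{x}$, check via the integral-parametric hypotheses that these lie in $L^\sharp$, factor out the $\mathbf{t}$-independent term $dQ(aG^{-1}\mathbf{x})$, identify the remaining sum with $\mathfrak{G}_G(a/c;\mathbf{w},\mathbf{x})$ by Lemma \ref{lemm:independentRepr}, and solve for the Gauss sum using $(-\rmi)^{-n/2}=\rmi^{n/2}$. No gaps.
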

\begin{proof}
Note that $L^\sharp=G^{-1}\numZ^n$ and $\abs{L^\sharp/L}=\det(G)$. Since $\mathfrak{G}_{G}(a/c;\mathbf{w},\mathbf{x})$ is integral-parametric, we have $\mathbf{w},-aG^{-1}\mathbf{x}\in L^\sharp$ and hence $\rho_{\underline{L}}(A,1)_{\mathbf{w},-aG^{-1}\mathbf{x}}$ makes sense. Set $Q(\mathbf{v})=\frac{1}{2}B(\mathbf{v},\mathbf{v})=\frac{1}{2}\mathbf{v}^\tp G\mathbf{v}$. A straightforward calculation shows that
\begin{equation*}
\sum_{\mathbf{v}\in L/cL}\etp{\frac{aQ(\mathbf{w}+\mathbf{v})-B(\mathbf{w}+\mathbf{v},-aG^{-1}\mathbf{x})+dQ(aG^{-1}\mathbf{x})}{c}}=\etp{\frac{d}{c}Q\left(aG^{-1}\mathbf{x}\right)}\mathfrak{G}_{G}(a/c;\mathbf{w},\mathbf{x}).
%\mathfrak{G}_{G}(a/c;\mathbf{w},\mathbf{x})=\etp{-\frac{d}{c}Q\left(aG^{-1}\mathbf{x}\right)}\cdot\sum_{\mathbf{v}\in L/cL}\etp{\frac{aQ(\mathbf{w}+\mathbf{v})-B(\mathbf{w}+\mathbf{v},-aG^{-1}\mathbf{x})+dQ(aG^{-1}\mathbf{x})}{c}}.
\end{equation*}
We then obtain \eqref{eq:GaussSumWeilRepr} by inserting the above formula into \eqref{eq:WeilReprShintani} with $y=\mathbf{w}$ and $x=-aG^{-1}\mathbf{x}$.
\end{proof}
\begin{rema}
By this lemma, the calculation of $\mathfrak{G}_{G}(a/c;\mathbf{w},\mathbf{x})$ and that of $\rho_{\underline{L}}\left(\tbtmat{a}{b}{c}{d},1\right)_{y,x}$ are essentially the same problem. Formulas for the latter problem with all Gauss sums explicitly evaluated are obtained by Scheithauer \cite{Sch09} for even $n$ and by Str\"omberg \cite{Str13} for general $n$. Thus, explicit formulas for integral-parametric $\mathfrak{G}_{G}(a/c;\mathbf{w},\mathbf{x})$ can be thought of to be already known. These formulas work very well if $G$ is fixed. However, to acquire the exact value of $\mathfrak{G}_{G}(a/c;\mathbf{w},\mathbf{x})$ in elementary symbols using Scheithauer and Str\"omberg's formulas, one need to first calculate a Jordan decomposition (an orthogonal direct sum decomposition with all summands being the form given in Theorem \ref{thm:indecomposableFQM}) of the finite quadratic module $L^\sharp/L$. To explain more concretely, we insert the formula in \cite[Remark 6.8]{Str13} into \eqref{eq:GaussSumWeilRepr}, and thus obtain a formula for $\mathfrak{G}_{G}(a/c;\mathbf{w},\mathbf{x})$ where all factors are elementary arithmetic functions except $\xi(a,c)$. One can see from \cite[Def. 6.1]{Str13} that it still need some effort to obtain the value of $\xi(a,c)$. Our objective is a formula for $\mathfrak{G}_{G}(a/c;\mathbf{w},\mathbf{x})$ that does not need to calculate any local data of $L^\sharp/L$. We achieve this in Theorem \ref{thm:main}, but with a restriction $\gcd(N,c)=1$ or $N$.
\end{rema}

By the above remark, we need more tools.

\subsection{Two auxiliary Gauss sums}
We need to calculate $\rho_{\underline{L}}(A,1)_{\mathbf{w},-aG^{-1}\mathbf{x}}$ by Lemma \ref{lemm:GaussSumWeilRepr}. For this purpose, we introduce the following Gauss-type sum.
\begin{deff}[Str\"omberg]
\label{def:GMcx}
Let $\underline{M}=(M,Q)$ be a finite quadratic module with associated bilinear map $B$. Let $c\in\numZ$ and $x\in M$. We define
\begin{equation*}
\mathscr{G}_{\underline{M}}(c,x)=\frac{1}{\sqrt{\abs{M}}\sqrt{\abs{M[c]}}}\sum_{y\in M}\etp{cQ(y)+B(x,y)},
\end{equation*}
where $M[c]$ is the kernel of the homomorphism $M\rightarrow M$, $y\mapsto cy$.
\end{deff}
The following notations are also needed:
\begin{align*}
M[c]^*&=cM=\{cy\colon y\in M\},\\
M[c]^\bullet&=\{y\in M\colon cQ(z)+B(y,z)=0+\numZ,\,\forall z\in M[c]\}.
\end{align*}
Scheithauer \cite[Prop. 2.1]{Sch09} showed that $M[c]^\bullet=y+M[c]^*$ for some $y\in M$. As a consequence, $\abs{M[c]^\bullet}=\abs{M[c]^*}$. Also note a nontrivial fact that $M[c]^\bullet=M[-c]^\bullet$, for $cQ(z)\in\frac{1}{2}\numZ/\numZ$ if $z\in M[c]=M[-c]$.

If $\underline{L}$ is a lattice with discriminant module $\underline{D}$ (which is a finite quadratic module by Example \ref{examp:discmodule}), then we set
\begin{equation*}
\mathscr{G}_{\underline{L}}(c,x):=\mathscr{G}_{\underline{D}}(c,x+L),\quad c\in\numZ,\,x\in L^\sharp.
\end{equation*}

\begin{prop}[Str\"omberg]
\label{prop:StrombergscrG}
If $x\not\in M[c]^\bullet$, then $\mathscr{G}_{\underline{M}}(c,x)=0$. Moreover, let $x_0\in M[c]^\bullet$. Then for each $x\in M[c]^\bullet$ there is a $y\in M$ such that $x=x_0+cy$ and for all such $y$ we have
\begin{equation*}
\mathscr{G}_{\underline{M}}(c,x)=\etp{-cQ(y)-B(x_0,y)}\mathscr{G}_{\underline{M}}(c,x_0).
\end{equation*}
\end{prop}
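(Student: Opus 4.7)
The plan is to prove both parts by a change-of-variable argument on the summation index, exploiting the quadratic identity $Q(u+v)=Q(u)+Q(v)+B(u,v)$ and the fact that $cB(y,z)=B(y,cz)$ vanishes whenever $z\in M[c]$.

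For part (1), I would let $z\in M[c]$ be arbitrary and substitute $y\mapsto y+z$ in the defining sum. Expanding gives
\begin{equation*}
cQ(y+z)+B(x,y+z)=cQ(y)+B(x,y)+cQ(z)+B(x,z)+cB(y,z).
\end{equation*}
Because $z\in M[c]$ forces $cz=0$ in $M$, we have $cB(y,z)=B(y,cz)=0$ in $\numQ/\numZ$, so the identity $\sum_{y\in M}\etp{cQ(y)+B(x,y)}=\etp{cQ(z)+B(x,z)}\sum_{y\in M}\etp{cQ(y)+B(x,y)}$ holds for every $z\in M[c]$. If $x\notin M[c]^\bullet$, then by definition there exists $z\in M[c]$ with $cQ(z)+B(x,z)\neq 0$ in $\numQ/\numZ$, so the root-of-unity factor is nontrivial and the sum must vanish. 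This yields $\mathscr{G}_{\underline{M}}(c,x)=0$.

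For part (2), the existence of some $y\in M$ with $x=x_0+cy$ is immediate from the cited fact $M[c]^\bullet=x_0+cM$. Fix such a $y$. In the sum defining $\mathscr{G}_{\underline{M}}(c,x)$ I would expand $B(x_0+cy,z)=B(x_0,z)+cB(y,z)$ and then use
\begin{equation*}
cQ(z)+cB(y,z)=cQ(z+y)-cQ(y),
\end{equation*}
which follows from $Q(z+y)=Q(z)+B(z,y)+Q(y)$. Applying the bijection $w=z+y$ on $M$ gives
\begin{equation*}
\sum_{z\in M}\etp{cQ(z)+B(x,z)}=\etp{-cQ(y)-B(x_0,y)}\sum_{w\in M}\etp{cQ(w)+B(x_0,w)},
\end{equation*}
and dividing both sides by $\sqrt{\abs{M}\abs{M[c]}}$ yields the stated relation.

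The only subtle point is justifying that $cB(y,z)=0$ in $\numQ/\numZ$ whenever $z\in M[c]$; the rest is a direct completion-of-the-square. A second minor point worth recording is that the formula in part (2) is independent of the choice of $y$ with $x-x_0=cy$: if $y'$ is another such choice then $y'-y\in M[c]$ and the difference $cQ(y')-cQ(y)+B(x_0,y'-y)$ vanishes in $\numQ/\numZ$ because $x_0\in M[c]^\bullet=M[-c]^\bullet$ (using $Q(y')=Q(y)+B(y,y'-y)+Q(y'-y)$ and $cB(y,y'-y)=0$). I do not anticipate a genuine obstacle; the only care required is bookkeeping the normalization $\sqrt{\abs{M}\abs{M[c]}}$, which cancels cleanly because both $\mathscr{G}_{\underline{M}}(c,x)$ and $\mathscr{G}_{\underline{M}}(c,x_0)$ carry the same normalization.
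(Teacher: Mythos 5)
Your proof is correct and complete. The paper itself does not prove this proposition but only cites an external reference (\cite[Lemma 2.4]{CZ25}), so there is nothing in-paper to compare against; your argument --- shifting the summation variable by $z\in M[c]$ and using $cB(y,z)=B(y,cz)=0$ to force vanishing when $x\notin M[c]^\bullet$, then completing the square via $w=z+y$ for the translation formula, with the normalization $\sqrt{\abs{M}\abs{M[c]}}$ cancelling because $c$ and $M$ are unchanged --- is the standard one, and your verification that the right-hand side is independent of the choice of $y$ (using precisely the condition $x_0\in M[c]^\bullet$) correctly covers the ``for all such $y$'' clause of the statement.
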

\begin{proof}
See \cite[Lemma 2.4]{CZ25}.
\end{proof}

\begin{prop}
\label{prop:GMmultiplicative}
Let $\underline{M}=(M,Q)$ be a finite quadratic module and let $M_1$, $M_2$ be submodules with $M_1\perp M_2$ and $M=M_1 \oplus M_2$. Set $\underline{M}_j=(M_j,Q\vert_{M_j})$, $j=1,2$. Then $\underline{M}_1$ and $\underline{M}_2$ themselves are finite quadratic modules, and for $x_1\in M_1$ and $x_2\in M_2$ we have
\begin{equation}
\label{eq:GMGM1GM2}
\mathscr{G}_{\underline{M}}(c,x_1+x_2)=\mathscr{G}_{\underline{M}_1}(c,x_1)\mathscr{G}_{\underline{M}_2}(c,x_2).
\end{equation}
\end{prop}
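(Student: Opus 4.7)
The plan is to verify the three ingredients appearing in Definition \ref{def:GMcx} split compatibly along the orthogonal direct sum, and then multiply them together. First I would confirm that $\underline{M}_1$ and $\underline{M}_2$ are themselves finite quadratic modules. The quadratic map $Q$ restricted to $M_j$ clearly remains a quadratic map with associated bilinear form $B|_{M_j\times M_j}$, so the only issue is nondegeneracy. If $x_1\in M_1$ satisfies $B(x_1,y_1)=0$ for every $y_1\in M_1$, then since $M_1\perp M_2$ we also have $B(x_1,y_2)=0$ for every $y_2\in M_2$; writing an arbitrary $y\in M$ as $y_1+y_2$ (using $M=M_1\oplus M_2$), we conclude $B(x_1,y)=0$ for all $y\in M$, whence $x_1=0$ by nondegeneracy of $B$ on $M$.

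Next I would split the summation in the definition of $\mathscr{G}_{\underline{M}}(c,x_1+x_2)$. Every $y\in M$ has a unique decomposition $y=y_1+y_2$ with $y_j\in M_j$. The orthogonality $M_1\perp M_2$ gives $B(y_1,y_2)=0$, so
\begin{equation*}
Q(y_1+y_2)=Q(y_1)+Q(y_2)+B(y_1,y_2)=Q(y_1)+Q(y_2),
\end{equation*}
and similarly $B(x_1+x_2,y_1+y_2)=B(x_1,y_1)+B(x_2,y_2)$ (the two cross terms vanish). Therefore
\begin{equation*}
\sum_{y\in M}\etp{cQ(y)+B(x_1+x_2,y)}=\left(\sum_{y_1\in M_1}\etp{cQ(y_1)+B(x_1,y_1)}\right)\left(\sum_{y_2\in M_2}\etp{cQ(y_2)+B(x_2,y_2)}\right).
\end{equation*}

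The remaining step is to handle the normalization factor $1/\bigl(\sqrt{|M|}\sqrt{|M[c]|}\bigr)$. The identity $|M|=|M_1|\cdot|M_2|$ is immediate from $M=M_1\oplus M_2$. For the $c$-torsion, observe that for $y=y_1+y_2$ the direct sum structure forces $cy=0$ if and only if $cy_1=0$ and $cy_2=0$, hence $M[c]=M_1[c]\oplus M_2[c]$ and $|M[c]|=|M_1[c]|\cdot|M_2[c]|$. Combining this factorization with the splitting of the exponential sum above yields \eqref{eq:GMGM1GM2}.

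The argument is essentially bookkeeping: there is no genuine obstacle. The only point requiring a moment of care is checking that each of the three quantities $|M|$, $|M[c]|$, and the exponential sum itself respect the orthogonal decomposition; once that is observed, the multiplicativity follows automatically from the definition. Care with the square-root normalization (writing $\sqrt{|M_1||M_2|}=\sqrt{|M_1|}\sqrt{|M_2|}$, which is unambiguous for positive integers) is the last thing to verify.
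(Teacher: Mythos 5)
Your proposal is correct and follows essentially the same route as the paper: both establish nondegeneracy of the restrictions by the same contrapositive argument using $M_1\perp M_2$, and both obtain \eqref{eq:GMGM1GM2} by splitting $|M|$, $|M[c]|$, and the exponential sum along the orthogonal direct sum. The only difference is that you write out the bookkeeping (the identities $Q(y_1+y_2)=Q(y_1)+Q(y_2)$ and the vanishing of the cross terms in $B$) that the paper leaves implicit.
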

\begin{proof}
Let $B$ be the associated bilinear map of $\underline{M}=(M,Q)$. To show $\underline{M}_1$ is a finite quadratic module, it suffices to show $B\vert_{M_1\times M_1}$ is nondegenerate. Suppose the contrary; then there is $0\neq m\in M_1$ such that $m\perp M_1$. Since $M_1\perp M_2$, we have $m\perp M_1+M_2=M$, which contracts the assumption $\underline{M}$ is a finite quadratic module. Therefore, $\underline{M}_1$ is a finite quadratic module, and so is $\underline{M}_2$. This shows that $\mathscr{G}_{\underline{M}_1}(c,x_1)$ and $\mathscr{G}_{\underline{M}_2}(c,x_2)$ fall within the scope of Definition \ref{def:GMcx}.

By the assumptions we have $M[c]=M_1[c]\oplus M_2[c]$ and hence $\abs{M[c]}=\abs{M_1[c]}\cdot\abs{M_2[c]}$. Also we have $\abs{M}=\abs{M_1}\cdot \abs{M_2}$. Now expanding the definitions of the three sums in \eqref{eq:GMGM1GM2} and then substituting $y\in M$ with $y_1\in M_1,y_2\in M_2$, $y_1+y_2=y$, we obtain \eqref{eq:GMGM1GM2}.
\end{proof}

This proposition, together with Theorem \ref{thm:indecomposableFQM}, reduces the calculation of $\mathscr{G}_{\underline{M}}(c,x)$ to that of $\mathscr{G}_{\underline{A}_{p^r}^a}(c,x)$, $\mathscr{G}_{\underline{B}_{2^r}}(c,x)$ and $\mathscr{G}_{\underline{C}_{2^r}}(c,x)$, which was given in \cite[Corollary 3.11]{Str13}. We recall Str\"omberg's results in the case $x=0$ here (Propositions \ref{prop:GApr}--\ref{prop:GBC2r}).
\begin{prop}
\label{prop:GApr}
Let $p$ be an odd prime, $r\in\numgeq{Z}{1}$, $c\in\numZ$, and $a\in\numZ$ with $p\nmid a$. Then
\begin{equation*}
\mathscr{G}_{\underline{A}_{p^r}^a}(c,0)=\legendre{2ac/(p^r,c)}{p^r/(p^r,c)}_K\cdot\etp{\frac{1-p^r/(p^r,c)}{8}}.
\end{equation*}
\end{prop}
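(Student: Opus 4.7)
The plan is to unravel the definition of $\mathscr{G}_{\underline{A}_{p^r}^a}(c,0)$ and reduce the inner sum to the classical odd-modulus quadratic Gauss sum \eqref{eq:classicalGauss}. Set $d=(p^r,c)$, write $c=dc_1$ and $m=p^r/d$, so that $\gcd(c_1,m)=1$ and, since $p\nmid a$, also $\gcd(ac_1,m)=1$. Note that $m$ is odd because $p$ is odd. These notational choices will allow every quantity in the target formula to be written cleanly.

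First I would compute the kernel size $\abs{M[c]}$ where $M=\numZ/p^r\numZ$. The relation $cy\equiv0\pmod{p^r}$ is equivalent to $m\mid y$, so $\abs{M[c]}=d$. Unfolding Definition \ref{def:GMcx}, with $Q(y)=ay^2/p^r$ modulo $\numZ$, gives
\begin{equation*}
\mathscr{G}_{\underline{A}_{p^r}^a}(c,0)=\frac{1}{\sqrt{p^r}\sqrt{d}}\sum_{y=0}^{p^r-1}\etp{\frac{acy^2}{p^r}}=\frac{1}{\sqrt{p^r}\sqrt{d}}\sum_{y=0}^{p^r-1}\etp{\frac{ac_1y^2}{m}}.
\end{equation*}
Since the summand depends only on $y\bmod m$ and the full range $[0,p^r)$ contains $d$ copies of a complete residue system modulo $m$, the sum collapses to $d\cdot\sum_{y=0}^{m-1}\etp{ac_1y^2/m}$, leaving the prefactor $\sqrt{d/p^r}\cdot d/d=\sqrt{d/p^r}$ in front of a classical Gauss sum with coprime data $(ac_1,m)$.

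Next I would invoke \eqref{eq:classicalGauss} with $a\leftarrow ac_1$ and $c\leftarrow m$, which applies since $m$ is odd and $\gcd(ac_1,m)=1$. This yields $\sqrt{m}\cdot\bigl(\tfrac{2ac_1}{m}\bigr)_K\cdot\etp{(1-m)/8}$, and multiplying by the prefactor $\sqrt{d/p^r}=1/\sqrt{m}$ gives exactly
\begin{equation*}
\mathscr{G}_{\underline{A}_{p^r}^a}(c,0)=\legendre{2ac_1}{m}_K\cdot\etp{\frac{1-m}{8}}=\legendre{2ac/(p^r,c)}{p^r/(p^r,c)}_K\cdot\etp{\frac{1-p^r/(p^r,c)}{8}},
\end{equation*}
after substituting back $c_1=c/d$ and $m=p^r/d$. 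The degenerate cases $c\equiv0\pmod{p^r}$ (so $d=p^r$, $m=1$) and $c=0$ are consistent because $\legendre{m}{1}_K=1$ and $\legendre{\pm1}{0}_K=1$ by the Kronecker-Jacobi conventions.

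I do not anticipate a serious obstacle: the only delicate point is the bookkeeping between $c$, $d$, $c_1$, $m$ and verifying that $\gcd(ac_1,m)=1$ so that the classical Gauss sum formula genuinely applies; this uses the hypothesis $p\nmid a$ in an essential way and would fail at $p=2$, which is precisely why the $p=2$ case is treated separately in Propositions \ref{prop:GBC2r} (and its $\underline{A}_{2^r}^a$ analog not recalled here).
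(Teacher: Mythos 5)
Your proof is correct, but it is genuinely different from the paper's, because the paper offers no proof at all: it simply cites Str\"omberg \cite[Lemma 3.6]{Str13}. Your computation is the natural self-contained argument — identify $\abs{M[c]}=(p^r,c)$, fold the sum over $\numZ/p^r\numZ$ down to $d$ copies of a classical quadratic Gauss sum modulo $m=p^r/(p^r,c)$ with coprime odd data, and apply Gauss's evaluation — and all the bookkeeping ($\gcd(ac_1,m)=1$, the prefactor $\sqrt{d/p^r}=1/\sqrt{m}$, the degenerate cases $m=1$) checks out, including the sign issue for negative $a$ hidden in $\legendre{2ac_1}{m}_K$. One caution about how you invoke \eqref{eq:classicalGauss}: in this paper that display is presented as a \emph{specialization of Theorem \ref{thm:main}}, while Proposition \ref{prop:GApr} is itself an ingredient in the chain leading to Theorem \ref{thm:main} (via Lemmas \ref{lemm:rankMod8OfPositive DefLattice} and \ref{lemm:MilgramExtension2Indecomp} and Theorem \ref{thm:MilgramExtension2}); to avoid an apparent circularity you must cite the odd-modulus Gauss evaluation as the classical result of Gauss (e.g. \cite[\S 1.5]{BEW98}), exactly as the paper itself does inside the proof of Lemma \ref{lemm:twoGaussSumRelation}. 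A trivial side remark: the $\underline{A}_{2^r}^a$ analogue \emph{is} recalled in the paper, as Proposition \ref{prop:GA2r}.
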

\begin{proof}
See \cite[Lemma 3.6]{Str13}.
\end{proof}
\begin{prop}
\label{prop:GA2r}
Let $r\in\numgeq{Z}{1}$, $c\in\numZ$, and $a\in\numZ$ with $2\nmid a$. Then
\begin{equation*}
\mathscr{G}_{\underline{A}_{2^r}^a}(c,0)=\begin{dcases}
\legendre{ac/(2^r,c)}{2^r/(2^r,c)}_K\cdot\etp{\frac{ac/(2^r,c)}{8}} &\text{if }2^r\nmid c,\\
0 &\text{if }2^r\parallel c,\\
1 &\text{if }2^{r+1}\mid c.
\end{dcases}
\end{equation*}
\end{prop}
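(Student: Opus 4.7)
The plan is to unpack Definition~\ref{def:GMcx} and reduce to the classical $2$-adic quadratic Gauss sum of \eqref{eq:classicalGauss}. Since $M = \numZ/2^r\numZ$ carries $Q(y) = ay^2/2^{r+1} \bmod 1$, one has $\abs{M} = 2^r$ and $\abs{M[c]} = \gcd(c,2^r)$, so the normalizer is $\sqrt{2^r\cdot\gcd(c,2^r)}$ and it remains to evaluate
\begin{equation*}
\Sigma := \sum_{y=0}^{2^r-1}\etp{\frac{acy^2}{2^{r+1}}}.
\end{equation*}
I will split on $e := v_2(c)$.

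The trivial cases are $e \geq r+1$ and $e = r$. If $e \geq r+1$, the exponent lies in $\numZ$ for every $y$, so $\Sigma = 2^r$ and the normalization produces $1$. If $e = r$, write $c = 2^r c_0$ with $c_0$ odd; the exponent becomes $ac_0 y^2/2$ and the summand is $(-1)^{ac_0 y^2} = (-1)^y$ (since $ac_0$ is odd and $y^2\equiv y \bmod 2$), so $\Sigma = 0$.

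The substantive case is $e \leq r-1$. Writing $c = 2^e c_1$ with $c_1$ odd and setting $d = 2^e$, $s = 2^r/d = 2^{r-e} \geq 2$, I get $\Sigma = \sum_{y=0}^{ds-1}\etp{ac_1 y^2/(2s)}$. A direct check shows the summand has exact period $s$ in $y$: the shift $y \mapsto y + s$ changes the exponent by $ac_1 y + ac_1 s/2$, an integer since $s$ is even, whereas $y \mapsto y + s/2$ changes it by $ac_1 y/2 + ac_1 s/8$, which is not constantly integral in $y$. Hence $\Sigma = d\sum_{y=0}^{s-1}\etp{ac_1 y^2/(2s)} = \tfrac{d}{2}\sum_{y=0}^{2s-1}\etp{ac_1 y^2/(2s)}$. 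The inner sum over the full period $[0,2s-1]$ falls in the $4\mid 2s$ branch of \eqref{eq:classicalGauss} and equals $2\sqrt{s}\legendre{4s}{ac_1}_K\etp{ac_1/8}$, giving $\Sigma = d\sqrt{s}\legendre{4s}{ac_1}_K\etp{ac_1/8}$. Dividing by the normalizer $\sqrt{2^r d} = d\sqrt{s}$ yields
\begin{equation*}
\mathscr{G}_{\underline{A}_{2^r}^a}(c,0) = \legendre{4s}{ac_1}_K\cdot\etp{ac_1/8}.
\end{equation*}
The identities $\legendre{4}{ac_1}_K = 1$ and $\legendre{2^j}{n}_K = \legendre{n}{2^j}_K$ for odd $n$ (both sides equal $\legendre{2}{n}_K^j$) then collapse the right-hand side to $\legendre{ac_1}{s}_K\etp{ac_1/8}$, which is exactly $\legendre{ac/(2^r,c)}{2^r/(2^r,c)}_K\cdot\etp{(ac/(2^r,c))/8}$.

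The one delicate point is the period analysis in the third case: it is tempting to guess that $y \mapsto \etp{ac_1 y^2/(2s)}$ has period $2s$ (the modulus in the exponent), but the $ac_1 y$ term is always integral, which forces the true period down to $s$ as soon as $s$ is even. Without this observation the factor $d/2$ comes out wrong by a factor of $\sqrt{2}$; once it is correctly installed, the remainder is a mechanical invocation of \eqref{eq:classicalGauss} and Kronecker-symbol multiplicativity.
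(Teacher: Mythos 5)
Your proof is correct. Every step checks out: the normalizer $\sqrt{2^r\gcd(c,2^r)}$ is right since $\abs{M[c]}=\gcd(c,2^r)$ for $M=\numZ/2^r\numZ$; the two degenerate cases $e\geq r+1$ and $e=r$ are handled cleanly (in the latter the summand really is $(-1)^y$, summed over an even number of terms); and in the main case the $s$-periodicity of $y\mapsto\etp{ac_1y^2/(2s)}$ (valid because $s=2^{r-e}$ is even, so $ac_1y+ac_1s/2\in\numZ$) correctly converts $\Sigma$ into $\tfrac{d}{2}$ times a full classical Gauss sum of modulus $2s$ with $4\mid 2s$, and the final Kronecker-symbol manipulation $\legendre{4s}{ac_1}_K=\legendre{s}{ac_1}_K=\legendre{2}{ac_1}_K^{r-e}=\legendre{ac_1}{2}_K^{r-e}=\legendre{ac_1}{s}_K$ is valid for odd $ac_1$ of either sign. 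The comparison to make here is that the paper does not prove this proposition at all: it simply cites \cite[Lemma 3.7]{Str13}, so your argument supplies a short, self-contained derivation that the paper leaves to an external reference. The only presentational caveat is that you invoke \eqref{eq:classicalGauss}, which the paper introduces as a specialization of Theorem \ref{thm:main} — and Theorem \ref{thm:main} ultimately depends on Proposition \ref{prop:GA2r} through Lemma \ref{lemm:MilgramExtension2Indecomp}. To avoid the appearance of circularity you should cite the $4\mid c$ branch of the classical Gauss evaluation as Gauss's theorem (e.g.\ \cite[\S 1.5]{BEW98}) rather than as a corollary of Theorem \ref{thm:main}; this is exactly what the paper itself does when it uses the classical formula inside the proof of Lemma \ref{lemm:twoGaussSumRelation}, so the substance of your argument is unaffected. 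Your remark that the exact period is $s$ rather than $s/2$ is more than is needed — only the fact that $s$ is a period enters the computation — but it does no harm.
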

\begin{proof}
See \cite[Lemma 3.7]{Str13}.
\end{proof}
\begin{prop}
\label{prop:GBC2r}
Let $r\in\numgeq{Z}{1}$ and $c\in\numZ$. Then
\begin{align*}
\mathscr{G}_{\underline{B}_{2^r}}(c,0)&=\legendre{3}{2^r/(2^r,c)}_K,\\
\mathscr{G}_{\underline{C}_{2^r}}(c,0)&=1.
\end{align*}
\end{prop}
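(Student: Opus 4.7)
The plan is to first reduce to the case where $c$ is odd, and then handle $\underline{C}_{2^r}$ by direct factorization and $\underline{B}_{2^r}$ via the algebraic identity $4(x^2+xy+y^2) = (2x+y)^2 + 3y^2$. Write $c = 2^s u$ with $u$ odd and $0 \leq s \leq r$; then $(2^r,c) = 2^s$. The case $s \geq r$ is trivial: every summand equals $1$, $M[c] = M$, and both formulas collapse to $1 = \legendre{3}{1}_K$. Otherwise, since $cQ(x,y) \bmod \numZ$ depends only on $(x,y) \bmod 2^{r-s}$, each double sum equals $2^{2s}$ times the analogous sum over $(\numZ/2^{r-s}\numZ)^2$ with $c$ replaced by $u$, and the normalization factors (including $\sqrt{\abs{M[c]}} = 2^s$) absorb this multiplicity exactly. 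Thus it suffices to prove $\mathscr{G}_{\underline{C}_{2^k}}(u,0) = 1$ and $\mathscr{G}_{\underline{B}_{2^k}}(u,0) = \legendre{3}{2^k}_K$ for $u$ odd, where $k := r-s \geq 1$.

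For $\underline{C}_{2^k}$, the inner sum $\sum_{y=0}^{2^k-1} \etp{uxy/2^k}$ vanishes unless $x \equiv 0 \pmod{2^k}$ (since $u$ is a unit in $\numZ/2^k\numZ$), in which case it equals $2^k$. Hence the total double sum is $2^k$ and $\mathscr{G}_{\underline{C}_{2^k}}(u,0) = 2^k/(2^k \cdot 1) = 1$.

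For $\underline{B}_{2^k}$, the identity above rewrites $S_k(u) := \sum_{x,y=0}^{2^k-1} \etp{u(x^2+xy+y^2)/2^k}$ as $\sum_{x,y=0}^{2^k-1} \etp{u((2x+y)^2+3y^2)/2^{k+2}}$. For fixed $y$, the change of variables $w = 2x+y$ makes $w$ traverse each residue of $\numZ/2^{k+1}\numZ$ of the parity of $y$, exactly once. Splitting the outer sum by the parity of $y$: the odd-$y$ branch becomes a multiple of $\sum_{w'=0}^{2^k-1}\etp{u(w'^2+w')/2^k}$, which vanishes for $k \geq 2$ by the pairing $w' \leftrightarrow w' + 2^{k-1}$ (the shift alters the phase by the factor $-1$ since $u$ is odd); the case $k = 1$ is checked by direct inspection. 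The even-$y$ branch factorizes as $\bigl(\sum_{w'=0}^{2^k-1} \etp{uw'^2/2^k}\bigr) \cdot \bigl(\sum_{y'=0}^{2^{k-1}-1}\etp{3uy'^2/2^k}\bigr)$, and by the symmetry $y' \leftrightarrow y' + 2^{k-1}$ the second factor equals $\tfrac{1}{2} G_k(3u)$, where $G_k(a) := \sum_{w=0}^{2^k-1}\etp{aw^2/2^k}$. Hence $S_k(u) = \tfrac{1}{2} G_k(u) G_k(3u)$ for $k \geq 2$; evaluating each $G_k$ by \eqref{eq:classicalGauss} and simplifying with $\legendre{2}{3}_K = \legendre{3}{2}_K = -1$ yields $S_k(u) = 2^k \legendre{3}{2^k}_K$, giving the desired formula.

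The main obstacle is the careful bookkeeping in the parity split: the substitution $w = 2x+y$ spans the two moduli $2^k$ and $2^{k+1}$, and one must verify that each residue of the appropriate parity is attained exactly once, confirm the vanishing of the odd-$y$ branch via the modular pairing, and track the Kronecker-symbol identities that collapse the product $G_k(u) G_k(3u)$ into the single factor $\legendre{3}{2^k}_K$. The base case $k=1$, where the pairing does not apply, is handled by a brief direct computation confirming $S_1(u) = -2 = 2\legendre{3}{2}_K$.
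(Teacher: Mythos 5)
Your proposal is correct. Note, however, that the paper does not actually prove this proposition at all: its ``proof'' is a one-line citation to Str\"omberg \cite[Lemma 3.8]{Str13}. What you have written is therefore a genuinely self-contained alternative. Your reduction to odd $c$ is sound (with $c=2^su$, $s<r$, the summand $u(x^2+xy+y^2)/2^{r-s}$ depends only on $(x,y)\bmod 2^{r-s}$, and the factor $2^{2s}$ is exactly cancelled by $\sqrt{\abs{M}}\sqrt{\abs{M[c]}}=2^{r+s}$, leaving $2^{-k}S_k(u)$ with $k=r-s$); the $\underline{C}_{2^r}$ case by orthogonality is immediate; and the key identity $4(x^2+xy+y^2)=(2x+y)^2+3y^2$ together with the substitution $w=2x+y$ correctly splits $S_k(u)$ into an odd-$y$ branch that vanishes for $k\geq2$ (the shift $w'\mapsto w'+2^{k-1}$ changes $u(w'^2+w')/2^k$ by $uw'+u2^{k-2}+u/2$, hence the phase by $-1$) and an even-$y$ branch equal to $\frac{1}{2}G_k(u)G_k(3u)$. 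Evaluating via \eqref{eq:classicalGauss} gives $\frac{1}{2}\cdot 2^{k+1}\legendre{2^{k+1}}{3u^2}_K\etp{u/2}=2^k(-1)^{k+1}(-1)=2^k(-1)^k=2^k\legendre{3}{2^k}_K$, and I confirmed $S_1(u)=-2$ and $S_2(1)=4$ agree. What your approach buys is independence from Str\"omberg's structural machinery at the cost of a page of explicit bookkeeping; what the citation buys is brevity. If you flesh this out, make sure to state explicitly that $\etp{uw^2/2^{k+2}}$ is well-defined as a function of $w\bmod 2^{k+1}$ (so the change of variables is legitimate), and that you may take $u>0$ without loss of generality before invoking \eqref{eq:classicalGauss}.
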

\begin{proof}
See \cite[Lemma 3.8]{Str13}.
\end{proof}

Besides $\mathscr{G}_{\underline{M}}(c,x)$, we need the following type of Gauss sums:
\begin{deff}
Let $\underline{L}=(L,B)$ be an even integral lattice of rank $n\in\numgeq{Z}{1}$, and set $Q(v)=\frac{1}{2}B(v,v)$. Let $a,c\in\numZ$ with $\gcd(a,c)=1$ and $c\neq0$. Let $w\in L^\sharp$. We define
\begin{equation*}
\mathfrak{G}_{\underline{L}}(a/c;w)=\abs{c}^{-n/2}\sum_{v\in L/cL}\etp{\frac{a}{c}Q(w+v)}.
\end{equation*}
\end{deff}
Some remarks are in order. It is immediate that the sum $\sum_{v\in L/cL}\etp{\frac{a}{c}Q(w+v)}$ in above is well-defined. Moreover, if $w_1-w_2\in L$ then $\mathfrak{G}_{\underline{L}}(a/c;w_1)=\mathfrak{G}_{\underline{L}}(a/c;w_2)$, and hence $\mathfrak{G}_{\underline{L}}(a/c;w)$ is also defined if $w\in L^\sharp/L$. Finally, if $G$ is an even integral symmetric nondegenerate matrix of size $n$ and $c>0$, then $\mathfrak{G}_{\underline{L}}(a/c;\mathbf{w})=c^{-n/2}\mathfrak{G}_{G}(a/c;\mathbf{w})$ where $\underline{L}=(\numZ^n,(\mathbf{x},\mathbf{y})\mapsto\mathbf{x}^\tp G\mathbf{y})$ and where $\mathbf{w}\in L^\sharp=G^{-1}\numZ^n$.
\begin{prop}
\label{prop:GwtoG0}
Let $\underline{L}=(L,B)$ be an even integral lattice of rank $n\in\numgeq{Z}{1}$, and set $Q(v)=\frac{1}{2}B(v,v)$. Suppose $a,b,c,d\in\numZ$ and $w\in L^\sharp$ with $ad-bc=1$, $d\neq0$ and $bcw\in L$. Then
\begin{equation}
\label{eq:GwtoG0}
\mathfrak{G}_{\underline{L}}(b/d;w)=\etp{a^2bdQ(w)}\mathfrak{G}_{\underline{L}}(b/d;0).
\end{equation}
\end{prop}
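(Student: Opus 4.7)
The plan is to attack this by a single translation substitution in the summation index, chosen so that the hypothesis $bcw \in L$ and the relation $ad-bc=1$ combine to eliminate the cross term.

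First I would unfold the definition
\begin{equation*}
\mathfrak{G}_{\underline{L}}(b/d;w)=|d|^{-n/2}\sum_{v\in L/dL}\etp{\tfrac{b}{d}Q(w+v)}
\end{equation*}
and substitute $v = v' + bcw$. This is a valid substitution on the index set because $bcw\in L$ by assumption, so the map $v'\mapsto v'+bcw$ is a bijection of $L/dL$. Using $ad - bc = 1$, i.e.\ $1 + bc = ad$, the argument transforms as $w + v = w + v' + bcw = v' + adw$.

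Next I would expand $Q(v'+adw) = Q(v') + ad\,B(v',w) + a^2d^2\,Q(w)$ and substitute, giving
\begin{equation*}
\tfrac{b}{d}Q(v'+adw) = \tfrac{b}{d}Q(v') + ab\,B(v',w) + a^2bd\,Q(w).
\end{equation*}
The crucial observation is that $B(v',w)\in\numZ$ since $v'\in L$ and $w\in L^\sharp$, and $ab\in\numZ$, so $\etp{ab\,B(v',w)}=1$. Hence the cross term silently vanishes under $\mathfrak{e}$.

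Pulling the $v'$-independent factor $\etp{a^2bd\,Q(w)}$ out of the sum then leaves precisely $|d|^{-n/2}\sum_{v'\in L/dL}\etp{\tfrac{b}{d}Q(v')} = \mathfrak{G}_{\underline{L}}(b/d;0)$, which is the claimed identity. I should also note (and verify quickly) that the summand is well-defined on $L/dL$, but this is exactly the $n$-dimensional even-integral analogue of Lemma \ref{lemm:independentRepr} applied to $\mathfrak{G}_{\underline{L}}(b/d;w)$, and the integrality conditions $G\mathbf{w}\in\numZ^n$, $a\mathbf{x}\in\numZ^n$ are trivially satisfied here since $w\in L^\sharp$ and there is no $\mathbf{x}$.

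The main (and really only) obstacle is finding the right shift; every other step is a routine expansion. The natural instinct to complete the square by shifting $v$ by a multiple of $w$ is obstructed by the fact that $w\notin L$ in general, so arbitrary shifts by $w$ do not permute $L/dL$. The hypothesis $bcw\in L$ is precisely what makes $bcw$ an admissible shift, and the Bezout identity $1+bc=ad$ is precisely what converts this admissible shift into the clean form $adw$, at which point $a\cdot d$ absorbs the $1/d$ denominator from the quadratic form and the cross-term exponent lands in $\numZ$.
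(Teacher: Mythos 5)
Your proposal is correct and is essentially identical to the paper's own proof: the paper likewise rewrites $w+v=adw+(v-bcw)$ via $ad-bc=1$, uses the hypothesis $bcw\in L$ to justify the shift of the summation index over $L/dL$, expands $Q(adw+v)$, and discards the cross term because $B(w,v)\in\numZ$.
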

\begin{proof}
Since $ad-bc=1$ we have
\begin{equation*}
\mathfrak{G}_{\underline{L}}(b/d;w)=\abs{d}^{-n/2}\sum_{v\in L/dL}\etp{\frac{b}{d}Q(adw+(v-bcw))}.
\end{equation*}
Since $bcw\in L$, the map $L/dL\rightarrow L/dL$, $v+dL\mapsto v-bcw+dL$ is well-defined and bijective. Therefore
\begin{align*}
\mathfrak{G}_{\underline{L}}(b/d;w)&=\abs{d}^{-n/2}\sum_{v\in L/dL}\etp{\frac{b}{d}Q(adw+v)}\\
&=\abs{d}^{-n/2}\sum_{v\in L/dL}\etp{\frac{bQ(adw)+bB(adw,v)+bQ(v)}{d}}\\
&=\etp{a^2bdQ(w)}\mathfrak{G}_{\underline{L}}(b/d;0).
\end{align*}
\end{proof}

The roles of $\mathscr{G}_{\underline{M}}(c,x)$ and $\mathfrak{G}_{\underline{L}}(a/c;w)$ are shown in the following proposition.
\begin{prop}
\label{prop:rhoA1yx2}
Let $\underline{L}=(L,B)$ be a positive definite even integral lattice of rank $n\in\numgeq{Z}{1}$, and set $Q(v)=\frac{1}{2}B(v,v)$. Let $D_{\underline{L}}$ be the discriminant module of $\underline{L}$ (which is a finite quadratic module by Example \ref{examp:discmodule}). Let $A=\tbtmat{a}{b}{c}{d}\in\slZ$ with $d\neq 0$ and $bc\cdot L^\sharp\subseteq L$. Let $y,x\in D_{\underline{L}}$. Then we have\footnote{Note that $\legendre{c}{-1}_K=1,-1$ if $c\geq0$ and $c<0$, respectively. If $c\neq0$ we can replace $\legendre{c}{-1}_K$ by $\sgn{c}$. We use $\legendre{c}{-1}_K$ here to make the formula valid also for $c=0$.}
\begin{equation}
\label{eq:rhoA1yx2}
\rho_{\underline{L}}(A,1)_{y,x}=(-\rmi)^{\frac{n(1-\sgn{d})}{2}\legendre{c}{-1}_K}\cdot\mathfrak{G}_{\underline{L}}(b/d;y)\sqrt{\frac{\abs{D_{\underline{L}}[c]}}{\abs{D_{\underline{L}}}}}\mathscr{G}_{D_{\underline{L}}}(-cd,y-dx).
\end{equation}
\end{prop}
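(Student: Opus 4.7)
The plan is to apply Shintani's formula \eqref{eq:WeilReprShintani}, which expresses $\rho_{\underline{L}}(A,1)_{y,x}$ as a single sum over $t \in L/cL$ when $c\neq 0$, and to factor the resulting sum into the two Gauss sums appearing on the right-hand side. The case $c=0$ forces $a=d=\pm 1$ and $bcL^\sharp \subseteq L$ trivially; here $\rho_{\underline{L}}(A,1)$ acts essentially by $\etp{bQ(y)}$ on $\delta_y$, and both sides can be matched directly using the generators $\widetilde{T}, \widetilde{S}$ of $\mathrm{Mp}_2(\numZ)$ together with the observation that $\mathscr{G}_{D_{\underline{L}}}(0,\cdot)$ reduces to a delta function on the characters. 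The first main step is algebraic: using $ad-bc=1$ and the identity $Q(u-v)=Q(u)-B(u,v)+Q(v)$, a routine check gives
\[
\frac{aQ(y+t)-B(y+t,x)+dQ(x)}{c} \equiv \frac{bQ(y+t)}{d}+\frac{Q(y+t-dx)}{cd} \pmod{\numZ}.
\]
Together with the hypothesis $bcL^\sharp\subseteq L$ (which forces $ad\equiv 1$ as an endomorphism of $D_{\underline{L}}$, so that $d$ acts as a unit), one verifies that the rewritten exponent is still invariant under $t \mapsto t+cs$ for $s\in L$.

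Next, extend the sum from $L/cL$ to $L/cdL$ at the cost of a factor $\abs{d}^{-n}$. Since $\gcd(c,d)=1$, B\'ezout via $ad-bc=1$ gives the bijection $L/cL \times L/dL \cong L/cdL$ sending $(v,r) \mapsto t = adv - bcr$. Plugging in and using $bc\equiv -1 \pmod{d}$ together with $bcL^\sharp \subseteq L$ to kill cross terms modulo $\numZ$, the first summand $\frac{bQ(y+t)}{d}$ reduces to $\frac{bQ(y+r)}{d}$, independent of $v$; the $r$-sum then becomes exactly $\abs{d}^{n/2}\cdot\mathfrak{G}_{\underline{L}}(b/d;y)$. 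After a rescaling $v \mapsto z$ using that $c$ acts on $L^\sharp/L$ with kernel $D_{\underline{L}}[c]$, the remaining $v$-sum, together with a shift justified by Proposition \ref{prop:GwtoG0}, takes the form $\sum_{z\in D_{\underline{L}}}\etp{-cdQ(z)+B(y-dx,z)}$ up to an explicit prefactor, which by Definition \ref{def:GMcx} equals $\sqrt{\abs{D_{\underline{L}}}\cdot\abs{D_{\underline{L}}[-cd]}}\cdot\mathscr{G}_{D_{\underline{L}}}(-cd,y-dx)$.

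The final step is to reconcile all constants. Shintani contributes $(-\rmi\sgn{c})^{n/2}\cdot\abs{D_{\underline{L}}}^{-1/2}\cdot\abs{c}^{-n/2}$, the sum extension contributes $\abs{d}^{-n}$, and the magnitudes from the two Gauss sums combine via the identity $D_{\underline{L}}[cd]=D_{\underline{L}}[c]$ (since $d$ acts as a unit on $D_{\underline{L}}$) to yield the claimed prefactor $\sqrt{\abs{D_{\underline{L}}[c]}/\abs{D_{\underline{L}}}}$. The main obstacle, as I see it, is twofold. First, Step 2 is combinatorially delicate: the naive cross terms such as $b^2 c B(y+v,v-r)/d$ arising from the substitution $t=adv-bcr$ are not obviously integers, and handling them requires the congruences $bc\equiv -1\pmod{d}$ and $ad\equiv 1\pmod{c}$, together with the hypothesis $bcL^\sharp\subseteq L$, applied repeatedly and uniformly. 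Second, the residual eighth-root-of-unity factor $(-\rmi)^{\frac{n(1-\sgn{d})}{2}\legendre{c}{-1}_K}$ has to be pinned down by a case analysis on $(\sgn{c},\sgn{d})\in\{\pm 1\}^2$: Shintani carries $(-\rmi\sgn{c})^{n/2}$, but $\mathfrak{G}_{\underline{L}}(b/d;y)$ is normalized by $\abs{d}^{-n/2}$, which discards sign information on $d$, so a branch correction depending on both signs must be accounted for in order to recover the stated compact form.
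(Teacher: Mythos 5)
Your route --- applying Shintani's formula \eqref{eq:WeilReprShintani} directly to $A$ and splitting the sum over $L/cdL$ by the Chinese Remainder Theorem --- is genuinely different from the paper's, but it breaks down at the factorization step. With $t=adv-bcr$ the first exponent does reduce to $\frac{b}{d}Q(y+r)$ modulo $\numZ$, but the second exponent does \emph{not} become independent of $r$: writing $y+t-dx=(y-dx+adv)-bcr$ and expanding, one finds
\begin{equation*}
\frac{Q(y+t-dx)}{cd}\equiv\frac{Q(y-dx+adv)}{cd}-\frac{b\,B(y,r)}{d}-\frac{b\,Q(r)}{d}\pmod{\numZ},
\end{equation*}
and the leftover terms $-\frac{b}{d}\left(B(y,r)+Q(r)\right)$ are not integers in general (take $L=\numZ$, $B(x,y)=2xy$, $b=2$, $d=3$, $r=1$). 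They exactly cancel the $r$-dependent part of the first exponent, so the double sum collapses to $\abs{d}^{n}\etp{bQ(y)/d}\sum_{v\in L/cL}\etp{Q(y-dx+adv)/(cd)}$ rather than factoring into $\bigl(\sum_r\bigr)\cdot\bigl(\sum_v\bigr)$; no $\mathfrak{G}_{\underline{L}}(b/d;y)$ factor emerges from the $r$-sum. There is a second, independent gap: $\mathscr{G}_{D_{\underline{L}}}(-cd,y-dx)$ is a sum over the discriminant group $L^\sharp/L$, and no rescaling or change of variables converts a sum over $L/cL$ (or $L/cdL$) into one over $L^\sharp/L$ --- that passage requires Poisson summation, which is precisely the content of the reciprocity Lemma \ref{lemm:twoGaussSumRelation} and is nowhere invoked in your proposal.

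The paper sidesteps both issues: it factors the group element as $(A,1)=(\tbtmat{-b}{a}{-d}{c},1)\cdot(\tbtmat{0}{-1}{1}{0},1)\cdot(\tbtmat{1}{0}{0}{1},\varepsilon(c,d))$ and uses the homomorphism property of $\rho_{\underline{L}}$, so the matrix-product index $t$ automatically runs over $L^\sharp/L$ (producing $\mathscr{G}_{D_{\underline{L}}}(-cd,y-dx)$ after the substitution $t\mapsto dt$, which is bijective because $bc\cdot L^\sharp\subseteq L$), while Shintani applied to the factor $\tbtmat{-b}{a}{-d}{c}$ supplies the sum over $L/dL$ that gives $\mathfrak{G}_{\underline{L}}(b/d;y)$. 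This also covers $c=0$ uniformly and makes the sign factor $(-\rmi)^{\frac{n(1-\sgn{d})}{2}\legendre{c}{-1}_K}$ fall out of the $\varepsilon(c,d)$ bookkeeping rather than a branch analysis. If you insist on the direct approach, you would have to follow your collapsed single sum with the reciprocity lemma, at which point you are essentially reproving Lemma \ref{lemm:twoGaussSumRelation} inside this proposition.
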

\begin{proof}
Note that in $\mathrm{Mp}_2(\numZ)$ we have $(\tbtmat{a}{b}{c}{d},1)=(\tbtmat{-b}{a}{-d}{c},1)\cdot(\tbtmat{0}{-1}{1}{0},1)\cdot(\tbtmat{1}{0}{0}{1},\varepsilon(c,d))$, where $\varepsilon(c,d)=-1$ if both $c$ and $d$ are negative and $\varepsilon(c,d)=1$ otherwise. Therefore
\begin{equation}
\label{eq:rhoA1Temp}
\rho_{\underline{L}}(A,1)=\rho_{\underline{L}}(\tbtmat{-b}{a}{-d}{c},1)\rho_{\underline{L}}(\tbtmat{0}{-1}{1}{0},1)\rho_{\underline{L}}(\tbtmat{1}{0}{0}{1},\varepsilon(c,d)).
\end{equation}
If $\varepsilon(c,d)=1$ then
\begin{equation*}
\rho_{\underline{L}}(A,1)_{y,x}=\sum_{t\in D_{\underline{L}}}\rho_{\underline{L}}(\tbtmat{-b}{a}{-d}{c},1)_{y,t}\cdot\rho_{\underline{L}}(\tbtmat{0}{-1}{1}{0},1)_{t,x}.
\end{equation*}
Since $-d\neq0$, in above we replace $\rho_{\underline{L}}(\tbtmat{-b}{a}{-d}{c},1)_{y,t}$ by the right-hand side of \eqref{eq:WeilReprShintani} where $A,y,x$ are set to $\tbtmat{-b}{a}{-d}{c},y,t$ respectively, and replace $\rho_{\underline{L}}(\tbtmat{0}{-1}{1}{0},1)_{t,x}$ by the right-hand side of \eqref{eq:WeilReprShintani} where $A,y,x$ are set to $\tbtmat{0}{-1}{1}{0},t,x$ respectively. Thus we obtain
\begin{multline}
\label{eq:rhoA1yxTemp}
\rho_{\underline{L}}(A,1)_{y,x}=(-\rmi)^{\frac{n(1-\sgn{d})}{2}\legendre{c}{-1}_K}\cdot\abs{D_{\underline{L}}}^{-1}\abs{d}^{-n/2}\\
\times\sum_{t\in L^\sharp/L}\sum_{v\in L/dL}\etp{\frac{-bQ(y+v)-B(y+v,t)+cQ(t)}{-d}-B(t,x)},
\end{multline}
Since $bc\cdot L^\sharp\subseteq L$, the group homomorphism $D_{\underline{L}}\rightarrow D_{\underline{L}}$, $t\mapsto dt$ is an isomorphism with the inverse map $t\mapsto at$. Replacing $t$ by $dt$ in the right-hand side of \eqref{eq:rhoA1yxTemp} gives
\begin{multline*}
\rho_{\underline{L}}(A,1)_{y,x}=(-\rmi)^{\frac{n(1-\sgn{d})}{2}\legendre{c}{-1}_K}\\
\times\abs{d}^{-n/2}\sum_{v\in L/dL}\etp{\frac{b}{d}Q(y+v)}\abs{D_{\underline{L}}}^{-1}\sum_{t\in L^\sharp/L}\etp{-cdQ(t)+B(y-dx+v,t)}.
\end{multline*}
Inserting the definitions of $\mathfrak{G}_{\underline{L}}(b/d;y)$ and $\mathscr{G}_{D_{\underline{L}}}(-cd,y-dx)$ into the above equation and using the facts $D_{\underline{L}}[-cd]=D_{\underline{L}}[c]$ (because $bc\cdot L^\sharp\subseteq L$ and $ad-bc=1$) and $B(v,t)\in\numZ$, we obtain the desired formula.

Now consider the case $\varepsilon(c,d)=-1$, namely, $c$ and $d$ are negative. We have $(\tbtmat{1}{0}{0}{1},-1)=(\tbtmat{-1}{0}{0}{-1},1)^2=(\tbtmat{0}{-1}{1}{0},1)^4$. Thus $\rho_{\underline{L}}(\tbtmat{1}{0}{0}{1},-1)=s^4$ where $s$ is given by \eqref{eq:Weils}. It follows that $\rho_{\underline{L}}(\tbtmat{1}{0}{0}{1},-1)=(-1)^{n}\cdot \mathrm{Id}$ where $\mathrm{Id}$ is the identity element in $\mathrm{GL}(\numC[L^\sharp/L])$. It then follows from this and \eqref{eq:rhoA1Temp} that
\begin{equation*}
\rho_{\underline{L}}(A,1)_{y,x}=(-1)^n\sum_{t\in D_{\underline{L}}}\rho_{\underline{L}}(\tbtmat{-b}{a}{-d}{c},1)_{y,t}\cdot\rho_{\underline{L}}(\tbtmat{0}{-1}{1}{0},1)_{t,x}.
\end{equation*}
The remaining part of the proof is the same as in the case $\varepsilon(c,d)=1$, with the only difference being that the factor $(-\rmi)^{\frac{n(1-\sgn{d})}{2}\legendre{c}{-1}_K}$ now comes from $(-1)^n(-\rmi)^{n/2}(-\rmi)^{n/2}$.
\end{proof}
\begin{rema}
(1) See \cite[Lemma 2.14]{CZ25} for an alternative proof based on the Jacobi theta series associated with lattices.

(2) We have $$ay-x\not\in D_{\underline{L}}[c]^\bullet\Longleftrightarrow y-dx\not\in D_{\underline{L}}[-cd]^\bullet.$$
This can be proved by expanding the definitions of both sides, applying the change of variables $D_{\underline{L}}\rightarrow D_{\underline{L}}$, $t\mapsto dt$ and using the assumptions $bc\cdot L^\sharp\subseteq L$ and $ad-bc=1$.

(3) If $ay-x\not\in D_{\underline{L}}[c]^\bullet$, then $\mathscr{G}_{D_{\underline{L}}}(-cd,y-dx)=0$ by Proposition \ref{prop:StrombergscrG} and the last remark, and hence $\rho_{\underline{L}}(A,1)_{y,x}=0$ by \eqref{eq:rhoA1yx2}.

(4) If $\underline{L}=(\numZ^n,(\mathbf{x},\mathbf{y})\mapsto\mathbf{x}^\tp G\mathbf{y})$ where $G$ is an even integral symmetric nondegenerate matrix of size $n$, then $D_{\underline{L}}=G^{-1}\numZ^n/\numZ^n$, and $D_{\underline{L}}[c]=(G^{-1}\numZ^n\cap c^{-1}\numZ^n)/\numZ^n$.
\end{rema}

\subsection{Proof of Theorem \ref{thm:MilgramExtension}}
\label{subsec:proof_of_milgramextension}
Combining Lemma \ref{lemm:GaussSumWeilRepr} and Propositions \ref{prop:GwtoG0}, \ref{prop:rhoA1yx2}, we find that to deduce Theorem \ref{thm:main}, it suffices to calculate $\mathfrak{G}_{\underline{L}}(b/d;0)$ and $\mathscr{G}_{D_{\underline{L}}}(-cd,y-dx)$. The key step is Theorem \ref{thm:MilgramExtension}, which we restate in the following form.
\begin{thm}[Equivalent form of Theorem \ref{thm:MilgramExtension}]
\label{thm:MilgramExtension2}
Let $\underline{L}=(L,B)$ be a positive definite even integral lattice of rank $n\in\numgeq{Z}{1}$, and set $D=\det(\underline{L})=\abs{L^\sharp/L}$. If $c\in\numZ$ is coprime to $D$, then we have
\begin{equation}
\label{eq:MilgramExtension2}
\mathscr{G}_{\underline{L}}(c,0)=\legendre{D}{c}_K\cdot\etp{\frac{nc_0}{8}},
\end{equation}
where $c_0$ is the odd part of $c$.
\end{thm}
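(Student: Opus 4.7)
The plan is to reduce to the indecomposable Jordan summands of $L^\sharp/L$, evaluate $\mathscr{G}$ on each using Str\"omberg's formulas (Propositions \ref{prop:GApr}--\ref{prop:GBC2r}), and recombine with the help of the classical positive-definite Milgram formula \cite[Appendix 4]{MH73} (the case $c=\pm1$). The point is that although the intermediate expressions depend on the Jordan data, the final closed form does not.

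First, since $\gcd(c,D)=1$, multiplication by $c$ is an automorphism of $M:=L^\sharp/L$, so $M[c]=0$ and Definition \ref{def:GMcx} reduces to
\[\mathscr{G}_{\underline{L}}(c,0)=\frac{1}{\sqrt D}\sum_{y\in M}\etp{cQ(y)}.\]
Decompose $M\cong\bigoplus_j\underline{M}_j$ orthogonally into indecomposable finite quadratic modules (Example \ref{examp:discmodule} and Theorem \ref{thm:indecomposableFQM}) and apply Proposition \ref{prop:GMmultiplicative} to obtain $\mathscr{G}_{\underline{L}}(c,0)=\prod_j\mathscr{G}_{\underline{M}_j}(c,0)$. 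For each summand $\gcd(c,|\underline{M}_j|)=1$, and Propositions \ref{prop:GApr}--\ref{prop:GBC2r} specialize to
\begin{align*}
\mathscr{G}_{\underline{A}_{p^r}^a}(c,0)&=\legendre{2ac}{p^r}_K\etp{\tfrac{1-p^r}{8}}\ (p\text{ odd}), & \mathscr{G}_{\underline{A}_{2^r}^a}(c,0)&=\legendre{ac}{2^r}_K\etp{\tfrac{ac}{8}},\\
\mathscr{G}_{\underline{B}_{2^r}}(c,0)&=\legendre{3}{2^r}_K, & \mathscr{G}_{\underline{C}_{2^r}}(c,0)&=1.
\end{align*}
The $2$-adic summand types appear only when $2\mid D$, in which case $c$ must be odd.

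Next I would compare with $c=1$. The classical Milgram formula gives $\mathscr{G}_{\underline{L}}(1,0)=\etp{n/8}$, matching \eqref{eq:MilgramExtension2} at $c=1$. Dividing the per-summand evaluations by their $c=1$ counterparts, the ratio $\mathscr{G}_{\underline{L}}(c,0)/\mathscr{G}_{\underline{L}}(1,0)$ equals
\[\prod_j\legendre{c}{|\underline{M}_j|}_K\cdot\prod_{\underline{A}_{2^r}^a}\etp{\tfrac{a(c-1)}{8}},\]
because $|\underline{B}_{2^r}|=|\underline{C}_{2^r}|=2^{2r}$ makes their Kronecker contributions trivial for odd $c$. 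Multiplicativity of the Kronecker symbol in the denominator collapses the first product into $(c/D)_K$. Kronecker reciprocity converts $(c/D)_K$ into $(D/c)_K$ times an $8$th root of unity depending only on $c_0$, $D$, and signs; combined with the residual $\underline{A}_{2^r}^a$-phases and one further invocation of Milgram for the $2$-primary part of $M$ (which pins down $\sum a\bmod 8$ in terms of the $2$-adic contribution to $n$), this simplifies to precisely $(D/c)_K\etp{n(c_0-1)/8}$, establishing the theorem.

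The main obstacle lies in the bookkeeping at the prime $2$: the phases $\etp{ac/8}$ from the $\underline{A}_{2^r}^a$ summands, the two supplementary laws $\legendre{2}{c}_K$ and $\legendre{-1}{c}_K$ (which govern $c\bmod 8$), and the Milgram constraint on local $2$-adic invariants must all align so that the final expression depends only on $n\bmod 8$, $D\bmod 8$, and $c_0\bmod 8$. The case split between $D$ odd (where $c$ may be even and $c_0$ emerges from the identity $\legendre{c}{p^r}_K=\legendre{c_0}{p^r}_K\legendre{2}{p^r}_K^{v_2(c)}$ combined with reciprocity) and $2\mid D$ (where $c=c_0$ is odd and the work is concentrated in the $2$-adic signature identity) is where the most care is required. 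The indefinite version promised for this section will follow from the identical scheme with the signature of $\underline{L}$ in place of $n$ inside the exponent.
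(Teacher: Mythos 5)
Your strategy coincides with the paper's own: decompose the discriminant module into indecomposable Jordan summands (Theorem \ref{thm:indecomposableFQM}), evaluate $\mathscr{G}$ on each summand via Propositions \ref{prop:GApr}--\ref{prop:GBC2r} together with the multiplicativity of Proposition \ref{prop:GMmultiplicative}, and then use Milgram's formula to eliminate the Jordan data in favour of $n\bmod 8$ and $D$. Your specializations of the per-summand formulas are correct, and the ratio identity
\[
\mathscr{G}_{\underline{L}}(c,0)/\mathscr{G}_{\underline{L}}(1,0)=\legendre{c}{D}_K\cdot\prod_{\underline{A}_{2^r}^a}\etp{\tfrac{a(c-1)}{8}}
\]
is right; organizing the argument as one global comparison with $c=1$ is only a mild repackaging of the paper's summand-by-summand verification, not a different route.

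The gap is that your closing step --- reciprocity plus ``one further invocation of Milgram for the $2$-primary part'' yields ``precisely $\legendre{D}{c}_K\etp{n(c_0-1)/8}$'' --- is asserted rather than proved, and that identity is where the entire content of the theorem lives. Two ingredients are missing. First, Milgram's formula as cited applies to lattices, not to individual Jordan components of a finite quadratic module; to extract the congruence for $\sum a\bmod 8$ (and the analogous constraints tying $\legendre{c}{p^r}_K$-type factors for odd $p$ to $n\bmod 8$) one must first realize each indecomposable summand as the discriminant module of a positive definite even lattice, which the paper does via \cite[Corollary 5.11]{Zhu24}, recording the resulting rank congruences in Lemma \ref{lemm:rankMod8OfPositive DefLattice}. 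Second, the verification that Kronecker reciprocity (including both supplementary laws and the sign conventions for negative $c$) combined with those rank congruences collapses to $\legendre{D}{c}_K\etp{n(c_0-1)/8}$ is not a one-line cancellation: it splits into cases according to the summand type and the residues of $p$, $a$, and $c_0$ modulo $8$, which is exactly the eight-case analysis of Lemma \ref{lemm:MilgramExtension2Indecomp}. Until that case analysis (or an equivalent uniform identity) is actually written out, what you have is a correct and well-aimed plan, but the decisive computation has been deferred to ``bookkeeping'' that you acknowledge but do not perform.
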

This theorem implies Theorem \ref{thm:MilgramExtension}, for we have $D_{\underline{L}}[c]=\{0\}$ since $\gcd(c,D)=1$.

It is possible to deduce Theorem \ref{thm:MilgramExtension2} from \cite[Lemma 3.9]{Str13}. However, since Str\"omberg omitted the proof of \cite[Lemma 3.9]{Str13}, we prefer to provide an independent proof with more details.

We begin with $\underline{L}$ whose discriminant module is simple, in which case we need the following lemma.
\begin{lemm}[Lemma 5.4 in \cite{Zhu24}]
\label{lemm:rankMod8OfPositive DefLattice}
Let $\underline{L}=(L,B_L)$ be a positive definite even integral lattice of rank $n \in \mathbb{Z}_{\geq 1}$, and $D_{\underline{L}}$ be its discriminant module. Let $\underline{M}=(M,Q_M)$ be a finite quadratic module. Let $p$ be a prime and $a$ be an integer with $p\nmid a$, and let $r$ be a positive integer. Suppose that $D_{\underline{L}}$ and $\underline{M}$ are isometrically isomorphic.
\begin{enumerate}
\item If $\underline{M}$ is trivial, i.e., $\vert M \vert=1$, then $n \equiv 0 \bmod 8$.
\item If $\underline{M}=\underline{A}_{p^r}^a$, with $r$ even and $p > 2$, then $n \equiv 0 \bmod 8$.
\item If $\underline{M}=\underline{A}_{p^r}^a$, with $r$ odd and $p > 2$, then $n \equiv 3 - \legendre{-1}{p}_K - 2\legendre{a}{p}_K \bmod 8$.
\item If $\underline{M}=\underline{A}_{2^r}^a$, with $r$ even and $\gcd(a,2)=1$, then $n \equiv a \bmod 8$.
\item If $\underline{M}=\underline{A}_{2^r}^a$, with $r$ odd and $\gcd(a,2)=1$, then $n \equiv a + 2 - 2\legendre{a}{2}_K \bmod 8$.
\item If $\underline{M}=\underline{B}_{2^r}$, then $n \equiv 2 - 2(-1)^r \bmod 8$.
\item If $\underline{M}=\underline{C}_{2^r}$, then $n \equiv 0 \bmod 8$.
\end{enumerate}
\end{lemm}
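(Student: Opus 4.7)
The strategy is to apply Milgram's formula in its standard form (cited already in the excerpt, e.g., in the discussion around \eqref{eq:Weils} and in the statement of Theorem \ref{thm:MilgramExtension}): for any positive definite even integral lattice $\underline{L}=(L,B)$ of rank $n$,
\begin{equation*}
\sum_{v\in D_{\underline{L}}}\etp{Q(v)}=\sqrt{\abs{D_{\underline{L}}}}\cdot\etp{n/8}.
\end{equation*}
Since by hypothesis there is an isometric isomorphism $D_{\underline{L}}\cong\underline{M}$, the left-hand side coincides with $S(\underline{M}):=\sum_{x\in M}\etp{Q_M(x)}$. Thus the mod-$8$ value of $n$ is pinned down by computing $S(\underline{M})$ explicitly for each of the seven indecomposable types listed, and solving $S(\underline{M})/\sqrt{\abs{M}}=\etp{n/8}$ for $n\bmod 8$.

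Two cases are immediate. If $\abs{M}=1$, then $S(\underline{M})=1=\sqrt{\abs{M}}$, forcing $n\equiv 0\bmod 8$. For $\underline{C}_{2^r}$, summing $\sum_{x,y}\etp{xy/2^r}$ over $y$ first kills every $x\not\equiv 0\bmod{2^r}$, yielding $S=2^r=\sqrt{\abs{M}}$ and $n\equiv 0\bmod 8$. For $\underline{A}_{p^r}^a$ with odd $p$, the sum is the classical one-variable Gauss sum $\sum_{x=0}^{p^r-1}\etp{ax^2/p^r}$, to which I would apply the $c$-odd case of \eqref{eq:classicalGauss} and get $S=\sqrt{p^r}\cdot\legendre{2a}{p^r}_K\cdot\etp{(1-p^r)/8}$. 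Using $p^2\equiv 1\bmod 8$ one separates $r$ even (trivial Jacobi symbol, $p^r\equiv 1\bmod 8$, giving $n\equiv 0\bmod 8$) from $r$ odd (where $p^r\equiv p\bmod 8$ and $\legendre{2a}{p^r}_K=\legendre{2}{p}_K\legendre{a}{p}_K$); a direct check running $p$ through residues $1,3,5,7\bmod 8$ recovers $n\equiv 3-\legendre{-1}{p}_K-2\legendre{a}{p}_K\bmod 8$.

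For $\underline{A}_{2^r}^a$ the sum is $S=\sum_{x=0}^{2^r-1}\etp{ax^2/2^{r+1}}$. The observation $(x+2^r)^2\equiv x^2\bmod{2^{r+1}}$ shows this is half the full Gauss sum modulo $2^{r+1}$, so the $4\mid c$ case of \eqref{eq:classicalGauss} (with $c=2^{r+1}$) gives $S=\sqrt{2^r}\cdot\legendre{2}{a}_K^{r+2}\cdot\etp{a/8}$. For $r$ even this immediately yields $n\equiv a\bmod 8$. For $r$ odd, substituting $\legendre{2}{a}_K=\etp{(a^2-1)/16}$ produces $\etp{n/8}=\etp{(a^2+2a-1)/16}$; evaluating at $a\equiv 1,3,5,7\bmod 8$ reproduces $n\equiv a+2-2\legendre{a}{2}_K\bmod 8$. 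For $\underline{B}_{2^r}$ the form $x^2+xy+y^2$ does not diagonalize over $\numZ/2^r\numZ$, so I would pre-multiply the exponent by $4$ to complete the square, use $4(x^2+xy+y^2)=(2x+y)^2+3y^2$ to separate the variables after an appropriate change of summation range (and a compensating factor), reduce to one-variable Gauss sums modulo $2^{r+2}$, and read off $n\equiv 2-2(-1)^r\bmod 8$.

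The main obstacle is the $8$-th-root-of-unity bookkeeping: correctly tracking $\legendre{2a}{p^r}_K$, $\etp{(1-p^r)/8}$, and $\legendre{2}{a}_K$ through each split on $p\bmod 8$, $a\bmod 8$, and the parity of $r$. Every step is a finite verification, but a sign error anywhere changes the mod-$8$ residue, so each indecomposable type needs to be checked against the residues that appear in the Jacobi-symbol manipulations. The $\underline{B}_{2^r}$ computation is the most delicate, because the naive completion of the square introduces a factor of $4$ that must be compensated carefully when the modulus is a power of $2$.
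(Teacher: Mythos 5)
Your proposal is correct and follows essentially the same route as the paper: Milgram's formula reduces the claim to evaluating $\sum_{x\in M}\etp{Q_M(x)}$ for each indecomposable type, which is exactly what the paper does by invoking the $c=1$ cases of Propositions \ref{prop:GApr}--\ref{prop:GBC2r}. The only difference is that you re-derive those evaluations from the classical one-variable Gauss sum \eqref{eq:classicalGauss} (your mod-$8$ case checks are all consistent) rather than quoting Str\"omberg's formulas.
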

\begin{proof}
This is an immediate consequence of Milgram's formula and the $c=1$ cases of Propositions \ref{prop:GApr}--\ref{prop:GBC2r}. See \cite[Lemma 5.4]{Zhu24}.
\end{proof}

\begin{lemm}
\label{lemm:MilgramExtension2Indecomp}
Let the notations and assumptions be as in Theorem \ref{thm:MilgramExtension2}. Suppose that the discriminant module $D_{\underline{L}}$ of $\underline{L}$ is indecomposable as a finite quadratic module, then \eqref{eq:MilgramExtension2} holds.
\end{lemm}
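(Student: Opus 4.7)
My plan is to apply Theorem \ref{thm:indecomposableFQM}, which classifies the non-trivial indecomposable finite quadratic modules into the families $\underline{A}_{p^r}^a$ (for odd $p$), $\underline{A}_{2^r}^a$, $\underline{B}_{2^r}$, and $\underline{C}_{2^r}$. Since $\mathscr{G}_{\underline{L}}(c,0)=\mathscr{G}_{D_{\underline{L}}}(c,0)$ by definition, it suffices to verify the identity \eqref{eq:MilgramExtension2} separately in each of these cases (together with the trivial case $|D_{\underline{L}}|=1$, where both sides are $1$ using part (1) of Lemma \ref{lemm:rankMod8OfPositive DefLattice}).

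In each case the left-hand side is evaluated directly using Propositions \ref{prop:GApr}, \ref{prop:GA2r}, or \ref{prop:GBC2r}. The coprimality hypothesis $\gcd(c,D)=1$ is crucial: it forces $(p^r,c)=1$ (or $(2^r,c)=1$), which collapses each piecewise formula to a single clean expression and eliminates the degenerate branches. At the same time, Lemma \ref{lemm:rankMod8OfPositive DefLattice} determines $n\bmod 8$ in terms of the invariants of $D_{\underline{L}}$, which fixes $\etp{nc_0/8}$ on the right-hand side. Note also that whenever a $2$-adic module appears, the assumption $\gcd(c,D)=1$ forces $c$ odd, hence $c_0=c$.

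The remaining task is then a case-by-case matching of the two sides using elementary properties of the Kronecker-Jacobi symbol, notably quadratic reciprocity and the identities $\legendre{2}{c}_K=\legendre{c}{2}_K=(-1)^{(c^2-1)/8}$ for odd $c$, along with the observation that perfect-square numerators (appearing when $r$ is even) make $\legendre{D}{c}_K=1$. For example, in the case $\underline{A}_{2^r}^a$ with $r$ odd, after cancelling common factors, the identity reduces to checking
\begin{equation*}
\legendre{a}{2}_K=\etp{\frac{(1-\legendre{a}{2}_K)c}{4}},
\end{equation*}
which is immediate from $\legendre{a}{2}_K\in\{\pm1\}$ together with $c$ odd.

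The main obstacle I expect is the case $\underline{A}_{p^r}^a$ with $p$ odd and $r$ odd: here $\legendre{D}{c}_K=\legendre{p}{c}_K$ is a genuine non-trivial Kronecker symbol, and matching it against $\legendre{2ac}{p}_K$ (from Proposition \ref{prop:GApr}) together with the sign factor $\etp{(1-p^r)/8}$ requires invoking full Kronecker-Jacobi reciprocity (with attention to the possibly negative $c$) and simultaneously tracking the correction $\etp{n c_0/8}$ with $n\equiv 3-\legendre{-1}{p}_K-2\legendre{a}{p}_K\bmod 8$. The analogous analysis for $\underline{A}_{2^r}^a$ with $r$ odd is similarly the most delicate among the $2$-adic cases. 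All other subcases (trivial, $p$ odd with $r$ even, $\underline{A}_{2^r}^a$ with $r$ even, $\underline{B}_{2^r}$, $\underline{C}_{2^r}$) follow by more routine manipulations once the $n\bmod 8$ information from Lemma \ref{lemm:rankMod8OfPositive DefLattice} is substituted.
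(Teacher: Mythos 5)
Your proposal is correct and follows essentially the same route as the paper's proof: reduce to the classification of indecomposable finite quadratic modules (Theorem \ref{thm:indecomposableFQM}), evaluate $\mathscr{G}_{\underline{L}}(c,0)$ via Propositions \ref{prop:GApr}--\ref{prop:GBC2r} (where $\gcd(c,D)=1$ collapses the piecewise formulas), pin down $n\bmod 8$ via Lemma \ref{lemm:rankMod8OfPositive DefLattice}, and match the two sides case by case using reciprocity; you also correctly single out the odd-$r$ cases as the only ones requiring nontrivial symbol manipulation, which is exactly where the paper subdivides further according to $p\bmod 8$ (resp.\ $a\bmod 8$).
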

\begin{proof}
The indecomposable finite quadratic modules are exactly those listed\footnote{For when any two of these indecomposable modules are isometrically isomorphic to each other, cf. \cite[Remark 3.13]{Zhu24}.} in Theorem \ref{thm:indecomposableFQM}. Hence the proof is divided into the following cases: (we suppose $c=2^ec_0$)

\textbf{Case (i)} $D_{\underline{L}}$ is trivial. Then $\mathscr{G}_{\underline{L}}(c,0)=1$. By Lemma \ref{lemm:rankMod8OfPositive DefLattice}(1) and the fact $D=1$, \eqref{eq:MilgramExtension2} holds in this case.

\textbf{Case (ii)} $D_{\underline{L}}\cong\underline{A}_{p^r}^a$ where $p$ is an odd prime, $r$ is even. By Proposition \ref{prop:GApr} and Lemma \ref{lemm:rankMod8OfPositive DefLattice}(2), \eqref{eq:MilgramExtension2} holds with both sides equal to $1$.

\textbf{Case (iii)} $D_{\underline{L}}\cong\underline{A}_{p^r}^a$ where $p$ is an odd prime, $r$ is odd, and $\legendre{a}{p}_K=1$. By Proposition \ref{prop:GApr} we have
\begin{equation}
\label{eq:indecompTemp1}
\mathscr{G}_{\underline{L}}(c,0)=\mathscr{G}_{\underline{A}_{p^r}^a}(c,0)=\legendre{2c}{p}_K\cdot\etp{\frac{1-p}{8}}=\legendre{2^{e+1}}{p}_K\legendre{c_0}{p}_K\cdot\etp{\frac{1-p}{8}}.
\end{equation}
By Lemma \ref{lemm:rankMod8OfPositive DefLattice}(3) we have
\begin{equation}
\label{eq:indecompTemp2}
\legendre{D}{c}_K\cdot\etp{\frac{nc_0}{8}}=\legendre{p}{2^e}_K\legendre{p}{c_0}_K\cdot\etp{\frac{\left(1-\legendre{-1}{p}_K\right)c_0}{8}}.
\end{equation}
If $p\equiv1\bmod 8$, then the right-hand sides of both \eqref{eq:indecompTemp1} and \eqref{eq:indecompTemp2} are equal to $\legendre{c_0}{p}_K$, and hence \eqref{eq:MilgramExtension2} holds. If $p\equiv3\bmod 8$, then the right-hand sides of both \eqref{eq:indecompTemp1} and \eqref{eq:indecompTemp2} are equal to $\rmi\cdot\legendre{2^{e}}{p}_K\legendre{c_0}{p}_K$, where we have used the Jacobi reciprocity law. Therefore, \eqref{eq:MilgramExtension2} holds again. The proofs for the cases $p\equiv5,7\bmod 8$ are similar.

\textbf{Case (iv)} $D_{\underline{L}}\cong\underline{A}_{p^r}^a$ where $p$ is an odd prime, $r$ is odd, and $\legendre{a}{p}_K=-1$. By Proposition \ref{prop:GApr} we have
\begin{equation*}
%\label{eq:indecompTemp3}
\mathscr{G}_{\underline{L}}(c,0)=\mathscr{G}_{\underline{A}_{p^r}^a}(c,0)=-\legendre{2^{e+1}}{p}_K\legendre{c_0}{p}_K\cdot\etp{\frac{1-p}{8}}.
\end{equation*}
By Lemma \ref{lemm:rankMod8OfPositive DefLattice}(3) we have
\begin{equation*}
%\label{eq:indecompTemp4}
\legendre{D}{c}_K\cdot\etp{\frac{nc_0}{8}}=\legendre{p}{2^e}_K\legendre{p}{c_0}_K\cdot\etp{\frac{\left(5-\legendre{-1}{p}_K\right)c_0}{8}}.
\end{equation*}
As in the case (iii), we divide the proof into four subcases according to $p\bmod8$, and thus find that \eqref{eq:MilgramExtension2} holds.

\textbf{Case (v)} $D_{\underline{L}}\cong\underline{A}_{2^r}^a$ where $r$ is even, and $2\nmid a$. By Proposition \ref{prop:GA2r},
\begin{equation}
\label{eq:indecompTemp3}
\mathscr{G}_{\underline{L}}(c,0)=\mathscr{G}_{\underline{A}_{2^r}^a}(c,0)=\etp{\frac{ac}{8}}=\etp{\frac{ac_0}{8}}.
\end{equation}
By Lemma \ref{lemm:rankMod8OfPositive DefLattice}(4),
\begin{equation}
\label{eq:indecompTemp4}
\legendre{D}{c}_K\cdot\etp{\frac{nc_0}{8}}=\etp{\frac{ac_0}{8}}.
\end{equation}
Now \eqref{eq:MilgramExtension2} follows from \eqref{eq:indecompTemp3} and \eqref{eq:indecompTemp4}.

\textbf{Case (vi)} $D_{\underline{L}}\cong\underline{A}_{2^r}^a$ where $r$ is odd, and $2\nmid a$. By Proposition \ref{prop:GA2r},
\begin{equation}
\label{eq:indecompTemp5}
\mathscr{G}_{\underline{L}}(c,0)=\mathscr{G}_{\underline{A}_{2^r}^a}(c,0)=\legendre{a}{2}_K\legendre{c_0}{2}_K\cdot\etp{\frac{ac_0}{8}}.
\end{equation}
By Lemma \ref{lemm:rankMod8OfPositive DefLattice}(5),
\begin{equation}
\label{eq:indecompTemp6}
\legendre{D}{c}_K\cdot\etp{\frac{nc_0}{8}}=\legendre{2}{c_0}_K\cdot\etp{\frac{\left(a + 2 - 2\legendre{a}{2}_K\right)c_0}{8}}.
\end{equation}
We divide the proof into four subcases according to $a\bmod8$, and thus find that the right-hand sides of \eqref{eq:indecompTemp5} and \eqref{eq:indecompTemp6} are equal to each other, from which \eqref{eq:MilgramExtension2} follows.

\textbf{Case (vii)} $D_{\underline{L}}\cong\underline{B}_{2^r}$. The proof is based on Proposition \ref{prop:GBC2r} and Lemma \ref{lemm:rankMod8OfPositive DefLattice}(6). The detail is omitted.

\textbf{Case (viii)} $D_{\underline{L}}\cong\underline{C}_{2^r}$. The proof is based on Proposition \ref{prop:GBC2r} and Lemma \ref{lemm:rankMod8OfPositive DefLattice}(7). The detail is omitted as well.
\end{proof}

%The following lemma let us deduce Theorem \ref{thm:MilgramExtension2} form the special case Lemma \ref{lemm:MilgramExtension2Indecomp}.
%\begin{lemm}
%Let $\underline{L}=(L,B)$ be a positive definite even integral lattice of rank $n\in\numgeq{Z}{1}$, and let $D_{\underline{L}}$ be its discriminant module (as a finite quadratic module). Suppose $D_1$ and $D_2$ be two submodules with $D_{\underline{L}}=D_1\oplus D_2$ and $D_1\perp D_2$. Let $\underline{L}_1=(L_1,B_1)$ and $\underline{L}_2=(L_2,B_2)$ be positive definite even integral lattices such that $D_{\underline{L}_j}\cong D_j$, $j=1,2$. Then
%\end{lemm}

\begin{proof}[Proof of Theorem \ref{thm:MilgramExtension2} (Theorem \ref{thm:MilgramExtension})]
By Theorem \ref{thm:indecomposableFQM}, there exist mutually orthogonal submodules $D_j$ of $D_{\underline{L}}$ such that $D_{\underline{L}}=\bigoplus_{j}D_j$ and that each $(D_j,Q\vert_{D_j})$ is an indecomposable module listed in Theorem \ref{thm:indecomposableFQM}. By \cite[Corollary 5.11]{Zhu24}, there exists a positive definite even integral lattice $\underline{L}_j$ such that $D_{\underline{L}_j}$ is isometrically isomorphic to $D_j$, for each $j$. Taking into account Proposition \ref{prop:GMmultiplicative} and Lemma \ref{lemm:MilgramExtension2Indecomp}, we have
\begin{equation}
\label{eq:MilgramExtensionTemp}
\mathscr{G}_{\underline{L}}(c,0)=\prod_j\mathscr{G}_{\underline{L}_j}(c,0)=\prod_j\legendre{\det{\underline{L}_j}}{c}_K\etp{\frac{n_jc_0}{8}}=\legendre{D}{c}_K\etp{\frac{\sum_jn_jc_0}{8}},
\end{equation}
where $\det{\underline{L}_j}=\abs{L_j^\sharp/L_j}$ and $n_j=\rank(L_j)$. Since the discriminant modules of $\underline{L}$ and of the orthogonal (external) direct sum of $\underline{L}_j$ are isometrically isomorphic, we have $n\equiv\sum_jn_j\bmod{8}$ according to Milgram's formula (cf. e.g. \cite[p. 127]{MH73}). Inserting this into \eqref{eq:MilgramExtensionTemp} we obtain \eqref{eq:MilgramExtension2}, as desired.
\end{proof}

\begin{rema}
\label{rema:MilgramExtension2indef}
Theorem \ref{thm:MilgramExtension2} can be adapted to include indefinite lattices. Let $\underline{L}=(L,B)$ be an even integral lattice of signature\footnote{The signature $\sigma$ of $\underline{L}=(L,B)$ is defined as follows. We choose a particular basis $(e_j)$ of $\numR\otimes L$ such that the Gram matrix $(B(e_j,e_j))_{i,j}$ is diagonal. Then $\sigma$ is the number of positive entries minus the number of negative entries. It is independent of the choice of  $(e_j)$ by the Jacobi-Sylvester Inertia Theorem. If $\underline{L}$ is positive definite, then $\sigma=\rank(L)$.} $\sigma\in\numZ$. Then \eqref{eq:MilgramExtension2} should be adapted to
\begin{equation}
\label{eq:GLc0indefinite}
\mathscr{G}_{\underline{L}}(c,0)=\legendre{\abs{L^\sharp/L}}{c}_K\cdot\etp{\frac{\sigma c_0}{8}}.
\end{equation}
This general formula follows directly from the special case \eqref{eq:MilgramExtension2}. We choose a positive definite even integral lattice $\underline{\Lambda}$ such that $D_{\underline{\Lambda}}\cong D_{\underline{L}}$ by \cite[Corollary 5.11]{Zhu24}. Then $\mathscr{G}_{\underline{L}}(c,0)=\mathscr{G}_{\underline{\Lambda}}(c,0)$ by the definition. Now we apply Theorem \ref{thm:MilgramExtension2} to $\underline{\Lambda}$ and then replace $\rank(\Lambda)$ by $\sigma$ since they are congruent to each other modulo $8$ due to Milgram's formula.
\end{rema}

\section{Main Theorem: Explicit formulas for quadratic Gauss sums over $\numZ^n/c\numZ^n$}
\label{sec:main_theorem_explicit_formulas_for_quadratic_gauss_sums_over_numz_n_c_numz_n_}

\subsection{Lemmas}
We shall prove Theorem \ref{thm:main} in this section. As we have seen at the beginning of \S\ref{subsec:proof_of_milgramextension}, we need to calculate $\mathfrak{G}_{\underline{L}}(b/d;0)$ and $\mathscr{G}_{D_{\underline{L}}}(-cd,y-dx)$. The latter was almost done in Theorem \ref{thm:MilgramExtension2} (Theorem \ref{thm:MilgramExtension}), while the former relies on the following lemma.
\begin{lemm}
\label{lemm:twoGaussSumRelation}
Let $\underline{L}=(L,B)$ be an even integral lattice of signature $\sigma$ (set $Q(v)=\frac{1}{2}B(v,v)$) and let $D_{\underline{L}}$ be its discriminant module. Let $c$ be a nonzero integer and $x\in L^\sharp$. If $x+L\in D_{\underline{L}}[c]^\bullet$, then
\begin{equation}
\label{eq:reciprocity}
\mathscr{G}_{\underline{L}}(c,x)\cdot\mathfrak{G}_{\underline{L}}(1/c;x)=\etp{\frac{\sigma\sgn{c}}{8}}\cdot\sqrt{\abs{D_{\underline{L}}[c]}}.
\end{equation}
In particular, if $cQ(z)\in\numZ$ for all $z+L\in D_{\underline{L}}[c]$, then \eqref{eq:reciprocity} holds for $x=0$.
\end{lemm}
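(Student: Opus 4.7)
The plan is to establish the stronger identity
\[
\mathfrak{G}_{\underline{L}}(1/c;x)=\etp{\sigma\cdot\sgn{c}/8}\cdot\sqrt{\abs{D_{\underline{L}}[c]}}\cdot\mathscr{G}_{\underline{L}}(-c,x),
\]
valid for every $x\in L^\sharp$ and every nonzero $c\in\numZ$ (no $D_{\underline{L}}[c]^\bullet$ hypothesis is needed for this half), and then combine it with the separate computation $\mathscr{G}_{\underline{L}}(c,x)\cdot\mathscr{G}_{\underline{L}}(-c,x)=\abs{\mathscr{G}_{\underline{L}}(c,x)}^2=1$, which is precisely where the $D_{\underline{L}}[c]^\bullet$ assumption enters, to deduce the lemma.

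For the stronger identity I would first check that $\tilde{Q}(z):=Q(z)/c\bmod\numZ$ is a well-defined nondegenerate quadratic form on $L^\sharp/cL$, making $\tilde{\underline{M}}=(L^\sharp/cL,\tilde{Q})$ into a finite quadratic module of order $\abs{c}^n\abs{D_{\underline{L}}}$ with associated bilinear form $\tilde{B}(z_1,z_2)=B(z_1,z_2)/c$. Then I apply Theorem~\ref{thm:PoissonSummationFinite} to the function $\phi(z)=\etp{\tilde{Q}(z)}$ on $\tilde{\underline{M}}$ with the subgroup $H=L/cL$. The left side of the Poisson formula equals $\abs{c}^{-n/2}\mathfrak{G}_{\underline{L}}(1/c;x)$. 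The characters of $\tilde{\underline{M}}$ that vanish on $H$ are exactly $\chi_\nu(z)=\etp{B(\nu,z)}$ with $\nu$ running through $D_{\underline{L}}=L^\sharp/L$; a completion of squares gives
\[
\hat{\phi}(\chi_\nu)=\sum_{z\in L^\sharp/cL}\etp{Q(z)/c-B(\nu,z)}=\etp{-cQ(\nu)}\cdot G(\tilde{\underline{M}}),
\]
where $G(\tilde{\underline{M}}):=\sum_{w\in\tilde{\underline{M}}}\etp{\tilde{Q}(w)}$ is the full Gauss sum.

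The crux is to identify the Milgram phase $G(\tilde{\underline{M}})=\sqrt{\abs{\tilde{\underline{M}}}}\cdot\etp{\sigma_{\tilde{\underline{M}}}/8}$. I claim $\sigma_{\tilde{\underline{M}}}\equiv\sigma\cdot\sgn{c}\bmod 8$, and I prove this via the rescaled lattice $\underline{L}(c):=(L,cB)$: this lattice is even integral (since $cQ(v)\in\numZ$ for $v\in L$) and has signature $\sigma\cdot\sgn{c}$ because positive scaling preserves signature while negative scaling reverses it. Its dual is $(1/c)L^\sharp$, so its discriminant module is $D_{\underline{L}(c)}=(1/c)L^\sharp/L$ with form $y/c\mapsto cQ(y/c)=Q(y)/c$; the assignment $y/c+L\mapsto y+cL$ is an isometric isomorphism $D_{\underline{L}(c)}\to\tilde{\underline{M}}$, and Milgram's formula applied to $\underline{L}(c)$ then supplies the asserted signature. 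Substituting $G(\tilde{\underline{M}})$ back into the Poisson formula and performing the bijection $\nu\mapsto-\nu$ (which together with $B(-\nu,x)=-B(\nu,x)$ produces the defining sum of $\mathscr{G}_{\underline{L}}(-c,x)$) yields the stronger identity after elementary simplification.

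The evaluation $\abs{\mathscr{G}_{\underline{L}}(c,x)}^2=1$ is then a direct computation: expanding the product $\mathscr{G}_{\underline{L}}(c,x)\mathscr{G}_{\underline{L}}(-c,x)$ as a double sum over $D_{\underline{L}}\times D_{\underline{L}}$, substituting $z=y-y'$, and applying $\sum_{y'\in D_{\underline{L}}}\etp{cB(y',z)}=\abs{D_{\underline{L}}}$ if $z\in D_{\underline{L}}[c]$ and $0$ otherwise reduces the expression to $\abs{D_{\underline{L}}[c]}^{-1}\sum_{z\in D_{\underline{L}}[c]}\etp{cQ(z)+B(x,z)}$, and the hypothesis $x+L\in D_{\underline{L}}[c]^\bullet$ says precisely that each exponent lies in $\numZ$, forcing the sum to be $\abs{D_{\underline{L}}[c]}$. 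Multiplying the stronger identity through by $\mathscr{G}_{\underline{L}}(c,x)$ finishes the proof; the ``in particular'' clause is then the special case $x=0$, because $0+L\in D_{\underline{L}}[c]^\bullet$ unfolds to exactly the condition $cQ(z)\in\numZ$ for every $z+L\in D_{\underline{L}}[c]$. The one nontrivial step is the signature identification $\sigma_{\tilde{\underline{M}}}\equiv\sigma\cdot\sgn{c}\bmod 8$ via the rescaled-lattice trick; once that is in hand, everything else is application of the finite-group Poisson formula and elementary bookkeeping.
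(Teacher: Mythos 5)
Your proof is correct, but it takes a genuinely different route from the paper's. The paper chooses an auxiliary sublattice $\Lambda\subseteq cL$ that is an orthogonal direct sum of rank-one lattices, relates $\mathscr{G}_{\underline{\Lambda}}(c,x)$ to the product $\mathfrak{G}_{\underline{L}}(1/c;x)\cdot\mathscr{G}_{\underline{L}}(c,x)$ through two rounds of coset decomposition and character orthogonality (that is where the hypothesis $x+L\in D_{\underline{L}}[c]^\bullet$ enters there), and then evaluates $\mathscr{G}_{\underline{\Lambda}}(c,x)$ directly by reducing to the classical one-variable Gauss sum \eqref{eq:classicalGauss}, so the phase $\etp{\sigma\sgn{c}/8}$ is re-derived from Gauss's evaluation rather than quoted. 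You instead run the finite Poisson summation formula on the finite quadratic module $(L^\sharp/cL,\,Q(\cdot)/c)$, identify it as the discriminant module of the rescaled lattice $(L,cB)$ so that the general (indefinite) Milgram formula supplies the phase, and isolate the role of the hypothesis $x+L\in D_{\underline{L}}[c]^\bullet$ in a separate unitarity computation $\mathscr{G}_{\underline{L}}(c,x)\mathscr{G}_{\underline{L}}(-c,x)=1$. The individual steps check out: $Q(\cdot)/c$ is well defined and nondegenerate on $L^\sharp/cL$; the characters trivial on $L/cL$ are exactly $z\mapsto\etp{B(\nu,z)}$ with $\nu\in L^\sharp/L$; the completion of squares $Q(z)/c-B(\nu,z)=Q(z-c\nu)/c-cQ(\nu)$ is valid because $c\nu\in L^\sharp$; and there is no circularity, since Lemma \ref{lemm:twoGaussSumRelation} is not used in the paper's proof of Theorem \ref{thm:MilgramExtension2}, while Milgram's formula is an external classical input the paper also relies on elsewhere. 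Your route buys a shorter argument, a transparent explanation of what the $D_{\underline{L}}[c]^\bullet$ condition is for, and the unconditional intermediate identity $\mathfrak{G}_{\underline{L}}(1/c;x)=\etp{\sigma\sgn{c}/8}\sqrt{\abs{D_{\underline{L}}[c]}}\,\mathscr{G}_{\underline{L}}(-c,x)$ valid for all $x\in L^\sharp$, which in particular shows $\mathfrak{G}_{\underline{L}}(1/c;x)=0$ whenever $x+L\notin D_{\underline{L}}[c]^\bullet$. The paper's route buys independence from the indefinite case of Milgram's formula inside this lemma, deriving the needed eighth root of unity from the one-dimensional Gauss sum alone.
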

\begin{proof}
We set $V=\numQ\otimes_\numZ L$ and extend $B$ to $V$ by bilinearity. Let $\Lambda$ be an arbitrary lattice in $(V,B)$ that is included in $L$. Then
\begin{equation*}
\Lambda\subseteq L\subseteq L^\sharp\subseteq\Lambda^\sharp.
\end{equation*}
In this sequence, each lattice is of finite index in the next lattice and $[L\colon\Lambda]=[\Lambda^\sharp\colon L^\sharp]$. Moreover, $\Lambda$ is even integral since $L$ is, and the signature of $\Lambda$ is $\sigma$ as well. Let us now compare $\mathscr{G}_{\underline{L}}(c,x)$ and $\mathscr{G}_{\underline{\Lambda}}(c,x)$ where $x\in L^\sharp$.

Let $\mathscr{R}_{\Lambda^\sharp/L}$ be a complete set of coset representatives of $\Lambda^\sharp/L$. Similarly, we fix some $\mathscr{R}_{L/\Lambda}$.
The following map
\begin{equation*}
\mathscr{R}_{\Lambda^\sharp/L}\times \mathscr{R}_{L/\Lambda}\rightarrow\Lambda^\sharp/\Lambda,\quad (y_1,y_2)\mapsto y_1+y_2+\Lambda
\end{equation*}
is a bijection. Applying the change of variables corresponding to this bijection, we find that
\begin{align*}
\sum_{y+\Lambda\in\Lambda^\sharp/\Lambda}\etp{cQ(y)+B(y,x)}&=\sum_{y_1\in\mathscr{R}_{\Lambda^\sharp/L}}\etp{cQ(y_1)+B(y_1,x)}\sum_{y_2\in\mathscr{R}_{L/\Lambda}}\etp{B(y_2,cy_1+x)}\\
&=\sum_{\twoscript{y_1\in\mathscr{R}_{\Lambda^\sharp/L}}{cy_1+x\in L^\sharp}}\etp{cQ(y_1)+B(y_1,x)}\cdot\abs{L/\Lambda},
\end{align*}
where we have used the orthogonality relation of the characters of $L/\Lambda$. In other words,
\begin{equation*}
\sum_{y+\Lambda\in\Lambda^\sharp/\Lambda}\etp{cQ(y)+B(y,x)}=\abs{L/\Lambda}\cdot\sum_{y+L\in(\Lambda^\sharp\cap c^{-1}L^\sharp)/L}\etp{cQ(y)+B(y,x)}.
\end{equation*}
In above we apply the change of variables corresponding to the bijection
\begin{equation*}
\mathscr{R}_{(\Lambda^\sharp\cap c^{-1}L^\sharp)/L^\sharp}\times \mathscr{R}_{L^\sharp/L}\rightarrow(\Lambda^\sharp\cap c^{-1}L^\sharp)/L,\quad (y_1,y_2)\mapsto y_1+y_2+L,
\end{equation*}
and thus obtain
\begin{multline*}
\sum_{y+\Lambda\in\Lambda^\sharp/\Lambda}\etp{cQ(y)+B(y,x)}=\abs{L/\Lambda}\\
\cdot\sum_{y_1\in\mathscr{R}_{(\Lambda^\sharp\cap c^{-1}L^\sharp)/L^\sharp}}\etp{cQ(y_1)+B(y_1,x)}\sum_{y_2\in\mathscr{R}_{L^\sharp/L}}\etp{cQ(y_2)+B(y_2,x+cy_1)}.
\end{multline*}
Namely,
\begin{equation*}
\mathscr{G}_{\underline{\Lambda}}(c,x)=\sqrt{\frac{\abs{D_{\underline{L}}[c]}}{\abs{D_{\underline{\Lambda}}[c]}}}\cdot\sum_{y\in\mathscr{R}_{(\Lambda^\sharp\cap c^{-1}L^\sharp)/L^\sharp}}\etp{cQ(y)+B(y,x)}\mathscr{G}_{\underline{L}}(c,x+cy).
\end{equation*}
In the right-hand side we can assume that $x+cy+L\in D_{\underline{L}}[c]^\bullet$ by Proposition \ref{prop:StrombergscrG}. It follows from the condition $x+L\in D_{\underline{L}}[c]^\bullet$ that
\begin{equation*}
x+cy+L\in D_{\underline{L}}[c]^\bullet\Longleftrightarrow cy+L\in D_{\underline{L}}[c]^*\Longleftrightarrow y\in L^\sharp+c^{-1}L.
\end{equation*}
We now choose $\Lambda$ such that $\Lambda\subseteq cL$, and hence $L^\sharp+c^{-1}L\subseteq c^{-1}L^\sharp=\Lambda^\sharp\cap c^{-1}L^\sharp$. Consequently,
\begin{equation}
\label{eq:reciprocityTemp}
\mathscr{G}_{\underline{\Lambda}}(c,x)=\sqrt{\frac{\abs{D_{\underline{L}}[c]}}{\abs{D_{\underline{\Lambda}}[c]}}}\cdot\sum_{y+L^\sharp\in (L^\sharp+c^{-1}L)/L^\sharp}\etp{cQ(y)+B(y,x)}\mathscr{G}_{\underline{L}}(c,x+cy),
\end{equation}
provided that $\Lambda\subseteq cL$.
Since $c^{-1}L/(c^{-1}L\cap L^\sharp)\rightarrow (L^\sharp+c^{-1}L)/L^\sharp$, $y+(c^{-1}L\cap L^\sharp)\mapsto y+L^\sharp$ is a group isomorphism, \eqref{eq:reciprocityTemp} can be simplified to
\begin{equation}
\label{eq:relationGLGLambda}
\mathscr{G}_{\underline{\Lambda}}(c,x)=\sqrt{\frac{\abs{D_{\underline{L}}[c]}}{\abs{D_{\underline{\Lambda}}[c]}}}\cdot\mathfrak{g}\cdot\mathscr{G}_{\underline{L}}(c,x),
\end{equation}
where
\begin{align*}
\mathfrak{g}&=\sum_{y+(c^{-1}L\cap L^\sharp)\in c^{-1}L/(c^{-1}L\cap L^\sharp)}\etp{cQ(y)+B(y,x)}\\
&=\sum_{y+(L\cap cL^\sharp)\in L/(L\cap cL^\sharp)}\etp{\frac{Q(y)+B(y,x)}{c}}.
\end{align*}
We now calculate $\mathfrak{g}$. Using the change of variables corresponding to the bijection
\begin{equation*}
\mathscr{R}_{L/(L\cap cL^\sharp)}\times \mathscr{R}_{(L\cap cL^\sharp)/cL}\rightarrow L/cL,\quad (y_1,y_2)\mapsto y_1+y_2+cL,
\end{equation*}
we find that, since $B(y_1,y_2)/c\in\numZ$,
\begin{multline*}
\sum_{y+cL\in L/cL}\etp{\frac{Q(y)+B(y,x)}{c}}\\
=\sum_{y_1\in\mathscr{R}_{L/(L\cap cL^\sharp)}}\etp{\frac{Q(y_1)+B(y_1,x)}{c}}\cdot\sum_{y_2\in\mathscr{R}_{(L\cap cL^\sharp)/cL}}\etp{\frac{Q(y_2)+B(y_2,x)}{c}}.
\end{multline*}
By the condition $x+L\in D_{\underline{L}}[c]^\bullet$, we have $\etp{\frac{Q(y_2)+B(y_2,x)}{c}}=1$ for all $y_2\in\mathscr{R}_{(L\cap cL^\sharp)/cL}$. Therefore,
\begin{align}
\mathfrak{g}=\sum_{y_1\in\mathscr{R}_{L/(L\cap cL^\sharp)}}\etp{\frac{Q(y_1)+B(y_1,x)}{c}}&=\abs{(L\cap cL^\sharp)/cL}^{-1}\sum_{y+cL\in L/cL}\etp{\frac{Q(y)+B(y,x)}{c}}\notag\\
&=\abs{(L\cap cL^\sharp)/cL}^{-1}\abs{c}^{n/2}\mathfrak{G}_{\underline{L}}(1/c;x)\etp{-\frac{Q(x)}{c}}\label{eq:mathfrakg},
\end{align}
where $n=\rank(L)=\dim(V)$. We have $\abs{D_{\underline{L}}[c]}=\abs{(c^{-1}L\cap L^\sharp)/L}=\abs{(L\cap cL^\sharp)/cL}$, and $\abs{D_{\underline{\Lambda}}[c]}=\abs{(c^{-1}\Lambda\cap \Lambda^\sharp)/\Lambda}=\abs{c}^{n}$ since $\Lambda\subseteq cL$. Inserting these two equations and \eqref{eq:mathfrakg} into \eqref{eq:relationGLGLambda} yields
\begin{equation*}
\mathscr{G}_{\underline{\Lambda}}(c,x)=\frac{1}{\sqrt{\abs{D_{\underline{L}}[c]}}}\etp{-\frac{Q(x)}{c}}\mathfrak{G}_{\underline{L}}(1/c;x)\mathscr{G}_{\underline{L}}(c,x),\quad\text{if } \Lambda\subseteq cL.
\end{equation*}
To deduce \eqref{eq:reciprocity} from the above formula, it remains to prove
\begin{equation}
\label{eq:GLambdaTemp}
\mathscr{G}_{\underline{\Lambda}}(c,x)=\etp{\frac{\sigma\sgn{c}}{8}}\etp{-\frac{Q(x)}{c}}
\end{equation}
for some\footnote{Indeed, if we can prove \eqref{eq:GLambdaTemp} for one lattice $\Lambda\subseteq cL$, then it will hold for all lattices $\Lambda\subseteq cL$.} lattice $\Lambda\subseteq cL$. To this end, we choose $n$ mutually orthogonal nonzero vectors $e_1,e_2,\dots,e_n\in cL$. Then they form a basis of $V$. Set $\Lambda=\bigoplus_j\numZ e_{j}$; it is a lattice in $(V,B)$ and $\Lambda\subseteq cL$. We now calculate \eqref{eq:GLambdaTemp} for this particular $\Lambda$. Set $d_j=B(e_j,e_j)$. Since $\Lambda\subseteq cL$ and $L$ is even integral, we have $2c^2\mid d_j$. Let $n_1$ and $n_2$ be the numbers of $j$ such that $d_j/c>0$ and $d_j/c<0$, respectively. Then $n_1-n_2=\sigma\sgn{c}$. Set $x=\sum_{j}r_je_j$, where $r_j\in\numQ$. We have $\abs{\Lambda^\sharp/\Lambda}=\prod_j\abs{d_j}$ and $\abs{\Lambda^\sharp/\Lambda[c]}=\abs{c}^n$. Therefore,
\begin{align}
\sqrt{\prod\nolimits_j\abs{d_j}}\cdot\sqrt{\abs{c}^n}\cdot\mathscr{G}_{\underline{\Lambda}}(c,x)&=\sum_{y\in\Lambda^\sharp/\Lambda}\etp{cQ(y)+B(y,x)}\notag\\
&=\prod_{j=1}^n\sum_{y_j\in d_j^{-1}\numZ/\numZ}\etp{cQ(y_je_j)+B(y_je_j,x)}\notag\\
&=\prod_{j=1}^n\sum_{y\in \numZ/d_j\numZ}\etp{\frac{c}{2d_j}y^2+r_jy}.\label{eq:GLambdaTemp2}
\end{align}
Since $e_j\in cL$ and $x\in L^\sharp$, we have $c^{-1}r_jd_j=B(c^{-1}e_j,x)\in\numZ$. Thus,
\begin{align*}
\sum_{y\in \numZ/d_j\numZ}\etp{\frac{c}{2d_j}y^2+r_jy}&=\sum_{y\in \numZ/d_j\numZ}\etp{\frac{c}{2d_j}\left(y+\frac{d_jr_j}{c}\right)^2}\etp{-\frac{d_jr_j^2}{2c}}\\
&=\frac{\abs{c}}{2}\sum_{y\bmod{\abs{2d_j/c}}}\etp{\frac{y^2}{2d_j/c}}\etp{-\frac{d_jr_j^2}{2c}}.
\end{align*}
Inserting the classical formula of Gauss (see \eqref{eq:classicalGauss}) into the above formula, we have, since $4\mid 2d_j/c$,
\begin{equation*}
\sum_{y\in \numZ/d_j\numZ}\etp{\frac{c}{2d_j}y^2+r_jy}=\sqrt{\abs{d_j}}\sqrt{\abs{c}}\etp{-\frac{d_jr_j^2}{2c}}\etp{\frac{\sgn{d_j/c}}{8}}.
\end{equation*}
Substituting this into \eqref{eq:GLambdaTemp2} gives \eqref{eq:GLambdaTemp}, as desired.

For the last statement, $cQ(z)\in\numZ$ for all $z+L\in D_{\underline{L}}[c]$ implies that $0+L\in D_{\underline{L}}[c]^\bullet$, and hence \eqref{eq:reciprocity} holds for $x=0$.
\end{proof}

\begin{rema}
\label{rema:reciprocity}
Let $N$ be the level of $\underline{L}$. If $\gcd(c,N)=1$, then $D_{\underline{L}}[c]=\{0+L\}$ and $D_{\underline{L}}[c]^\bullet=D_{\underline{L}}$. Therefore, \eqref{eq:reciprocity} holds for all $x\in L^\sharp$. On the other hand, if $N\mid c$, then $cQ(z)=c/N\cdot NQ(z)\in\numZ$ for all $z+L\in D_{\underline{L}}$, and hence \eqref{eq:reciprocity} holds for $x=0$.
\end{rema}

\begin{coro}
\label{coro:GLbd0Ndivc}
Let $\underline{L}=(L,B)$ be an even integral lattice of signature $\sigma$ and level $N$. Let $c$ be a nonzero integer divisible by $N$. Then
\begin{equation*}
\mathfrak{G}_{\underline{L}}(1/c;0)=\etp{\frac{\sigma\sgn{c}}{8}}\cdot\sqrt{\abs{L^\sharp/L}}.
\end{equation*}
\end{coro}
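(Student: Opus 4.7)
The plan is to view this corollary as a direct specialization of the reciprocity formula \eqref{eq:reciprocity} in Lemma \ref{lemm:twoGaussSumRelation} at $x=0$, combined with a trivial evaluation of $\mathscr{G}_{\underline{L}}(c,0)$. First I would verify the hypothesis of the last sentence of Lemma \ref{lemm:twoGaussSumRelation}: since $N$ is the level of $\underline{L}$ we have $NQ(z)\in\numZ$ for all $z\in L^\sharp$, and the assumption $N\mid c$ then gives $cQ(z)=(c/N)\cdot NQ(z)\in\numZ$ (in particular for every $z+L\in D_{\underline{L}}[c]$, though we get it for the whole of $D_{\underline{L}}$). Thus \eqref{eq:reciprocity} applies and yields
\[
\mathscr{G}_{\underline{L}}(c,0)\cdot\mathfrak{G}_{\underline{L}}(1/c;0)=\etp{\frac{\sigma\sgn{c}}{8}}\cdot\sqrt{\abs{D_{\underline{L}}[c]}}.
\]

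Next I would identify $D_{\underline{L}}[c]$ with the full discriminant module. From $NQ(L^\sharp)\subseteq\numZ$ one deduces $NB(L^\sharp,L^\sharp)\subseteq\numZ$ by polarization, hence $N\cdot L^\sharp\subseteq L^{\sharp\sharp}=L$; combined with $N\mid c$ this gives $c\cdot L^\sharp\subseteq L$, so multiplication by $c$ annihilates every element of $L^\sharp/L$. Consequently $D_{\underline{L}}[c]=D_{\underline{L}}$ and $\sqrt{\abs{D_{\underline{L}}[c]}}=\sqrt{\abs{L^\sharp/L}}$.

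Finally I would unfold the definition of $\mathscr{G}_{\underline{L}}(c,0)$: since $cQ(y)\in\numZ$ for every $y+L\in D_{\underline{L}}$ (by the same computation above), every term of the sum $\sum_{y\in D_{\underline{L}}}\etp{cQ(y)}$ equals $1$, and dividing by $\sqrt{\abs{D_{\underline{L}}}}\cdot\sqrt{\abs{D_{\underline{L}}[c]}}=\abs{D_{\underline{L}}}$ yields $\mathscr{G}_{\underline{L}}(c,0)=1$. Substituting this and the previous identification into the displayed reciprocity gives the desired formula. There is no real obstacle here; the corollary is essentially a packaging of Lemma \ref{lemm:twoGaussSumRelation} with the observation that under the hypothesis $N\mid c$ the auxiliary sum $\mathscr{G}_{\underline{L}}(c,0)$ degenerates to $1$.
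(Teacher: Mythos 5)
Your proposal is correct and follows exactly the paper's route: Remark \ref{rema:reciprocity} supplies the applicability of \eqref{eq:reciprocity} at $x=0$, and the proof of Corollary \ref{coro:GLbd0Ndivc} then just observes $D_{\underline{L}}[c]=D_{\underline{L}}$ and $\mathscr{G}_{\underline{L}}(c,0)=1$, as you do. Your added justifications (the polarization argument for $cL^\sharp\subseteq L$ and the term-by-term evaluation of $\mathscr{G}_{\underline{L}}(c,0)$) are correct details the paper leaves implicit.
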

\begin{proof}
By the above remark, \eqref{eq:reciprocity} is applicable for $x=0$, which gives
\begin{equation}
\label{eq:GL1c0temp}
\mathfrak{G}_{\underline{L}}(1/c;0)=\etp{\frac{\sigma\sgn{c}}{8}}\cdot\sqrt{\abs{D_{\underline{L}}[c]}}\mathscr{G}_{\underline{L}}(c,0)^{-1}.
\end{equation}
Since $N\mid c$, we have $D_{\underline{L}}[c]=D_{\underline{L}}$, and $\mathscr{G}_{\underline{L}}(c,0)=1$. Inserting these into \eqref{eq:GL1c0temp} gives the desired conclusion.
\end{proof}

\begin{coro}
\label{coro:GLbd0coprimetoN}
Let $\underline{L}=(L,B)$ be an even integral lattice of signature $\sigma$ and level $N$. Set $D=\abs{L^\sharp/L}$. Let $b$, $d$ be integers with $d\neq 0$, $\gcd(b,d)=1$ and $\gcd(N,d)=1$. Then
\begin{equation}
\label{eq:GLbd0coprimetoN}
\mathfrak{G}_{\underline{L}}(b/d;0)=\etp{\frac{\sigma\sgn{b}(\sgn{d_0}-d_0)}{8}}\cdot\legendre{\abs{b}^n D}{d}_K,
\end{equation}
where $d_0$ is the odd part of $d$ (with $\sgn{d_0}=\sgn{d}$), $n=\rank(L)$, and where we set $\sgn{0}=0$ as a convention.
\end{coro}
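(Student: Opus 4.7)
The strategy is to reduce the general case to the case $b=1$, which falls out immediately from Lemma \ref{lemm:twoGaussSumRelation} combined with the indefinite form of Theorem \ref{thm:MilgramExtension2} given in Remark \ref{rema:MilgramExtension2indef}. The reduction uses the rescaled lattice $\underline{L}_b:=(L,bB)$.

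First, I would dispatch the case $b=1$. Since $\gcd(N,d)=1$, we have $D_{\underline{L}}[d]=\{0+L\}$, so by Remark \ref{rema:reciprocity} Lemma \ref{lemm:twoGaussSumRelation} applies at $x=0$ and gives
$$\mathscr{G}_{\underline{L}}(d,0)\cdot\mathfrak{G}_{\underline{L}}(1/d;0)=\etp{\sigma\sgn{d}/8}.$$
On the other hand, $\gcd(d,D)=\gcd(d,N)=1$, so by Remark \ref{rema:MilgramExtension2indef} one has $\mathscr{G}_{\underline{L}}(d,0)=\legendre{D}{d}_K\etp{\sigma d_0/8}$. Dividing and using $\sgn d=\sgn{d_0}$ and $\legendre{D}{d}_K^{-1}=\legendre{D}{d}_K\in\{\pm1\}$ yields $\mathfrak{G}_{\underline{L}}(1/d;0)=\legendre{D}{d}_K\etp{\sigma(\sgn{d_0}-d_0)/8}$.

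Next, for $b>0$ I would bootstrap via $\underline{L}_b=(L,bB)$. This lattice is even integral (as $bQ(L)\subseteq\numZ$), of rank $n$, signature $\sigma$, dual $L_b^\sharp=b^{-1}L^\sharp$, and discriminant $|L_b^\sharp/L|=b^n D$. Its level $N_b$ divides $Nb$, since for every $y\in L^\sharp$ one has $Nb\cdot(bQ)(y/b)=NQ(y)\in\numZ$. Combined with $\gcd(N,d)=\gcd(b,d)=1$, this forces $\gcd(N_b,d)=1$, so Step 1 applies to $\underline{L}_b$:
$$\mathfrak{G}_{\underline{L}_b}(1/d;0)=\legendre{b^n D}{d}_K\etp{\sigma(\sgn{d_0}-d_0)/8}.$$
But by definition $\mathfrak{G}_{\underline{L}_b}(1/d;0)=|d|^{-n/2}\sum_{v\in L/dL}\etp{bQ(v)/d}=\mathfrak{G}_{\underline{L}}(b/d;0)$, which is the claimed formula for $b>0$. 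Finally, for $b<0$, I apply the $b>0$ result to the opposite lattice $-\underline{L}=(L,-B)$, which has signature $-\sigma$ but the same rank, discriminant, and level: since $\mathfrak{G}_{\underline{L}}(b/d;0)=\mathfrak{G}_{-\underline{L}}(|b|/d;0)$, one obtains
$$\mathfrak{G}_{\underline{L}}(b/d;0)=\legendre{|b|^n D}{d}_K\etp{-\sigma(\sgn{d_0}-d_0)/8}=\legendre{|b|^n D}{d}_K\etp{\sigma\sgn{b}(\sgn{d_0}-d_0)/8},$$
matching the claim (the degenerate case $b=0$ forces $d=\pm1$ and both sides equal $1$).

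The only substantive technical point, and hence the main obstacle, is verifying that $\underline{L}_b$ has level coprime to $d$. Everything else — signature, discriminant, even integrality — is routine, and once $\gcd(N_b,d)=1$ is established the result drops out of the $b=1$ case. I do not anticipate further difficulties, since Remark \ref{rema:reciprocity} and Remark \ref{rema:MilgramExtension2indef} have already packaged the key reciprocity and Milgram-type evaluation in the generality needed.
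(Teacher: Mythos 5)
Your proof is correct and follows essentially the same route as the paper's: both rest on the reciprocity of Lemma \ref{lemm:twoGaussSumRelation} (with $\sqrt{\abs{D_{\underline{L}}[c]}}=1$ from $\gcd(N,d)=1$), the indefinite Milgram evaluation \eqref{eq:GLc0indefinite}, and the rescaling $(L,\abs{b}B)$ whose level divides $\abs{b}N$. The only cosmetic difference is that you absorb the sign of $b$ into the opposite lattice $(L,-B)$ after first settling $b=1$, whereas the paper keeps it in the argument as $\mathfrak{G}_{(L,\abs{b}B)}(\sgn{b}/d;0)$ and applies the reciprocity with $c=\sgn{b}d$ in one shot.
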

\begin{proof}
If $b=0$, then $d=\pm1$, and the conclusion holds trivially. Now assume that $b\neq0$. According to Lemma \ref{lemm:twoGaussSumRelation}, we have
\begin{align}
\mathfrak{G}_{\underline{L}}(b/d;0)&=\mathfrak{G}_{(L, \abs{b}\cdot B)}(\sgn{b}/d;0)\notag\\
&=\etp{\frac{\sigma\sgn{bd}}{8}}\sqrt{\abs{D_{(L,\abs{b}\cdot B)}[\sgn{b}d]}}\mathscr{G}_{(L,\abs{b}\cdot B)}(\sgn{b}d,0)^{-1},\label{eq:GLbd0temp}
\end{align}
where we have used the fact that the signatures of $\underline{L}$ and of $(L, \abs{b}B)$, respectively, are the same. Note that the level of $(L, \abs{b}B)$ is equal to $\abs{b}N$. By the conditions we have $\gcd(\abs{b}N,\sgn{b}d)=1$, and hence $\sqrt{\abs{D_{(L,\abs{b}\cdot B)}[\sgn{b}d]}}=1$. Inserting this and \eqref{eq:GLc0indefinite} into \eqref{eq:GLbd0temp} gives
\begin{equation*}
\mathfrak{G}_{\underline{L}}(b/d;0)=\etp{\frac{\sigma\sgn{bd}}{8}}\legendre{\abs{(b^{-1}L^\sharp)/L}}{\sgn{b}d}_K\etp{-\frac{\sigma\sgn{b}d_0}{8}}.
\end{equation*}
The above formula can be simplified to \eqref{eq:GLbd0coprimetoN} since $\abs{(b^{-1}L^\sharp)/L}=\abs{b}^nD$ and since $\legendre{x}{-y}_K=\legendre{x}{y}_K$ when $x>0$.
\end{proof}

\begin{lemm}
Let the notations and assumptions be as in Lemma \ref{lemm:GaussSumWeilRepr}.  Suppose that $\gcd(N,a)=1$. If $N\mid b$ and $d\neq0$, then we have
\begin{multline}
\label{eq:GacwxTemp}
\mathfrak{G}_{G}(a/c;\mathbf{w},\mathbf{x})=\rmi^{\frac{n\sgn{d}}{2}}c^{\frac{n}{2}}\etp{-\frac{a'a^2}{2c}\mathbf{x}^\tp G^{-1}\mathbf{x}}\\
\cdot\mathfrak{G}_{\underline{L}}(b/d;0)\sqrt{\abs{(G^{-1}\numZ^n\cap c^{-1}\numZ^n)/\numZ^n}}\mathscr{G}_{D_{\underline{L}}}(-ac,a(\mathbf{w}+G^{-1}\mathbf{x})),
\end{multline}
where $a'\in\numZ$ is any integer with $aa'\equiv1\bmod{cN}$.
\end{lemm}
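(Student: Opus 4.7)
The strategy is to chain together three tools already in hand: Lemma~\ref{lemm:GaussSumWeilRepr} (to convert the matrix Gauss sum into a Weil representation coefficient), Proposition~\ref{prop:rhoA1yx2} (to factor that coefficient as a product of $\mathfrak{G}_{\underline{L}}(b/d;\mathbf{w})$ and $\mathscr{G}_{D_{\underline{L}}}(-cd,\mathbf{w}+daG^{-1}\mathbf{x})$), and Proposition~\ref{prop:GwtoG0} (to reduce $\mathfrak{G}_{\underline{L}}(b/d;\mathbf{w})$ to $\mathfrak{G}_{\underline{L}}(b/d;0)$). The hypotheses of these tools hold: $bc\cdot L^\sharp\subseteq L$ follows from $N\mid b$ together with $N\cdot L^\sharp\subseteq L$; $\legendre{c}{-1}_K=1$ because $c>0$; and $bc\mathbf{w}\in L$ follows from $\mathbf{w}\in L^\sharp$ together with $N\mid b$. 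The spurious phase $\etp{a^2 b d\, Q(\mathbf{w})}$ produced by Proposition~\ref{prop:GwtoG0} collapses to $1$, since $NQ(\mathbf{w})\in\numZ$ and $N\mid b$ make the argument an integer.

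The heart of the argument is to identify $\mathscr{G}_{D_{\underline{L}}}(-cd,\mathbf{w}+daG^{-1}\mathbf{x})$ with $\mathscr{G}_{D_{\underline{L}}}(-ac,a(\mathbf{w}+G^{-1}\mathbf{x}))$. Since $\gcd(a,N)=1$ and the prime divisors of $\abs{D_{\underline{L}}}$ coincide with those of $N$, multiplication by $a$ is an automorphism of $D_{\underline{L}}$, so I would substitute $y=az$ in the defining sum and check two congruences modulo $\numZ$: the quadratic parts agree because $(-cda^2-(-ac))Q(z)=-a\cdot(b/N)\cdot Nc^2 Q(z)\in\numZ$, and the linear parts agree because $a\,B(\mathbf{w}+daG^{-1}\mathbf{x},z)-B(a(\mathbf{w}+G^{-1}\mathbf{x}),z)=(a\mathbf{x})^\tp(bcz)\in\numZ$ (using $N\mid b$ together with $Nz\in L$). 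The normalization $\sqrt{\abs{D_{\underline{L}}[\cdot]}}$ is unaffected because $D_{\underline{L}}[cd]=D_{\underline{L}}[c]=D_{\underline{L}}[ca]$, which follows from the invertibility of both $a$ and $d$ on $D_{\underline{L}}$.

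What remains is bookkeeping. The power-of-$\rmi$ combination $\rmi^{n/2}\cdot(-\rmi)^{n(1-\sgn d)/2}$ collapses to $\rmi^{n\sgn d/2}$ as a principal-branch identity; the square roots combine via $\sqrt{\det(G)}\cdot\sqrt{\abs{D_{\underline{L}}[c]}/\abs{D_{\underline{L}}}}=\sqrt{\abs{(G^{-1}\numZ^n\cap c^{-1}\numZ^n)/\numZ^n}}$; and the choice $a'=d$ is a legitimate solution of $aa'\equiv 1\bmod cN$, because $ad=1+bc$ and $N\mid b$ give $ad\equiv 1\bmod cN$. Independence from the choice of $a'$ follows from even integrality of $N\cdot G^{-1}$ applied to the integer vector $a\mathbf{x}$: shifting $a'$ by $k\cdot cN$ alters the exponent by $\tfrac{k}{2}(a\mathbf{x})^\tp(NG^{-1})(a\mathbf{x})\in\numZ$. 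The main obstacle is precisely the coordinated algebraic bookkeeping in the change-of-variable step, where the twin hypotheses $N\mid b$ and $\gcd(a,N)=1$ must be invoked simultaneously to control the quadratic coefficient, the linear term, and the normalization of the finite-quadratic-module Gauss sum.
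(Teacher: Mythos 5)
Your proposal is correct and follows essentially the same route as the paper's proof: substitute Proposition \ref{prop:rhoA1yx2} into Lemma \ref{lemm:GaussSumWeilRepr}, kill the $\mathbf{w}$-dependence of $\mathfrak{G}_{\underline{L}}(b/d;\cdot)$ via Proposition \ref{prop:GwtoG0} and $N\mid b$, and convert $\mathscr{G}_{D_{\underline{L}}}(-cd,\mathbf{w}+adG^{-1}\mathbf{x})$ into $\mathscr{G}_{D_{\underline{L}}}(-ac,a(\mathbf{w}+G^{-1}\mathbf{x}))$ by the change of variables $y\mapsto ay$, with the same integrality checks. The only cosmetic difference is that you take $a'=d$ and argue independence of the choice of $a'$ separately, whereas the paper verifies the phase identity $\etp{-\tfrac{a^2d}{2c}\mathbf{x}^\tp G^{-1}\mathbf{x}}=\etp{-\tfrac{a'a^2}{2c}\mathbf{x}^\tp G^{-1}\mathbf{x}}$ directly from $ad=1+bc$; these are equivalent.
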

\begin{proof}
Since $N\mid b$, we have $bc\cdot L^\sharp\subseteq L$, and hence Proposition \ref{prop:rhoA1yx2} is applicable. Substituting \eqref{eq:rhoA1yx2} (with $y=\mathbf{w}$ and $x=-aG^{-1}\mathbf{x}$) into \eqref{eq:GaussSumWeilRepr} gives
\begin{multline}
\label{eq:GacwxTemp2}
\mathfrak{G}_{G}(a/c;\mathbf{w},\mathbf{x})=\etp{-\frac{a^2d}{2c}\mathbf{x}^\tp G^{-1}\mathbf{x}}(\rmi c)^{\frac{n}{2}}\sqrt{\det(G)}\\
\cdot(-\rmi)^{\frac{n(1-\sgn{d})}{2}}\cdot\mathfrak{G}_{\underline{L}}(b/d;\mathbf{w})\sqrt{\frac{\abs{D_{\underline{L}}[c]}}{\abs{D_{\underline{L}}}}}\mathscr{G}_{D_{\underline{L}}}(-cd,\mathbf{w}+adG^{-1}\mathbf{x}).
\end{multline}
We have $D_{\underline{L}}=G^{-1}\numZ^n/\numZ^n$, and hence
\begin{equation*}
\abs{D_{\underline{L}}}=\det(G),\quad \abs{D_{\underline{L}}[c]}=\abs{(G^{-1}\numZ^n\cap c^{-1}\numZ^n)/\numZ^n}.
\end{equation*}
Since $bc\cdot L^\sharp\subseteq L$ and $\mathbf{w}\in G^{-1}\numZ^n=L^\sharp$, we have $bc\mathbf{w}\in L$, and hence Proposition \ref{prop:GwtoG0} is valid, that is, \eqref{eq:GwtoG0} holds. However $N\mid b$, so we can simplify \eqref{eq:GwtoG0} to
\begin{equation*}
\mathfrak{G}_{\underline{L}}(b/d;w)=\etp{a^2bdQ(w)}\mathfrak{G}_{\underline{L}}(b/d;0)=\mathfrak{G}_{\underline{L}}(b/d;0).
\end{equation*}
Furthermore, we have
\begin{equation*}
\etp{-\frac{a^2d}{2c}\mathbf{x}^\tp G^{-1}\mathbf{x}}=\etp{-\frac{a'ad}{2cN}(a\mathbf{x})^\tp NG^{-1}(a\mathbf{x})}=\etp{-\frac{a'a^2}{2c}\mathbf{x}^\tp G^{-1}\mathbf{x}}
\end{equation*}
for $ad=1+bc$ and $\frac{ba^2}{2}\mathbf{x}^\tp G^{-1}\mathbf{x}\in\numZ^n$. Now we consider $\mathscr{G}_{D_{\underline{L}}}(-cd,\mathbf{w}+adG^{-1}\mathbf{x})$. Applying the change of variables $D_{\underline{L}}\rightarrow D_{\underline{L}}$, $y\mapsto ay$, which is bijective since $N\mid b$, we find that
\begin{equation*}
\mathscr{G}_{D_{\underline{L}}}(-cd,\mathbf{w}+adG^{-1}\mathbf{x})=\mathscr{G}_{D_{\underline{L}}}(-a^2cd,a\mathbf{w}+a^2dG^{-1}\mathbf{x})=\mathscr{G}_{D_{\underline{L}}}(-ac,a(\mathbf{w}+G^{-1}\mathbf{x})),
\end{equation*}
where we have used the fact $D_{\underline{L}}[-cd]=D_{\underline{L}}[-a^2cd]=D_{\underline{L}}[-ac]$. Taking into account these facts, we can reduce \eqref{eq:GacwxTemp2} to \eqref{eq:GacwxTemp}, as desired.
\end{proof}

\subsection{Proof of Theorem \ref{thm:main}}
When $a=0$, then $c=1$ and Theorem \ref{thm:main} holds trivially. Assume that $a\neq0$ below. First we consider the subcase $\gcd(N,a)=1$.

We begin with \eqref{eq:GacwxTemp}, which is true for under the conditions of Theorem \ref{thm:main}, $\mathfrak{G}_{G}(a/c;\mathbf{w},\mathbf{x})$ is indeed integral-parametric, and we can indeed require $b$ and $d$ fulfilling the further requirement $N\mid b$ and $d\neq0$, for $\gcd(N,a)=1$. Since $N\mid b$, we have $\gcd(N,d)=1$, and hence \eqref{eq:GLbd0coprimetoN} holds with $\sigma=n$. It remains to calculate $\mathscr{G}_{D_{\underline{L}}}(-ac,a(\mathbf{w}+G^{-1}\mathbf{x}))$ in \eqref{eq:GacwxTemp}.

If $\gcd(N,c)=1$, then \eqref{eq:reciprocity}, with $c$ replaced by $-ac$, holds for all $x\in L^\sharp$ by Remark \ref{rema:reciprocity}, and hence for $x=a(\mathbf{w}+G^{-1}\mathbf{x})$. It follows that
\begin{equation}
\label{eq:proofMainTheoremTemp1}
\mathscr{G}_{D_{\underline{L}}}(-ac,a(\mathbf{w}+G^{-1}\mathbf{x}))=\etp{-\frac{n\sgn{a}}{8}}\mathfrak{G}_{\underline{L}}\left(-\frac{1}{ac};a(\mathbf{w}+G^{-1}\mathbf{x})\right)^{-1}.
\end{equation}
Let $a_1,b_1\in\numZ$ satisfy $\det\tbtmat{a_1}{1}{Nb_1}{-ac}=1$. Then $Nb_1a(\mathbf{w}+G^{-1}\mathbf{x})\in\numZ^n$ and hence, by Proposition \ref{prop:GwtoG0},
\begin{align}
\mathfrak{G}_{\underline{L}}\left(-\frac{1}{ac};a(\mathbf{w}+G^{-1}\mathbf{x})\right)&=\etp{-\frac{a_1^2a^3c}{2}(\mathbf{w}+G^{-1}\mathbf{x})^\tp G(\mathbf{w}+G^{-1}\mathbf{x})}\mathfrak{G}_{\underline{L}}\left(-\frac{1}{ac};0\right)\notag\\
&=\etp{-\frac{a'c'a^2}{2}(\mathbf{w}+G^{-1}\mathbf{x})^\tp G(\mathbf{w}+G^{-1}\mathbf{x})}\mathfrak{G}_{\underline{L}}\left(-\frac{1}{ac};0\right),\label{eq:proofMainTheoremTemp2}
\end{align}
where $c'\in\numZ$ satisfy $cc'\equiv1\bmod{aN}$. Recall that $aa'\equiv1\bmod{cN}$ as required in \eqref{eq:GacwxTemp}. Again by Corollary \ref{coro:GLbd0coprimetoN} we have
\begin{equation}
\label{eq:proofMainTheoremTemp3}
\mathfrak{G}_{\underline{L}}\left(-\frac{1}{ac};0\right)=\etp{\frac{n(\sgn{-a_0c_0}+a_0c_0)}{8}}\cdot\legendre{D}{-ac}_K,
\end{equation}
where $a_0$ and $c_0$ are the odd part of $a$ and $c$, respectively. Inserting \eqref{eq:proofMainTheoremTemp3} into \eqref{eq:proofMainTheoremTemp2}, and then inserting the obtained formula into \eqref{eq:proofMainTheoremTemp1}, we find that
\begin{multline*}
\mathscr{G}_{D_{\underline{L}}}(-ac,a(\mathbf{w}+G^{-1}\mathbf{x}))\\
=\etp{-\frac{n\sgn{a}}{8}}\etp{\frac{a'c'a^2}{2}(\mathbf{w}+G^{-1}\mathbf{x})^\tp G(\mathbf{w}+G^{-1}\mathbf{x})}\etp{-\frac{n(\sgn{-a_0c_0}+a_0c_0)}{8}}\cdot\legendre{D}{-ac}_K.
\end{multline*}
Inserting this and \eqref{eq:GLbd0coprimetoN} into \eqref{eq:GacwxTemp}, and noting that $\abs{(G^{-1}\numZ^n\cap c^{-1}\numZ^n)/\numZ^n}=1$ since $\gcd(N,c)=1$, we deduce that, in the case where $\gcd(N,a)=\gcd(N,c)=1$,
\begin{multline*}
\mathfrak{G}_{G}(a/c;\mathbf{w},\mathbf{x})=\rmi^{\frac{n\sgn{d}}{2}}c^{\frac{n}{2}}\etp{-\frac{a'a^2}{2c}\mathbf{x}^\tp G^{-1}\mathbf{x}}\etp{\frac{n\sgn{b}(\sgn{d_0}-d_0)}{8}}\cdot\legendre{\abs{b}^n D}{d}_K\\
\cdot\etp{-\frac{n\sgn{a}}{8}}\etp{\frac{a'c'a^2}{2}(\mathbf{w}+G^{-1}\mathbf{x})^\tp G(\mathbf{w}+G^{-1}\mathbf{x})}\etp{-\frac{n(\sgn{-a_0c_0}+a_0c_0)}{8}}\cdot\legendre{D}{-ac}_K.
\end{multline*}
When choosing $b$ and $d$ in \eqref{eq:GacwxTemp}, we can further require that $d>0$, $2\nmid d$ and $d$ is sufficiently large, so that $\sgn{a}=\sgn{bc}=\sgn{b}$ and $d_0=d$. Then the above formula can be reduced to
\begin{multline}
\label{eq:GGacwxTemp}
\mathfrak{G}_{G}(a/c;\mathbf{w},\mathbf{x})=c^{\frac{n}{2}}\etp{-\frac{a'a^2}{2c}\mathbf{x}^\tp G^{-1}\mathbf{x}+\frac{a'c'a^2}{2}(\mathbf{w}+G^{-1}\mathbf{x})^\tp G(\mathbf{w}+G^{-1}\mathbf{x})}\\
\cdot\etp{\frac{n(1-a_0c_0)}{8}}\legendre{D}{-ac}_K\etp{\frac{n\sgn{a}(1-d)}{8}}\legendre{\abs{b}^n D}{d}_K.
\end{multline}
Setting $\mathbf{w}=\mathbf{x}=0$ in the above we deduce that
\begin{equation}
\label{eq:GGac00Temp1}
\mathfrak{G}_{G}(a/c;0,0)=c^{\frac{n}{2}}\etp{\frac{n(1-a_0c_0)}{8}}\legendre{D}{-ac}_K\etp{\frac{n\sgn{a}(1-d)}{8}}\legendre{\abs{b}^n D}{d}_K.
\end{equation}
On the other hand, we have
\begin{equation}
\label{eq:GGac00Temp2}
\mathfrak{G}_{G}(a/c;0,0)=c^{\frac{n}{2}}\mathfrak{G}_{\underline{L}}(a/c;0)=c^{\frac{n}{2}}\etp{\frac{n\sgn{a}(1-c_0)}{8}}\cdot\legendre{\abs{a}^n D}{c}_K
\end{equation}
by Corollary \ref{coro:GLbd0coprimetoN}. Thus the right-hand sides of \eqref{eq:GGac00Temp1} and \eqref{eq:GGac00Temp2} are equal. Inserting this equality into \eqref{eq:GGacwxTemp}, we arrive at \eqref{eq:GGacwxWhenNccoprime}, as desired.

This concludes the proof in the case $\gcd(N,c)=\gcd(N,a)=1$. Now assume $\gcd(N,c)=1$ but $a$ is general. According to Proposition \ref{prop:GtoaG}, we have $\mathfrak{G}_{G}(a/c;\mathbf{w},\mathbf{x})=\mathfrak{G}_{\abs{a}G}(\sgn{a}/c;\mathbf{w},\abs{a}\mathbf{x})$. Since $\gcd(N,\sgn{a})=1$, we can apply what we have proved to $\mathfrak{G}_{\abs{a}G}(\sgn{a}/c;\mathbf{w},\abs{a}\mathbf{x})$. It should be noted that the level of $\abs{a}G$ is $\abs{a}N$ and $\det(\abs{a}G)=\abs{a}^nD$. It follows that
\begin{align*}
\mathfrak{G}_{G}(a/c;\mathbf{w},\mathbf{x})&=\mathfrak{G}_{\abs{a}G}(\sgn{a}/c;\mathbf{w},\abs{a}\mathbf{x})\\
&=c^{\frac{n}{2}}\etp{-\frac{\sgn{a}}{2c}(\abs{a}\mathbf{x})^\tp (\abs{a}G)^{-1}(\abs{a}\mathbf{x})+\frac{\sgn{a}c''}{2}(\mathbf{w}+G^{-1}\mathbf{x})^\tp \abs{a}G (\mathbf{w}+G^{-1}\mathbf{x})}\\
\cdot&\etp{\frac{n\sgn{a}(1-c_0)}{8}}\cdot\legendre{\abs{a}^n D}{c}_K,
\end{align*}
where $c''\in\numZ$ satisfies $cc''\equiv1\bmod{\sgn{a}\cdot\abs{a}N}$. This formula can be reduced to \eqref{eq:GGacwxWhenNccoprime} immediately.

Finally, we shall treat the case $\gcd(N,c)=N$, in which we must have $\gcd(N,a)=1$ so that \eqref{eq:GacwxTemp} still holds. Since $N\mid c$, we have $\abs{(G^{-1}\numZ^n\cap c^{-1}\numZ^n)/\numZ^n}=\abs{G^{-1}\numZ^n/\numZ^n}=D$. Therefore,
\begin{equation}
\label{eq:GacwxTempNmidc}
\mathfrak{G}_{G}(a/c;\mathbf{w},\mathbf{x})=\rmi^{\frac{n\sgn{d}}{2}}c^{\frac{n}{2}}\etp{-\frac{a'a^2}{2c}\mathbf{x}^\tp G^{-1}\mathbf{x}}\sqrt{D}\mathfrak{G}_{\underline{L}}(b/d;0)\mathscr{G}_{D_{\underline{L}}}(-ac,a(\mathbf{w}+G^{-1}\mathbf{x})).
\end{equation}
It follows from the orthogonality relation that $\mathscr{G}_{D_{\underline{L}}}(-ac,a(\mathbf{w}+G^{-1}\mathbf{x}))=\delta$ where $\delta$ is the one defined in Theorem \ref{thm:main}(2). It remains to express $\mathfrak{G}_{\underline{L}}(b/d;0)$ in terms of $a$ and $c$ in \eqref{eq:GacwxTempNmidc}. Recall that when choosing $b$ and $d$, we require $ad-bc=1$, $N\mid b$, and $d\neq0$. We now split the proof into four cases:
\begin{equation*}
(i)\,2\mid D\qquad(ii)\,2\nmid D\text{ and }2\mid c\qquad(iii)\,2\nmid D,2\nmid c\text{ and }2\nmid a\qquad(iv)\,2\nmid D,2\nmid c\text{ and }2\mid a.
\end{equation*}

For case (i), we further require that $b$ and $d$ satisfy $4D\mid b$ (which is possible for $N$ and $4D$ have the same set of prime factors), $d>0$ and $\sgn{a}=\sgn{b}$. Then we have $\legendre{\abs{b}}{d}_K=\legendre{\abs{b}}{a}_K$ for $ad\equiv1\bmod{\abs{b}}$ and $4\mid b$, and we have $\legendre{D}{d}_K=\legendre{D}{a}_K$ for $ad\equiv1\bmod{4D}$. It follows from these facts and \eqref{eq:GLbd0coprimetoN} that
\begin{equation}
\label{eq:GLbd0usingacTemp}
\mathfrak{G}_{\underline{L}}(b/d;0)=\etp{\frac{n\sgn{a}(1-a)}{8}}\legendre{\abs{b}}{a}_K^n\legendre{D}{a}_K
\end{equation}
It can be proved that $\legendre{\abs{b}}{a}_K=\legendre{-\sgn{a}c}{\abs{a}}_K$ using the facts $-bc\equiv1\bmod{a}$, $\sgn{b}=\sgn{a}$ and $2\nmid a$. Inserting this into \eqref{eq:GLbd0usingacTemp} we find that $\mathfrak{G}_{\underline{L}}(b/d;0)=\mu_G(a,c)$, where $\mu_G(a,c)$ is the one defined in Theorem \ref{thm:main}(2). Combining this and \eqref{eq:GacwxTempNmidc} we arrive at \eqref{eq:GGacwxWhenNdivc}, as desired.

For case (ii), we must have $2\mid n$ (cf., e.g. \cite[Remarks 14.3.23(b)]{CS17}). The deduction used in case (i) is also valid for this case.

For case (iii), we must have $2\mid n$ as in case (ii). The deduction is similar to case (ii). The different point is that in this case, we shall use the fact $2\mid n$ when proving
\begin{equation*}
\etp{\frac{n\sgn{b}(\sgn{d_0}-d_0)}{8}}=\etp{\frac{n\sgn{a}(1-a)}{8}}.
\end{equation*}
(If one chooses $b$ with $8D\mid b$ instead of $4D\mid b$, which is possible, then one does not need the fact $2\mid n$ to prove the above equality.)

For case (iv), we also have $2\mid n$ as above. We further require that $b$ and $d$ satisfy
\begin{equation*}
D\mid b,\quad d>0,\quad d\equiv1\bmod{8},\quad \sgn{b}=\sgn{a}.
\end{equation*}
Such $b$ and $d$ always exist by modulo $8$ arithmetic and by the fact $D$ and $N$ have the same set of prime factors. Since $2\mid a$ and $ad-bc=1$, we have $b\equiv c\equiv1\bmod{2}$. Therefore, \eqref{eq:GLbd0coprimetoN} implies
\begin{equation*}
\mathfrak{G}_{\underline{L}}(b/d;0)=\etp{\frac{n\sgn{b}(\sgn{d_0}-d_0)}{8}}\cdot\legendre{\abs{b}^n D}{d}_K=\legendre{D}{d}_K=\legendre{a}{D}_K.
\end{equation*}
Combining this and \eqref{eq:GacwxTempNmidc} we arrive at \eqref{eq:GGacwxWhenNdivc}, as desired. This concludes the whole proof.

\subsection{When $G$ is indefinite}
\label{subsec:when_g_is_indefinite}
If we replace the assumption ``$G$ is positive definite'' by ``$G$ is nonsingular'' in Theorem \ref{thm:main}, then there are similar conclusions. For instance, Corollaries \ref{coro:GLbd0Ndivc} and \ref{coro:GLbd0coprimetoN} give formulas for $\mathfrak{G}_{G}(r;0,0)$. These formulas involve the signature of the corresponding lattice and involve $\abs{L^\sharp/L}=\abs{\det(G)}$. Attention must be paid to that in Theorem \ref{thm:main} we define $D=\det(G)$, but in the above propositions where $\underline{L}$ or $G$ are allowed to be non-positive-definite, $D$ usually means $\abs{L^\sharp/L}=\abs{\det(G)}$.

The following extension of the $\gcd(N,c)=1$ case of Theorem \ref{thm:main} to indefinite $G$ is interesting, for the formula does not involve the signature. This formula is more efficient than those involving the signature, e.g., Corollary \ref{coro:GLbd0coprimetoN}, as to calculate the signature (indeed, only signature mod $8$ is needed), one need to first calculate all eigenvalues of $G$ or calculate the Jordan decomposition of $G^{-1}\numZ^n/\numZ^n$, both of which, when $n$ is large, are more arduous tasks than just calculating $\det(G)$ and then inserting it into the right-hand side of \eqref{eq:GGacwxWhenNccoprime}.
\begin{thm}
\label{thm:mainIndefinite}
Let the notations and assumptions be as in Theorem \ref{thm:main} except that $G$ is not required to be positive definite but required merely to be nonsingular. If $\gcd(N,c)=1$, then \eqref{eq:GGacwxWhenNccoprime} still holds. (We emphasize that $D=\det(G)$ may be negative now, and that $\gcd(N,c)=1\Longleftrightarrow\gcd(D,c)=1$.)
\end{thm}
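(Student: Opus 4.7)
The plan is to extend the proof of the $\gcd(N,c)=1$ case of Theorem \ref{thm:main} to nonsingular $G$ by two adjustments: first, replace its Weil-representation-based derivation (which required positive definiteness) by a direct application of Corollary \ref{coro:GLbd0coprimetoN}, which is valid for arbitrary signature; second, use a Kronecker-symbol identity to convert the signature $\sigma$ appearing there back into the rank $n$ of \eqref{eq:GGacwxWhenNccoprime}, and $\abs{\det G}$ back into the signed $\det G$.

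For the base case $\mathbf{w}=\mathbf{x}=\mathbf{0}$, I would write $\mathfrak{G}_G(a/c;0,0)=c^{n/2}\mathfrak{G}_{\underline L}(a/c;0)$ with $\underline L=(\numZ^n,(\mathbf u,\mathbf v)\mapsto\mathbf u^\tp G\mathbf v)$ and apply Corollary \ref{coro:GLbd0coprimetoN} with $b=a$, $d=c$ to obtain
\begin{equation*}
\mathfrak{G}_G(a/c;0,0)=c^{n/2}\cdot\etp{\tfrac{\sigma\sgn{a}(1-c_0)}{8}}\cdot\legendre{\abs{a}^n\cdot\abs{\det G}}{c}_K.
\end{equation*}
Matching with the right-hand side of \eqref{eq:GGacwxWhenNccoprime} reduces to showing $\etp{(\sigma-n)\sgn{a}(1-c_0)/8}=\legendre{\sgn{\det G}}{c}_K$. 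Writing $\sigma-n=-2n_-$ (where $n_-$ counts negative eigenvalues) and $\sgn{\det G}=(-1)^{n_-}$, this further reduces to $\etp{-n_-\sgn{a}(1-c_0)/4}=\legendre{-1}{c}_K^{n_-}$, and since $(c_0-1)/2\in\numZ$, both sides evaluate to $(-1)^{n_-(c_0-1)/2}$ regardless of $\sgn{a}$.

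For general $\mathbf{w},\mathbf{x}$, I would complete the square modulo $c$: because $\gcd(N,c)=1$ forces $\gcd(\det G,c)=1$, one can find an integer vector $\boldsymbol\lambda$ with $aG\boldsymbol\lambda\equiv aG\mathbf w+a\mathbf x\pmod c$, and then the bijection $\mathbf v\mapsto\mathbf v-\boldsymbol\lambda$ on $\numZ^n/c\numZ^n$ collapses the sum to $\mathfrak{G}_G(a/c;0,0)$ times an explicit exponential prefactor. Setting $\boldsymbol\mu=\boldsymbol\lambda-(\mathbf w+G^{-1}\mathbf x)$ (so $aG\boldsymbol\mu\in c\numZ^n$) and using the congruence $cc'\equiv 1\pmod{aN}$ to write $1-cc'=-aN\ell$ for some $\ell\in\numZ$, the residual exponent simplifies after substitution to a linear combination of $\boldsymbol\lambda^\tp G\boldsymbol\lambda$, $\boldsymbol\lambda^\tp G\boldsymbol\mu$, and $\boldsymbol\mu^\tp G\boldsymbol\mu$ with coefficients $-\tfrac{ac'}{2}$, $\tfrac{a^2N\ell}{c}$, $-\tfrac{a^2N\ell}{2c}$ respectively.

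The main obstacle is the integrality of these three terms: none is manifestly in $\numZ$, because $G^{-1}\mathbf x$ need not be integer (only $a\mathbf x$ is). However, the evenness of $G$ yields $\boldsymbol\lambda^\tp G\boldsymbol\lambda\in 2\numZ$; the relation $aG\boldsymbol\mu\in c\numZ^n$ yields $\boldsymbol\lambda^\tp G\boldsymbol\mu\in\tfrac{c}{a}\numZ$; and even-integrality of $NG^{-1}$ yields $\boldsymbol\mu^\tp G\boldsymbol\mu\in\tfrac{2c^2}{a^2N}\numZ$. Multiplying by the respective coefficients then gives $ac'\numZ$, $aN\ell\numZ$, and $c\ell\numZ$, all contained in $\numZ$. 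Once this integrality is verified, the residual exponential equals $1$ and \eqref{eq:GGacwxWhenNccoprime} follows for indefinite $G$.
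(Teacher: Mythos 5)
Your proposal is correct, but it takes a genuinely different route from the paper. The paper proves Theorem \ref{thm:mainIndefinite} by a perturbation argument: it replaces $G$ by $G_p=G+4N^2ct\cdot I_n$ with $t$ large, observes that $\mathfrak{G}_{G_p}(a/c;\mathbf{w},\mathbf{x})=\mathfrak{G}_{G}(a/c;\mathbf{w},\mathbf{x})$ because the added quadratic term contributes only integers to the exponent, applies Theorem \ref{thm:main}(1) to the positive definite $G_p$, and then checks that the right-hand side of \eqref{eq:GGacwxWhenNccoprime} is also unchanged (via $D_p\equiv D\bmod{4N^2c}$ for the Kronecker symbol and an integrality computation for the exponential factors). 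You instead avoid the positive-definite machinery for this case altogether: your base case follows from Corollary \ref{coro:GLbd0coprimetoN}, which is already stated for arbitrary signature, combined with the identity $\etp{(\sigma-n)\sgn{a}(1-c_0)/8}=\legendre{\sgn{\det G}}{c}_K$ (which I checked: both sides equal $(-1)^{n_-(c_0-1)/2}$), and your reduction of general $(\mathbf{w},\mathbf{x})$ to the base case is an elementary completion of squares modulo $c$, legitimate since $\gcd(a\det G,c)=1$ makes $\boldsymbol\lambda$ exist and Lemma \ref{lemm:independentRepr} makes the shift $\mathbf{v}\mapsto\mathbf{v}-\boldsymbol\lambda$ permissible; the three integrality claims for $\boldsymbol\lambda^\tp G\boldsymbol\lambda$, $\boldsymbol\lambda^\tp G\boldsymbol\mu$, $\boldsymbol\mu^\tp G\boldsymbol\mu$ all check out (the last using even-integrality of $NG^{-1}$). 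The paper's route is shorter given Theorem \ref{thm:main}(1) as a black box and immediately explains why the formula keeps the \emph{same shape} as in the definite case; yours is more self-contained, makes transparent exactly where the signature gets absorbed into the sign of the determinant through the Kronecker symbol, and in effect gives an independent re-derivation of the $(\mathbf{w},\mathbf{x})$-dependence of \eqref{eq:GGacwxWhenNccoprime} that does not pass through the Weil-representation formalism. One cosmetic remark: your stated coefficient $\tfrac{a^2N\ell}{c}$ for the cross term $\boldsymbol\lambda^\tp G\boldsymbol\mu$ differs from the coefficient $ac'$ obtained by direct expansion, but the discrepancy is $\tfrac{a}{c}$, and $\tfrac{a}{c}\boldsymbol\lambda^\tp G\boldsymbol\mu=\tfrac{1}{c}\boldsymbol\lambda^\tp(aG\boldsymbol\mu)\in\numZ$, so the two bookkeepings agree modulo $\numZ$ and your conclusion stands.
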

\begin{proof}
Let $t$ be a positive integer and set $G_p=G+4N^2ct\cdot I_n$, where $I_n$ is the identity matrix. Then $G_p$ is even integral and symmetric. Let $t$ be sufficiently large, then $G_p$ is positive definite. Let $N_p$ be the level of $G_p$ and let $D_p=\det(G_p)$. Then
\begin{equation}
\label{eq:DpDmod4N2c}
D_p\equiv D\bmod{4N^2c},
\end{equation}
and hence
\begin{equation*}
\gcd(N,c)=1\Longleftrightarrow\gcd(D,c)=1\Longleftrightarrow\gcd(D_p,c)=1\Longleftrightarrow\gcd(N_p,c)=1.
\end{equation*}
Since $G\cdot\mathbf{w}\in\numZ^n$, we have $G_p\cdot\mathbf{w}\in\numZ^n$, so that $\mathfrak{G}_{G_p}(a/c;\mathbf{w},\mathbf{x})$ is integral-parametric. By the definition, we have
\begin{align*}
&\mathfrak{G}_{G_p}(a/c;\mathbf{w},\mathbf{x})\\
=&\sum_{\mathbf{v}\in\numZ^n/c\numZ^n}\etp{\frac{a}{c}\cdot\left(\frac{1}{2}(\mathbf{v}+\mathbf{w})^\tp\cdot G\cdot (\mathbf{v}+\mathbf{w})+(\mathbf{v}+\mathbf{w})^\tp\cdot\mathbf{x}\right)+2at(N\mathbf{v}+N\mathbf{w})^\tp(N\mathbf{v}+N\mathbf{w})}\\
=&\mathfrak{G}_{G}(a/c;\mathbf{w},\mathbf{x})
\end{align*}
since $2at(N\mathbf{v}+N\mathbf{w})^\tp(N\mathbf{v}+N\mathbf{w})\in\numZ$. Taking into account this and applying Theorem \ref{thm:main}(1) to $\mathfrak{G}_{G_p}(a/c;\mathbf{w},\mathbf{x})$, we obtain
\begin{multline*}
\mathfrak{G}_{G}(a/c;\mathbf{w},\mathbf{x})=c^{n/2}\cdot\legendre{\abs{a}}{c}_K^n\cdot\legendre{D_p}{c}_K\cdot\etp{\frac{n\cdot\sgn a\cdot(1-c_0)}{8}}\\
\times\etp{-\frac{a}{2c}\mathbf{x}^\tp G_p^{-1}\mathbf{x}+\frac{ac''}{2}(\mathbf{w}+G_p^{-1}\mathbf{x})^\tp G_p (\mathbf{w}+G_p^{-1}\mathbf{x})},
\end{multline*}
where $c''$ is any integer satisfying $cc''\equiv1\bmod{aN_p}$. To deduce \eqref{eq:GGacwxWhenNccoprime} from the above formula, it remains to prove
\begin{equation}
\label{eq:fromIndefiniteToDefinite1}
\legendre{D_p}{c}_K=\legendre{D}{c}_K,
\end{equation}
and
\begin{multline}
\label{eq:fromIndefiniteToDefinite2}
\etp{-\frac{a}{2c}\mathbf{x}^\tp G_p^{-1}\mathbf{x}+\frac{ac''}{2}(\mathbf{w}+G_p^{-1}\mathbf{x})^\tp G_p (\mathbf{w}+G_p^{-1}\mathbf{x})}\\
=\etp{-\frac{a}{2c}\mathbf{x}^\tp G^{-1}\mathbf{x}+\frac{ac'}{2}(\mathbf{w}+G^{-1}\mathbf{x})^\tp G (\mathbf{w}+G^{-1}\mathbf{x})},
\end{multline}
where $c'$ is any integer satisfying $cc'\equiv1\bmod{aN}$.

If $c$ is odd (it is already positive), then the function $x\mapsto\legendre{x}{c}_K$ is $c$-periodic, and hence \eqref{eq:fromIndefiniteToDefinite1} holds due to \eqref{eq:DpDmod4N2c}. Otherwise, if $c$ is even, then we can write $c=2^ec_0$ where $e\in\numgeq{Z}{1}$ and $c_0$ is odd. By \eqref{eq:DpDmod4N2c} we have $D_p\equiv D\bmod{8}$, and hence $\legendre{D_p}{2}_K=\legendre{D}{2}_K$. It follows that
\begin{equation*}
\legendre{D_p}{c}_K=\legendre{D_p}{2}_K^e\legendre{D_p}{c_0}_K=\legendre{D}{2}_K^e\legendre{D}{c_0}_K=\legendre{D}{c}_K.
\end{equation*}
This completes the proof of \eqref{eq:fromIndefiniteToDefinite1}.

Now we treat \eqref{eq:fromIndefiniteToDefinite2}. We choose a $c'''$ satisfying $cc'''\equiv1\bmod{aNN_p}$, which exists since $\gcd(aNN_p,c)=1$. Then the left-hand and right-hand sides of \eqref{eq:fromIndefiniteToDefinite2} remain unchanged if we set $c'=c'''$ and $c''=c'''$, respectively. After setting this value, we find \eqref{eq:fromIndefiniteToDefinite2} is equivalent to
\begin{multline}
\label{eq:fromIndefiniteToDefinite3}
\etp{\frac{ac'''}{2}\mathbf{w}^\tp G_p\mathbf{w}+ac'''\mathbf{w}^\tp\mathbf{x}+\frac{a(cc'''-1)}{2c}\mathbf{x}^\tp G_p^{-1}\mathbf{x}}\\
=\etp{\frac{ac'''}{2}\mathbf{w}^\tp G\mathbf{w}+ac'''\mathbf{w}^\tp\mathbf{x}+\frac{a(cc'''-1)}{2c}\mathbf{x}^\tp G^{-1}\mathbf{x}}.
\end{multline}
Since $G_p-G=4N^2ctI_n$, we have
\begin{equation}
\label{eq:fromIndefiniteToDefinite3_1}
\frac{ac'''}{2}\mathbf{w}^\tp G_p\mathbf{w}-\frac{ac'''}{2}\mathbf{w}^\tp G\mathbf{w}=2acc'''t(N\mathbf{w})^\tp(N\mathbf{w})\in\numZ.
\end{equation}
Note that $G_p^{-1}-G^{-1}=-4N^2ctG_p^{-1}G^{-1}$. Therefore, if we let $cc'''=1+aNN_pu$ with $u\in\numZ$, then
\begin{equation}
\label{eq:fromIndefiniteToDefinite3_2}
\frac{a(cc'''-1)}{2c}\mathbf{x}^\tp G_p^{-1}\mathbf{x}-\frac{a(cc'''-1)}{2c}\mathbf{x}^\tp G^{-1}\mathbf{x}=-2N^2ut(a\mathbf{x})^\tp(N_pG_p^{-1})(NG^{-1})(a\mathbf{x})\in\numZ.
\end{equation}
Combining \eqref{eq:fromIndefiniteToDefinite3_1} and \eqref{eq:fromIndefiniteToDefinite3_2}, we deduce \eqref{eq:fromIndefiniteToDefinite3}, and hence \eqref{eq:fromIndefiniteToDefinite2}. This concludes the proof.
\end{proof}

\section{Application I. A duality theorem and Gauss subsums}
\label{sec:application_i_a_duality_theorem_and_gauss_subsums}
In this section, we give our first application of Theorem \ref{thm:main} and Theorem \ref{thm:mainIndefinite}, namely, some duality formulas on Gauss subsums. The results are presented in two theorems, according to $\gcd(N,c)=1$ and $N$ respectively.

The conclusion for the case $\gcd(N,c)=1$ has been presented in Theorem \ref{thm:duality1}, whose proof will be given below in \S\ref{subsec:proofDuality1}. The conclusion for the case $\gcd(N,c)=N$, together with its proof, will be given in \S\ref{subsec:Duality2}. A lemma which is valid for all $c$ is provided in \S\ref{subsec:lemma_general_c}. It is required for both cases just mentioned.

\subsection{A lemma for general $c$}
\label{subsec:lemma_general_c}
\begin{lemm}
\label{lemm:GGUforGeneralc}
Let $G$ be a nonsingular even integral symmetric matrix of size $n\in\numgeq{Z}{1}$. Let $c$ be a positive integer and $a$ be a nonzero integer coprime to $c$. Let $H$ be a subgroup of $\numZ^n/c\numZ^n$ and let $\mathbf{w}\in G^{-1}\numZ^n$. Then we have
\begin{equation}
\label{eq:GGUforGeneralc}
\frac{1}{\sqrt{\abs{H}}}\cdot\mathfrak{G}_{G}^H(a/c;\mathbf{w})=\frac{1}{\sqrt{c^n}\sqrt{\abs{H^\bot}}}\cdot\sum_{\mathbf{y}\in H^\bot}\etp{\frac{\mathbf{w}^\tp\mathbf{y}}{c}}\mathfrak{G}_{G}(a/c;\mathbf{w},-a^{-1}\mathbf{y}).
\end{equation}
\end{lemm}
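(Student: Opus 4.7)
The plan is to apply the finite Poisson summation formula (Theorem \ref{thm:PoissonSummationFinite}) with $M=\numZ^n/c\numZ^n$ and test function
\[
f(\mathbf{v}) := \etp{\frac{a}{2c}(\mathbf{v}+\mathbf{w})^\tp G(\mathbf{v}+\mathbf{w})}.
\]
First I would verify that $f$ is well-defined on $M$: the hypothesis $\mathbf{w}\in G^{-1}\numZ^n$ is equivalent to $G\mathbf{w}\in\numZ^n$, so $\mathfrak{G}_G(a/c;\mathbf{w})$ is integral-parametric in the sense of Definition \ref{deff:intPara}, and the argument of Lemma \ref{lemm:independentRepr} (with $\mathbf{x}=\mathbf{0}$) shows the exponent of $f$ is constant on cosets of $c\numZ^n$. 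In particular $\sum_{\mathbf{v}\in H}f(\mathbf{v})=\mathfrak{G}_G^H(a/c;\mathbf{w})$ is meaningful (Corollary \ref{coro:welldefinedGaussH}).

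Next I would identify characters in the standard way: the map $\mathbf{y}\mapsto\chi_{\mathbf{y}}$ with $\chi_{\mathbf{y}}(\mathbf{v})=\etp{\mathbf{v}^\tp\mathbf{y}/c}$ gives an isomorphism $\numZ^n/c\numZ^n\cong\widehat{M}$, and a direct comparison of \eqref{eq:defMdualH} with \eqref{eq:Hbot} shows that $\widehat{M}^H$ corresponds precisely to $H^\bot$. Applying Theorem \ref{thm:PoissonSummationFinite} at $x=0$ then yields
\[
\frac{1}{|H|}\,\mathfrak{G}_G^H(a/c;\mathbf{w}) \;=\; \frac{1}{c^n}\sum_{\mathbf{y}\in H^\bot}\widehat{f}(\chi_{\mathbf{y}}).
\]

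The main computation is to express the Fourier coefficient as the Gauss sum appearing on the right of \eqref{eq:GGUforGeneralc}. Setting $\mathbf{x}:=-a^{-1}\mathbf{y}$ formally (note $a\mathbf{x}=-\mathbf{y}\in\numZ^n$, so that $\mathfrak{G}_G(a/c;\mathbf{w},\mathbf{x})$ is integral-parametric), the identity
\[
-\frac{\mathbf{v}^\tp\mathbf{y}}{c} \;=\; \frac{a}{c}(\mathbf{v}+\mathbf{w})^\tp(-a^{-1}\mathbf{y}) + \frac{\mathbf{w}^\tp\mathbf{y}}{c}
\]
pulls out the factor $\etp{\mathbf{w}^\tp\mathbf{y}/c}$ and converts the remaining sum into $\mathfrak{G}_G(a/c;\mathbf{w},-a^{-1}\mathbf{y})$, giving $\widehat{f}(\chi_{\mathbf{y}}) = \etp{\mathbf{w}^\tp\mathbf{y}/c}\cdot\mathfrak{G}_G(a/c;\mathbf{w},-a^{-1}\mathbf{y})$.

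Finally, multiplying the displayed Poisson identity by $\sqrt{|H|}$ and invoking Corollary \ref{coro:HHbotequalcn} (namely $|H|\cdot|H^\bot|=c^n$) to rewrite $\sqrt{|H|}/c^n = 1/(\sqrt{c^n}\sqrt{|H^\bot|})$ yields \eqref{eq:GGUforGeneralc}. No step is technically difficult; the only mild point worth recording carefully is the character identification $\widehat{M}^H\leftrightarrow H^\bot$, which is however immediate from the definitions.
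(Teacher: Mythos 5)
Your proposal is correct and follows essentially the same route as the paper: Poisson summation on $\numZ^n/c\numZ^n$ applied to $f(\mathbf{v})=\etp{\frac{a}{2c}(\mathbf{v}+\mathbf{w})^\tp G(\mathbf{v}+\mathbf{w})}$ at $x=0$, the identification $\widehat{M}^H\leftrightarrow H^\bot$ via $\mathbf{y}\mapsto\chi_{\mathbf{y}}$, the computation $\widehat{f}(\chi_{\mathbf{y}})=\etp{\mathbf{w}^\tp\mathbf{y}/c}\,\mathfrak{G}_G(a/c;\mathbf{w},-a^{-1}\mathbf{y})$, and the normalization $\abs{H}\cdot\abs{H^\bot}=c^n$. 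As a bonus, your closed-form expression for $\widehat{f}(\chi_{\mathbf{y}})$ also settles the well-definedness of each summand on the right of \eqref{eq:GGUforGeneralc} (a point the paper addresses in a separate remark), since the left-hand side manifestly depends only on $\mathbf{y}+c\numZ^n$.
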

Before the proof, we remark three important facts. Firstly, $\mathfrak{G}_{G}^H(a/c;\mathbf{w})$ showing up in the left-hand side of \eqref{eq:GGUforGeneralc} is well-defined by Corollary \ref{coro:welldefinedGaussH}, since here $\mathfrak{G}_{G}(a/c;\mathbf{w})$ is integral-parametric. Secondly, each $\etp{\frac{\mathbf{w}^\tp\mathbf{y}}{c}}\mathfrak{G}_{G}(a/c;\mathbf{w},-a^{-1}\mathbf{y})$ showing up in the right-hand side of \eqref{eq:GGUforGeneralc} actually means $\etp{\frac{\mathbf{w}^\tp\widetilde{\mathbf{y}}}{c}}\mathfrak{G}_{G}(a/c;\mathbf{w},-a^{-1}\widetilde{\mathbf{y}})$ where $\widetilde{\mathbf{y}}$ is any element in the preimage of $\mathbf{y}$ under the projection $\numZ^n\rightarrow\numZ^n/c\numZ^n$. One can show immediately that this quantity is independent of the choice of $\widetilde{\mathbf{y}}$. However, the values of both factors $\etp{\frac{\mathbf{w}^\tp\widetilde{\mathbf{y}}}{c}}$ and $\mathfrak{G}_{G}(a/c;\mathbf{w},-a^{-1}\widetilde{\mathbf{y}})$ themselves may change if we choose different $\widetilde{\mathbf{y}}$. Finally, the sum $\mathfrak{G}_{G}(a/c;\mathbf{w},-a^{-1}\mathbf{y})$ is of-course integral-parametric.
\begin{proof}[Proof of Lemma \ref{lemm:GGUforGeneralc}]
Let $f$ be the function on $\numZ^n/c\numZ^n$ defined by $f(\mathbf{v})=\etp{\frac{a}{2c}(\mathbf{v}+\mathbf{w})^\tp G(\mathbf{v}+\mathbf{w})}$. For each $\mathbf{t}\in\numZ^n/c\numZ^n$, let $\chi_{\mathbf{t}}$ be the character of $\numZ^n/c\numZ^n$ defined by $\chi_{\mathbf{t}}(\mathbf{v})=\etp{\frac{1}{c}\mathbf{v}^\tp\mathbf{t}}$. It is immediate that the map from $\numZ^n/c\numZ^n$ to $\widehat{\numZ^n/c\numZ^n}$ that sends $\mathbf{t}$ to $\chi_{\mathbf{t}}$ is a group isomorphism. Applying Theorem \ref{thm:PoissonSummationFinite} to $M=\numZ^n/c\numZ^n$, $x=0$ and to $f$ just defined, we obtain
\begin{equation}
\label{eq:GGUforGeneralcTemp1}
\frac{1}{\abs{H}}\mathfrak{G}_{G}^H(a/c;\mathbf{w})=\frac{1}{\abs{H}}\sum_{\mathbf{y}\in H}f(\mathbf{y})=\frac{1}{c^n}\sum_{\chi\in\widehat{\numZ^n/c\numZ^n}^{H}}\widehat{f}(\chi)\overline{\chi(0)}.
\end{equation}
By the definitions and Lemma \ref{lemm:independentRepr}, for each $\mathbf{x}\in a^{-1}\numZ^n$ we have
\begin{equation}
\label{eq:GGUforGeneralcTemp2}
\mathfrak{G}_{G}(a/c;\mathbf{w},\mathbf{x})=\sum_{\mathbf{v}\in\numZ^n/c\numZ^n}f(\mathbf{v})\etp{\frac{\mathbf{v}^\tp(a\mathbf{x})}{c}}\etp{\frac{a\mathbf{w}^\tp\mathbf{x}}{c}}=\etp{\frac{a\mathbf{w}^\tp\mathbf{x}}{c}}\widehat{f}(\chi_{-a\mathbf{x}}).
\end{equation}
Let $\mathscr{R}\subseteq\numZ^n$ be a complete set of representatives of $\numZ^n/c\numZ^n$. Then when $\mathbf{x}$ runs over the elements in $-a^{-1}\cdot\mathscr{R}$, $\chi_{-a\mathbf{x}}$ precisely runs over all characters in $\widehat{\numZ^n/c\numZ^n}$. Combining this, \eqref{eq:GGUforGeneralcTemp1} and \eqref{eq:GGUforGeneralcTemp2}, we deduce that
\begin{align}
\frac{1}{\abs{H}}\mathfrak{G}_{G}^H(a/c;\mathbf{w})&=\frac{1}{c^n}\cdot\sum_{-a\mathbf{x}\in\mathscr{R},\,\chi_{-a\mathbf{x}}\in\widehat{\numZ^n/c\numZ^n}^{H}}\etp{-\frac{a\mathbf{w}^\tp\mathbf{x}}{c}}\mathfrak{G}_{G}(a/c;\mathbf{w},\mathbf{x})\notag\\
&=\frac{1}{c^n}\cdot\sum_{\mathbf{y}\in\mathscr{R},\,\chi_{\mathbf{y}}\in\widehat{\numZ^n/c\numZ^n}^{H}}\etp{\frac{\mathbf{w}^\tp\mathbf{y}}{c}}\mathfrak{G}_{G}(a/c;\mathbf{w},-a^{-1}\mathbf{y}).\label{eq:GGUforGeneralcTemp3}
\end{align}
According to \eqref{eq:defMdualH}, we have
\begin{align*}
\chi_{\mathbf{y}}\in\widehat{\numZ^n/c\numZ^n}^{H}&\Longleftrightarrow\text{for all }(v_1,v_2,\dots,v_n)+\numZ^n\in H\text{ we have }\sum v_iy_i\equiv0\bmod{c}\\
&\Longleftrightarrow\mathbf{y}+\numZ^n\in H^\bot.
\end{align*}
Inserting this into \eqref{eq:GGUforGeneralcTemp3}, and noting that $c^n=\sqrt{c^n}\sqrt{\abs{H}}\sqrt{\abs{H^\bot}}$ (see Corollary \ref{coro:HHbotequalcn}), we arrive at \eqref{eq:GGUforGeneralc}, as desired.
\end{proof}

\subsection{Proof of Theorem \ref{thm:duality1}}
\label{subsec:proofDuality1}
Note that both $\mathfrak{G}_{G}(a/c;\mathbf{w})$ and $\mathfrak{G}_{G^\bot}(a^\bot/c;-aG\mathbf{w})$ are integral-parametric, so $\mathfrak{G}_{G}^H(a/c;\mathbf{w})$ and $\mathfrak{G}_{G^\bot}^{H^\bot}(a^\bot/c;-aG\mathbf{w})$ are well-defined by Corollary \ref{coro:welldefinedGaussH}. In other words, each factor in \eqref{eq:duality1} makes sense. Since $\gcd(N,c)=1$, we can apply Theorem \ref{thm:mainIndefinite}. Therefore, we substitute \eqref{eq:GGacwxWhenNccoprime} into the right-hand side of \eqref{eq:GGUforGeneralc} and obtain
\begin{multline}
\label{eq:proofDuality1Temp}
\frac{1}{\sqrt{\abs{H}}}\cdot\mathfrak{G}_{G}^H(a/c;\mathbf{w})=\frac{1}{\sqrt{\abs{H^\bot}}}\sum_{\mathbf{y}\in H^\bot}\etp{\frac{\mathbf{w}^\tp\mathbf{y}}{c}}\legendre{\abs{a}}{c}_K^n\legendre{D}{c}_K\etp{\frac{n\cdot\sgn a\cdot(1-c_0)}{8}}\\
\times\etp{-\frac{1}{2ac}\mathbf{y}^\tp G^{-1}\mathbf{y}+\frac{ac'}{2}(\mathbf{w}-a^{-1}G^{-1}\mathbf{y})^\tp G (\mathbf{w}-a^{-1}G^{-1}\mathbf{y})},
\end{multline}
where $c_0$ is the odd part of $c$, and $c'$ is any solution of $cc'\equiv 1\bmod{aN}$. Note that
\begin{align*}
&\etp{\frac{\mathbf{w}^\tp\mathbf{y}}{c}}\etp{-\frac{1}{2ac}\mathbf{y}^\tp G^{-1}\mathbf{y}+\frac{ac'}{2}(\mathbf{w}-a^{-1}G^{-1}\mathbf{y})^\tp G (\mathbf{w}-a^{-1}G^{-1}\mathbf{y})}\\
=&\etp{\frac{a}{2c}\mathbf{w}^\tp G\mathbf{w}}\etp{\frac{cc'-1}{2ac}(\mathbf{y}-aG\mathbf{w})^\tp G^{-1}(\mathbf{y}-aG\mathbf{w})}\\
=&\etp{\frac{a}{2c}\mathbf{w}^\tp G\mathbf{w}}\etp{\frac{a^\bot}{2c}(\mathbf{y}-aG\mathbf{w})^\tp G^{\bot}(\mathbf{y}-aG\mathbf{w})}.
\end{align*}
Inserting this into the right-hand side of \eqref{eq:proofDuality1Temp}, we arrive at \eqref{eq:duality1}, which finishes the proof.

\subsection{The $\gcd(N,c)=N$ case}
\label{subsec:Duality2}
Let the notations and assumptions be the same as in Theorem \ref{thm:duality1}, but now we assume that $G$ is positive definite, and $N\mid c$. The coset $aG\mathbf{w}+G\numZ^n$ is contained in $\numZ^n$. Set
\begin{equation*}
(aG\mathbf{w}+G\numZ^n)/c\numZ^n:=\{\mathbf{x}+c\numZ^n\colon \mathbf{x}\in aG\mathbf{w}+G\numZ^n\},
\end{equation*}
which is a subset of $\numZ^n/c\numZ^n$. Then the $N\mid c$ analog of Theorem \ref{thm:duality1} is the following one.
\begin{thm}
\label{thm:duality2}
In the case $G$ is positive definite and $N\mid c$, we have\footnote{It may happen that $H^\bot\cap(aG\mathbf{w}+G\numZ^n)/c\numZ^n=\emptyset$, in which case $\mathfrak{G}_{G^\bot}^{H^\bot\cap(aG\mathbf{w}+G\numZ^n)/c\numZ^n}\left(\frac{-a'}{Nc},\mathbf{c};-aG\mathbf{w},0\right)=0$.}
\begin{equation}
\label{eq:duality2}
\frac{1}{\sqrt{\abs{H}}}\cdot\mathfrak{G}_{G}^H\left(\frac{a}{c};\mathbf{w}\right)=\gamma\cdot\frac{\sqrt{D}}{\sqrt{\abs{H^\bot}}}\cdot\mathfrak{G}_{G^\bot}^{H^\bot\cap(aG\mathbf{w}+G\numZ^n)/c\numZ^n}\left(\frac{-a'}{Nc},\mathbf{c};-aG\mathbf{w},0\right),
\end{equation}
where $a'$ is any integer with $aa'\equiv1\bmod{cN}$, $\mathbf{c}:=(c,c,\dots,c)\in\numZ^n$, and where
\begin{equation*}
\gamma=\rmi^{\frac{n}{2}}\cdot \mu_G(a,c)\cdot\etp{\frac{a}{2c}\mathbf{w}^\tp G\mathbf{w}}
\end{equation*}
with $\mu_G(a,c)$ given by \eqref{eq:mainB}.
\end{thm}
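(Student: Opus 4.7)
The plan is to imitate the argument for Theorem \ref{thm:duality1} in \S\ref{subsec:proofDuality1}, but using the $N\mid c$ clause of Theorem \ref{thm:main} in place of Theorem \ref{thm:mainIndefinite}. First I would apply Lemma \ref{lemm:GGUforGeneralc} to write
\begin{equation*}
\frac{1}{\sqrt{\abs{H}}}\mathfrak{G}_G^H(a/c;\mathbf{w})=\frac{1}{\sqrt{c^n\abs{H^\bot}}}\sum_{\mathbf{y}\in H^\bot}\etp{\frac{\mathbf{w}^\tp\widetilde{\mathbf{y}}}{c}}\mathfrak{G}_G(a/c;\mathbf{w},-a^{-1}\widetilde{\mathbf{y}}),
\end{equation*}
where $\widetilde{\mathbf{y}}\in\numZ^n$ is any lift of $\mathbf{y}$. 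Each inner Gauss sum is integral-parametric, so Theorem \ref{thm:main}(2) evaluates it as
\begin{equation*}
\delta\cdot c^{n/2}\,D^{1/2}\,\rmi^{n/2}\,\mu_G(a,c)\,\etp{-\tfrac{a'}{2c}\widetilde{\mathbf{y}}^\tp G^{-1}\widetilde{\mathbf{y}}},
\end{equation*}
where $\delta=1$ iff $a\mathbf{w}-G^{-1}\widetilde{\mathbf{y}}\in\numZ^n$, equivalently $\widetilde{\mathbf{y}}\in aG\mathbf{w}+G\numZ^n$.

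The assumption $N\mid c$ together with the integrality of $NG^{-1}$ forces $c\numZ^n\subseteq G\numZ^n$, so $aG\mathbf{w}+G\numZ^n$ is a union of $c\numZ^n$-cosets; hence the condition on $\widetilde{\mathbf{y}}$ descends to the subset $(aG\mathbf{w}+G\numZ^n)/c\numZ^n$ of $\numZ^n/c\numZ^n$ and is independent of the chosen lift. Collecting the $\mathbf{y}$-independent constants, the identity becomes
\begin{equation*}
\frac{1}{\sqrt{\abs{H}}}\mathfrak{G}_G^H(a/c;\mathbf{w})=\frac{\sqrt{D}\,\rmi^{n/2}\mu_G(a,c)}{\sqrt{\abs{H^\bot}}}\sum_{\mathbf{y}\in H^\bot\cap (aG\mathbf{w}+G\numZ^n)/c\numZ^n}\etp{\tfrac{\mathbf{w}^\tp\widetilde{\mathbf{y}}}{c}-\tfrac{a'}{2c}\widetilde{\mathbf{y}}^\tp G^{-1}\widetilde{\mathbf{y}}}.
\end{equation*}

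It remains to recognize the sum on the right as
\begin{equation*}
\etp{\tfrac{a}{2c}\mathbf{w}^\tp G\mathbf{w}}\cdot\mathfrak{G}_{G^\bot}^{H^\bot\cap (aG\mathbf{w}+G\numZ^n)/c\numZ^n}\left(\tfrac{-a'}{Nc},\mathbf{c};-aG\mathbf{w},0\right),
\end{equation*}
which will finish the proof. Expanding the generic summand of the latter using $G^\bot=NG^{-1}$ and $\mathbf{w}'=-aG\mathbf{w}$ produces three pieces: $-\frac{a'}{2c}\mathbf{v}^\tp G^{-1}\mathbf{v}$, $\frac{a'a}{c}\mathbf{v}^\tp\mathbf{w}$, and $-\frac{a'a^2}{2c}\mathbf{w}^\tp G\mathbf{w}$. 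Writing $a'a=1+\mu cN$ with $\mu\in\numZ$, the middle piece differs from $\mathbf{v}^\tp\mathbf{w}/c$ by the integer $\mu\mathbf{v}^\tp(N\mathbf{w})$ (using $N\mathbf{w}\in\numZ^n$, a consequence of $\mathbf{w}\in G^{-1}\numZ^n$ and the integrality of $NG^{-1}$), while the third piece differs from $-\frac{a}{2c}\mathbf{w}^\tp G\mathbf{w}$ by $-a\mu\cdot NQ(\mathbf{w})\in\numZ$ (by the level condition $NQ(\mathbf{w})\in\numZ$). Both discrepancies are integers, hence invisible to $\mathfrak{e}(\cdot)$, and substituting gives \eqref{eq:duality2}.

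The main obstacle, as in the coprime case, is this final phase bookkeeping: verifying the two congruences just described and confirming that the restriction of the summation to $H^\bot\cap(aG\mathbf{w}+G\numZ^n)/c\numZ^n$ arises automatically from the $\delta$ factor rather than being imposed by hand. Both rest on the single structural fact that the level $N$ divides $c$, combined with the standing hypothesis $G\mathbf{w}\in\numZ^n$, which ensures that $\numZ^n/c\numZ^n$ is the correct ambient quotient for all quantities in sight.
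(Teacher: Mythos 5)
Your proposal is correct and follows essentially the same route as the paper: apply Lemma \ref{lemm:GGUforGeneralc}, evaluate each inner sum via Theorem \ref{thm:main}(2) so that the $\delta$ factor cuts the summation down to $H^\bot\cap(aG\mathbf{w}+G\numZ^n)/c\numZ^n$, and then match phases by completing the square and using $aa'\equiv1\bmod{cN}$ together with $N\mathbf{w}\in\numZ^n$ and $NQ(\mathbf{w})\in\numZ$. The paper performs the same phase computation in the opposite direction (rewriting $\etp{\mathbf{w}^\tp\mathbf{y}/c-\tfrac{a'}{2c}\mathbf{y}^\tp G^{-1}\mathbf{y}}$ as the $G^\bot$-summand times $\etp{\tfrac{a'a^2}{2c}\mathbf{w}^\tp G\mathbf{w}}$) and isolates the well-definedness of the non-integral-parametric right-hand sum in a separate lemma, but these are cosmetic differences.
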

Before the proof, it is important to notice that $\mathfrak{G}_{G^\bot}\left(\frac{-a'}{Nc},\mathbf{c};-aG\mathbf{w},0\right)$ is not necessarily integral-parametric, since in general $\frac{-a'}{Nc}\cdot\mathbf{c}\not\in\numZ^n$. However, an analog of Lemma \ref{lemm:independentRepr} holds, which ensures $\mathfrak{G}_{G^\bot}^{H^\bot\cap(aG\mathbf{w}+G\numZ^n)/c\numZ^n}\left(\frac{-a'}{Nc},\mathbf{c};-aG\mathbf{w},0\right)$ to be well-defined.
\begin{lemm}
\label{lemm:nonintegralpara}
Let $p\colon\numZ^n\rightarrow \numZ^n/c\numZ^n$ be the natural quotient map. Let $S\subseteq\numZ^n$ be a subset such that $p\vert_{S_j}$ is a bijection from $S$ onto $H^\bot\cap(aG\mathbf{w}+G\numZ^n)/c\numZ^n$. Then the value of
\begin{equation}
\label{eq:sumvinS}
\sum_{\mathbf{v}\in S}\etp{\frac{-a'}{Nc}\cdot\left(\frac{1}{2}(\mathbf{v}-aG\mathbf{w})^\tp\cdot G^\bot\cdot (\mathbf{v}-aG\mathbf{w})\right)}
\end{equation}
is independent of the choice of $S$. In other words, $\mathfrak{G}_{G^\bot}^{H^\bot\cap(aG\mathbf{w}+G\numZ^n)/c\numZ^n}\left(\frac{-a'}{Nc},\mathbf{c};-aG\mathbf{w},0\right)$ is independent of the choice of representatives of the elements of $H^\bot\cap(aG\mathbf{w}+G\numZ^n)/c\numZ^n$, and hence is well-defined.
\end{lemm}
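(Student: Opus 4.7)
The plan is to show that the summand in \eqref{eq:sumvinS} at a point $\mathbf{v}\in\numZ^n$ depends only on the class $p(\mathbf{v})$ in $(aG\mathbf{w}+G\numZ^n)/c\numZ^n$, from which independence of $S$ follows immediately. The $H^\bot$ restriction plays no role in the well-definedness argument itself; it only prescribes which classes appear in the sum.

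First I will simplify the summand. Since $N\mid c$ and $NG^{-1}$ is even integral, $c\numZ^n\subseteq N\numZ^n\subseteq G\numZ^n$. Consequently every $\mathbf{v}\in\numZ^n$ whose class lies in $(aG\mathbf{w}+G\numZ^n)/c\numZ^n$ already belongs to $aG\mathbf{w}+G\numZ^n$, and by nonsingularity of $G$ we may write $\mathbf{v}=aG\mathbf{w}+G\mathbf{y}$ with a unique $\mathbf{y}\in\numZ^n$. Using $G^\bot=NG^{-1}$ and $(G\mathbf{y})^\tp G^\bot(G\mathbf{y})=N\mathbf{y}^\tp G\mathbf{y}$, the summand reduces to $\etp{-a'\mathbf{y}^\tp G\mathbf{y}/(2c)}$.

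Next I will compare this quantity for two representatives $\mathbf{v}_i=aG\mathbf{w}+G\mathbf{y}_i$ ($i=1,2$) of the same class. Setting $\mathbf{d}=\mathbf{y}_1-\mathbf{y}_2$, the congruence $\mathbf{v}_1\equiv\mathbf{v}_2\bmod c\numZ^n$ becomes $G\mathbf{d}\in c\numZ^n$, and a direct expansion gives
\begin{equation*}
\frac{-a'}{2c}\bigl(\mathbf{y}_1^\tp G\mathbf{y}_1-\mathbf{y}_2^\tp G\mathbf{y}_2\bigr)=-\frac{a'}{c}\mathbf{y}_2^\tp G\mathbf{d}-\frac{a'}{c}Q(\mathbf{d}),
\end{equation*}
with $Q(\mathbf{d})=\frac{1}{2}\mathbf{d}^\tp G\mathbf{d}$. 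The first term is an integer because $G\mathbf{d}\in c\numZ^n$, so the whole expression is an integer once $c\mid Q(\mathbf{d})$ is established.

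The main obstacle, and the only real content of the lemma, is precisely this divisibility $c\mid Q(\mathbf{d})$. From $G\mathbf{d}\in c\numZ^n$ I deduce $\mathbf{d}/c\in G^{-1}\numZ^n=L^\sharp$, where $\underline{L}=(\numZ^n,B_G)$ is the lattice attached to $G$; the defining property of the level then yields $NQ(\mathbf{d}/c)\in\numZ$, i.e.\ $Q(\mathbf{d})\in(c^2/N)\numZ$. Because $N\mid c$, the ratio $c^2/N$ is a positive multiple of $c$, so $Q(\mathbf{d})\in c\numZ$. This forces the two summands to coincide and completes the proof.
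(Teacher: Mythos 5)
Your proof is correct and follows essentially the same route as the paper's: both expand the difference of exponents for two representatives of the same class into a cross term and a square term, and verify each is an integer using $N\mid c$ together with the even integrality of $G^\bot=NG^{-1}$ (equivalently, the level property $NQ(L^\sharp)\subseteq\numZ$). Your substitution $\mathbf{v}=aG\mathbf{w}+G\mathbf{y}$ is just a reparametrization of the paper's direct computation in $\mathbf{v}$ — under $\mathbf{d}=G^{-1}\Delta\mathbf{v}$ your key quantity $\frac{a'}{c}Q(\mathbf{d})$ is literally the paper's $\frac{a'}{2c}\Delta\mathbf{v}^\tp G^{-1}\Delta\mathbf{v}$.
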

\begin{proof}
Let $\mathbf{v}_1, \mathbf{v}_2$ be vectors such that $p(\mathbf{v}_1)=p(\mathbf{v}_2)\in H^\bot\cap(aG\mathbf{w}+G\numZ^n)/c\numZ^n$. Then
\begin{equation*}
\mathbf{v}_1,\,\mathbf{v}_2\in aG\mathbf{w}+G\numZ^n+c\numZ^n=aG\mathbf{w}+G\numZ^n\, (\text{since } c\numZ^n\subseteq G\numZ^n),
\end{equation*}
and $\Delta\mathbf{v}:=\mathbf{v}_2-\mathbf{v}_1\in c\numZ^n$. It follows that
\begin{align*}
&\etp{\frac{-a'}{Nc}\cdot\left(\frac{1}{2}(\mathbf{v}_2-aG\mathbf{w})^\tp\cdot G^\bot\cdot (\mathbf{v}_2-aG\mathbf{w})\right)}\\
=&\etp{\frac{-a'}{Nc}\cdot\left(\frac{1}{2}(\mathbf{v}_1-aG\mathbf{w})^\tp\cdot G^\bot\cdot (\mathbf{v}_1-aG\mathbf{w})\right)}\etp{-\frac{a'}{c}\Delta\mathbf{v}^\tp G^{-1}(\mathbf{v}_1-aG\mathbf{w})}\etp{-\frac{a'}{2c}\Delta\mathbf{v}^\tp G^{-1}\Delta\mathbf{v}}\\
=&\etp{\frac{-a'}{Nc}\cdot\left(\frac{1}{2}(\mathbf{v}_1-aG\mathbf{w})^\tp\cdot G^\bot\cdot (\mathbf{v}_1-aG\mathbf{w})\right)},
\end{align*}
for $\etp{-\frac{a'}{2c}\Delta\mathbf{v}^\tp G^{-1}\Delta\mathbf{v}}=1$ since $-\frac{a'}{2c}\Delta\mathbf{v}^\tp G^{-1}\Delta\mathbf{v}=-a'\cdot\frac{1}{2}(c^{-1}\Delta\mathbf{v})^\tp cG^{-1}(c^{-1}\Delta\mathbf{v})\in\numZ$, and $\etp{-\frac{a'}{c}\Delta\mathbf{v}^\tp G^{-1}(\mathbf{v}_1-aG\mathbf{w})}=1$ since $\frac{\Delta\mathbf{v}}{c}\in\numZ^n$ and $G^{-1}(\mathbf{v}_1-aG\mathbf{w})\in\numZ^n$. That is to say, each term in \eqref{eq:sumvinS} is independent of the choice of the representative $\mathbf{v}$, so the whole sum \eqref{eq:sumvinS} is independent of the choice of $S$.
\end{proof}
Now we are ready to prove Theorem \ref{thm:duality2}.
\begin{proof}[Proof of Theorem \ref{thm:duality2}]
Inserting \eqref{eq:GGacwxWhenNdivc} into the right-hand side of \eqref{eq:GGUforGeneralc}, we obtain
\begin{equation}
\label{eq:duality2Temp1}
\frac{1}{\sqrt{\abs{H}}}\cdot\mathfrak{G}_{G}^H(a/c;\mathbf{w})=\rmi^{\frac{n}{2}}B\frac{\sqrt{D}}{\sqrt{\abs{H^\bot}}}\cdot\sum_{\mathbf{y}\in H^\bot}\delta_{a\mathbf{w}-G^{-1}\mathbf{y}+\numZ^n,0+\numZ^n}\cdot\etp{\frac{\mathbf{w}^\tp\mathbf{y}}{c}}\etp{-\frac{a'}{2c}\mathbf{y}^\tp G^{-1}\mathbf{y}},
\end{equation}
where $\delta_{a\mathbf{w}-G^{-1}\mathbf{y}+\numZ^n,0+\numZ^n}$ is the Kronecker delta. We have
\begin{equation*}
\etp{\frac{\mathbf{w}^\tp\mathbf{y}}{c}}\etp{-\frac{a'}{2c}\mathbf{y}^\tp G^{-1}\mathbf{y}}=\etp{-\frac{a'}{2Nc}(\mathbf{y}-aG\mathbf{w})^\tp\cdot G^\bot\cdot (\mathbf{y}-aG\mathbf{w})}\etp{\frac{a'a^2}{2c}\mathbf{w}^\tp G\mathbf{w}}.
\end{equation*}
Inserting this into the right-hand side of \eqref{eq:duality2Temp1}, and noting $a\mathbf{w}-G^{-1}\mathbf{y}+\numZ^n=0+\numZ^n$ if and only if $\mathbf{y}\in aG\mathbf{w}+G\numZ^n$, we deduce that
\begin{multline}
\label{eq:duality2Temp2}
\frac{1}{\sqrt{\abs{H}}}\cdot\mathfrak{G}_{G}^H(a/c;\mathbf{w})=\rmi^{\frac{n}{2}}B\etp{\frac{a'a^2}{2c}\mathbf{w}^\tp G\mathbf{w}}\frac{\sqrt{D}}{\sqrt{\abs{H^\bot}}}\\
\cdot\sum_{\mathbf{y}+c\numZ^n\in H^\bot,\,\mathbf{y}\in aG\mathbf{w}+G\numZ^n}\etp{-\frac{a'}{2Nc}(\mathbf{y}-aG\mathbf{w})^\tp\cdot G^\bot\cdot (\mathbf{y}-aG\mathbf{w})}.
\end{multline}
It is immediate that $\rmi^{\frac{n}{2}}B\etp{\frac{a'a^2}{2c}\mathbf{w}^\tp G\mathbf{w}}=\gamma$, since $aa'\equiv1\bmod{cN}$. Thus, \eqref{eq:duality2Temp2} can be reduced to the desired conclusion.
\end{proof}

\section{Intermission: Examples of the main theorem and the duality theorem}
\label{sec:intermission_examples_of_the_main_theorem_and_the_duality_theorem}
In this section, we give some concrete examples, immediate corollaries, and direct applications of Theorems \ref{thm:main}, \ref{thm:mainIndefinite}, \ref{thm:duality1} and \ref{thm:duality2}.

\subsection{The $A_n$ lattices}
\label{subsec:Anlattices}
Let $n\in\numgeq{Z}{1}$, and let $G=A_n$ be the matrix whose $(i,j)$-entry equals $2$ if $i=j$, equals $-1$ if $\abs{i-j}=1$, and equals $0$ otherwise. It is positive definite by, e.g., completing the squares. The lattices $(\numZ^n,(\mathbf{x},\mathbf{y})\mapsto\mathbf{x}^\tp A_n\mathbf{y})$ and $(\numZ^n,(\mathbf{x},\mathbf{y})\mapsto\mathbf{x}^\tp A_n^{-1}\mathbf{y})$ are similar to the weight lattice and the root lattice of the root system of type $A_n$, respectively; cf., e.g. \cite[p. 254 and 264]{TY05}.

Note that the entries of $A_n^{-1}$ can be determined explicitly as follows. If $i\leq j$, then the $(i,j)$-entry of $A_n^{-1}$ equals $\frac{i(n+1-j)}{n+1}$. Other entries are determined by the fact $A_n^{-1}$ is symmetric (since $A_n$ is). It follows that the level of $A_n$ is equal to $2n+2$ if $n$ is odd, and equal to $n+1$ if $n$ is even. Moreover, since $\det(A_n)=2\det(A_{n-1})-\det(A_{n-2})$ for $n\geq3$ and $\det(A_1)=2$, $\det(A_2)=3$, we have $\det(A_n)=n+1$ by induction. For instance, the matrices $A_5$ and $A_5^{-1}$ are provided below:
\begin{equation*}
A_5=\begin{pmatrix}
2 & -1 & 0 & 0 & 0\\
-1 & 2 & -1 & 0 & 0\\
0 & -1 & 2 & -1 & 0\\
0 & 0 & -1 & 2 & -1\\
0 & 0 & 0 & -1 & 2
\end{pmatrix},\quad
6A_5^{-1}=\begin{pmatrix}
5 & 4 & 3 & 2 & 1\\
4 & 8 & 6 & 4 & 2\\
3 & 6 & 9 & 6 & 3\\
2 & 4 & 6 & 8 & 4\\
1 & 2 & 3 & 4 & 5
\end{pmatrix}.
\end{equation*}

For $G=A_n$, the formula \eqref{eq:GGacwxWhenNccoprime} with $\mathbf{w}=0$ becomes
\begin{multline*}
\sum_{v_1,\dots,v_n\bmod{c}}\etp{\frac{a}{c}\left(\sum_{i=1}^nv_i^2-\sum_{i=1}^{n-1}v_iv_{i+1}+\sum_{i=1}^nv_ix_i\right)}=c^{n/2}\legendre{\abs{a}}{c}_K^n\legendre{n+1}{c}_K\\
\cdot\etp{\frac{n\cdot\sgn a\cdot(1-c_0)}{8}}\etp{\frac{a(cc'-1)}{2c(n+1)}\left(\sum_{i=1}^ni(n+1-i)x_i^2+2\sum_{1\leq i<j\leq n}i(n+1-j)x_ix_j\right)},
\end{multline*}
provided that $c>0$, $\gcd(c,n+1)=\gcd(c,a)=1$ and $ax_i\in\numZ$ for all $i$.

\subsection{A condition for the vanishing of Gauss subsums}
\label{subsec:a_condition_for_the_vanishing_of_gauss_subsums}
Here is an interesting application of Theorem \ref{thm:duality2}, which gives a criterion for whether $\mathfrak{G}_{G}^S\left(a/c;\mathbf{w}\right)=0$, where $S$ is a certain small subset of $\numZ^n/c\numZ^n$. This gives an insight into the fact that when $\mathfrak{G}_{G}\left(a/c;\mathbf{w}\right)=0$, this sum can be divided into many subsums, all of which vanish.
\begin{prop}
\label{prop:GzeroTower}
Let $G$ be a positive definite even integral symmetric matrix of size $n\in\numgeq{Z}{1}$, and let $N$ be its level. Let $a$ be a nonzero integer and $c$ be a positive integer with $\gcd(a,c)=1$ and $N\mid c$. Let $\mathbf{w}\in G^{-1}\numZ^n$ satisfying $a\mathbf{w}\not\in\numZ^n$.
\begin{enumerate}
	\item If $H$ is a subgroup of $\numZ^n/c\numZ^n$ containing $cG^{-1}\numZ^n/c\numZ^n$, then $\mathfrak{G}_{G}^H\left(a/c;\mathbf{w}\right)=0$.
	\item For all $\mathbf{x}\in\numZ^n$, we have $\mathfrak{G}_{G}^{(\mathbf{x}+cG^{-1}\numZ^n)/c\numZ^n}\left(a/c;\mathbf{w}\right)=0$.
\end{enumerate}
\end{prop}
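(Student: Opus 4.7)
The plan is to derive both parts from Theorem \ref{thm:duality2}, with part (2) reducing to part (1) by translating the $\mathbf{w}$-parameter, so the main work lies in part (1).

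For part (1), I would apply \eqref{eq:duality2} and show that the set indexing the Gauss subsum on the right-hand side is empty. The key step is the computation of $(cG^{-1}\numZ^n/c\numZ^n)^\bot$. Note first that $c\numZ^n\subseteq cG^{-1}\numZ^n\subseteq\numZ^n$, the first inclusion being trivial and the second following from $N\mid c$ together with $NG^{-1}\in M_n(\numZ)$, so $cG^{-1}\numZ^n/c\numZ^n$ is a bona fide subgroup of $\numZ^n/c\numZ^n$. Unwinding \eqref{eq:Hbot}, a coset $\mathbf{v}+c\numZ^n$ lies in $(cG^{-1}\numZ^n/c\numZ^n)^\bot$ iff $\mathbf{v}^\tp(cG^{-1}\mathbf{y})\in c\numZ$ for every $\mathbf{y}\in\numZ^n$, equivalently $G^{-1}\mathbf{v}\in\numZ^n$, equivalently $\mathbf{v}\in G\numZ^n$. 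Thus $(cG^{-1}\numZ^n/c\numZ^n)^\bot = G\numZ^n/c\numZ^n$, and the inclusion $H\supseteq cG^{-1}\numZ^n/c\numZ^n$ yields $H^\bot\subseteq G\numZ^n/c\numZ^n$.

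Next I would note that the cosets $G\numZ^n$ and $aG\mathbf{w}+G\numZ^n$ of $G\numZ^n$ in $\numZ^n$ coincide iff $aG\mathbf{w}\in G\numZ^n$, iff $a\mathbf{w}\in\numZ^n$, which is forbidden by hypothesis. Consequently the two subsets $G\numZ^n/c\numZ^n$ and $(aG\mathbf{w}+G\numZ^n)/c\numZ^n$ of $\numZ^n/c\numZ^n$ are disjoint, so $H^\bot\cap(aG\mathbf{w}+G\numZ^n)/c\numZ^n=\emptyset$. The footnote to Theorem \ref{thm:duality2} then forces the right-hand side of \eqref{eq:duality2} to vanish, giving $\mathfrak{G}_G^H(a/c;\mathbf{w})=0$.

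For part (2), I would perform the change of variables $\mathbf{v}=\mathbf{x}+\mathbf{u}$ inside \eqref{eq:defGaussH}, which is legitimate because $\mathbf{x}\in\numZ^n$ and $cG^{-1}\numZ^n+\mathbf{x}$ is precisely the coset in question, to obtain
\begin{equation*}
\mathfrak{G}_G^{(\mathbf{x}+cG^{-1}\numZ^n)/c\numZ^n}(a/c;\mathbf{w})=\mathfrak{G}_G^{cG^{-1}\numZ^n/c\numZ^n}(a/c;\mathbf{w}+\mathbf{x}).
\end{equation*}
Since $\mathbf{w}+\mathbf{x}\in G^{-1}\numZ^n$ and $a(\mathbf{w}+\mathbf{x})=a\mathbf{w}+a\mathbf{x}\notin\numZ^n$ (as $a\mathbf{x}\in\numZ^n$ while $a\mathbf{w}\notin\numZ^n$), part (1) applies with $H=cG^{-1}\numZ^n/c\numZ^n$ and $\mathbf{w}$ replaced by $\mathbf{w}+\mathbf{x}$, and the conclusion follows. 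No significant obstacle is anticipated: the only conceptual point is the identification of $(cG^{-1}\numZ^n/c\numZ^n)^\bot$ with $G\numZ^n/c\numZ^n$, after which the hypothesis $a\mathbf{w}\notin\numZ^n$ is exactly what is needed to put $H^\bot$ and the ``support coset'' of the dualized sum into distinct cosets of $G\numZ^n/c\numZ^n$.
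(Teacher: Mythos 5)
Your proposal is correct and follows essentially the same route as the paper: apply Theorem \ref{thm:duality2}, show $H^\bot\subseteq G\numZ^n/c\numZ^n$ so that the indexing set of the dual subsum is empty because $a\mathbf{w}\notin\numZ^n$ separates the cosets, and reduce part (2) to part (1) by the translation $\mathfrak{G}_G^{(\mathbf{x}+c\numZ^n)+H}(a/c;\mathbf{w})=\mathfrak{G}_G^{H}(a/c;\mathbf{x}+\mathbf{w})$. The only (immaterial) difference is that you compute $(cG^{-1}\numZ^n/c\numZ^n)^\bot=G\numZ^n/c\numZ^n$ directly and use inclusion-reversal, whereas the paper computes $(G\numZ^n/c\numZ^n)^\bot$ and invokes the biduality $H^{\bot\bot}=H$ from Lemma \ref{lemm:HHbot}.
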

\begin{proof}
(1) Note that
\begin{align*}
(G\numZ^n/c\numZ^n)^\bot&=\{\mathbf{x}+c\numZ^n\colon\mathbf{v}^\tp G\mathbf{x}\in c\numZ^n\text{ for all }\mathbf{v}\in\numZ^n\}=cG^{-1}\numZ^n/c\numZ^n,
\end{align*}
and that
\begin{equation*}
H\supseteq(G\numZ^n/c\numZ^n)^\bot\Longleftrightarrow H^\bot\subseteq G\numZ^n/c\numZ^n
\end{equation*}
by the second conclusion of Lemma \ref{lemm:HHbot}. Since $a\mathbf{w}\not\in\numZ^n$, we have
\begin{equation*}
H^\bot\cap(aG\mathbf{w}+G\numZ^n)/c\numZ^n \subseteq G\numZ^n/c\numZ^n\cap(aG\mathbf{w}+G\numZ^n)/c\numZ^n=\emptyset.
\end{equation*}
Thus, the right-hand side of \eqref{eq:duality2} vanishes. This, together with Theorem \ref{thm:duality2}, implies that $\mathfrak{G}_{G}^H\left(a/c;\mathbf{w}\right)=0$, as desired.

(2) Let $H=cG^{-1}\numZ^n/c\numZ^n$. Then $(\mathbf{x}+cG^{-1}\numZ^n)/c\numZ^n=(\mathbf{x}+c\numZ^n)+H$, which is an element of $(\numZ^n/c\numZ^n)/H$. Thus, we have
\begin{equation*}
\mathfrak{G}_{G}^{(\mathbf{x}+cG^{-1}\numZ^n)/c\numZ^n}\left(a/c;\mathbf{w}\right)=\mathfrak{G}_{G}^{(\mathbf{x}+c\numZ^n)+H}\left(a/c;\mathbf{w}\right)=\mathfrak{G}_{G}^{H}\left(a/c;\mathbf{x}+\mathbf{w}\right),
\end{equation*}
where the latter identity follows from the definition. Since $\mathbf{w}\in G^{-1}\numZ^n$ and $a\mathbf{w}\not\in\numZ^n$, we have as well $\mathbf{x}+\mathbf{w}\in G^{-1}\numZ^n$ and $a(\mathbf{x}+\mathbf{w})\not\in\numZ^n$. Therefore, the first conclusion we have just proved implies $\mathfrak{G}_{G}^{H}\left(a/c;\mathbf{x}+\mathbf{w}\right)=0$, i.e., $\mathfrak{G}_{G}^{(\mathbf{x}+cG^{-1}\numZ^n)/c\numZ^n}\left(a/c;\mathbf{w}\right)=0$.
\end{proof}

These are not strong conclusions, since one can also prove them in a rather direct manner, using the orthogonality relation of characters of $G^{-1}\numZ^n/\numZ^n$. However, they tell us that in the case $N\mid c$ and $\delta=0$ in \eqref{eq:GGacwxWhenNdivc}, what the pattern of cancellations in $\mathfrak{G}_{G}(a/c;\mathbf{w},\mathbf{x})$ looks like: this sum actually can be divided into $c^n/\det(G)$ sums, each of which equals $0$ due to the orthogonality relation mentioned above.

\subsection{Explicit formulas of Gauss subsums}
\label{subsec:explicit_formulas_of_gauss_subsums}
If $H^\bot$ in \eqref{eq:duality1} is cyclic, then the right-hand side is a classical Gauss sum, of which explicit formulas are known. Thus, we will have explicit formulas for $\mathfrak{G}_{G}^H(a/c;\mathbf{w})$ for such $H$. Note that for $\mathbf{v}_1,\mathbf{v}_2\in\numZ^n$ or $\numZ^n/c\numZ^n$, by $\mathbf{v}_1\perp\mathbf{v}_2$ we understand $\mathbf{v}_1^\tp\mathbf{v}_2\equiv0\bmod{c}$.
\begin{thm}
\label{thm:GaussSubsumExplicit}
Let $G$ be a nonsingular even integral symmetric matrix of size $n\in\numgeq{Z}{1}$, and let $D$ and $N$ be the determinant and level of $G$, respectively. Let $c\in\numgeq{Z}{1}$ and $a\in\numZ_{\neq0}$ with $\gcd(c,a)=\gcd(c,N)=1$. Let $\mathbf{w}\in G^{-1}\numZ^n$ and $\mathbf{h}=(h_1,\dots,h_n)\in\numZ^n$. Set $u=c/\gcd(c,h_1,\dots,h_n)$. Then we have
\begin{multline}
\label{eq:GaussSubsumExplicit}
\sum_{\mathbf{v}\in\numZ^n/c\numZ^n,\,\mathbf{v}\perp\mathbf{h}}\etp{\frac{a}{c}\cdot\left(\frac{1}{2}(\mathbf{v}+\mathbf{w})^\tp\cdot G\cdot (\mathbf{v}+\mathbf{w})\right)}\\
=c^{n/2}\cdot\legendre{\abs{a}}{c}_K^n\cdot\legendre{D}{c}_K\cdot\etp{\frac{n\cdot\sgn a\cdot(1-c_0)}{8}}\etp{\frac{ac'}{2}\mathbf{w}^\tp G\mathbf{w}}\cdot C,
\end{multline}
where $c_0$ is the odd part of $c$, and $c'$ is any integer satisfying $cc'\equiv1\bmod{aN}$, and where
\begin{equation*}
C=\begin{dcases}
1 &\text{if }\mathbf{h}^\tp G^\bot\mathbf{h}\equiv0\bmod{2c}\text{ and }\mathbf{h}\perp N\mathbf{w}\\
0 &\text{if }\mathbf{h}^\tp G^\bot\mathbf{h}\equiv0\bmod{2c}\text{ and }\mathbf{h}\not\perp N\mathbf{w}\\
u^{-1}\mathfrak{G}_{(2)}\left(\frac{a^\bot\mathbf{h}^\tp G^\bot\mathbf{h}}{2c},u;0,-\frac{2aN\mathbf{w}^\tp\mathbf{h}}{\mathbf{h}^\tp G^\bot\mathbf{h}}\right) &\text{if }\mathbf{h}^\tp G^\bot\mathbf{h}\not\equiv0\bmod{2c},
\end{dcases}
\end{equation*}
with $a^\bot$ an integer subject to the condition $a^\bot aN\equiv-1\bmod{c}$.
\end{thm}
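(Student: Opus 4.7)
The plan is to derive Theorem \ref{thm:GaussSubsumExplicit} as a direct consequence of Theorem \ref{thm:duality1}. Writing $H=\{\mathbf{v}\in\numZ^n/c\numZ^n:\mathbf{v}\perp\mathbf{h}\}$, I first observe that $H$ is the kernel of the linear map $\mathbf{v}\mapsto\mathbf{v}^\tp\mathbf{h}+c\numZ$ from $\numZ^n/c\numZ^n$ to $\numZ/c\numZ$, whose image has order $u=c/\gcd(c,h_1,\dots,h_n)$. Hence $\abs{H}=c^n/u$, and Corollary \ref{coro:HHbotequalcn} yields $\abs{H^\bot}=u$. Since the cyclic subgroup $\langle\mathbf{h}+c\numZ^n\rangle$ has order $u$ and is visibly contained in $H^\bot$, the two coincide, so $\{k\mathbf{h}:k=0,1,\dots,u-1\}$ is a complete set of representatives of $H^\bot$.

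Next I would apply Theorem \ref{thm:duality1}, parametrize the resulting sum on the right by $\mathbf{y}=k\mathbf{h}$, and expand using $G^\bot G=N\cdot I_n$:
\begin{equation*}
(k\mathbf{h}-aG\mathbf{w})^\tp G^\bot(k\mathbf{h}-aG\mathbf{w})=k^2q-2akN\mathbf{w}^\tp\mathbf{h}+a^2N\mathbf{w}^\tp G\mathbf{w},
\end{equation*}
where $q=\mathbf{h}^\tp G^\bot\mathbf{h}$. The $k$-independent factor $\etp{\frac{a^\bot a^2N}{2c}\mathbf{w}^\tp G\mathbf{w}}$, combined with $\etp{\frac{a}{2c}\mathbf{w}^\tp G\mathbf{w}}$ from $\gamma$, collapses to $\etp{\frac{ac'}{2}\mathbf{w}^\tp G\mathbf{w}}$ with $c':=(1+a^\bot aN)/c\in\numZ$; indeed $cc'=1+a^\bot aN\equiv1\bmod aN$, and the exponential is independent of the specific valid $c'$ because any two such choices differ by a multiple of $aN$ while $N\mathbf{w}^\tp G\mathbf{w}\in2\numZ$ (a consequence of $NG^{-1}$ being even integral). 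Together with $\sqrt{\abs{H}/\abs{H^\bot}}=c^{n/2}/u$, this yields the asserted formula with
\begin{equation*}
C=\frac{1}{u}\sum_{k=0}^{u-1}\etp{\frac{a^\bot(k^2q-2kr)}{2c}},\qquad r:=aN\mathbf{w}^\tp\mathbf{h}.
\end{equation*}

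The argument closes with a case split on $q\bmod 2c$. If $q\equiv0\bmod 2c$, the sum degenerates to a geometric series $\frac{1}{u}\sum_{k=0}^{u-1}\zeta^k$ with $\zeta=\etp{-a^\bot r/c}$. Using $\gcd(a^\bot a,c)=1$ (which follows from $\gcd(a^\bot aN,c)=1$), one has $\zeta=1$ if and only if $c\mid N\mathbf{w}^\tp\mathbf{h}$, i.e.\ $\mathbf{h}\perp N\mathbf{w}$; and $\zeta^u=1$ always, since $\gcd(c,h_1,\dots,h_n)\mid N\mathbf{w}^\tp\mathbf{h}$ (each $h_i$ is divisible by $c/u$ while $N\mathbf{w}\in\numZ^n$). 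Hence $C$ equals $1$ or $0$ in agreement with the statement. In the remaining case $q\not\equiv 0\bmod 2c$, expanding the definition of $\mathfrak{G}_{(2)}$ with the parameters indicated in the statement reproduces $uC$ term by term, completing the proof. I expect the principal technical checkpoint to be the $c'$-independence of $\etp{\frac{ac'}{2}\mathbf{w}^\tp G\mathbf{w}}$ together with the implicit evenness $q\in2\numZ$ (from $G^\bot$ being even integral), both of which are needed to make all the fractional exponents involved well-defined modulo $\numZ$.
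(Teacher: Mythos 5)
Your proposal is correct and follows essentially the same route as the paper: identify $H^\bot=\langle\mathbf{h}+c\numZ^n\rangle$ (you do this by a counting argument via Corollary \ref{coro:HHbotequalcn}, the paper via $H^{\bot\bot}=H$ from Lemma \ref{lemm:HHbot}), apply Theorem \ref{thm:duality1}, expand the resulting one-parameter sum over $k\mathbf{h}$ using $G^\bot G=NI_n$, merge the $k$-independent phase into $\etp{\frac{ac'}{2}\mathbf{w}^\tp G\mathbf{w}}$, and split into the degenerate (geometric-sum) and nondegenerate ($\mathfrak{G}_{(2)}$) cases exactly as in the paper's proof. The checkpoints you flag — well-definedness in $c'$ and the evenness of $\mathbf{h}^\tp G^\bot\mathbf{h}$ and $N\mathbf{w}^\tp G\mathbf{w}$ — are the right ones and are handled correctly.
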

Some remarks are in order before the proof. First, the integer $c'$ can be chosen such that merely $cc'\equiv1\bmod{N}$. The requirement $cc'\equiv1\bmod{aN}$ is presented here for being consistent with its usage in \eqref{eq:GGacwxWhenNccoprime}. In addition, it is immediate that $\mathbf{h}^\tp G^\bot\mathbf{h},\,N\mathbf{w}^\tp\mathbf{h}\in\numZ$, that $2c\mid u\mathbf{h}^\tp G^\bot\mathbf{h}$, and that $c\mid uN\mathbf{w}^\tp\mathbf{h}$. Therefore
\begin{equation}
\label{eq:GaussSuminC}
\mathfrak{G}_{(2)}\left(\frac{a^\bot\mathbf{h}^\tp G^\bot\mathbf{h}}{2c},u;0,-\frac{2aN\mathbf{w}^\tp\mathbf{h}}{\mathbf{h}^\tp G^\bot\mathbf{h}}\right)=\mathfrak{G}_{(2)}\left(\frac{a_1}{c_1},u;0,-\frac{2aN\mathbf{w}^\tp\mathbf{h}}{\mathbf{h}^\tp G^\bot\mathbf{h}}\right),
\end{equation}
where $a_1=\frac{a^\bot u\mathbf{h}^\tp G^\bot\mathbf{h}}{2c}/\gcd\left(u,\frac{u\mathbf{h}^\tp G^\bot\mathbf{h}}{2c}\right)$ and $c_1=u/\gcd\left(u,\frac{u\mathbf{h}^\tp G^\bot\mathbf{h}}{2c}\right)$ are coprime integers. Since $u\cdot a_1/c_1\in\numZ$, we deduce that $\mathfrak{G}_{(2)}\left(\frac{a^\bot\mathbf{h}^\tp G^\bot\mathbf{h}}{2c},u;0,-\frac{2aN\mathbf{w}^\tp\mathbf{h}}{\mathbf{h}^\tp G^\bot\mathbf{h}}\right)$ is integral-parametric if and only if $\gcd\left(u,\frac{u\mathbf{h}^\tp G^\bot\mathbf{h}}{2c}\right)\mid \frac{uN\mathbf{w}^\tp\mathbf{h}}{c}$. Suppose this is the case. If $c_1$ is odd or $4\mid c_1$, then \eqref{eq:GaussSuminC} can be calculated using \eqref{eq:GGacwxWhenNccoprime} or \eqref{eq:GGacwxWhenNdivc}, respectively, with $G=(2)$. Otherwise, if $c_1\equiv2\bmod{4}$ and $\mathbf{h}^\tp G^\bot\mathbf{h}\mid aN\mathbf{w}^\tp\mathbf{h}$, then \eqref{eq:GaussSuminC} equals $0$ by, e.g., \cite[Theorem 1.5.1]{BEW98}. If $c_1\equiv2\bmod{4}$ and $\mathbf{h}^\tp G^\bot\mathbf{h}\nmid aN\mathbf{w}^\tp\mathbf{h}$, then \eqref{eq:GaussSuminC} can be calculated efficiently using \cite[Theorem 1.2.2]{BEW98}. However, if $\gcd\left(u,\frac{u\mathbf{h}^\tp G^\bot\mathbf{h}}{2c}\right)\nmid \frac{uN\mathbf{w}^\tp\mathbf{h}}{c}$, i.e., \eqref{eq:GaussSuminC} is not integral-parametric, we do not know any explicit formula for it.
\begin{proof}
Set $H=\{\mathbf{v}\in\numZ^n/c\numZ^n\colon\mathbf{v}\perp\mathbf{h}\}$, which is a subgroup of $\numZ^n/c\numZ^n$. Then $H=\langle\mathbf{h}+c\numZ^n\rangle^\bot$, and hence by Lemma \ref{lemm:HHbot}, $H^\bot=\langle\mathbf{h}+c\numZ^n\rangle$, the subgroup generated by $\mathbf{h}+c\numZ^n$. It is immediate that $\abs{H^\bot}=\ord(\mathbf{h}+c\numZ^n)=u$, and consequently $\abs{H}=c^n/u$ by Lemma \ref{lemm:HHbot}.

By Theorem \ref{thm:duality1}, we have
\begin{multline}
\label{eq:GaussSubsumExplicitTemp}
\sum_{\mathbf{v}\in\numZ^n/c\numZ^n,\,\mathbf{v}\perp\mathbf{h}}\etp{\frac{a}{c}\cdot\left(\frac{1}{2}(\mathbf{v}+\mathbf{w})^\tp\cdot G\cdot (\mathbf{v}+\mathbf{w})\right)}=\mathfrak{G}_{G}^H(a/c;\mathbf{w})\\
=c^{n/2}\legendre{\abs{a}}{c}_K^n\legendre{D}{c}_K\etp{\frac{n\cdot\sgn a\cdot(1-c_0)}{8}}\etp{\frac{a}{2c}\mathbf{w}^\tp G\mathbf{w}}u^{-1}\mathfrak{G}_{G^\bot}^{H^\bot}(a^\bot/c;-aG\mathbf{w}).
\end{multline}
It remains to calculate $\mathfrak{G}_{G^\bot}^{H^\bot}(a^\bot/c;-aG\mathbf{w})$. Expanding the definition, we find that
\begin{equation}
\label{eq:GaussSubsumExplicitTemp2}
\mathfrak{G}_{G^\bot}^{H^\bot}(a^\bot/c;-aG\mathbf{w})=\etp{\frac{a^2a^\bot N\mathbf{w}^\tp G\mathbf{w}}{2c}}\sum_{v=0}^{u-1}\etp{\frac{a^\bot\mathbf{h}^\tp G^\bot\mathbf{h}}{2c}v^2-\frac{aa^\bot N\mathbf{w}^\tp\mathbf{h}}{c}v}.
\end{equation}
If $\mathbf{h}^\tp G^\bot\mathbf{h}\equiv0\bmod{2c}$, then
\begin{equation*}
\mathfrak{G}_{G^\bot}^{H^\bot}(a^\bot/c;-aG\mathbf{w})=\etp{\frac{a^2a^\bot N\mathbf{w}^\tp G\mathbf{w}}{2c}}\sum_{v=0}^{u-1}\etp{-\frac{aa^\bot N\mathbf{w}^\tp\mathbf{h}}{c}v}=\etp{\frac{a^2a^\bot N\mathbf{w}^\tp G\mathbf{w}}{2c}}uC.
\end{equation*}
(When $\mathbf{h}\not\perp N\mathbf{w}$, the last equality follows from the fact $c\mid uN\mathbf{w}^\tp\mathbf{h}$.) Inserting this into \eqref{eq:GaussSubsumExplicitTemp}, and noting
\begin{equation}
\label{eq:GaussSubsumExplicitTemp3}
\etp{\frac{a}{2c}\mathbf{w}^\tp G\mathbf{w}}\etp{\frac{a^2a^\bot N\mathbf{w}^\tp G\mathbf{w}}{2c}}=\etp{\frac{ac'}{2}\mathbf{w}^\tp G\mathbf{w}},
\end{equation}
we arrive at \eqref{eq:GaussSubsumExplicit} in the case $\mathbf{h}^\tp G^\bot\mathbf{h}\equiv0\bmod{2c}$. On the other hand, if $\mathbf{h}^\tp G^\bot\mathbf{h}\not\equiv0\bmod{2c}$, \eqref{eq:GaussSubsumExplicit} follows from \eqref{eq:GaussSubsumExplicitTemp}--\eqref{eq:GaussSubsumExplicitTemp3}, and
\begin{equation*}
\sum_{v=0}^{u-1}\etp{\frac{a^\bot\mathbf{h}^\tp G^\bot\mathbf{h}}{2c}v^2-\frac{aa^\bot N\mathbf{w}^\tp\mathbf{h}}{c}v}=\mathfrak{G}_{(2)}\left(\frac{a^\bot\mathbf{h}^\tp G^\bot\mathbf{h}}{2c},u;0,-\frac{2aN\mathbf{w}^\tp\mathbf{h}}{\mathbf{h}^\tp G^\bot\mathbf{h}}\right).
\end{equation*}
This concludes the proof.
\end{proof}

\subsection{Hecke Gauss sums}
\label{subsec:hecke_gauss_sums}
Throughout this subsection, let $K$ be a number field, $\mathcal{O}$ be the integral closure of $\numZ$ in $K$, and let $\mathfrak{d}$ and $\Delta$ be the different and discriminant of $K$, respectively. Let $(b_1,b_2,\dots,b_n)$ be an integral basis of $K$, i.e., $\mathcal{O}=\bigoplus_{j=1}^n\numZ b_j$ and $K=\bigoplus_{j=1}^n\numQ b_j$. Let $\tr\colon K\rightarrow\numQ$ and $\mathbf{N}\colon K\rightarrow\numQ$ be the (absolute) trace and norm of $K$, respectively. For a fractional ideal $\mathfrak{a}$, let $\mathbf{N}(\mathbf{\mathfrak{a}})$ be its norm. Note that $\mathbf{N}(\mathbf{\mathfrak{a}})=\abs{\mathcal{O}/\mathfrak{a}}$ if $\mathfrak{a}$ is integral, that $\mathbf{N}(\omega)=\mathbf{N}(\mathcal{O}\omega)$ if $\omega\in K^\times=K\setminus\{0\}$, and that $\mathbf{N}(\mathfrak{d})=\abs{\Delta}$; cf., e.g. \cite[Prop. 22 in p. 26 and Prop. 14 in p. 66]{Lan94}. Set $n=[K\colon\numQ]$.

Let $\omega\in K^\times$. Then there are unique coprime integral ideals $\mathfrak{a}$ and $\mathfrak{b}$ such that $\omega\mathfrak{d}=\mathfrak{b}\mathfrak{a}^{-1}$. The \emph{Hecke Gauss sum} is defined by the formula
\begin{equation*}
C(\omega)=C_K(\omega):=\sum_{\mu\in\mathcal{O}/\mathfrak{a}}\etp{\tr(\mu^2\omega)}.
\end{equation*}
See \cite[Section 1]{BS10} for a good historical overview. As was mentioned by Boylan and Skoruppa in this paper, no explicit formulas were known for $C_K(\omega)$ if $K\neq\numQ$ until they derived one for $K$ being quadratic fields. See also \cite{BS13}, where the authors provided an elegant proof of Hecke’s reciprocity law for $C(\omega)$, based on Milgram's formula.

We shall show that for all $K$ and all $\omega$, $C(\omega)$ is equal to some $\mathfrak{G}_{G}(r,\mathbf{t};\mathbf{0},\mathbf{0})$, up to a simple factor, where $G$ is a nonsingular even integral symmetric matrix of size $n$ depending on $K$ and $\omega$. Then, in the case of quadratic fields, we derive an explicit formula (different from Boylan and Skoruppa's) when Theorem \ref{thm:mainIndefinite} can be applied. Finally, we provide an example where $K$ is a cyclotomic field.

\begin{prop}
\label{prop:HeckeGaussToMatrixGauss}
Let $c_1$ be a positive integer such that $c_1\mathcal{O}\subseteq\mathfrak{a}$. Let $a_1$ be the greatest common divisor of $\tr(c_1\omega b_j^2)$ ($j=1,2,\dots,n$) and $2\tr(c_1\omega b_ib_j)$ ($1\leq i<j\leq n$). Set $c=c_1/\gcd(a_1,c_1)$, $a=a_1/\gcd(a_1,c_1)$, and $G=\left(2a^{-1}c\tr(\omega b_ib_j)\right)_{1\leq i,j\leq n}$. Then $G$ is a nonsingular even integral symmetric matrix (independent of the choice of $c_1$, but depending on the basis $(b_j)$), and
\begin{equation}
\label{eq:HeckeGaussToMatrixGauss}
C_K(\omega)=\mathbf{N}(\mathfrak{a})c^{-n}\mathfrak{G}_{G}(a/c;\mathbf{0}).
\end{equation}
\end{prop}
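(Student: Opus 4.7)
The plan is to identify $\mu = \sum_j v_j b_j \in \mathcal{O}$ with the vector $\mathbf{v} = (v_1,\ldots,v_n) \in \numZ^n$, so that the Hecke sum $C_K(\omega)$ and the lattice Gauss sum $\mathfrak{G}_G(a/c;\mathbf{0})$ are sums of one and the same function $f(\mathbf{v}) := \etp{\tfrac{a}{2c}\mathbf{v}^\tp G\mathbf{v}}$ on $\numZ^n$, merely taken over different cosets: the former over $\numZ^n/\Lambda$, where $\Lambda := \{\mathbf{v}\in\numZ^n\colon\sum_j v_j b_j\in\mathfrak{a}\}$ has index $\mathbf{N}(\mathfrak{a})$ in $\numZ^n$, and the latter over $\numZ^n/c\numZ^n$. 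A common refinement of these two quotients will then yield the formula.

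Before that, I verify the matrix $G$ itself. Rewriting $G_{ij} = 2\tr(c_1\omega b_ib_j)/a_1$, the integrality and even-diagonality follow at once from the definition of $a_1$ as the gcd of $\tr(c_1\omega b_j^2)$ and $2\tr(c_1\omega b_ib_j)$ ($i<j$): the former divisibility gives $G_{jj}\in 2\numZ$ and the latter gives $G_{ij}\in\numZ$. Symmetry is obvious. Independence from $c_1$: replacing $c_1$ by $kc_1$ multiplies every entry of the gcd-list by $k$, hence multiplies $a_1$ by $k$, while $c/a = c_1/a_1$ (and thus $G$) is unaffected. For nonsingularity, if $\tr(\omega xy)=0$ for all $y\in K$ then $\omega x=0$ by nondegeneracy of the trace form of the separable extension $K/\numQ$, whence $x=0$ since $\omega\neq 0$; therefore the Gram matrix $(\tr(\omega b_ib_j))$ is invertible, and so is $G=(2c/a)\cdot(\tr(\omega b_ib_j))$. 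In particular $a_1>0$, so $a\neq0$, $c\geq 1$, and $\gcd(a,c)=1$ by construction.

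The key algebraic identity is then immediate: for $\mu=\sum_j v_jb_j$, direct expansion yields
\begin{equation*}
\tfrac{a}{2c}\,\mathbf{v}^\tp G\mathbf{v} \;=\; \tfrac{a}{2c}\cdot\tfrac{2c}{a}\sum_{i,j}v_iv_j\tr(\omega b_ib_j) \;=\; \tr(\omega\mu^2),
\end{equation*}
so that $f(\mathbf{v})=\etp{\tr(\omega\mu^2)}$. Consequently $C_K(\omega) = \sum_{\mathbf{v}\in\numZ^n/\Lambda}f(\mathbf{v})$ (the well-definedness of $C_K(\omega)$ amounts to $\Lambda$-periodicity of $f$), and $\mathfrak{G}_G(a/c;\mathbf{0}) = \sum_{\mathbf{v}\in\numZ^n/c\numZ^n}f(\mathbf{v})$ is well-defined by Lemma \ref{lemm:independentRepr} since it is integral-parametric.

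The main (and only real) obstacle is that the two summation lattices $\Lambda$ and $c\numZ^n$ need not be comparable. To sidestep this, observe that $f$ is invariant under both $\Lambda$ and $c\numZ^n$, hence under their sum $\Lambda+c\numZ^n$, so it descends to $\numZ^n/(\Lambda+c\numZ^n)$. Writing $d := [\numZ^n:\Lambda+c\numZ^n]$, multiplicativity of index yields $[\Lambda+c\numZ^n:c\numZ^n]=c^n/d$ and $[\Lambda+c\numZ^n:\Lambda]=\mathbf{N}(\mathfrak{a})/d$, whence
\begin{equation*}
\mathfrak{G}_G(a/c;\mathbf{0}) \;=\; \tfrac{c^n}{d}\!\!\sum_{\mathbf{v}\in\numZ^n/(\Lambda+c\numZ^n)}\!\!\!f(\mathbf{v}), \qquad C_K(\omega) \;=\; \tfrac{\mathbf{N}(\mathfrak{a})}{d}\!\!\sum_{\mathbf{v}\in\numZ^n/(\Lambda+c\numZ^n)}\!\!\!f(\mathbf{v}).
\end{equation*}
Dividing these two identities eliminates $d$ and the common inner sum and gives \eqref{eq:HeckeGaussToMatrixGauss}.
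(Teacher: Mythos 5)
Your proof is correct, but it routes the main identity differently from the paper. The paper exploits the hypothesis $c_1\mathcal{O}\subseteq\mathfrak{a}$ to pass to the \emph{common refinement} $\mathcal{O}/c_1\mathcal{O}\cong\numZ^n/c_1\numZ^n$ (note $c\mid c_1$, so $c_1\numZ^n$ sits inside both $\Lambda$ and $c\numZ^n$): it shows $\sum_{\mu\in\mathcal{O}/c_1\mathcal{O}}\etp{\tr(\mu^2\omega)}=\abs{\mathfrak{a}/c_1\mathcal{O}}\cdot C_K(\omega)$ by a double-coset expansion in which the cross terms are trivial, and then converts the same sum to $\mathfrak{G}_{G}(a/c;\mathbf{0})$ via Proposition \ref{prop:ttoc}. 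You instead pass to the \emph{common coarsening} $\numZ^n/(\Lambda+c\numZ^n)$, using only that the single function $f(\mathbf{v})=\etp{\tr(\omega\mu^2)}$ is periodic under both $\Lambda$ and $c\numZ^n$; the two periodicity facts are exactly the well-definedness statements already present in the paper (the definition of $C_K(\omega)$ over $\mathcal{O}/\mathfrak{a}$, and Lemma \ref{lemm:independentRepr}). Your version is arguably a bit cleaner: it dispenses with Proposition \ref{prop:ttoc} and with the explicit verification that the cross terms $\etp{\tr(2\mu_1\mu_2\omega)+\tr(\mu_2^2\omega)}$ equal $1$, since that computation is absorbed into the $\Lambda$-periodicity of $f$. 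The paper's version has the advantage of producing the intermediate identity over $\mathcal{O}/c_1\mathcal{O}$, which is what gets reused in the cyclotomic computation later.

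Two small points. First, you use the definition of $a_1$ as a gcd without checking that the quantities $\tr(c_1\omega b_ib_j)$ are integers; this is where the hypothesis $c_1\mathcal{O}\subseteq\mathfrak{a}$ really enters, via $\tr(\omega c_1\mathcal{O})\subseteq\tr(\omega\mathfrak{a})=\tr(\mathfrak{b}\mathfrak{d}^{-1})\subseteq\numZ$, and it should be said (it also furnishes the $\Lambda$-periodicity of $f$ that you invoke). Second, at the end you should substitute one identity into the other rather than ``divide'': the two displayed equations give $c^nC_K(\omega)=\mathbf{N}(\mathfrak{a})\mathfrak{G}_G(a/c;\mathbf{0})$ directly, with no need to worry about whether the common inner sum vanishes. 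Neither point affects the validity of the argument.
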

\begin{proof}
(1) $c_1$ exists since $\mathcal{O}$ and $\mathfrak{a}$ are both lattices in $K$($\cong_\numQ\numQ^n$).

(2) We now show $\tr(c_1\omega b_ib_j)\in\numZ$, from which it follows that $a_1$ is well-defined. The map $(x,y)\mapsto \tr(xy)$ is a $\numQ$-bilinear map on $K\times K$. It is strongly nondegenerate (cf., e.g., \cite[Theorem 8.2.2]{Rom06}). Moreover, $\mathfrak{d}^{-1}$, by the definition, is the dual lattice of $\mathcal{O}$ in $(K,(x,y)\mapsto \tr(xy))$. Thus,
\begin{equation*}
\tr(c_1\omega b_ib_j)\in\tr(\omega c_1\mathcal{O})\subseteq\tr(\omega \mathfrak{a})=\tr(\mathfrak{b}\mathfrak{d}^{-1})\subseteq\numZ.
\end{equation*}

(3) It is immediate that $G$ is even integral and symmetric. Since $K=\bigoplus_{j=1}^n\numQ b_j=\bigoplus_{j=1}^n\numQ \omega b_j$, the nondegeneracy of $G$ follows from the fact $(x,y)\mapsto \tr(xy)$ is strongly nondegenerate. Moreover, a different choice of $c_1$ leads to the same $a$ and $c$, and hence the same $G$.

(4) We now show \eqref{eq:HeckeGaussToMatrixGauss}. We consider the sum $\sum_{\mu\in\mathcal{O}/c_1\mathcal{O}}\etp{\tr(\mu^2\omega)}$, which is well-defined since for $\mu_1,\mu_2\in\mathcal{O}$, the condition $\mu_1+c_1\mathcal{O}=\mu_2+c_1\mathcal{O}$ would imply that
\begin{equation*}
\etp{\tr(\mu_1^2\omega)}=\etp{\tr(\mu_2^2\omega)+\tr(2\mu_2(\mu_1-\mu_2)\omega)+\tr((\mu_1-\mu_2)^2\omega)}=\etp{\tr(\mu_2^2\omega)},
\end{equation*}
as $2\mu_2(\mu_1-\mu_2)\omega,\,(\mu_1-\mu_2)^2\omega\in\omega c_1\mathcal{O}\subseteq\mathfrak{b}\mathfrak{d}^{-1}$. We have
\begin{align*}
\sum_{\mu\in\mathcal{O}/c_1\mathcal{O}}\etp{\tr(\mu^2\omega)}&=\sum_{\mu_1\in\mathcal{O}/\mathfrak{a},\,\mu_2\in\mathfrak{a}/c_1\mathcal{O}}\etp{\tr((\mu_1+\mu_2)^2\omega)}\\
&=\sum_{\mu_1\in\mathcal{O}/\mathfrak{a}}\etp{\tr(\mu_1^2\omega)}\sum_{\mu_2\in\mathfrak{a}/c_1\mathcal{O}}\etp{\tr(2\mu_1\mu_2\omega)+\tr(\mu_2^2\omega)}\\
&=\abs{\mathfrak{a}/c_1\mathcal{O}}\cdot C_K(\omega),
\end{align*}
since $\etp{\tr(2\mu_1\mu_2\omega)+\tr(\mu_2^2\omega)}=1$ as $\mu_1\in\mathcal{O}$ and $\mu_2\in\mathfrak{a}$. It follows that
\begin{align*}
C_K(\omega)&=\abs{\mathfrak{a}/c_1\mathcal{O}}^{-1}\sum_{\mu\in\mathcal{O}/c_1\mathcal{O}}\etp{\tr(\mu^2\omega)}\\
&=\frac{\abs{\mathcal{O}/\mathfrak{a}}}{\abs{\mathcal{O}/c_1\mathcal{O}}}\sum_{\mathbf{v}\in\numZ^n/c_1\numZ^n}\etp{\tr((v_1b_1+v_2b_2+\dots v_nb_n)^2\omega)}\\
&=\mathbf{N}(\mathfrak{a})c_1^{-n}\mathfrak{G}_{G}(a/c,(c_1,c_1,\dots,c_1);\mathbf{0},\mathbf{0}).
\end{align*}
Inserting \eqref{eq:ttoc} with $\mathbf{t}=(c_1,c_1,\dots,c_1)$ into the above formula, we obtain \eqref{eq:HeckeGaussToMatrixGauss} as desired.
\end{proof}
\begin{rema}
We can choose $c_1$ with merely $c_1\omega\in\mathcal{O}$, which is easier to handle. Then it follows that $c_1\omega\mathfrak{d}=c_1\mathfrak{b}\mathfrak{a}^{-1}$ is integral. Consequently, $\mathfrak{a}\mid c_1\mathfrak{b}$, and hence $\mathfrak{a}\mid c_1\mathcal{O}$, i.e., $c_1\mathcal{O}\subseteq\mathfrak{a}$, since $\mathfrak{a}$ and $\mathfrak{b}$ are coprime by the assumption. Therefore, Proposition \ref{prop:HeckeGaussToMatrixGauss} is valid for this $c_1$.
\end{rema}
\begin{rema}
\label{rema:NaFormula}
To compute the right-hand side of \eqref{eq:HeckeGaussToMatrixGauss}, we need to first work out $\mathfrak{a}$, since $\mathbf{N}(\mathfrak{a})$ appears. For certain $K$ and $\omega$, we can know the value of $\mathbf{N}(\mathfrak{a})$ without knowing $\mathfrak{a}$. Assume that each prime factor $p$ of $\Delta$ does not split in $\mathcal{O}$, i.e., there is only one prime $\mathcal{O}$-ideal lying above $p\numZ$. Let $\omega=\alpha/c_1$ with $c_1\in\numgeq{Z}{1}$ and $\alpha\in\mathcal{O}$. We further assume that $\gcd(c_1,\mathbf{N}(\alpha))=1$. Then we would have
\begin{equation}
\label{eq:Naformula}
\mathbf{N}(\mathfrak{a})=\frac{c_1^n}{\gcd(c_1^n,\Delta)}.
\end{equation}
To see this, suppose to the contrary that there is a rational prime $p$ dividing $\mathbf{N}(\mathfrak{a})$ and $\mathbf{N}(\mathfrak{b})$. Then there are prime $\mathcal{O}$-ideals $\mathfrak{p}_1$ and $\mathfrak{p}_2$ lying above $p$ such that $\mathfrak{p}_1\mid\mathfrak{a}$ and $\mathfrak{p}_2\mid\mathfrak{b}$. Since $\mathfrak{a}$ and $\mathfrak{b}$ are coprime, we have $\mathfrak{p}_1\neq\mathfrak{p}_2$. Since $\omega\mathfrak{d}=\mathfrak{b}\mathfrak{a}^{-1}$, i.e., $\alpha\mathfrak{d}(c_1\mathcal{O})^{-1}=\mathfrak{b}\mathfrak{a}^{-1}$, we have $\mathfrak{p}_1\mid c_1\mathcal{O}$ and $\mathfrak{p}_2\mid \alpha\mathfrak{d}$. The assumption $\gcd(c_1,\mathbf{N}(\alpha))=1$ implies $\mathfrak{p}_2\nmid \alpha\mathcal{O}$, so $\mathfrak{p}_2\mid \mathfrak{d}$. Taking the norms we find $p\mid \Delta$, which contradicts the assumption on $\Delta$. We have thus proved $\gcd(\mathbf{N}(\mathfrak{a}),\mathbf{N}(\mathfrak{b}))=1$. Taking the norms of $\alpha\mathfrak{d}(c_1\mathcal{O})^{-1}=\mathfrak{b}\mathfrak{a}^{-1}$, we have $\mathbf{N}(\alpha)\abs{\Delta}c_1^{-n}=\mathbf{N}(\mathfrak{b})\mathbf{N}(\mathfrak{a})^{-1}$, from which \eqref{eq:Naformula} follows.
\end{rema}
\begin{rema}
\label{rema:TKandBW}
Set $T_K(\omega)=\left(\tr(\omega b_ib_j)\right)_{1\leq i,j\leq n}$, then $G=2a^{-1}cT_K(\omega)$. To compute the right-hand side of \eqref{eq:HeckeGaussToMatrixGauss} in practice, we need to work out $T_K(\omega)$. Let $\theta_1,\theta_2,\dots,\theta_n$ be all field embeddings from $K$ to $\numC$ that fix $\numQ$. Then $\tr(x)=\sum_{j=1}^n\theta_j(x)$ for $x\in K$. Set $B=(\theta_j(b_i))_{1\leq i,j\leq n}$ and $W=\mathop{\mathrm{diag}}(\theta_1(\omega),\dots,\theta_n(\omega))$. It is immediate that
\begin{equation*}
T_K(\omega)=BWB^\tp.
\end{equation*}
\end{rema}
For quadratic fields, Proposition \ref{prop:HeckeGaussToMatrixGauss} specializes to the following one.
\begin{prop}
\label{prop:HeckeGaussQuadratic}
Let $d\neq0,1$ be a square-free integer, $K=\numQ(\sqrt{d})$, and let $\omega=\frac{v_0+v_1\sqrt{d}}{c_1}$ where $c_1\in\numgeq{Z}{1}$ and $v_0, v_1\in\numZ$ such that $\gcd(c_1,v_0,v_1)=1$. Set
\begin{equation}
\label{eq:quadratic_a1}
a_1=\begin{dcases}
\gcd(2v_0, 4v_1d) &\text{if }d\not\equiv1\bmod{4}\\
\gcd(2v_0, 2v_1d, v_0+v_1d) &\text{if }d\equiv1\bmod{4},
\end{dcases}
\end{equation}
\begin{align*}
G&=\tbtMat{4v_0a_1^{-1}}{4v_1da_1^{-1}}{4v_1da_1^{-1}}{4v_0da_1^{-1}}\text{ if }d\not\equiv1\bmod{4},\\
G&=\tbtMat{4v_0a_1^{-1}}{2(v_0+v_1d)a_1^{-1}}{2(v_0+v_1d)a_1^{-1}}{(v_0d+v_0+2v_1d)a_1^{-1}}\text{ if }d\equiv1\bmod{4}.
\end{align*}
Set $a=a_1/\gcd(a_1,c_1),\,c=c_1/\gcd(a_1,c_1)$. Then $G$ is nonsingular, even integral, and symmetric, and we have
\begin{equation}
\label{eq:HeckeGaussQuadratic}
C_{\numQ(\sqrt{d})}\left(\frac{v_0+v_1\sqrt{d}}{c_1}\right)=\frac{\mathbf{N}(\mathfrak{a})}{c^2}\mathfrak{G}_{G}\left(\frac{a}{c};\mathbf{0}\right).
\end{equation}
In particular, if $\gcd(c_1,v_0^2-v_1^2d)=1$, then
\begin{equation}
\label{eq:HeckeGaussQuadraticNa}
C_{\numQ(\sqrt{d})}\left(\frac{v_0+v_1\sqrt{d}}{c_1}\right)=\frac{(c_1,a_1)^2}{(c_1^2,\Delta)}\mathfrak{G}_{G}\left(\frac{a}{c};\mathbf{0}\right),
\end{equation}
where $\Delta$ is the discriminant of $\numQ(\sqrt{d})$ ($\Delta=d$ if $d\equiv1\bmod{4}$, and $\Delta=4d$ otherwise).
\end{prop}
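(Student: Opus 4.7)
The plan is to derive the statement as a direct specialization of Proposition \ref{prop:HeckeGaussToMatrixGauss}, using the canonical integral basis of $\mathcal{O}_K$, and then apply Remark \ref{rema:NaFormula} for the simpler form \eqref{eq:HeckeGaussQuadraticNa}.

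First I would fix the integral basis $(b_1,b_2)$ with $b_1=1$ and $b_2=\sqrt{d}$ if $d\not\equiv 1\bmod 4$, and $b_2=(1+\sqrt{d})/2$ if $d\equiv 1\bmod 4$. Using $\tr(x+y\sqrt{d})=2x$, a direct calculation of $c_1\tr(\omega b_ib_j)$ for $\omega=(v_0+v_1\sqrt{d})/c_1$ yields the matrix $c_1T_K(\omega)$ as
\begin{equation*}
c_1T_K(\omega)=\begin{pmatrix}2v_0 & 2v_1d\\ 2v_1d & 2v_0d\end{pmatrix}\quad\text{or}\quad\begin{pmatrix}2v_0 & v_0+v_1d\\ v_0+v_1d & (v_0(1+d)+2v_1d)/2\end{pmatrix}
\end{equation*}
respectively. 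Then $G=(2c_1/a_1)T_K(\omega)$ by the construction in Proposition \ref{prop:HeckeGaussToMatrixGauss}, giving the stated matrices once $a_1$ is identified.

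Next I would verify that the gcd prescribed in Proposition \ref{prop:HeckeGaussToMatrixGauss}, namely $a_1=\gcd(\tr(c_1\omega b_1^2),\tr(c_1\omega b_2^2),2\tr(c_1\omega b_1b_2))$, matches \eqref{eq:quadratic_a1}. When $d\not\equiv 1\bmod 4$ this is immediate since $2v_0\mid 2v_0d$, so the middle term drops out. When $d\equiv 1\bmod 4$, the key identity is
\begin{equation*}
\tfrac{v_0(1+d)+2v_1d}{2}=(v_0+v_1d)+\tfrac{v_0(d-1)}{2},
\end{equation*}
combined with the observation that $d\equiv 1\bmod 4$ forces $(d-1)/4\in\numZ$, so $2v_0$ divides $v_0(d-1)/2$. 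A short two-way divisibility argument then shows that the gcd of $\{2v_0,(v_0(1+d)+2v_1d)/2,2(v_0+v_1d)\}$ equals that of $\{2v_0,2v_1d,v_0+v_1d\}$. Once $a_1$ is identified, \eqref{eq:HeckeGaussQuadratic} is immediate from \eqref{eq:HeckeGaussToMatrixGauss}, and evenness/nondegeneracy of $G$ come for free from Proposition \ref{prop:HeckeGaussToMatrixGauss}.

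Finally, for \eqref{eq:HeckeGaussQuadraticNa}, I would apply Remark \ref{rema:NaFormula} with $\alpha=v_0+v_1\sqrt{d}\in\mathcal{O}_K$ (note $\numZ[\sqrt{d}]\subseteq\mathcal{O}_K$ in both cases). The two hypotheses of that remark hold: (i) in a quadratic field, the rational primes dividing $\Delta$ are exactly the ramified ones, hence do not split, and (ii) $\gcd(c_1,\mathbf{N}(\alpha))=\gcd(c_1,v_0^2-v_1^2d)=1$ by assumption. Therefore $\mathbf{N}(\mathfrak{a})=c_1^2/(c_1^2,\Delta)$, and
\begin{equation*}
\frac{\mathbf{N}(\mathfrak{a})}{c^2}=\frac{\mathbf{N}(\mathfrak{a})\cdot(a_1,c_1)^2}{c_1^2}=\frac{(a_1,c_1)^2}{(c_1^2,\Delta)},
\end{equation*}
which combined with \eqref{eq:HeckeGaussQuadratic} yields \eqref{eq:HeckeGaussQuadraticNa}. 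The only genuine obstacle is the gcd manipulation for the $d\equiv 1\bmod 4$ case; everything else is essentially bookkeeping.
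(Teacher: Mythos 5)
Your proposal is correct and follows essentially the same route as the paper: specialize Proposition \ref{prop:HeckeGaussToMatrixGauss} to the standard integral basis $(1,\delta)$, compute $T_K(\omega)$, identify $a_1$ with \eqref{eq:quadratic_a1}, and invoke Remark \ref{rema:NaFormula} for \eqref{eq:HeckeGaussQuadraticNa}. In fact you supply more detail than the paper on the only nontrivial point, the gcd identification in the $d\equiv1\bmod4$ case (via $\tfrac{v_0(1+d)+2v_1d}{2}=(v_0+v_1d)+2v_0\cdot\tfrac{d-1}{4}$), which the paper asserts without justification.
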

\begin{proof}
We take the integral basis $b_1=1$, $b_2=\delta$, where $\delta=(1+\sqrt{d})/2$ if $d\equiv1\bmod{4}$ and $\delta=\sqrt{d}$ otherwise. Let $T_K(\omega)$, $B$ and $W$ be as in Remark \ref{rema:TKandBW}. For $K=\numQ(\sqrt{d})$, $\theta_1$ is the identity map and $\theta_2$ maps $r_1+r_2\sqrt{d}$ to $\overline{r_1+r_2\sqrt{d}}=r_1-r_2\sqrt{d}$, where $r_1,r_2\in\numQ$. Then $B=\tbtmat{1}{1}{\delta}{\overline{\delta}}$, $W=c_1^{-1}\tbtmat{v_0+v_1\sqrt{d}}{0}{0}{v_0-v_1\sqrt{d}}$, and hence $T_{\numQ(\sqrt{d})}(\omega)=c_1^{-1}\tbtmat{2v_0}{2v_1d}{2v_1d}{2v_0d}$ if $d\not\equiv1\bmod{4}$, and $T_{\numQ(\sqrt{d})}(\omega)=c_1^{-1}\tbtmat{2v_0}{v_0+v_1d}{v_0+v_1d}{v_0d/2+v_0/2+v_1d}$ if $d\equiv1\bmod{4}$. The quantity $a_1$ defined by \eqref{eq:quadratic_a1} is the same as the one defined in Proposition \ref{prop:HeckeGaussToMatrixGauss}, and thus $G$ defined here is the same as in Proposition \ref{prop:HeckeGaussToMatrixGauss}. Therefore, the conclusion on $G$ and \eqref{eq:HeckeGaussQuadratic} follow from Proposition \ref{prop:HeckeGaussToMatrixGauss}. Now suppose that $\gcd(c_1,v_0^2-v_1^2d)=1$, that is, $\gcd(c_1,\mathbf{N}(v_0+v_1\sqrt{d}))=1$. Since each prime factor $p$ of $\Delta$ does not split in $\mathcal{O}$ for $K=\numQ(\sqrt{d})$, we conclude by Remark \ref{rema:NaFormula} and \eqref{eq:HeckeGaussQuadratic} that \eqref{eq:HeckeGaussQuadraticNa} holds.
\end{proof}

\begin{thm}
\label{thm:explicitHeckeGaussQuadratic}
Let the notations and assumptions be as in Proposition \ref{prop:HeckeGaussQuadratic}. Suppose that $$\gcd(c, 4\Delta(v_0^2-v_1^2d)/a_1^2)=1.$$ Then we have
\begin{equation}
\label{eq:explicitHeckeGaussQuadratic}
C_{\numQ(\sqrt{d})}\left(\frac{v_0+v_1\sqrt{d}}{c_1}\right)=\frac{\mathbf{N}(\mathfrak{a})}{c}\cdot(-1)^{\frac{1-c_0}{2}}\cdot\legendre{4\Delta(v_0^2-v_1^2d)/a_1^2}{c}_K,
\end{equation}
where $c_0$ is the odd part of $c$. If in addition $\gcd(c_1,v_0^2-v_1^2d)=1$, then
\begin{equation}
\label{eq:explicitHeckeGaussQuadraticNa}
C_{\numQ(\sqrt{d})}\left(\frac{v_0+v_1\sqrt{d}}{c_1}\right)=\frac{c_1(c_1,a_1)}{(c_1^2,\Delta)}\cdot(-1)^{\frac{1-c_0}{2}}\cdot\legendre{4\Delta(v_0^2-v_1^2d)/a_1^2}{c}_K.
\end{equation}
\end{thm}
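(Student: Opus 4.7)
The plan is to combine Proposition \ref{prop:HeckeGaussQuadratic}, which identifies $C_{\numQ(\sqrt d)}(\omega)$ with $\frac{\mathbf{N}(\mathfrak{a})}{c^2}\mathfrak{G}_G(a/c;\mathbf{0})$, with Theorem \ref{thm:mainIndefinite} applied in the very clean special case $n=2$, $\mathbf{w}=\mathbf{x}=\mathbf{0}$. The hypothesis $\gcd(c,4\Delta(v_0^2-v_1^2d)/a_1^2)=1$ is tailored precisely to put us in the $\gcd(N,c)=1$ regime of Theorem \ref{thm:mainIndefinite}.

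First I would carry out the determinant computation for the matrix $G$ defined in Proposition \ref{prop:HeckeGaussQuadratic}. A direct $2\times 2$ calculation, split into the two cases $d\not\equiv 1\bmod 4$ and $d\equiv 1\bmod 4$, yields in both cases
\[
D=\det(G)=\frac{4\Delta(v_0^2-v_1^2 d)}{a_1^2}.
\]
In the $d\equiv1\bmod 4$ case the cross terms $\pm 4(v_0+v_1d)^2/a_1^2$ telescope against $4v_0^2(d+1)/a_1^2$ to leave $4d(v_0^2-v_1^2d)/a_1^2=4\Delta(v_0^2-v_1^2d)/a_1^2$. Because the level $N$ of $G$ has the same set of prime factors as $D$ (noted in \S\ref{subsec:lattices_and_weil_representations}), the standing hypothesis is equivalent to $\gcd(c,N)=1$, so Theorem \ref{thm:mainIndefinite} applies.

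Next I would plug $n=2$, $\mathbf{w}=\mathbf{x}=\mathbf{0}$ into \eqref{eq:GGacwxWhenNccoprime}. The exponential in the second line is trivially $1$. Since $(v_0,v_1)\neq (0,0)$ and $d\neq 0$, the integer $a_1$ defined by \eqref{eq:quadratic_a1} is strictly positive, so $a>0$ and $\sgn a=1$; moreover $\gcd(a,c)=1$ gives $\legendre{|a|}{c}_K^{\,2}=1$. The remaining root of unity is
\[
\etp{\tfrac{2(1-c_0)}{8}}=\etp{\tfrac{1-c_0}{4}}=(-1)^{(1-c_0)/2},
\]
a congruence check in the four residue classes $c_0\bmod 8$ confirming the last equality. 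Consequently
\[
\mathfrak{G}_{G}(a/c;\mathbf{0})=c\cdot (-1)^{(1-c_0)/2}\cdot\legendre{D}{c}_K.
\]

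Inserting this into \eqref{eq:HeckeGaussQuadratic} and substituting the formula for $D$ gives \eqref{eq:explicitHeckeGaussQuadratic} at once. To obtain \eqref{eq:explicitHeckeGaussQuadraticNa} I would then invoke the last assertion of Proposition \ref{prop:HeckeGaussQuadratic}, which under the extra hypothesis $\gcd(c_1,v_0^2-v_1^2d)=1$ (via Remark \ref{rema:NaFormula}) yields $\mathbf{N}(\mathfrak{a})=c_1^2/\gcd(c_1^2,\Delta)$; combined with $c=c_1/\gcd(c_1,a_1)$ this simplifies $\mathbf{N}(\mathfrak{a})/c$ to $c_1(c_1,a_1)/(c_1^2,\Delta)$.

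The only genuine obstacle is the bookkeeping in the $d\equiv 1\bmod 4$ determinant computation and the verification that $\etp{(1-c_0)/4}=(-1)^{(1-c_0)/2}$ uniformly across odd $c_0$; everything else is a direct specialisation of Theorem \ref{thm:mainIndefinite}.
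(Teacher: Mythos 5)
Your proposal is correct and follows the same route as the paper: compute $\det(G)=4\Delta(v_0^2-v_1^2d)/a_1^2$, use that $N$ and $D$ share prime factors to invoke Theorem \ref{thm:mainIndefinite} with $n=2$, $\mathbf{w}=\mathbf{x}=\mathbf{0}$, and substitute into \eqref{eq:HeckeGaussQuadratic} and \eqref{eq:HeckeGaussQuadraticNa}. The paper states this in one line; your write-up simply supplies the determinant and root-of-unity verifications it leaves implicit.
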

\begin{proof}
We have $\det(G)=4\Delta(v_0^2-v_1^2d)/a_1^2$. Thus the assumption $\gcd(c, 4\Delta(v_0^2-v_1^2d)/a_1^2)=1$ means $\gcd(c, \det(G))=1$. Since the determinant and level of $G$ have the same set of prime factors, Theorem \ref{thm:mainIndefinite} can be applied to $\mathfrak{G}_{G}(a/c;\mathbf{0})$. Therefore, we insert \eqref{eq:GGacwxWhenNccoprime} into the right-hand sides of \eqref{eq:HeckeGaussQuadratic} and \eqref{eq:HeckeGaussQuadraticNa}, and obtain \eqref{eq:explicitHeckeGaussQuadratic} and \eqref{eq:explicitHeckeGaussQuadraticNa}, respectively.
\end{proof}

To conclude this subsection, we present a specialization of Proposition \ref{prop:HeckeGaussToMatrixGauss} and an example showing that one can compute the exact value of $C_K(\omega)$, where $K$ is a cyclotomic field, by combining Theorem \ref{thm:mainIndefinite} and this specialization.

Let $p$ be an odd rational prime, $\zeta_p=\etp{1/p}$, and $K=\numQ(\zeta_p)$. Then $n=[K\colon\numQ]=p-1$, $\mathcal{O}=\numZ[\zeta_p]$, $p\mathcal{O}=\mathfrak{p}^{p-1}$, $\mathfrak{d}=\mathfrak{p}^{p-2}$, and $\Delta=\legendre{-1}{p}_K\cdot p^{p-2}$, where $\mathfrak{p}$ is the unique prime $\mathcal{O}$-ideal lying above $p\numZ$ (indeed, $\mathfrak{p}$ is the principal ideal generated by $1-\zeta_p$); cf. \cite[Chap. 4]{Lan94}. We shall use the following integral basis
\begin{equation*}
(b_1,b_2,\dots,b_{p-1})=(\zeta_p^1,\zeta_p^2,\dots,\zeta_p^{p-1}).
\end{equation*}
For each $1\leq k\leq p-1$, define a $(p-1)\times(p-1)$ matrix $T_k$ by the formula
\begin{equation*}
\text{the }(i_1,i_2)\text{ entry}=T_k[i_1,i_2]=\begin{dcases}
p-1 &\text{if } i_1+i_2\equiv-k\bmod{p}\\
-1 &\text{otherwise}.
\end{dcases}
\end{equation*}
\begin{prop}
\label{prop:HeckeGaussCyclotomic}
In Proposition \ref{prop:HeckeGaussToMatrixGauss}, if $K=\numQ(\zeta_p)$ and $\omega=c_1^{-1}\sum_{k=1}^{p-1}v_k\zeta_p^k$, where $c_1\in\numgeq{Z}{1}$ and $v_k\in\numZ$ satisfying $\gcd(c_1,v_1,\dots,v_{p-1})=1$, then $a_1$ is equal to the greatest common divisor of
\begin{equation*}
\sum_{k=1}^{p-1}v_kT_k[j,j],\text{ where }j=1,2,\dots,p-1,\text{ and }\sum_{k=1}^{p-1}2v_kT_k[i,j],\text{ where }1\leq i<j\leq p-1,
\end{equation*}
and we have
%\begin{equation*}
%G[i,j]=2a_1^{-1}\sum_{k=1}^{p-1}v_kT_k[i,j],\quad 1\leq i,j\leq p-1.
%\end{equation*}
\begin{equation*}
G=2a_1^{-1}\sum_{k=1}^{p-1}v_kT_k.
\end{equation*}
Moreover, if we assume additionally $\gcd(c_1,\mathbf{N}(\sum v_k\zeta_p^k))=1$, then \eqref{eq:HeckeGaussToMatrixGauss} becomes
\begin{equation}
\label{eq:HeckeGaussCyclotomicgcd1}
C_{\numQ(\zeta_p)}\left(\frac{\sum_{k=1}^{p-1}v_k\zeta_p^k}{c_1}\right)=\frac{(c_1,a_1)^{p-1}}{(c_1^{p-1},p^{p-2})}\mathfrak{G}_{G}\left(\frac{a}{c};\mathbf{0}\right).
\end{equation}
\end{prop}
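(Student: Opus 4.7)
The plan is to specialize Proposition \ref{prop:HeckeGaussToMatrixGauss} directly to $K=\numQ(\zeta_p)$ with the integral basis $b_j=\zeta_p^j$ ($j=1,\dots,p-1$), and then compute the matrix $T_K(\omega)=(\tr(\omega b_ib_j))_{i,j}$ explicitly. Writing $\omega=c_1^{-1}\sum_{k=1}^{p-1}v_k\zeta_p^k$, one gets
\begin{equation*}
c_1\tr(\omega b_ib_j)=\sum_{k=1}^{p-1}v_k\tr(\zeta_p^{i+j+k}),
\end{equation*}
so everything reduces to evaluating $\tr(\zeta_p^m)$ for $1\leq m\leq 3p-3$. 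Since the minimal polynomial of $\zeta_p$ is $1+x+\dots+x^{p-1}$, the standard computation gives $\tr(\zeta_p^m)=p-1$ when $p\mid m$ and $\tr(\zeta_p^m)=-1$ otherwise. Consequently $c_1\tr(\omega b_ib_j)=\sum_{k=1}^{p-1}v_k T_k[i,j]$, which by the definition of $T_k$ yields both the description of $a_1$ and the identity $T_K(\omega)=c_1^{-1}\sum_k v_kT_k$.

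Next I will substitute this into the general formula $G=2a^{-1}c\,T_K(\omega)$ from Proposition \ref{prop:HeckeGaussToMatrixGauss}. Because $a/c=a_1/c_1$ by the definition of $a$ and $c$, the factor $2a^{-1}c/c_1$ simplifies to $2/a_1$, giving
\begin{equation*}
G=\frac{2}{a_1}\sum_{k=1}^{p-1}v_kT_k,
\end{equation*}
as claimed. That $G$ is nonsingular, even integral, and symmetric is inherited from Proposition \ref{prop:HeckeGaussToMatrixGauss}, and equation \eqref{eq:HeckeGaussToMatrixGauss} then reads $C_{\numQ(\zeta_p)}(\omega)=\mathbf{N}(\mathfrak{a})c^{-(p-1)}\mathfrak{G}_G(a/c;\mathbf{0})$.

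To obtain the final form \eqref{eq:HeckeGaussCyclotomicgcd1}, I will apply Remark \ref{rema:NaFormula} to compute $\mathbf{N}(\mathfrak{a})$ under the hypothesis $\gcd(c_1,\mathbf{N}(\sum v_k\zeta_p^k))=1$. Its hypothesis that every rational prime dividing $\Delta$ is non-split in $\mathcal{O}$ holds trivially here, since $p$ is the only such prime and $p\mathcal{O}=\mathfrak{p}^{p-1}$ is totally ramified. With $n=p-1$ and $|\Delta|=p^{p-2}$, formula \eqref{eq:Naformula} gives $\mathbf{N}(\mathfrak{a})=c_1^{p-1}/\gcd(c_1^{p-1},p^{p-2})$. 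Combining this with $c=c_1/\gcd(a_1,c_1)$ yields $\mathbf{N}(\mathfrak{a})/c^{p-1}=\gcd(a_1,c_1)^{p-1}/\gcd(c_1^{p-1},p^{p-2})$, which is exactly the prefactor in \eqref{eq:HeckeGaussCyclotomicgcd1}.

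The proof is essentially a sequence of routine identifications; the only mildly delicate points are the evaluation of $\tr(\zeta_p^m)$ (handled by the cyclotomic polynomial relation) and the justification of the hypothesis of Remark \ref{rema:NaFormula}, which is the main conceptual step but amounts to nothing more than the total ramification of $p$ in $\numZ[\zeta_p]$.
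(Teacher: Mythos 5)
Your proof is correct and follows essentially the same route as the paper: both reduce the claim to computing $T_K(\omega)=(\tr(\omega b_ib_j))_{i,j}$, and your entrywise evaluation via $\tr(\zeta_p^{i+j+k})\in\{p-1,-1\}$ is the same computation as the paper's matrix identity $B\mathop{\mathrm{diag}}(\zeta_p^k,\dots,\zeta_p^{(p-1)k})B^\tp=T_k$, since the $(i,j)$ entry of the latter is exactly $\sum_l\zeta_p^{(i+j+k)l}$. The remaining steps (substituting into $G=2a^{-1}cT_K(\omega)$ and invoking Remark \ref{rema:NaFormula} via the total ramification of $p$) coincide with the paper's argument.
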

\begin{proof}
Let $B$, $W$, and $T_K(\omega)$ be the matrices defined in Remark \ref{rema:TKandBW}. The Galois group of $K=\numQ(\zeta_p)$ over $\numQ$ consists of $p-1$ elements: $\theta_j$, $j=1,2,\dots,p-1$, where $\theta_j(\zeta_p)=\zeta_p^j$; cf. \cite[p. 71]{Lan94}. It follows that $B=\left(\etp{ij/p}\right)_{1\leq i,j\leq p-1}$. A direct calculation shows that
$$
T_{\numQ(\zeta_p)}(\omega)=\sum_{k=1}^{p-1}c_1^{-1}v_kB\mathop{\mathrm{diag}}(\zeta_p^k,\zeta_p^{2k},\dots\zeta_p^{(p-1)k})B^\tp=c_1^{-1}\sum_{k=1}^{p-1}v_kT_k.
$$
The assertions on $a_1$ and $G$ thus follow from $G=2a^{-1}cT_K(\omega)$. The last assertion follows from Remark \ref{rema:NaFormula} and the fact $p$ does not split in $\numZ[\zeta_p]$.
\end{proof}
\begin{examp}
\label{examp:Q13HeckeGaussSum}
For $p=13$ and $\omega=37^{-1}\sum_{k=1}^{12}k\zeta_{13}^k\in\numQ(\zeta_{13})$, we have $a_1=13$, $c_1=37$, $a=13$, $c=37$, and
\begin{equation*}
G=\begin{pmatrix}
10 & 8 & 6 & 4 & 2 & 0 & -2 & -4 & -6 & -8 & -10 & -12 \\
8 & 6 & 4 & 2 & 0 & -2 & -4 & -6 & -8 & -10 & -12 & 12 \\
6 & 4 & 2 & 0 & -2 & -4 & -6 & -8 & -10 & -12 & 12 & 10 \\
4 & 2 & 0 & -2 & -4 & -6 & -8 & -10 & -12 & 12 & 10 & 8 \\
2 & 0 & -2 & -4 & -6 & -8 & -10 & -12 & 12 & 10 & 8 & 6 \\
0 & -2 & -4 & -6 & -8 & -10 & -12 & 12 & 10 & 8 & 6 & 4 \\
-2 & -4 & -6 & -8 & -10 & -12 & 12 & 10 & 8 & 6 & 4 & 2 \\
-4 & -6 & -8 & -10 & -12 & 12 & 10 & 8 & 6 & 4 & 2 & 0 \\
-6 & -8 & -10 & -12 & 12 & 10 & 8 & 6 & 4 & 2 & 0 & -2 \\
-8 & -10 & -12 & 12 & 10 & 8 & 6 & 4 & 2 & 0 & -2 & -4 \\
-10 & -12 & 12 & 10 & 8 & 6 & 4 & 2 & 0 & -2 & -4 & -6 \\
-12 & 12 & 10 & 8 & 6 & 4 & 2 & 0 & -2 & -4 & -6 & -8
\end{pmatrix}.
\end{equation*}
Since $\det(G)=2^{12}13^{10}$ and $\mathbf{N}(\sum_{k=1}^{12}k\zeta_{13}^k)=13^{11}$, we have $\gcd(c,\det(G))=1$ and $\gcd(c_1,\mathbf{N}(\sum k\zeta_{13}^k))=1$, respectively, and hence we can substitute \eqref{eq:GGacwxWhenNccoprime} into the right-hand side of \eqref{eq:HeckeGaussCyclotomicgcd1}, which is ensured by Theorem \ref{thm:mainIndefinite}. We thus obtain
\begin{equation*}
C_{\numQ(\zeta_{13})}\left(\frac{\sum_{k=1}^{12}k\zeta_{13}^k}{37}\right)=\mathfrak{G}_{G}\left(\frac{13}{37};\mathbf{0}\right)=37^6.
\end{equation*}
\end{examp}

\section{Application II. Explicit formulas for coefficients of Weil representations of $\slZ$}
\label{sec:application_ii_explicit_formulas_for_coefficients_of_weil_representations_of_slz_}
In this section, we first prove Theorem \ref{thm:WeilCoef}, then present an explicit formula in the case $N\mid c$, and finally translate these formulas into the language of theta series of lattice index. For the definitions and basic properties, see \S \ref{subsec:lattices_and_weil_representations}.

\subsection{Proof of Theorem \ref{thm:WeilCoef}}
\label{subsec:proof_of_theorem_ref_thm_weilcoef}
Choose any $\numZ$-linear isomorphism $\sigma\colon\numZ^n\rightarrow L$, and set $G=\left(B(\sigma(e_i),\sigma(e_j)\right)_{1\leq i,j\leq n}$, where $e_i$ is the $i$th column of the identity matrix. Then $G$ is a positive definite even integral symmetric matrix, and $\sigma$ is an isometry from $(\numZ^n,(\mathbf{v},\mathbf{w})\mapsto\mathbf{v}^\tp G\mathbf{w})$ onto $(L,B)$. Thus, without loss of generality we assume $L=\numZ^n$ and $B(\mathbf{v},\mathbf{w})=\mathbf{v}^\tp G\mathbf{w}$. Note that $\det(G)=\abs{L^\sharp/L}$. Let $\mathbf{w}_1,\mathbf{v}_1\in L^\sharp$ be any representatives of $w,v\in L^\sharp/L$, respectively. Setting in \eqref{eq:GaussSumWeilRepr} $\mathbf{w}=\mathbf{w}_1$ and $\mathbf{x}=-a^{-1}G\mathbf{v}_1$, we derive that
\begin{equation}
\label{eq:proofWeilCoefTemp}
\rho_{\underline{L}}\left(A,1\right)_{w,v}=\rho_{\underline{L}}\left(A,1\right)_{\mathbf{w}_1,\mathbf{v}_1}=D^{-\frac{1}{2}}\rmi^{-\frac{n}{2}}c^{-\frac{n}{2}}\etp{\frac{d}{2c}\mathbf{v}_1^\tp G\mathbf{v}_1}\mathfrak{G}_{G}(a/c;\mathbf{w}_1,-a^{-1}G\mathbf{v}_1).
\end{equation}
Note that $\mathfrak{G}_{G}(a/c;\mathbf{w}_1,-a^{-1}G\mathbf{v}_1)$ is integral-parametric, and that $\gcd(N,c)=1$. ($N$ is defined as the level of $\underline{L}$, so is also that of $G$.) This means Theorem \ref{thm:main}(1) is valid for $\mathfrak{G}_{G}(a/c;\mathbf{w}_1,-a^{-1}G\mathbf{v}_1)$. Inserting \eqref{eq:GGacwxWhenNccoprime} (with $\mathbf{w}=\mathbf{w}_1$ and $\mathbf{x}=-a^{-1}G\mathbf{v}_1$) into the right-hand side of \eqref{eq:proofWeilCoefTemp}, we obtain
\begin{multline}
\label{eq:proofWeilCoefTemp2}
\rho_{\underline{L}}\left(A,1\right)_{w,v}=\frac{1}{\sqrt{D}}\cdot(-\rmi)^{\frac{n}{2}}\cdot\legendre{\abs{a}}{c}_K^n\cdot\legendre{D}{c}_K\cdot\etp{\frac{n\cdot\sgn a\cdot(1-c_0)}{8}}\\
\times\etp{\frac{d}{2c}\mathbf{v}_1^\tp G\mathbf{v}_1}\etp{-\frac{1}{2ac}\mathbf{v}_1^\tp G\mathbf{v}_1+\frac{ac'}{2}(\mathbf{w}_1-a^{-1}\mathbf{v}_1)^\tp G(\mathbf{w}_1-a^{-1}\mathbf{v}_1)}.
\end{multline}
A direct calculation shows that (using $ad-bc=1$)
\begin{multline*}
\etp{\frac{d}{2c}\mathbf{v}_1^\tp G\mathbf{v}_1}\etp{-\frac{1}{2ac}\mathbf{v}_1^\tp G\mathbf{v}_1+\frac{ac'}{2}(\mathbf{w}_1-a^{-1}\mathbf{v}_1)^\tp G(\mathbf{w}_1-a^{-1}\mathbf{v}_1)}\\
=\etp{\frac{b+c'}{a}Q(\mathbf{v}_1)-c'B(\mathbf{w}_1,\mathbf{v}_1)+ac'Q(\mathbf{w}_1)},
\end{multline*}
from which and \eqref{eq:proofWeilCoefTemp2} the desired formula follows.

\begin{rema}
The root of unity $\etp{\frac{b+c'}{a}Q(\mathbf{v}_1)-c'B(\mathbf{w}_1,\mathbf{v}_1)+ac'Q(\mathbf{w}_1)}$ is independent of the choices of representatives $\mathbf{w}_1$ and $\mathbf{v}_1$, so we are safe to write just $\etp{\frac{b+c'}{a}Q(v)-c'B(w,v)+ac'Q(w)}$. This can be seen from the above proof, or alternatively, from the expression itself, since we have $\frac{b+c'}{a}\in\numZ$, which follows from $a\mid bc+cc'$.
\end{rema}

\subsection{The formula in the case $N\mid c$}
\label{subsec:the_formula_in_the_case_nmidc}
Using \eqref{eq:GGacwxWhenNdivc} instead of \eqref{eq:GGacwxWhenNccoprime} we have the following analog of Theorem \ref{thm:WeilCoef} in the case $N\mid c$. This is not a new result; see Remark \ref{rema:WeilCoeffNmidc}.
\begin{thm}
Let $\underline{L}=(L,B)$ be a positive definite even integral lattice of rank $n\in\numgeq{Z}{1}$ and set $Q(x)=\frac{1}{2}B(x,x)$. Let $A=\tbtmat{a}{b}{c}{d}\in\slZ$ with $N\mid c>0$, where $N$ is the level of $\underline{L}$. Then for $w,v\in L^\sharp/L$ we have
\begin{equation}
\label{eq:WeilCoeffNmidc}
\rho_{\underline{L}}\left(A,1\right)_{w,v}=\delta_{aw,v}\cdot\mu_G(a,c)\cdot\etp{abQ(w)},
\end{equation}
where $\mu_G(a,c)$ is defined by \eqref{eq:mainB} with $D=\abs{L^\sharp/L}$, and where $\delta_{aw,v}$ is the Kronecker $\delta$.
\end{thm}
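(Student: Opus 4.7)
The proof will follow the same scheme as that of Theorem \ref{thm:WeilCoef}, with Theorem \ref{thm:main}(2) replacing Theorem \ref{thm:main}(1). By choosing a $\numZ$-linear isomorphism $\numZ^n \to L$ and transporting the bilinear form, I may assume $L = \numZ^n$ with $B(\mathbf{v},\mathbf{u}) = \mathbf{v}^\tp G\mathbf{u}$ for some positive definite even integral symmetric $G$ of level $N$ and determinant $D = \abs{L^\sharp/L}$; then $L^\sharp = G^{-1}\numZ^n$. Fix representatives $\mathbf{w}_1, \mathbf{v}_1 \in L^\sharp$ of $w, v$. Applying Lemma \ref{lemm:GaussSumWeilRepr} with $(\mathbf{w},\mathbf{x}) = (\mathbf{w}_1, -a^{-1}G\mathbf{v}_1)$---so that $-aG^{-1}\mathbf{x} = \mathbf{v}_1$ and $\mathbf{x}^\tp G^{-1}\mathbf{x} = a^{-2}\mathbf{v}_1^\tp G\mathbf{v}_1$---gives
\begin{equation*}
\rho_{\underline{L}}(A,1)_{w,v} = D^{-1/2}\rmi^{-n/2}c^{-n/2}\etp{\tfrac{d}{2c}\mathbf{v}_1^\tp G\mathbf{v}_1}\,\mathfrak{G}_{G}(a/c;\mathbf{w}_1,-a^{-1}G\mathbf{v}_1).
\end{equation*}

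Since $N\mid c$ (and $\gcd(a,N)=1$ because $\gcd(a,c)=1$), Theorem \ref{thm:main}(2) evaluates this Gauss sum. Its indicator $\delta$ equals $1$ iff $a(\mathbf{w}_1 - a^{-1}\mathbf{v}_1) = a\mathbf{w}_1 - \mathbf{v}_1 \in \numZ^n$, which is precisely the condition $aw = v$ in $L^\sharp/L$; hence $\delta = \delta_{aw,v}$. After substituting Theorem \ref{thm:main}(2) and cancelling the prefactors $c^{n/2}$, $D^{1/2}$, $\rmi^{n/2}$, I arrive at
\begin{equation*}
\rho_{\underline{L}}(A,1)_{w,v} = \delta_{aw,v}\cdot \mu_G(a,c)\cdot \etp{\tfrac{d-a'}{2c}\mathbf{v}_1^\tp G\mathbf{v}_1}.
\end{equation*}

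The main (and only) remaining obstacle is to identify the residual exponential with $\etp{abQ(w)}$ on the support $aw = v$, which amounts to careful bookkeeping of divisibilities. On this support, write $\mathbf{v}_1 = a\mathbf{w}_1 + \mathbf{u}$ with $\mathbf{u} \in \numZ^n$ and expand $\mathbf{v}_1^\tp G\mathbf{v}_1$. Because $aa' \equiv ad \equiv 1\pmod c$ gives $(d-a')/c \in \numZ$, while $G\mathbf{w}_1 \in \numZ^n$ and $G$ is even integral, both the cross-term $\frac{a(d-a')}{c}(G\mathbf{w}_1)^\tp\mathbf{u}$ and the $\mathbf{u}$-diagonal contribution $\frac{d-a'}{c}\cdot\frac{\mathbf{u}^\tp G\mathbf{u}}{2}$ lie in $\numZ$, so the exponent collapses modulo $\numZ$ to $a^2(d-a')Q(\mathbf{w}_1)/c$. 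Finally, the congruence $aa' \equiv 1 \pmod{cN}$ combined with $ad-bc=1$ yields $a^2(d-a') - abc = -a(aa'-1) \equiv 0 \pmod{cN}$, hence $a^2(d-a')/c - ab \in N\numZ$; since $Q(\mathbf{w}_1) \in N^{-1}\numZ$ by the definition of the level, the two exponentials $\etp{a^2(d-a')Q(\mathbf{w}_1)/c}$ and $\etp{abQ(w)}$ agree, completing the proof.
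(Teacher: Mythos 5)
Your proof follows exactly the paper's route: reduce to $L=\numZ^n$, apply Lemma \ref{lemm:GaussSumWeilRepr} with $\mathbf{x}=-a^{-1}G\mathbf{v}_1$, insert Theorem \ref{thm:main}(2) to get $\delta_{aw,v}\cdot\mu_G(a,c)\cdot\etp{\frac{d-a'}{c}Q(v)}$, and then check that the residual exponential equals $\etp{abQ(w)}$ on the support $aw=v$; your bookkeeping for that last step (using $d\equiv a'\bmod c$, $a^2(d-a')-abc=-a(aa'-1)\in cN\numZ$, and $NQ(\mathbf{w}_1)\in\numZ$) is correct and in fact more explicit than the paper's one-line assertion. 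The only omission is the degenerate case $a=0$, which the statement does not exclude (it forces $c=1$, hence $N=1$ and $L^\sharp=L$) and in which both your substitution $\mathbf{x}=-a^{-1}G\mathbf{v}_1$ and Theorem \ref{thm:main}(2) are unavailable; the paper disposes of this case separately by noting $8\mid n$ via Theorem \ref{thm:MilgramExtension2} and invoking \eqref{eq:WeilReprShintani}.
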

\begin{proof}
If $a=0$, then $c=1$ and hence $N=1$, $L^\sharp=L$. Then $8\mid n$ by Theorem \ref{thm:MilgramExtension2}, and consequently \eqref{eq:WeilCoeffNmidc} holds by \eqref{eq:WeilReprShintani}. Now assume $a\neq0$. As in the last subsection, \eqref{eq:proofWeilCoefTemp} still holds in the present case. Inserting \eqref{eq:GGacwxWhenNdivc} (with $\mathbf{w}=\mathbf{w}_1$ and $\mathbf{x}=-a^{-1}G\mathbf{v}_1$) into the right-hand side of \eqref{eq:proofWeilCoefTemp}, we obtain
\begin{equation*}
\rho_{\underline{L}}\left(A,1\right)_{w,v}=\delta_{aw,v}\cdot\mu_G(a,c)\cdot\etp{\frac{d-a'}{c}Q(v)},
\end{equation*}
where $a'$ satisfies $aa'\equiv 1\bmod{cN}$. The desired formula \eqref{eq:WeilCoeffNmidc} follows from this and the fact when $v=aw$ we have $\etp{\frac{d-a'}{c}Q(v)}=\etp{abQ(w)}$.
\end{proof}
\begin{rema}
\label{rema:WeilCoeffNmidc}
An explicit formula for $\rho_{\underline{L}}\left(A,1\right)_{w,v}$ in the case $N\mid c$, to our best knowledge, was first obtained, in the form of a transformation equation of certain theta series (see the next subsection), by Schoeneberg \cite[Eq. (16)]{Sch39}, using a method developed by Hecke. Schoeneberg considered the case $2\mid n$, and the case $2\nmid n$ was treated by Pfezter \cite[Eq. (22)]{Pfe53}. For more recent reference, see \cite[Theorem 14.3.11]{CS17}. Note that our formula are different from the above three formulas mentioned (e.g., $\mu_G(a,c)$ depends on $a$ and $c$, while the formula in \cite[Theorem 14.3.11]{CS17} expresses $\mu_G(a,c)$ in another form depending on $c$ and $d$), and the proof of the equivalence of these formulas is not very obvious. Finally, note that Richter's results \cite{Ric00_2,Ric00_1} on symplectic theta series associated with possibly indefinite lattices can also be applied to obtain a formula for $\rho_{\underline{L}}\left(A,1\right)_{w,v}$.
\end{rema}

\subsection{Jacobi theta series of lattice index}
\label{subsec:theta_series_of_lattice_index}
Let $\underline{L}=(L,B)$, where $L\subseteq\numR^n$, be a positive definite even integral lattice. Set $Q(x)=\frac{1}{2}B(x,x)$ and extend $B$ to $\numC^n$ by bilinearity. We consider the following theta series
\begin{equation}
\label{eq:deffJacobiThetaLatticeIndex}
\vartheta_{\underline{L}, t}(\tau,z)=\sum_{v \in t+L}\etp{\tau Q(v)+B(v,z)},
\end{equation}
where $\tau\in\uhp:=\{\tau\in\numC\colon\Im\tau>0\}$, $z\in\numC^n$, and $t\in L^\sharp$. It is well-known that this series converges absolutely and uniformly on compact subsets, and hence is holomorphic. Moreover, $\vartheta_{\underline{L}, t}$ depends only on $t+L$, not $t$ itself, so we may assume $t\in L^\sharp/L$. A good reference is \cite[\S 3.5]{Boy15}, where Boylan developed the transformation equations of $\vartheta_{\underline{L}, t}$ for $\mathcal{O}$-lattice $\underline{L}$, based on \cite[Chap. 5]{Ebe13}. By setting $\mathcal{O}=\numZ$, the functions considered by Boylan become \eqref{eq:deffJacobiThetaLatticeIndex}. See \cite[\S 2.3]{Ajo15} as well for another good reference. The modular transformation equations of $\vartheta_{\underline{L}, t}$ are well-known:
\begin{equation}
\label{eq:thetaTransformation}
\varepsilon^{-n}(c\tau+d)^{-n/2}\etp{-\frac{c}{c\tau+d}Q(z)}\vartheta_{\underline{L}, t}\left(\frac{a\tau+b}{c\tau+d},\frac{z}{c\tau+d}\right)=\sum_{x\in L^\sharp/L}\rho_{\underline{L}}\left(\tbtmat{a}{b}{c}{d},\varepsilon\right)_{t,x}\vartheta_{\underline{L}, x}(\tau,z),
\end{equation}
where $\tbtmat{a}{b}{c}{d}\in\slZ$ and $\varepsilon\in\{\pm1\}$. See for instance \cite[Theorem 2.3.4]{Ajo15}. Therefore, Theorem \ref{thm:WeilCoef} has an equivalent form, which provides explicitly the Fourier expansion of $\vartheta_{\underline{L}, t}\left(\frac{a\tau+b}{c\tau+d},\frac{z}{c\tau+d}\right)$.
\begin{thm}
\label{thm:thetaTransformation}
If $c>0$ is coprime to $N$, the level of $\underline{L}$, and $a\neq0$, then
\begin{equation}
\label{eq:thetaabcd}
(c\tau+d)^{-n/2}\etp{-\frac{c}{c\tau+d}Q(z)}\vartheta_{\underline{L}, t}\left(\frac{a\tau+b}{c\tau+d},\frac{z}{c\tau+d}\right)=\gamma\cdot\sum_{v\in L^\sharp}c(v,t)q^{Q(v)}\zeta^v,
\end{equation}
where $q^{Q(v)}:=\etp{\tau Q(v)}$, $\zeta^v:=\etp{B(v,z)}$, and where
\begin{align*}
\gamma&=\frac{1}{\sqrt{D}}\cdot(-\rmi)^{\frac{n}{2}}\cdot\legendre{\abs{a}}{c}_K^n\cdot\legendre{D}{c}_K\cdot\etp{\frac{n\cdot\sgn a\cdot(1-c_0)}{8}},\\
c(v,t)&=\etp{\frac{b+c'}{a}Q(v)-c'B(t,v)+ac'Q(t)}.
\end{align*}
(For $D$, $c'$ and $c_0$, see Theorem \ref{thm:WeilCoef}.)
\end{thm}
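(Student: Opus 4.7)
The plan is to derive Theorem~\ref{thm:thetaTransformation} as a direct consequence of Theorem~\ref{thm:WeilCoef} combined with the standard modular transformation equation \eqref{eq:thetaTransformation} for the Jacobi theta series $\vartheta_{\underline{L}, t}$. Since Theorem~\ref{thm:WeilCoef} already supplies closed-form values for every matrix entry $\rho_{\underline{L}}(A, 1)_{t, x}$ under the hypotheses $c > 0$, $\gcd(N, c) = 1$, $a \neq 0$, the only task is to unfold \eqref{eq:thetaTransformation} into a single sum over $L^\sharp$ and substitute.

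First, I would specialize \eqref{eq:thetaTransformation} to $\varepsilon = 1$; the LHS then coincides verbatim with the LHS of \eqref{eq:thetaabcd}. On the RHS I would expand each $\vartheta_{\underline{L}, x}(\tau, z)$ using \eqref{eq:deffJacobiThetaLatticeIndex} and, using the fact that the cosets $x + L$ partition $L^\sharp$ as $x$ ranges over $L^\sharp/L$, collapse the double sum into a single sum indexed by $v \in L^\sharp$:
\begin{equation*}
\sum_{x \in L^\sharp/L} \rho_{\underline{L}}(A, 1)_{t, x} \, \vartheta_{\underline{L}, x}(\tau, z) = \sum_{v \in L^\sharp} \rho_{\underline{L}}(A, 1)_{t, v} \cdot q^{Q(v)} \zeta^v,
\end{equation*}
where on the right the subscripts are interpreted via the convention $\rho_{\underline{L}}(\gamma)_{y, x} := \rho_{\underline{L}}(\gamma)_{y + L, x + L}$ introduced in \S\ref{subsec:lattices_and_weil_representations}. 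Absolute convergence of $\vartheta_{\underline{L}, x}$ on compact subsets of $\uhp \times \numC^n$ justifies the rearrangement.

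Second, I would apply Theorem~\ref{thm:WeilCoef} with $w = t$ to obtain
\begin{equation*}
\rho_{\underline{L}}(A, 1)_{t, v} = \gamma \cdot \etp{\tfrac{b + c'}{a} Q(v) - c' B(t, v) + a c' Q(t)} = \gamma \cdot c(v, t),
\end{equation*}
with $\gamma$ and $c(v, t)$ exactly as in the statement. Substituting into the previous display produces \eqref{eq:thetaabcd} directly.

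There is no substantive obstacle; the argument is essentially a repackaging of Theorem~\ref{thm:WeilCoef}. The one piece of housekeeping is the verification that $(b + c')/a \in \numZ$, so that the exponent defining $c(v, t)$ is manifestly a rational with controlled denominator and depends only on $v + L$: this follows from $a d - b c = 1$ and $c c' \equiv 1 \bmod a N$, which together give $c(b + c') = a(d + N k)$ for some $k \in \numZ$, whence $\gcd(a, c) = 1$ forces $a \mid b + c'$.
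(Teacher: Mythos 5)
Your proposal is correct and is exactly the paper's proof: Theorem \ref{thm:thetaTransformation} is obtained by substituting the formula of Theorem \ref{thm:WeilCoef} into the transformation equation \eqref{eq:thetaTransformation} and collapsing the sum over $L^\sharp/L$ into a sum over $L^\sharp$. Your closing verification that $a\mid b+c'$ (from $cb\equiv-1$ and $cc'\equiv1\bmod a$ together with $\gcd(a,c)=1$) is the same observation the paper records in the remark following the proof of Theorem \ref{thm:WeilCoef}.
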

\begin{proof}
This follows from combining Theorem \ref{thm:WeilCoef} and \eqref{eq:thetaTransformation}.
\end{proof}
\begin{rema}
If we set $\vartheta_{\underline{L}^\sharp}(\tau,z)=\sum_{v \in L^\sharp}\etp{\tau Q(v)+B(v,z)}$, or equivalently, $\vartheta_{\underline{L}^\sharp}=\sum_{t\in L^\sharp/L}\vartheta_{\underline{L}, t}$, then the right-hand side of \eqref{eq:thetaabcd} can be rewritten as
\begin{equation*}
\gamma\cdot\etp{ac'Q(t)}\cdot\vartheta_{\underline{L}^\sharp}\left(\tau+\frac{b+c'}{a},z-c't\right).
\end{equation*}
The formula \eqref{eq:thetaabcd}, with the right-hand side replaced by the above expression, shows a phenomenon that the action of a modular transformation $\tbtmat{a}{b}{c}{d}$ on $\vartheta_{\underline{L}, t}$ is equivalent to the action of the translation $(\tau,z)\mapsto(\tau+(b+c')/a,z-c't)$ on $\vartheta_{\underline{L}^\sharp}$, up to a factor in $\numQ(\sqrt{D},\zeta_N,\zeta_8)$. It is an interesting question, at least to us, to study whether, for any modular (or Jacobi) form, there is a function that plays the same role as $\vartheta_{\underline{L}^\sharp}$ plays for $\vartheta_{\underline{L}, t}$.
\end{rema}

\section{Application III. Number of points of affine quadratic hypersurfaces over $\mathbb{F}_p$}
\label{sec:application_iii_number_of_points_of_arbitrary_affine_quadratic_hypersurfaces_over_prime_finite_fields}
In this section, we study the congruence \eqref{eq:affineQuadratic}, in particular, its solution number $r_{G,\mathbf{v}}(m)$. In \S \ref{subsec:a_formula_for_arbitrary_modulus}, we provide a formula for $p$ being an arbitrary modulus, not necessarily a prime. The proof is based on Theorem \ref{thm:mainIndefinite} and finite Fourier analysis. Then in \S\ref{subsec:proof_of_theorem_ref_thm_rgvm}, we prove Theorem \ref{thm:rGvm}, as a direct consequence of the above general formula.

\subsection{A formula for arbitrary modulus $c$}
\label{subsec:a_formula_for_arbitrary_modulus}
\begin{thm}
\label{thm:affineQuadraticmodulusc}
Let $G$ be a nonsingular even integral symmetric matrix of size $n\in\numgeq{Z}{1}$. Let $\mathbf{v}\in\numZ^n$, $m$ be an integer, and $c$ be a positive integer. If $\gcd(D,c)=1$, where $D=\det(G)$, then the solution number $r_{G,\mathbf{v}}(m)$ of the congruence 
\begin{equation*}
%\label{eq:affineQuadraticmodulusc}
\frac{1}{2}\mathbf{x}^\tp G\mathbf{x}+\mathbf{v}^\tp\mathbf{x}\equiv m\bmod{c},\quad\mathbf{x}\in\numZ^n/c\numZ^n
\end{equation*}
is given by the formula
\begin{equation}
\label{eq:rGvmGeneralc}
r_{G,\mathbf{v}}(m)=c^{\frac{n}{2}-1}\legendre{D}{c}_K\cdot\sum_{a=0}^{c-1}\etp{\frac{n(1-\widetilde{c}_0)}{8}}(a,c)^\frac{n}{2}\legendre{\widetilde{a}}{\widetilde{c}}^n\legendre{D}{(a,c)}\etp{\frac{a}{c}\left(\frac{\widetilde{c}\cdot\widetilde{c}'-1}{2}\mathbf{v}^\tp G^{-1}\mathbf{v}-m\right)},
\end{equation}
where $\widetilde{a}=a/(a,c)$, $\widetilde{c}=c/(a,c)$, $\widetilde{c}_0$ is the odd part of $\widetilde{c}$, and $\widetilde{c}'$ is an integer satisfies $\widetilde{c}\cdot\widetilde{c}'\equiv1\bmod{\widetilde{a}N}$ ($N$ being the level of $G$).
\end{thm}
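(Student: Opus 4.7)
The plan is to apply finite Fourier analysis on $\numZ/c\numZ$ to convert $r_{G,\mathbf{v}}(m)$ into a sum of Gauss sums, and then evaluate each Gauss sum using Theorem \ref{thm:mainIndefinite}. Concretely, using the standard identity $\mathbf{1}_{k\equiv 0\bmod c}=\frac{1}{c}\sum_{a=0}^{c-1}\etp{ak/c}$, we write
\begin{equation*}
r_{G,\mathbf{v}}(m)=\frac{1}{c}\sum_{a=0}^{c-1}\etp{-\frac{am}{c}}\,S_a,\qquad S_a:=\sum_{\mathbf{x}\in\numZ^n/c\numZ^n}\etp{\frac{a}{c}\left(\tfrac{1}{2}\mathbf{x}^\tp G\mathbf{x}+\mathbf{v}^\tp\mathbf{x}\right)}.
\end{equation*}
Writing $d=(a,c)$, $\widetilde{a}=a/d$, $\widetilde{c}=c/d$, the summand of $S_a$ depends only on $\mathbf{x}$ modulo $\widetilde{c}$, so Lemma \ref{lemm:independentRepr} (applied after matching to the definition of $\mathfrak{G}_G$ with $\mathbf{w}=\mathbf{0}$ and with the linear parameter equal to $\mathbf{v}$) gives $S_a=d^n\cdot\mathfrak{G}_G(\widetilde{a}/\widetilde{c};\mathbf{0},\mathbf{v})$.

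For $a\neq 0$ we next evaluate $\mathfrak{G}_G(\widetilde{a}/\widetilde{c};\mathbf{0},\mathbf{v})$ via Theorem \ref{thm:mainIndefinite}; this is legitimate because the assumption $\gcd(D,c)=1$ forces $\gcd(N,\widetilde{c})=1$ (since $D$ and $N$ share the same set of prime factors), and the sum is integral-parametric (as $\widetilde{a}\mathbf{v}\in\numZ^n$). Substituting $\mathbf{w}=\mathbf{0}$, $\mathbf{x}=\mathbf{v}$ into \eqref{eq:GGacwxWhenNccoprime} and simplifying the quadratic exponent using $(G^{-1}\mathbf{v})^\tp G(G^{-1}\mathbf{v})=\mathbf{v}^\tp G^{-1}\mathbf{v}$, the two exponential factors collapse to
\begin{equation*}
\etp{\frac{\widetilde{a}(\widetilde{c}\widetilde{c}'-1)}{2\widetilde{c}}\mathbf{v}^\tp G^{-1}\mathbf{v}}=\etp{\frac{a(\widetilde{c}\widetilde{c}'-1)}{2c}\mathbf{v}^\tp G^{-1}\mathbf{v}},
\end{equation*}
while the prefactor becomes $\widetilde{c}^{n/2}\legendre{\widetilde{a}}{\widetilde{c}}_K^n\legendre{D}{\widetilde{c}}_K\etp{n(1-\widetilde{c}_0)/8}$, since $\widetilde{a}>0$ for $a\in\{1,\dots,c-1\}$.

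Multiplying by $d^n$ gives $d^n\widetilde{c}^{n/2}=d^{n/2}c^{n/2}$, and using complete multiplicativity $\legendre{D}{\widetilde{c}}_K=\legendre{D}{c}_K\legendre{D}{d}_K$ (valid since $\gcd(D,c)=1$ so both symbols are $\pm1$) allows us to pull the common factor $c^{n/2-1}\legendre{D}{c}_K$ outside the sum. Combining with the easy term $a=0$ (for which $\widetilde{c}=1$, $d=c$, $S_0=c^n$, contributing $c^{n-1}=c^{n/2-1}\legendre{D}{c}_K\cdot c^{n/2}\legendre{D}{c}_K$) yields \eqref{eq:rGvmGeneralc}. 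The one point that requires care is verifying that the special term $a=0$ fits the same expression as the generic terms, which it does precisely because $\legendre{D}{c}_K^2=1$ under the hypothesis $\gcd(D,c)=1$; this is the main bookkeeping hurdle, but it is straightforward.
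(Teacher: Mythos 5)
Your proposal is correct and follows essentially the same route as the paper: Fourier inversion on $\numZ/c\numZ$ to write $r_{G,\mathbf{v}}(m)$ as $\frac{1}{c}\sum_a\etp{-am/c}S_a$, reduction of the period of $S_a$ from $c$ to $\widetilde{c}$ (the paper does this via Proposition \ref{prop:ttoc}), and evaluation of each $\mathfrak{G}_G(\widetilde{a}/\widetilde{c};\mathbf{0},\mathbf{v})$ by Theorem \ref{thm:mainIndefinite}, with the same bookkeeping for the multiplicativity of $\legendre{D}{\cdot}_K$ and the separate $a=0$ term.
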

Note that for fixed $G$ and $c$, the abbreviations $\widetilde{a}$, $\widetilde{c}$, $\widetilde{c}_0$, and $\widetilde{c}'$ all depend on $a$, so we can not place them outside the sum over $a$.
\begin{proof}
We define a map
\begin{align*}
f_{G,\mathbf{v}}\colon\numZ^n/c\numZ^n&\rightarrow\numZ/c\numZ,\\
\mathbf{x}+c\numZ^n&\mapsto\frac{1}{2}\mathbf{x}^\tp G\mathbf{x}+\mathbf{v}^\tp\mathbf{x}+c\numZ,
\end{align*}
which is well-defined as can be checked directly. Moreover, we regard $r_{G,\mathbf{v}}$, bu abuse of notations, as a map from $\numZ/c\numZ$ to $\numgeq{Z}{0}$. They are related by the following formula:
\begin{equation}
\label{eq:fGvandrGvRelation}
\sum_{\mathbf{x}\in\numZ^n/c\numZ^n}\etp{\frac{af_{G,\mathbf{v}}(\mathbf{x})}{c}}=\sum_{m\in\numZ/c\numZ}r_{G,\mathbf{v}}(m)\etp{\frac{am}{c}},
\end{equation}
where $a$ is an arbitrary integer. This can be seen by regrouping the terms of the left-hand side according to the value of $f_{G,\mathbf{v}}(\mathbf{x})$. We carry over the notations in \S\ref{subsec:fourier_analysis_on_finite_abelian_groups} with $M=\numZ/c\numZ$. Then $r_{G,\mathbf{v}}\in L^2(\numZ/c\numZ)$. For each $a+c\numZ\in\numZ/c\numZ$, let $\chi_{a}$ be the character of $\numZ/c\numZ$ defined by $\chi_{a}(v)=\etp{\frac{av}{c}}$ (The $n=1$ case of the characters introduced in the proof of Lemma \ref{lemm:GGUforGeneralc}). It is immediate that the map from $\numZ/c\numZ$ to $\widehat{\numZ/c\numZ}$ that sends $a+c\numZ$ to $\chi_{a}$ is a group isomorphism. In these notations, \eqref{eq:fGvandrGvRelation} amounts to
\begin{equation*}
\widehat{r_{G,\mathbf{v}}}(\chi_{-a})=\sum_{\mathbf{x}\in\numZ^n/c\numZ^n}\etp{\frac{af_{G,\mathbf{v}}(\mathbf{x})}{c}}.
\end{equation*}
Thus, expanding $r_{G,\mathbf{v}}$ as a linear combination of $\chi_{-a}$, $a\in\numZ/c\numZ$ (the finite Fourier expansion), we find that
\begin{align*}
r_{G,\mathbf{v}}(m)&=\frac{1}{c}\sum_{a\in\numZ/c\numZ}\widehat{r_{G,\mathbf{v}}}(\chi_{-a})\chi_{-a}(m)\\
&=\frac{1}{c}\sum_{a\in\numZ/c\numZ}\mathfrak{G}_{G}(\widetilde{a}/\widetilde{c},\mathbf{c};\mathbf{0},\mathbf{v})\etp{-\frac{am}{c}},
\end{align*}
where $\mathbf{c}=(c,c,\dots,c)$. Since $\gcd(D,c)=1$ we have $\gcd(D,\widetilde{c})=1$ for all $a$. Thus, inserting \eqref{eq:ttoc} and then the conclusion of Theorem \ref{thm:mainIndefinite} into the above formula, we obtain \eqref{eq:rGvmGeneralc} as desired. (The term with $a=0$ should be considered separately since Theorem \ref{thm:mainIndefinite} requires $a\neq0$, or we can just choose another representative, e.g., $a=c$.)
\end{proof}

As an immediate corollary, we have
\begin{equation*}
\abs{r_{G,\mathbf{v}}(m)-c^{n-1}}\leq c^{\frac{n}{2}-1}\sum_{d\mid c,\,d\neq c}d^{\frac{n}{2}}\phi(c/d),
\end{equation*}
where $\phi$ is the Euler totient function.

\begin{rema}
The number $r_{G,\mathbf{v}}(m)$, with $\mathbf{v}=\mathbf{0}$ and $c$ being a prime power, plays an important role in the deduction of the well-known (classical) Siegel Theorem on solution numbers of integral quadratic forms; cf. \cite{Sie36}, or \cite[Chap. II, Section 9]{MH73}. Our formula can be used to derive explicit expressions of the local densities that appear in Siegel's Theorem. For instance, \cite[Chap. II, Lemma 9.8]{MH73} can be seen as a special case of \eqref{eq:rGvmGeneralc} after an elementary simplification. For local densities in more general context, see, for instance, \cite{Yan98}. Finally, it seems that our formula for $r_{G,\mathbf{v}}(m)$, when $\mathbf{v}\neq\mathbf{0}$, can be used to derive explicit formulas for local densities of inhomogeneous quadratic forms (cf., e.g., \cite{Shi04}), which we deem important for future work.
\end{rema}

\subsection{Proof of Theorem \ref{thm:rGvm}}
\label{subsec:proof_of_theorem_ref_thm_rgvm}
Since $p\nmid D$, we can set $c=p$ in \eqref{eq:rGvmGeneralc}, and thus obtain
\begin{equation}
\label{eq:rGvmPrimep}
r_{G,\mathbf{v}}(m)=p^{\frac{n}{2}-1}\legendre{D}{p}_K\cdot\left(p^{\frac{n}{2}}\legendre{D}{p}_K+\sum_{a=1}^{p-1}\etp{\frac{n(1-p_0)}{8}}\legendre{a}{p}_K^n\etp{\frac{a}{p}\left(\frac{pp'-1}{2}\mathbf{v}^\tp G^{-1}\mathbf{v}-m\right)}\right),
\end{equation}
where $p_0=p$ if $p$ is odd and $p_0=1$ if $p=2$, and where $p'$ (depending an both $p$ and $a$) is an integer satisfying $pp'\equiv1\bmod{aN}$. We choose $p'$ such that $pp'\equiv1\bmod{(p-1)!\cdot N}$. Then $pp'\equiv1\bmod{aN}$ for all $1\leq a\leq p-1$, namely, this choice of $p'$ is the same for all $a$. Note that $\frac{pp'-1}{2}\mathbf{v}^\tp G^{-1}\mathbf{v}-m\in\numZ$.

\textbf{Case (1)} $2\mid n$ and $p>2$. The sum $\sum_{a=1}^{p-1}$ in \eqref{eq:rGvmPrimep} is then a geometric sum. Therefore, if $\frac{pp'-1}{2}\mathbf{v}^\tp G^{-1}\mathbf{v}\equiv m\bmod{p}$, then $r_{G,\mathbf{v}}(m)=p^{n-1}+\legendre{D}{p}_K\cdot\etp{\frac{n(1-p)}{8}}\cdot p^{\frac{n}{2}-1}(p-1)$. On the other hand, if $\frac{pp'-1}{2}\mathbf{v}^\tp G^{-1}\mathbf{v}\not\equiv m\bmod{p}$, then $r_{G,\mathbf{v}}(m)=p^{n-1}-\legendre{D}{p}_K\cdot\etp{\frac{n(1-p)}{8}}\cdot p^{\frac{n}{2}-1}$. It is immediate that
\begin{equation*}
\frac{pp'-1}{2}\mathbf{v}^\tp G^{-1}\mathbf{v}\equiv m\bmod{p}\Longleftrightarrow Nm+\frac{1}{2}\mathbf{v}^\tp G^\bot\mathbf{v}\equiv0\bmod{p},
\end{equation*}
from which Theorem \ref{thm:rGvm}(1) follows.

\textbf{Case (2)} $2\mid n$ and $p=2$. Then \eqref{eq:rGvmPrimep} becomes
\begin{equation*}
r_{G,\mathbf{v}}(m)=2^{n-1}+\legendre{D}{2}_K\cdot(-1)^{\frac{pp'-1}{2}\mathbf{v}^\tp G^{-1}\mathbf{v}-m}\cdot2^{\frac{n}{2}-1}.
\end{equation*}
Since $p=2$, we have $2\nmid N$, and hence
\begin{equation*}
(-1)^{\frac{pp'-1}{2}\mathbf{v}^\tp G^{-1}\mathbf{v}-m}=(-1)^{Nm+\frac{1}{2}\mathbf{v}^\tp G^\bot\mathbf{v}}
\end{equation*}
as in the previous case. Therefore, Theorem \ref{thm:rGvm}(2) holds.

\textbf{Case (3)} $2\nmid n$. Then $2\mid N$ (see \cite[Remarks 14.3.23]{CS17}), and hence $p>2$. Now \eqref{eq:rGvmPrimep} becomes
\begin{equation*}
%\label{eq:rGvmPrimep_nodd}
r_{G,\mathbf{v}}(m)=p^{\frac{n}{2}-1}\legendre{D}{p}_K\cdot\left(p^{\frac{n}{2}}\legendre{D}{p}_K+\etp{\frac{n(1-p)}{8}}\sum_{a=1}^{p-1}\legendre{a}{p}_K\etp{\frac{at_m'}{p}}\right),
\end{equation*}
where $t_m'=\frac{pp'-1}{2}\mathbf{v}^\tp G^{-1}\mathbf{v}-m$. As in the previous cases, we have $t_m'\equiv -t_m\bmod{p}$. By the classical formula on Gauss character sums (cf., e.g., \cite[Prop. 3.4.5(a) and 3.4.10(b)]{CS17}), we have
\begin{equation*}
r_{G,\mathbf{v}}(m)=p^{n-1}+p^{\frac{n}{2}-1}\legendre{D}{p}_K\etp{\frac{n(1-p)}{8}}\sqrt{p}\legendre{-2t_m}{p}_K\etp{\frac{1-p}{8}},
\end{equation*}
from which Theorem \ref{thm:rGvm}(3) follows.

\section{Application IV. Number of solutions of generalized Markoff equations over $\mathbb{F}_p$}
\label{sec:application_iv_number_of_solutions_of_generalized_markoff_equations_over_prime_finite_fields}
Here we carry over the notations introduced in \S\ref{subsec:generalized_markoff_equations} and Theorem \ref{thm:solnumbergMarkoff}. The aim of this section is to prove Theorem \ref{thm:solnumbergMarkoff}. Besides $A_{11}$, $A_{22}$ and $A_{33}$, we further set
\begin{equation}
\label{eq:defA12A13A23}
A_{12}=-2a_{12}a_{33}+a_{13}a_{23},\quad A_{13}=a_{12}a_{23}-2a_{22}a_{13},\quad A_{23}=-2a_{11}a_{23}+a_{12}a_{13},
\end{equation}
so that $G\cdot A=A\cdot G=D\cdot I_3$, where $A=(A_{ij})_{1\leq i,j\leq3}$ with $A_{ij}=A_{ji}$.
\begin{lemm}
We have
\begin{align}
A_{22}A_{33}-A_{23}^2&=D\cdot2a_{11},\qquad&-A_{12}A_{33}+A_{23}A_{13}&=D\cdot a_{12},\label{eq:Adet1}\\
A_{11}A_{33}-A_{13}^2&=D\cdot2a_{22},\qquad &A_{12}A_{23}-A_{22}A_{13}&=D\cdot a_{13},\label{eq:Adet2}\\
A_{11}A_{22}-A_{12}^2&=D\cdot2a_{33},\qquad&-A_{11}A_{23}+A_{12}A_{13}&=D\cdot a_{23},\label{eq:Adet3}
\end{align}
and
\begin{align}
D&=2a_{11}A_{11}+a_{12}A_{12}+a_{13}A_{13},\label{eq:Dexpansion1}\\
D&=a_{12}A_{12}+2a_{22}A_{22}+a_{23}A_{23},\label{eq:Dexpansion2}\\
D&=a_{13}A_{13}+a_{23}A_{23}+2a_{33}A_{33}.\label{eq:Dexpansion3}
\end{align}
\end{lemm}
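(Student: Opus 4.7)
The key observation is that the matrix $A=(A_{ij})_{1\le i,j\le 3}$ (with $A_{ij}=A_{ji}$) is nothing other than the classical adjugate $\operatorname{adj}(G)$: a direct check of the cofactors of the symmetric matrix
$$G=\begin{pmatrix} 2a_{11} & a_{12} & a_{13} \\ a_{12} & 2a_{22} & a_{23} \\ a_{13} & a_{23} & 2a_{33} \end{pmatrix}$$
shows that $A_{ij}=(-1)^{i+j}\det(G_{ij})$, where $G_{ij}$ is the $(i,j)$-minor. In particular, the assertion $G\cdot A = A\cdot G = D\cdot I_3$ that opens the lemma is just the standard adjugate identity.

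With this observation in hand, the formulas (\ref{eq:Dexpansion1})--(\ref{eq:Dexpansion3}) are immediate: they are precisely the Laplace (cofactor) expansions of $D=\det(G)$ along the first, second, and third rows of $G$, respectively. So only (\ref{eq:Adet1})--(\ref{eq:Adet3}) require work.

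The plan for (\ref{eq:Adet1})--(\ref{eq:Adet3}) is to recognize that the six identities, taken together, assert that $\operatorname{adj}(A) = D\cdot G$ entrywise. Since $A=\operatorname{adj}(G)$, this is the $3\times 3$ instance of the classical identity $\operatorname{adj}(\operatorname{adj}(M))=\det(M)\cdot M$. I will prove this identity as follows. Over the rational function field $\mathbb{Q}(a_{11},\dots,a_{33})$ we have $D\neq 0$, so $A$ is invertible with $A^{-1}=D^{-1}G$, and $\det(A)=D^{n-1}=D^{2}$. Hence
$$\operatorname{adj}(A)=\det(A)\cdot A^{-1}=D^{2}\cdot D^{-1}G = D\cdot G.$$
Each component of this equation is a polynomial identity in the six variables $a_{ij}$, proved on a Zariski-open subset of affine space; it therefore holds as a polynomial identity everywhere. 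Writing out the six entries of $\operatorname{adj}(A)=D\cdot G$ (each entry of the adjugate being a $2\times 2$ minor of $A$) yields exactly (\ref{eq:Adet1})--(\ref{eq:Adet3}).

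The only conceivable obstacle is the step that $A=\operatorname{adj}(G)$ really is the adjugate with the correct signs, which is a mechanical but easy verification of six $2\times 2$ determinants against the definitions of $A_{11},A_{22},A_{33}$ in the statement of Theorem \ref{thm:solnumbergMarkoff} and of $A_{12},A_{13},A_{23}$ in \eqref{eq:defA12A13A23}. Once that matching of signs is in place, everything else is the application of two standard facts: Laplace expansion and the iterated adjugate identity.
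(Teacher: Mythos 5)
Your proposal is correct. Every step checks out: $A=(A_{ij})$ is indeed the adjugate of $G$ with the signs as defined in the paper (e.g.\ the $(1,2)$-cofactor of $G$ is $-(2a_{12}a_{33}-a_{13}a_{23})=A_{12}$), so \eqref{eq:Dexpansion1}--\eqref{eq:Dexpansion3} are the three row Laplace expansions of $D$, and the six identities \eqref{eq:Adet1}--\eqref{eq:Adet3} are precisely the entries of $\operatorname{adj}(\operatorname{adj}(G))=\det(G)\cdot G$, which you derive correctly from $\operatorname{adj}(A)=\det(A)A^{-1}$, $\det(A)=D^{2}$, $A^{-1}=D^{-1}G$ on the locus $D\neq 0$, extended everywhere by polynomial identity. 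The paper's own proof is literally ``a straightforward calculation,'' i.e.\ brute-force expansion of each side in the six variables $a_{ij}$; your route replaces that with two standard structural facts (Laplace expansion and the iterated adjugate identity) plus a Zariski-density argument. What the paper's approach buys is self-containedness and zero overhead; what yours buys is an explanation of \emph{why} the nine identities hold and a proof that generalizes immediately to any symmetric $3\times 3$ (or, with the exponent $\det(M)^{n-2}$, any $n\times n$) Gram matrix, at the modest cost of the one mechanical verification that the signs in \eqref{eq:defA12A13A23} match the cofactors — which you correctly flag as the only point requiring care.
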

\begin{proof}
A straightforward calculation.
\end{proof}

The proof for the case $p=2$ is relatively easy:
\begin{lemm}
\label{lemm:solnumbergMarkoffp2}
Theorem \ref{thm:solnumbergMarkoff} holds in the case $p=2$.
\end{lemm}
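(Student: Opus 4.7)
The plan is to observe that over $\mathbb{F}_2$ we have $x^2\equiv x$, and that the hypothesis $p\nmid a_{11}a_{22}a_{33}d$ forces $a_{11}\equiv a_{22}\equiv a_{33}\equiv d\equiv 1\pmod{2}$. Hence modulo $2$ the equation \eqref{eq:gMarkoff} simplifies to
\begin{equation*}
x+y+z+a_{12}xy+a_{13}xz+a_{23}yz\equiv xyz\pmod{2},
\end{equation*}
in which only the parities of $a_{12},a_{13},a_{23}$ matter, and the whole problem reduces to counting zeros of a concrete polynomial on the $8$-point set $\mathbb{F}_2^3$.

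First I would note two trivial reductions: $(0,0,0)$ is always a solution, while each of $(1,0,0)$, $(0,1,0)$, $(0,0,1)$ gives the relation $1\equiv 0$, hence never is. So in every case, counting solutions amounts to checking which of the four remaining points $(1,1,0),(1,0,1),(0,1,1),(1,1,1)$ satisfy the reduced equation.

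Next I would dispatch the three cases of the statement by direct substitution. In case $a_{12}\equiv a_{13}\equiv a_{23}\equiv 0$, the reduced equation is $x+y+z\equiv xyz$, and each of the four remaining points works, giving $1+4=5$. In case $a_{12}\equiv a_{13}\equiv a_{23}\equiv 1$, the equation is $x+y+z+xy+xz+yz\equiv xyz$; the three points of weight $2$ give $2+1\equiv 1\not\equiv 0$ and $(1,1,1)$ gives $3+3\equiv 0\not\equiv 1$, so none works, giving $1$ solution. In the \emph{otherwise} case, by the symmetry permuting the coordinates and the corresponding coefficients $a_{12},a_{13},a_{23}$, it suffices to treat the two representatives $(a_{12},a_{13},a_{23})\equiv(1,0,0)$ and $(a_{12},a_{13},a_{23})\equiv(1,1,0)\pmod{2}$; direct evaluation in each case shows exactly two of the four remaining points satisfy the equation, giving $1+2=3$.

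There is no genuine obstacle: since $|\mathbb{F}_2^3|=8$ the whole argument is a finite, mechanical enumeration, and the only mild bookkeeping is invoking the permutation symmetry in the third case to keep the subcase analysis short.
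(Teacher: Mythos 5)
Your proposal is correct and follows essentially the same route as the paper: reduce to $a_{11}=a_{22}=a_{33}=d=1$ modulo $2$ and verify the claim by direct enumeration over the eight points of $\mathbb{F}_2^3$ for each parity class of $(a_{12},a_{13},a_{23})$. Your use of $x^2\equiv x$, the upfront elimination of the weight-$0$ and weight-$1$ points, and the coordinate-permutation symmetry merely streamline the same finite check that the paper performs exhaustively.
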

\begin{proof}
Since we have assumed $p\nmid a_{11}a_{22}a_{33}d$, we can assume $a_{11}=a_{22}=a_{33}=d=1$ for $p=2$ without loss of generality. There are 8 possibilities of the value of $(a_{12}\bmod{2},a_{13}\bmod{2},a_{23}\bmod{2})$. For each value, we solve \eqref{eq:gMarkoff} modulo $2$ by checking whether each value of $(x,y,z)\in\numZ^3/2\numZ^3$ satisfies it directly. In this way we find that Theorem \ref{thm:solnumbergMarkoff}(4), i.e., the case $p=2$ of Theorem \ref{thm:solnumbergMarkoff}, holds.
\end{proof}

To prove Theorem \ref{thm:solnumbergMarkoff} for $p>2$, we partition the affine variety determined by \eqref{eq:gMarkoff} into the sections $z=j$, where $j=0,1,\dots,p-1$, and then show at most two of these sections are singular affine quadratic varieties, and the rest are nonsingular ones.
\begin{lemm}
\label{lemm:a11pa22p}
Let $p$ be an odd prime with $p\nmid a_{11}a_{22}a_{33}d$, and let $t_1,t_2$ be nonnegative integers. Set $a_{11}'=a_{11}+pt_1$, $a_{22}'=a_{22}+pt_2$. For $z=0,1,\dots,p-1$, set
\begin{align*}
G_z&=\tbtMat{2a_{11}'}{a_{12}-dz}{a_{12}-dz}{2a_{22}'},\quad&\mathbf{v}_z&=(a_{13}z,a_{23}z),\\
D_z&=\det(G_z)=4a_{11}'a_{22}'-(a_{12}-dz)^2,\quad &N_z&=\text{ level of }G_z.
\end{align*}
Then there exist $t_1$ and $t_2$ such that:
\begin{enumerate}
	\item $G_z$ is positive definite for all $0\leq z\leq p-1$,
	\item $N_z=D_z$ for all $0\leq z\leq p-1$,
	\item The solution number of \eqref{eq:gMarkoff} modulo $p$ is equal to
	\begin{equation}
	\label{eq:partitionSections}
	\sum_{z=0}^{p-1}r_{G_z,\mathbf{v}_z}(-a_{33}z^2),
	\end{equation}
	where $r_{G_z,\mathbf{v}_z}(-a_{33}z^2)$ is defined in Theorem \ref{thm:rGvm}.
\end{enumerate}
\end{lemm}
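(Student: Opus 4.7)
The plan is to verify the three assertions in order, with the main work being the choice of $t_1,t_2$ so that all three hold simultaneously.

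For assertion (3), I would observe that $a_{11}'\equiv a_{11}\pmod p$ and $a_{22}'\equiv a_{22}\pmod p$, so replacing $a_{11}$ by $a_{11}'$ and $a_{22}$ by $a_{22}'$ in \eqref{eq:gMarkoff} does not change the set of solutions mod $p$. For each fixed $z\in\{0,1,\dots,p-1\}$, the equation \eqref{eq:gMarkoff} read mod $p$ in the unknowns $(x,y)$ becomes
\begin{equation*}
\tfrac{1}{2}(x,y)\,G_z\,(x,y)^\tp+\mathbf{v}_z^\tp(x,y)\equiv-a_{33}z^2\bmod p,
\end{equation*}
so the number of $(x,y)\in(\numZ/p\numZ)^2$ satisfying the equation for that value of $z$ equals $r_{G_z,\mathbf{v}_z}(-a_{33}z^2)$ in the notation of Theorem \ref{thm:rGvm}. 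Summing over $z$ gives \eqref{eq:partitionSections}, and this conclusion is valid for any choice of $t_1,t_2$; it does not yet pin down a particular choice.

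For assertion (1), $G_z$ is symmetric and even integral by construction, so positive definiteness reduces to $2a_{11}'>0$ together with $D_z=4a_{11}'a_{22}'-(a_{12}-dz)^2>0$. Since $z$ runs over a finite set, $(a_{12}-dz)^2$ is bounded by some constant $C$ depending only on $a_{12}, d, p$, so both conditions are implied by $a_{11}'>0$, $a_{22}'>0$, and $4a_{11}'a_{22}'>C$, which hold once $t_1,t_2$ are taken sufficiently large.

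The main obstacle is assertion (2). For a $2\times 2$ even integral symmetric matrix $\bigl(\begin{smallmatrix}2\alpha&\beta\\ \beta&2\gamma\end{smallmatrix}\bigr)$ with determinant $\Delta=4\alpha\gamma-\beta^2\ne0$, a direct computation of $NG^{-1}$ shows that the level is $\Delta/\gcd(\alpha,\beta,\gamma)$ (here one uses that $\gcd(\alpha,\beta,\gamma)\mid\Delta$). Applied to $G_z$, this gives $N_z=D_z/\gcd(a_{11}',a_{12}-dz,a_{22}')$, so $N_z=D_z$ is equivalent to $\gcd(a_{11}',a_{12}-dz,a_{22}')=1$. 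A sufficient condition simultaneously for every $z\in\{0,\dots,p-1\}$ is $\gcd(a_{11}',a_{22}')=1$. To realize this together with the largeness required by (1), I would fix by Dirichlet's theorem a prime $q\equiv a_{11}\pmod p$ with $q$ arbitrarily large (the hypothesis $p\nmid a_{11}$ makes the arithmetic progression admissible), define $t_1$ by $a_{11}'=q$, and then choose $t_2$ with $a_{22}'=a_{22}+pt_2$ both arbitrarily large and not divisible by $q$; since $\gcd(p,q)=1$, only one residue class of $t_2$ modulo $q$ is excluded, leaving infinitely many acceptable values. With this choice, $\gcd(a_{11}',a_{22}')=1$, giving (2), while enlarging $q$ and $t_2$ secures (1) as well. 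Collecting the three assertions completes the proof.
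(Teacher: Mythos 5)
Your proposal is correct and follows essentially the same route as the paper: positive definiteness via large $t_1,t_2$, the reduction of $N_z=D_z$ to $\gcd(a_{11}',a_{22}')=1$ (the paper argues directly that no proper divisor $m$ of $D_z$ makes $mG_z^{-1}$ even integral, which is equivalent to your explicit level formula $\Delta/\gcd(\alpha,\beta,\gamma)$), and the realization of coprimality by Dirichlet's theorem, exactly as the paper suggests. The partition of the solution count by the sections $z=0,\dots,p-1$ is likewise identical.
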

\begin{proof}
If $t_1$ and $t_2$ are sufficiently large, then $G_z$ is positive definite for all $0\leq z\leq p-1$. Among these large values we choose $t_1$ and $t_2$ such that $\gcd(a_{11}',a_{22}')=1$, which is possible by Dirichlet's theorem on primes in arithmetic progressions, or by more elementary \cite[Lemma 6.1.1]{CS17}. We have
\begin{equation*}
G_z^{-1}=D_z^{-1}\tbtMat{2a_{22}'}{-(a_{12}-dz)}{-(a_{12}-dz)}{2a_{11}'}.
\end{equation*}
Therefore $D_zG_z^{-1}$ is even integral, and hence $N_z\mid D_z$. Since $a_{11}'$ and $a_{22}'$ are coprime, $mG_z^{-1}$ is not even integral for any proper positive divisor $m$ of $D_z$, so $N_z=D_z$.

Note that the solution number of \eqref{eq:gMarkoff} modulo $p$ is equal to the solution number of it with $a_{11}$, $a_{22}$ replaced by $a_{11}'$, $a_{22}'$, respectively, since $a_{jj}\equiv a_{jj}'\bmod{p}$. The latter equation can be rewritten as
\begin{equation*}
\frac{1}{2}\begin{pmatrix}x&y\end{pmatrix}G_z\begin{pmatrix}x\\y\end{pmatrix}+\mathbf{v}_{z}^\tp\begin{pmatrix}x\\y\end{pmatrix}\equiv-a_{33}z^2\bmod{p},
\end{equation*}
from which \eqref{eq:partitionSections} follows. (This formula holds for whatever $t_1$ and $t_2$.)
\end{proof}

From here on, $a_{11}'$ and $a_{22}'$ are always chosen, and $G_z$, $\mathbf{v}_z$, $D_z$, and $N_z$ are always defined as in Lemma \ref{lemm:a11pa22p}. The sections $z=j$ considered above are divided into three disjoint classes:
\begin{align}
S_1&=\left\{z\in\{0,1,\dots,p-1\}\colon N_z\equiv0\bmod{p}\right\},\label{eq:zS1}\\
S_2&=\left\{z\in\{0,1,\dots,p-1\}\colon N_z\not\equiv0\bmod{p},\quad-N_za_{33}z^2+\frac{1}{2}\mathbf{v}_z^\tp N_zG_z^{-1}\mathbf{v}_z\equiv0\bmod{p}\right\},\label{eq:zS2}\\
S_3&=\left\{z\in\{0,1,\dots,p-1\}\colon N_z\not\equiv0\bmod{p},\quad-N_za_{33}z^2+\frac{1}{2}\mathbf{v}_z^\tp N_zG_z^{-1}\mathbf{v}_z\not\equiv0\bmod{p}\right\}.\label{eq:zS3}
\end{align}
For $z\in S_2$ or $z\in S_3$, $r_{G_z,\mathbf{v}_z}(-a_{33}z^2)$ can be evaluated\footnote{This will be the only place where we apply our main theorem. After applying it, we will obtain some intermediate formulas for the solution numbers as one will see in the proof below (e.g. \eqref{eq:proofThm1.6Temp2}). However, to acquire a formula in a relatively beautiful form like the one given in Theorem \ref{thm:solnumbergMarkoff}, we need more effort. This extra effort is a lengthy, tedious, and elementary reasoning, which is split into fifteen cases.} using Theorem \ref{thm:rGvm}(1), where $\delta$ equals $1$ or $0$, respectively. For $z\in S_1$, the following elementary fact will be used:
\begin{lemm}
\label{lemm:rGvSingularQuad}
Let $p$ be an odd prime, let $G=\tbtmat{2g_{11}}{g_{12}}{g_{12}}{2g_{22}}$, where $g_{11},g_{12},g_{22}\in\numZ$, let $\mathbf{v}=(v_1,v_2)\in\numZ^2$, and let $m\in\numZ$. Suppose that $p\mid\det(G)$, and $p\nmid g_{11}$. Then
\begin{enumerate}
	\item If $p\nmid g_{12}v_1-2g_{11}v_2$, then $r_{G,\mathbf{v}}(m)=p$.
	\item If $p\mid g_{12}v_1-2g_{11}v_2$, then $$r_{G,\mathbf{v}}(m)=p\cdot\left(1+\legendre{4g_{11}m+v_1^2}{p}_K\right).$$
\end{enumerate}
\end{lemm}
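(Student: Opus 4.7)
The plan is to reduce the quadratic to a simple normal form via a linear change of variables modulo $p$, then count solutions directly in each case.

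First I would clear the denominator: the congruence $\frac{1}{2}\mathbf{x}^\tp G\mathbf{x}+\mathbf{v}^\tp\mathbf{x}\equiv m\bmod p$ is equivalent to
\begin{equation*}
g_{11}x^2+g_{12}xy+g_{22}y^2+v_1x+v_2y\equiv m\bmod p.
\end{equation*}
Since $p\nmid g_{11}$, I multiply through by $4g_{11}$ and use the hypothesis $4g_{11}g_{22}\equiv g_{12}^2\bmod p$ (which is exactly $p\mid\det G$) to rewrite the left-hand side as a perfect square plus linear terms:
\begin{equation*}
(2g_{11}x+g_{12}y)^2+4g_{11}v_1x+4g_{11}v_2y\equiv 4g_{11}m\bmod p.
\end{equation*}

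Next I perform the substitution $w\equiv 2g_{11}x+g_{12}y+v_1\bmod p$ (keeping $y$ as the second variable). Because $2g_{11}$ is invertible modulo $p$, the map $(x,y)\mapsto(w,y)$ is a bijection on $(\numZ/p\numZ)^2$. Eliminating $x$ via $2g_{11}x\equiv w-v_1-g_{12}y\bmod p$ and completing the square in $w$, the equation becomes
\begin{equation*}
w^2-2(g_{12}v_1-2g_{11}v_2)\,y\equiv 4g_{11}m+v_1^2\bmod p.
\end{equation*}
This reduction is the content of the lemma, and from it the count is immediate.

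Finally I split into the two cases. In case (1), the coefficient $-2(g_{12}v_1-2g_{11}v_2)$ of $y$ is a unit mod $p$, so for each of the $p$ values of $w$ there is exactly one $y$ solving the equation, giving $r_{G,\mathbf{v}}(m)=p$. In case (2), the $y$-term vanishes mod $p$, so the equation reduces to $w^2\equiv 4g_{11}m+v_1^2\bmod p$ with $y$ free; the number of $w$-solutions is $1+\legendre{4g_{11}m+v_1^2}{p}_K$ (interpreting the Kronecker symbol as $0$ when $p$ divides the argument, giving the one solution $w\equiv 0$), and multiplying by the $p$ free choices of $y$ yields the stated formula. There is no real obstacle here; the only thing to be a little careful about is that the $p=0$ contribution of the Kronecker symbol correctly accounts for the single solution $w\equiv 0$, which is handled by the identity $1+\legendre{0}{p}_K=1$.
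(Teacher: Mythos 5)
Your proof is correct and follows essentially the same route as the paper: multiply by $4g_{11}$, use $p\mid\det(G)$ to complete the square into $(2g_{11}x+g_{12}y+v_1)^2\equiv 2(g_{12}v_1-2g_{11}v_2)y+4g_{11}m+v_1^2\bmod p$, and count. The only (cosmetic) difference is in case (1), where you count one $y$ per value of $w$ via the bijection $(x,y)\mapsto(w,y)$, while the paper counts over $y$ by splitting the right-hand side into residue, zero, and non-residue values; both immediately give $p$.
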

\begin{proof}
For $G$, $\mathbf{v}$ given above, the congruence \eqref{eq:affineQuadratic} becomes
\begin{equation*}
\frac{1}{2}\begin{pmatrix}x&y\end{pmatrix}\tbtMat{2g_{11}}{g_{12}}{g_{12}}{2g_{22}}\begin{pmatrix}x\\y\end{pmatrix}+\begin{pmatrix}v_1&v_2\end{pmatrix}\begin{pmatrix}x\\y\end{pmatrix}\equiv m\bmod{p}.
\end{equation*}
Multiplying $4g_{11}$ on both sides, and noting that $4g_{11}g_{22}y^2\equiv g_{12}^2y^2\bmod{p}$ (since $p\mid\det(G)$), we find this congruence is equivalent to
\begin{equation}
\label{eq:singularQuadraticCongTemp}
(2g_{11}x+g_{12}y+v_1)^2\equiv2(g_{12}v_1-2g_{11}v_2)y+4g_{11}m+v_1^2\bmod{p}.
\end{equation}

If $p\nmid g_{12}v_1-2g_{11}v_2$, then there are exactly $(p-1)/2$ values for $y$ in $\numZ/p\numZ$ such that the right-hand side of \eqref{eq:singularQuadraticCongTemp} is a quadratic residue mod $p$, and if $y$ assumes each such value, there are exactly two choices of $x$ such that \eqref{eq:singularQuadraticCongTemp} holds. On the other hand, there are exact one value for $y$ such that the right-hand side of \eqref{eq:singularQuadraticCongTemp} is congruent to $0\bmod{p}$, and if $y$ assumes this value, there is exactly one choice of $x$ such that \eqref{eq:singularQuadraticCongTemp} holds. Therefore,
\begin{equation*}
r_{G,\mathbf{v}}(m)=\frac{p-1}{2}\cdot2+1\cdot1=p,
\end{equation*}
which proves the first assertion.

If $p\mid g_{12}v_1-2g_{11}v_2$, then \eqref{eq:singularQuadraticCongTemp} is equivalent to
\begin{equation*}
%\label{eq:singularQuadraticCongTemp}
(2g_{11}x+g_{12}y+v_1)^2\equiv4g_{11}m+v_1^2\bmod{p}.
\end{equation*}
The rest of the proof is then immediate.
\end{proof}

The congruence appearing in \eqref{eq:zS2} is connected to $A_{11}$ and $A_{22}$ as the following lemma shows:
\begin{lemm}
\label{lemm:S2cong}
Let $p$ be an odd prime with $p\nmid a_{11}a_{22}a_{33}d$. Then $-N_za_{33}z^2+\frac{1}{2}\mathbf{v}_z^\tp N_zG_z^{-1}\mathbf{v}_z\equiv0\bmod{p}$ holds if and only if $z\equiv0\bmod{p}$ or
\begin{equation}
\label{eq:S2congzneq0}
(a_{13}a_{23}-2a_{33}(a_{12}-dz))^2\equiv A_{11}A_{22}\bmod{p}.
\end{equation}
\end{lemm}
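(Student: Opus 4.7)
The plan is to compute the quantity $-N_z a_{33}z^2+\tfrac12\mathbf{v}_z^\tp N_zG_z^{-1}\mathbf{v}_z$ explicitly, factor out $z^2$, and then recognize the remaining polynomial in $a_{12}-dz$ as a perfect square minus $A_{11}A_{22}$, divided by $4a_{33}$.

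Concretely, by the definition of $G_z$ we have $N_z=D_z=4a_{11}'a_{22}'-(a_{12}-dz)^2$ and
\[
N_zG_z^{-1}=\begin{pmatrix} 2a_{22}' & -(a_{12}-dz) \\ -(a_{12}-dz) & 2a_{11}' \end{pmatrix}.
\]
Substituting $\mathbf{v}_z=(a_{13}z,a_{23}z)^\tp$ and collecting terms yields
\[
-N_z a_{33}z^2+\tfrac12\mathbf{v}_z^\tp N_zG_z^{-1}\mathbf{v}_z = z^2\bigl[a_{22}'a_{13}^2-(a_{12}-dz)a_{13}a_{23}+a_{11}'a_{23}^2+a_{33}(a_{12}-dz)^2-4a_{11}'a_{22}'a_{33}\bigr].
\]
Since $a_{11}'\equiv a_{11}$ and $a_{22}'\equiv a_{22}\bmod p$, the bracketed expression is congruent modulo $p$ to
\[
T:=a_{22}a_{13}^2-(a_{12}-dz)a_{13}a_{23}+a_{11}a_{23}^2+a_{33}(a_{12}-dz)^2-4a_{11}a_{22}a_{33}.
\]
Hence the desired congruence $-N_z a_{33}z^2+\tfrac12\mathbf{v}_z^\tp N_zG_z^{-1}\mathbf{v}_z\equiv 0\bmod p$ holds if and only if $z\equiv 0\bmod p$ or $T\equiv 0\bmod p$.

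The second step is to identify the condition $T\equiv 0\bmod p$ with \eqref{eq:S2congzneq0}. A direct expansion of $(a_{13}a_{23}-2a_{33}(a_{12}-dz))^2$ and of $A_{11}A_{22}=(4a_{22}a_{33}-a_{23}^2)(4a_{11}a_{33}-a_{13}^2)$ shows that
\[
(a_{13}a_{23}-2a_{33}(a_{12}-dz))^2-A_{11}A_{22} = 4a_{33}\cdot T,
\]
which I would verify by a few lines of algebra. Because $p\nmid a_{33}$ (using the hypothesis $p\nmid a_{11}a_{22}a_{33}d$ and $p$ odd), cancellation of $4a_{33}$ yields the equivalence of $T\equiv 0\bmod p$ with \eqref{eq:S2congzneq0}.

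The proof is therefore completely mechanical: the only real task is the identity $4a_{33}T=(a_{13}a_{23}-2a_{33}(a_{12}-dz))^2-A_{11}A_{22}$, which is a purely polynomial check, and there is no conceptual obstacle.
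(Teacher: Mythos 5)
Your proposal is correct: the computation of $N_zG_z^{-1}$, the factoring out of $z^2$, and the polynomial identity $(a_{13}a_{23}-2a_{33}(a_{12}-dz))^2-A_{11}A_{22}=4a_{33}T$ all check out, and the hypothesis $p\nmid a_{33}$ with $p$ odd lets you cancel $4a_{33}$. This is exactly the route the paper takes --- its proof consists of the single sentence ``replace $N_z$ by $D_z$ via Lemma \ref{lemm:a11pa22p}(2), then a straightforward calculation'' --- so you have simply written out the calculation the author left implicit.
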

\begin{proof}
First using Lemma \ref{lemm:a11pa22p}(2) to replace $N_z$ by $D_z$, and then a straightforward calculation.
\end{proof}

The following classical formula will be used when we evaluate the sum \eqref{eq:partitionSections}:
\begin{lemm}
\label{lemm:legendreSum}
Let $p$ be an odd prime, and let $a,b,c\in\numZ$ with $p\nmid a$. Then
\begin{equation*}
\sum_{x=0}^{p-1}\legendre{ax^2+bx+c}{p}_K=\begin{dcases}
-\legendre{a}{p}_K &\text{ if }b^2-4ac\not\equiv0\bmod{p}\\
(p-1)\cdot\legendre{a}{p}_K &\text{ if }b^2-4ac\equiv0\bmod{p}.
\end{dcases}
\end{equation*}
\end{lemm}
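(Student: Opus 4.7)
The plan is to complete the square inside the Legendre symbol and reduce to a standard one-variable quadratic character sum. Since $p$ is odd and $p\nmid a$, the element $4a$ is a unit in $\mathbb{F}_p$, and we have the identity
\begin{equation*}
4a(ax^2+bx+c) = (2ax+b)^2 - (b^2-4ac).
\end{equation*}
Applying the multiplicativity of the Kronecker--Jacobi symbol and using $\legendre{4a}{p}_K=\legendre{a}{p}_K$, we obtain
\begin{equation*}
\legendre{a}{p}_K\sum_{x=0}^{p-1}\legendre{ax^2+bx+c}{p}_K = \sum_{x=0}^{p-1}\legendre{(2ax+b)^2-(b^2-4ac)}{p}_K.
\end{equation*}
Since the map $x\mapsto 2ax+b$ is a bijection on $\mathbb{Z}/p\mathbb{Z}$ (again $p\nmid 2a$), the right-hand sum becomes $\sum_{y=0}^{p-1}\legendre{y^2-D}{p}_K$ where $D=b^2-4ac \bmod p$.

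Next I would evaluate $S(D):=\sum_{y=0}^{p-1}\legendre{y^2-D}{p}_K$ in the two cases. When $D\equiv 0\bmod p$, this sum equals $\sum_{y=0}^{p-1}\legendre{y^2}{p}_K = p-1$, because the term $y=0$ contributes $0$ and each other term contributes $1$. Multiplying by $\legendre{a}{p}_K$ gives the claimed value $(p-1)\legendre{a}{p}_K$.

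For the case $D\not\equiv 0\bmod p$, I would use the standard evaluation $S(D)=-1$. One clean way to see this is to sum the identity $\sum_{D\in\mathbb{F}_p}S(D)=\sum_{y}\sum_{D}\legendre{y^2-D}{p}_K$; the inner sum vanishes for each fixed $y$ (as $D\mapsto y^2-D$ is a bijection and $\sum_{t\in\mathbb{F}_p}\legendre{t}{p}_K=0$), so $\sum_{D\in\mathbb{F}_p}S(D)=0$. Combining this with the already established value $S(0)=p-1$ and noting that $S(D)$ depends only on whether $D$ is zero, a nonzero square, or a non-square (since replacing $y$ by $ty$ for $t\not\equiv 0$ shows $S(t^2D)=S(D)$), one reduces to showing $S(D)$ takes a single value on $\mathbb{F}_p^\times$; combined with $\sum_{D\neq 0}S(D)=-(p-1)$ this forces $S(D)=-1$ for $D\neq 0$. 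Multiplying by $\legendre{a}{p}_K$ yields the desired $-\legendre{a}{p}_K$. There is no serious obstacle here; the only mild subtlety is verifying the $S(t^2D)=S(D)$ invariance and that this suffices (together with the average-$0$ identity) to pin down $S(D)$, but this is routine character-sum manipulation.
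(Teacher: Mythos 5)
Your overall route is the standard one, and it is essentially the argument behind the reference that the paper itself uses for this lemma (the paper's ``proof'' is just a pointer to \cite[Theorem 2.1.2]{BEW98}): complete the square via $4a(ax^2+bx+c)=(2ax+b)^2-(b^2-4ac)$, pull out $\legendre{4a}{p}_K=\legendre{a}{p}_K$, substitute $y=2ax+b$ (a bijection mod $p$ since $p\nmid 2a$), and reduce everything to $S(D)=\sum_{y=0}^{p-1}\legendre{y^2-D}{p}_K$ with $D=b^2-4ac$. Your treatment of the case $D\equiv0\bmod p$ is correct.

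The evaluation of $S(D)$ for $D\not\equiv0\bmod p$, however, has a genuine gap. The substitution $y\mapsto ty$ only yields $S(t^2D)=S(D)$, i.e., that $S$ is constant on each of the two square classes of $\mathbb{F}_p^\times$; call these values $s_+$ and $s_-$. Combined with $\sum_{D\in\mathbb{F}_p}S(D)=0$ and $S(0)=p-1$, this gives only the single linear relation $s_++s_-=-2$, which does not force $s_+=s_-=-1$: nothing you have written rules out, say, $s_+=0$ and $s_-=-2$. You need one more input that compares the two square classes, and the scaling symmetry cannot provide it. The standard repair is a point count: since $\legendre{w}{p}_K=\#\{z\in\mathbb{F}_p\colon z^2=w\}-1$ for every $w\in\mathbb{F}_p$ (including $w=0$), one gets $S(D)=\#\{(y,z)\in\mathbb{F}_p^2\colon y^2-z^2=D\}-p$; for $D\neq0$ the invertible change of variables $u=y+z$, $v=y-z$ (here $p$ odd is used) turns the condition into $uv=D$, which has exactly $p-1$ solutions, whence $S(D)=-1$. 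With that replacement for the final step, your proof is complete and matches the classical argument.
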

\begin{proof}
See \cite[Theorem 2.1.2]{BEW98} and the example following it.
\end{proof}

The proof of Theorem \ref{thm:solnumbergMarkoff} (for $p>2$) will be split into the following cases (all congruences are modulo $p$)
\begin{align*}
\text{\textbf{Case (1)}  }&\legendre{A_{11}A_{22}}{p}_K=-1.\\
\text{\textbf{Case (2)}  }&\legendre{A_{11}A_{22}}{p}_K=0,\quad A_{12}\equiv0.\\
\text{\textbf{Case (3)}  }&\legendre{A_{11}A_{22}}{p}_K=0,\quad A_{12}\not\equiv0. \\
\text{\textbf{Case (4)}  }&\legendre{A_{11}A_{22}}{p}_K=1,\quad a_{11}A_{11}\not\equiv a_{22}A_{22},\quad A_{11}A_{22}-A_{12}^2\equiv0.\\
\text{\textbf{Case (5)}  }&\legendre{A_{11}A_{22}}{p}_K=1,\quad a_{11}A_{11}\not\equiv a_{22}A_{22},\quad A_{11}A_{22}-A_{12}^2\not\equiv0.\\
\text{\textbf{Case (6)}  }&\legendre{A_{11}A_{22}}{p}_K=1,\quad a_{11}A_{11}\equiv a_{22}A_{22},\quad A_{11}A_{22}-A_{12}^2\equiv0,\quad a_{13}\not\equiv0,\quad A_{23}\not\equiv0.\\
\text{\textbf{Case (7)}  }&\legendre{A_{11}A_{22}}{p}_K=1,\quad a_{11}A_{11}\equiv a_{22}A_{22},\quad A_{11}A_{22}-A_{12}^2\equiv0,\quad a_{13}\equiv0,\quad A_{23}\not\equiv0.\\
\text{\textbf{Case (8)}  }&\legendre{A_{11}A_{22}}{p}_K=1,\quad a_{11}A_{11}\equiv a_{22}A_{22},\quad A_{11}A_{22}-A_{12}^2\equiv0,\quad a_{13}\not\equiv0,\quad A_{23}\equiv0.\\
\text{\textbf{Case (9)}  }&\legendre{A_{11}A_{22}}{p}_K=1,\quad a_{11}A_{11}\equiv a_{22}A_{22},\quad A_{11}A_{22}-A_{12}^2\equiv0,\quad a_{13}\equiv0,\quad A_{23}\equiv0,\\
&A_{33}\equiv0.\\
\text{\textbf{Case (10)}  }&\legendre{A_{11}A_{22}}{p}_K=1,\quad a_{11}A_{11}\equiv a_{22}A_{22},\quad A_{11}A_{22}-A_{12}^2\equiv0,\quad a_{13}\equiv0,\quad A_{23}\equiv0,\\
&A_{33}\not\equiv0.\\
\text{\textbf{Case (11)}  }&\legendre{A_{11}A_{22}}{p}_K=1,\quad a_{11}A_{11}\equiv a_{22}A_{22},\quad A_{11}A_{22}-A_{12}^2\not\equiv0,\quad a_{13}\not\equiv0,\quad A_{23}\not\equiv0.\\
\text{\textbf{Case (12)}  }&\legendre{A_{11}A_{22}}{p}_K=1,\quad a_{11}A_{11}\equiv a_{22}A_{22},\quad A_{11}A_{22}-A_{12}^2\not\equiv0,\quad a_{13}\equiv0,\quad A_{23}\not\equiv0.\\
\text{\textbf{Case (13)}  }&\legendre{A_{11}A_{22}}{p}_K=1,\quad a_{11}A_{11}\equiv a_{22}A_{22},\quad A_{11}A_{22}-A_{12}^2\not\equiv0,\quad a_{13}\not\equiv0,\quad A_{23}\equiv0.\\
\text{\textbf{Case (14)}  }&\legendre{A_{11}A_{22}}{p}_K=1,\quad a_{11}A_{11}\equiv a_{22}A_{22},\quad A_{11}A_{22}-A_{12}^2\not\equiv0,\quad a_{13}\equiv0,\quad A_{23}\equiv0,\\
&A_{33}\equiv0.\\
\text{\textbf{Case (15)}  }&\legendre{A_{11}A_{22}}{p}_K=1,\quad a_{11}A_{11}\equiv a_{22}A_{22},\quad A_{11}A_{22}-A_{12}^2\not\equiv0,\quad a_{13}\equiv0,\quad A_{23}\equiv0,\\
&A_{33}\not\equiv0.
\end{align*}
\begin{lemm}
\label{lemm:Dmodp}
Let $p$ be an odd prime with $p\nmid a_{11}a_{22}a_{33}d$. The above fifteen cases are mutually exclusive, and cover all possibilities. Moreover, Cases (7), (10), (12), (13), (14) cannot occur. Finally, $p\mid D$ if and only if one of the cases (2), (4), (6), (8), (9) occurs, and $p\nmid D$ if and only if one of the cases (1), (3), (5), (11), (15) occurs.
\end{lemm}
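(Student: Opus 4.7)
The plan is to begin by observing that identity (\ref{eq:Adet3}), namely $A_{11}A_{22} - A_{12}^2 = 2a_{33}D$, together with $p$ odd and $p \nmid a_{33}$, yields the key equivalence
\[p \mid D \iff A_{11}A_{22} - A_{12}^2 \equiv 0 \bmod{p}.\]
Inspection of the fifteen case-conditions shows that the right-hand congruence holds precisely in Cases (2), (4), (6), (7), (8), (9), (10); thus once I establish the impossibility of (7) and (10), the third assertion of the lemma drops out, and the fourth follows by complement. Mutual exclusivity and exhaustiveness I would dispatch by noting the branching structure of the hypotheses: successive splits on $\legendre{A_{11}A_{22}}{p}_K \in \{-1,0,1\}$, on the Boolean conditions $a_{11}A_{11} \equiv a_{22}A_{22}$ and $A_{11}A_{22} - A_{12}^2 \equiv 0$, and on $a_{13}, A_{23}, A_{33} \bmod{p}$, all by complementary alternatives, so nothing needs to be proved beyond verifying the enumeration.

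The substantive part is showing Cases (7), (10), (12), (13), (14) cannot occur. My principal tools will be three: (i) subtracting expansions (\ref{eq:Dexpansion1}) and (\ref{eq:Dexpansion2}) gives $2(a_{11}A_{11} - a_{22}A_{22}) = a_{23}A_{23} - a_{13}A_{13}$, so for odd $p$ the condition $a_{11}A_{11} \equiv a_{22}A_{22}$ is equivalent to $a_{23}A_{23} \equiv a_{13}A_{13}$; (ii) the adjugate-type identities (\ref{eq:Adet1})--(\ref{eq:Adet3}), in particular $A_{22}A_{33} - A_{23}^2 = 2a_{11}D$ and $A_{12}A_{23} - A_{22}A_{13} = D a_{13}$; and (iii) the definitions $A_{13} = a_{12}a_{23} - 2a_{22}a_{13}$ and $A_{23} = -2a_{11}a_{23} + a_{12}a_{13}$ from (\ref{eq:defA12A13A23}).

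With these, each forbidden case should reduce to a one-line contradiction. For Cases (7) and (12), I would plug $a_{13} \equiv 0$ into the definition of $A_{23}$ to obtain $A_{23} \equiv -2a_{11}a_{23}$, whence $A_{23} \not\equiv 0$ forces $a_{23} \not\equiv 0$; tool (i) then gives $a_{23}A_{23} \equiv a_{13}A_{13} \equiv 0$, contradicting that both factors are nonzero. For Case (10), $a_{13} \equiv A_{23} \equiv 0$ combined with the definition of $A_{23}$ yields $a_{23} \equiv 0$, so (\ref{eq:Dexpansion3}) reduces to $D \equiv 2a_{33}A_{33} \bmod{p}$, which together with $p \mid D$ and $p \nmid a_{33}$ forces $A_{33} \equiv 0$, contradicting the case hypothesis. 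For Case (13), tool (i) with $A_{23} \equiv 0$ and $a_{13} \not\equiv 0$ yields $A_{13} \equiv 0$, and then $Da_{13} = A_{12}A_{23} - A_{22}A_{13} \equiv 0$ together with $p \nmid D$ forces $a_{13} \equiv 0$, a contradiction. For Case (14), $A_{23} \equiv A_{33} \equiv 0$ together with $A_{22}A_{33} - A_{23}^2 = 2a_{11}D$ gives $p \mid D$, contradicting $A_{11}A_{22} - A_{12}^2 \not\equiv 0$.

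The main obstacle, such as it is, is not depth but case-by-case bookkeeping: five short but distinct contradictions, each leveraging a different combination of the identities among the $a_{ij}$ and $A_{ij}$. I do not anticipate needing anything beyond the relations (\ref{eq:Adet1})--(\ref{eq:Dexpansion3}) and the definitions in (\ref{eq:defA12A13A23}).
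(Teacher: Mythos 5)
Your proposal is correct and follows essentially the same route as the paper: the same adjugate identities \eqref{eq:Adet1}--\eqref{eq:Dexpansion3} and the difference of \eqref{eq:Dexpansion1} and \eqref{eq:Dexpansion2} drive each contradiction, and the classification of $p\mid D$ rests on the same equivalence $p\mid D\Leftrightarrow A_{11}A_{22}\equiv A_{12}^2\bmod{p}$ coming from \eqref{eq:Adet3}. Your unified treatment of Cases (7) and (12) is a slight streamlining of the paper's separate (and, for Case (12), more roundabout) arguments, but the overall method is identical.
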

\begin{proof}
It is immediate that these cases are mutually exclusive, and cover all possibilities.

To prove Case (7) is impossible, we assume its conditions hold, and we shall deduce a contradiction. Subtracting \eqref{eq:Dexpansion2} from \eqref{eq:Dexpansion1}, and noting $a_{11}A_{11}\equiv a_{22}A_{22}$, we find that $a_{13}A_{13}\equiv a_{23}A_{23}$. Since $a_{13}\equiv0$ and $A_{23}\not\equiv0$, we then have $a_{23}\equiv0$. Thus, by the third equation in \eqref{eq:defA12A13A23}, we have $A_{23}\equiv0$, a contradiction, which shows Case (7) is impossible.

For Case (10), assume by contradiction that it holds. Then $D\equiv0$ by $A_{11}A_{22}-A_{12}^2\equiv0$ and the first formula of \eqref{eq:Adet3}. By the first formula of \eqref{eq:Adet1} and $A_{23}\equiv0$ we know that $A_{22}A_{33}\equiv0$, which contradicts the assumptions $\legendre{A_{11}A_{22}}{p}_K=1$ and $A_{33}\not\equiv0$. This shows Case (10) never occurs.

For Case (12), assume by contradiction that it holds. Since $a_{13}\equiv0$, we have $A_{22}\equiv4a_{11}a_{33}$ by the definition of $A_{22}$. Thus $a_{11}A_{11}\equiv a_{22}A_{22}\equiv4a_{11}a_{22}a_{33}$, which implies $A_{11}\equiv4a_{22}a_{33}$. By this and the definition of $A_{11}$, we have $a_{23}\equiv0$. It follows from this, the assumption $a_{13}\equiv0$, and the third equation of \eqref{eq:defA12A13A23} that $A_{23}\equiv0$, which contradicts one of the assumptions. This shows Case (12) never occurs.

For Case (13), assume by contradiction that it holds. As in Case (7), we have $a_{13}A_{13}\equiv a_{23}A_{23}$. This, together with the assumptions $a_{13}\not\equiv0$ and $A_{23}\equiv0$, implies $A_{13}\equiv0$. By the second equation of \eqref{eq:Adet2}, we then have $Da_{13}\equiv0$. However, $D\not\equiv0$ (since $A_{11}A_{22}-A_{12}^2\not\equiv0$ and the first equation of \eqref{eq:Adet3}), so $a_{13}\equiv0$, which contradicts one of the assumptions. This shows Case (13) never occurs.

For Case (14), assume by contradiction that it holds. Since $A_{11}A_{22}-A_{12}^2\not\equiv0$ we have $D\not\equiv0$ by the first equation of \eqref{eq:Adet3}. By the assumptions $a_{13}\equiv0$, $A_{23}\equiv0$, and $A_{33}\equiv0$, we have $D=a_{13}A_{13}+a_{23}A_{23}+2a_{33}A_{33}\equiv0$ (according to \eqref{eq:Dexpansion3}), contradicts $D\not\equiv0$ just deduced. This shows Case (14) never occurs.

We now examine other cases to see whether $p\mid D$ or not. In Case (1), it follows from the first equation of \eqref{eq:Adet3} that $p\nmid D$. In Cases (2) and (3), it follows again from the first equation of \eqref{eq:Adet3} that $p\mid D$ and $p\nmid D$, respectively. In Cases (4), (5), (6), (8), (9), (11), (15), there are assumptions on $A_{11}A_{22}-A_{12}^2\bmod{p}$. By the first equation of \eqref{eq:Adet3}, we have $p\mid D$ if and only if $p\mid A_{11}A_{22}-A_{12}^2$. Thus, in these cases, whether $p\mid D$ or not is clear.
\end{proof}

In Cases (1)--(5), the following observation will significantly simplify the proof of Theorem \ref{thm:solnumbergMarkoff} by reducing the number of subcases.
\begin{lemm}
\label{lemm:cases1-5rz}
Let $p$ be an odd prime with $p\nmid a_{11}a_{22}a_{33}d$, and assume we are in one of the cases (1)--(5). Then each element $z\in S_1$ (see \eqref{eq:zS1}), if exists, satisfies
\begin{equation*}
p\nmid -da_{13}z^2+A_{23}z\quad\text{ or }\quad p\mid A_{22}z^2,
\end{equation*}
and (as a consequence) we have $r_{G_z,\mathbf{v}_z}(-a_{33}z^2)=p$.
\end{lemm}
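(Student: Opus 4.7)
The plan is to apply Lemma \ref{lemm:rGvSingularQuad} with $g_{11}=a_{11}'$, $g_{12}=a_{12}-dz$, $g_{22}=a_{22}'$, $v_1=a_{13}z$, $v_2=a_{23}z$, $m=-a_{33}z^2$. Since $a_{11}'\equiv a_{11}$ and $a_{22}'\equiv a_{22}\bmod p$, a direct computation gives
\begin{equation*}
g_{12}v_1-2g_{11}v_2\equiv -da_{13}z^2+A_{23}z\bmod p,\qquad 4g_{11}m+v_1^2\equiv -A_{22}z^2\bmod p,
\end{equation*}
using $A_{23}=a_{12}a_{13}-2a_{11}a_{23}$ and $A_{22}=4a_{11}a_{33}-a_{13}^2$. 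When $p\nmid -da_{13}z^2+A_{23}z$, part (1) of Lemma \ref{lemm:rGvSingularQuad} immediately gives $r_{G_z,\mathbf{v}_z}(-a_{33}z^2)=p$; when $p\mid A_{22}z^2$ (and we fall into part (2)), the Kronecker symbol vanishes, again giving $r=p$. Thus the second assertion follows once the first is established, and it suffices to derive a contradiction under the combined hypotheses $z\in S_1$, $p\mid -da_{13}z^2+A_{23}z$, $p\nmid z$, $p\nmid A_{22}$, in each of Cases (1)--(5).

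Under these hypotheses, $p\mid z(A_{23}-da_{13}z)$ with $p\nmid z$ forces $A_{23}\equiv da_{13}z\bmod p$, and $z\in S_1$ gives $(a_{12}-dz)^2\equiv 4a_{11}a_{22}\bmod p$. The crucial step is the identity
\begin{equation*}
a_{13}(a_{12}-dz)=a_{12}a_{13}-a_{13}dz\equiv a_{12}a_{13}-A_{23}=2a_{11}a_{23}\bmod p.
\end{equation*}
Squaring and substituting $(a_{12}-dz)^2\equiv 4a_{11}a_{22}$ yields $4a_{11}a_{22}a_{13}^2\equiv 4a_{11}^2a_{23}^2\bmod p$; cancelling $4a_{11}$ (legal since $p\nmid a_{11}$) gives $a_{22}a_{13}^2\equiv a_{11}a_{23}^2\bmod p$, which, via $A_{11}=4a_{22}a_{33}-a_{23}^2$ and $A_{22}=4a_{11}a_{33}-a_{13}^2$, is equivalent to
\begin{equation*}
a_{11}A_{11}\equiv a_{22}A_{22}\bmod p.
\end{equation*}
Moreover $(a_{12}-dz)^2\equiv 4a_{11}a_{22}\bmod p$ together with $p\nmid a_{11}a_{22}$ shows that $\legendre{a_{11}a_{22}}{p}_K=1$.

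Finally, I would check incompatibility case by case. Using $p\nmid A_{22}$ and the derived congruence, $A_{11}\equiv a_{11}^{-1}a_{22}A_{22}\bmod p$, hence
\begin{equation*}
\legendre{A_{11}A_{22}}{p}_K=\legendre{a_{11}a_{22}}{p}_K\cdot\legendre{A_{22}^2}{p}_K=1,
\end{equation*}
contradicting Case (1) where this symbol is $-1$. In Cases (2) and (3), $\legendre{A_{11}A_{22}}{p}_K=0$ together with $p\nmid A_{22}$ forces $p\mid A_{11}$, whence $a_{11}A_{11}\equiv a_{22}A_{22}$ yields $p\mid a_{22}A_{22}$, impossible. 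In Cases (4) and (5) the standing hypothesis $a_{11}A_{11}\not\equiv a_{22}A_{22}\bmod p$ is a direct contradiction. The main obstacle is locating the bridging identity $a_{13}(a_{12}-dz)\equiv 2a_{11}a_{23}\bmod p$: it is not obvious a priori why the singularity condition $z\in S_1$ and the vanishing of $g_{12}v_1-2g_{11}v_2\bmod p$ should combine to produce the symmetric-looking relation $a_{11}A_{11}\equiv a_{22}A_{22}$; once this is in hand the rest is a short algebraic case split.
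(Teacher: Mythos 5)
Your proposal is correct and follows essentially the same route as the paper: reduce via Lemma \ref{lemm:rGvSingularQuad}, use $da_{13}z\equiv A_{23}\bmod p$ together with $(a_{12}-dz)^2\equiv 4a_{11}a_{22}\bmod p$ to obtain $a_{22}a_{13}^2\equiv a_{11}a_{23}^2\bmod p$, then derive a contradiction in each of Cases (1)--(5). Your only (harmless, and in fact slightly cleaner) deviation is to recast that congruence uniformly as $a_{11}A_{11}\equiv a_{22}A_{22}\bmod p$ and use it in all five cases, whereas the paper works with $A_{11}a_{13}^2\equiv A_{22}a_{23}^2$ and a somewhat longer argument in Cases (1)--(3).
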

\begin{proof}
Assume that $p\mid -da_{13}z^2+A_{23}z$, we need to prove $p\mid A_{22}z^2$. Since $z\in S_1$, we have $(a_{12}-dz)^2\equiv4a_{11}a_{22}\bmod{p}$ (see Lemma \ref{lemm:a11pa22p}(2)).

If $z=0$, then obviously $p\mid A_{22}z^2$.

If $z\neq0$, it remains to prove $p\mid A_{22}$. In this case $da_{13}z\equiv A_{23}\bmod{p}$. Hence we have
\begin{equation*}
4a_{11}a_{22}a_{13}^2\equiv (a_{12}-dz)^2a_{13}^2\equiv(a_{12}a_{13}-A_{23})^2\bmod{p}.
\end{equation*}
Inserting the third equation of \eqref{eq:defA12A13A23} into above, we obtain $4a_{11}a_{22}a_{13}^2\equiv4a_{11}^2a_{23}^2\bmod{p}$, which is equivalent to $A_{11}a_{13}^2\equiv A_{22}a_{23}^2\bmod{p}$ by the definitions of $A_{11}$ and $A_{22}$. Assume by contradiction that $p\nmid A_{22}$, then $\legendre{A_{11}A_{22}}{p}_K\legendre{a_{13}^2}{p}_K=\legendre{a_{23}^2}{p}_K$. If we are in Case (1), then $-\legendre{a_{13}^2}{p}_K=\legendre{a_{23}^2}{p}_K$, so $a_{13}\equiv a_{23}\equiv0\bmod{p}$. If follows from this and the definitions of $A_{11}$ and $A_{22}$ that
$$\legendre{A_{11}A_{22}}{p}_K=\legendre{4a_{22}a_{33}\cdot4a_{11}a_{33}}{p}_K=\legendre{a_{11}a_{22}}{p}_K=1\text{ or }0,$$
which contradicts the assumption of Case (1). If we are in Cases (2) or (3), then $p\mid A_{11}$, and hence $p\mid a_{23}$ by $A_{11}a_{13}^2\equiv A_{22}a_{23}^2\bmod{p}$. Since $A_{11}=4a_{22}a_{33}-a_{23}^2$, we deduce $p\mid4a_{22}a_{33}$, a contradiction. Finally, if we are in Cases (4) or (5), then
\begin{align*}
A_{11}a_{13}^2\equiv A_{22}a_{23}^2\bmod{p}&\Longleftrightarrow A_{11}(4a_{11}a_{33}-A_{22})\equiv A_{22}(4a_{22}a_{33}-A_{11})\bmod{p}\\
&\Longleftrightarrow a_{11}A_{11}\equiv a_{22}A_{22}\bmod{p},
\end{align*}
which contradicts one of the assumptions of Cases (4) and (5). We have thus shown that in each of the cases (1)--(5), there is a contradiction. Therefore we must have $p\mid A_{22}$.

We now prove the last assertion. Setting $G=G_z$, $\mathbf{v}=\mathbf{v}_z$ in Lemma \ref{lemm:rGvSingularQuad}, we find that $g_{12}v_1-2g_{11}v_2\equiv-da_{13}z^2+A_{23}z\bmod{p}$. If $-da_{13}z^2+A_{23}z\not\equiv0\bmod{p}$, then $r_{G_z,\mathbf{v}_z}(-a_{33}z^2)=p$ by Lemma \ref{lemm:rGvSingularQuad}(1). On the other hand, if $-da_{13}z^2+A_{23}z\equiv0\bmod{p}$, then $p\mid A_{22}z^2$ by what we have just proved. By Lemma \ref{lemm:rGvSingularQuad}(2), we have
\begin{equation*}
r_{G_z,\mathbf{v}_z}(-a_{33}z^2)=p\cdot\left(1+\legendre{4g_{11}m+v_1^2}{p}_K\right)=p\cdot\left(1+\legendre{-A_{22}z^2}{p}_K\right)=p.
\end{equation*}
This concludes the whole proof.
\end{proof}

We are now in a position to achieve our goal:
\begin{proof}[Proof of Theorem \ref{thm:solnumbergMarkoff}]
The case $p=2$ has been proved in Lemma \ref{lemm:solnumbergMarkoffp2}. For $p>2$, let $R$ denote the number of solutions of \eqref{eq:gMarkoff} over $\numZ/p\numZ$. By Lemma \ref{lemm:a11pa22p}(3) we have
\begin{equation*}
R=\sum_{z\in S_1}r_{G_z,\mathbf{v}_z}(-a_{33}z^2)+\sum_{z\in S_2}r_{G_z,\mathbf{v}_z}(-a_{33}z^2)+\sum_{z\in S_3}r_{G_z,\mathbf{v}_z}(-a_{33}z^2),
\end{equation*}
where $S_1$, $S_2$, and $S_3$ are the sets defined in \eqref{eq:zS1}, \eqref{eq:zS2}, and \eqref{eq:zS3}, respectively. Set $r_z=r_{G_z,\mathbf{v}_z}(-a_{33}z^2)$ for simplicity. Inserting the formula given in Theorem \ref{thm:rGvm}(1) into $r_{G_z,\mathbf{v}_z}(-a_{33}z^2)$ for $z\in S_2\cup S_3$, we obtain
\begin{align}
R&=\sum_{z\in S_1}r_z+\sum_{z\in S_2}\left(p+\legendre{D_z}{p}_K(-1)^{\frac{1-p}{2}}(p-1)\right)+\sum_{z\in S_3}\left(p-\legendre{D_z}{p}_K(-1)^{\frac{1-p}{2}}\right)\notag\\
&=p^2-(-1)^{\frac{1-p}{2}}\sum_{z=0}^{p-1}\legendre{D_z}{p}_K+(-1)^{\frac{1-p}{2}}p\sum_{z\in S_2}\legendre{D_z}{p}_K+\sum_{z\in S_1}\left(r_z-p+(-1)^{\frac{1-p}{2}}\legendre{D_z}{p}_K\right)\notag\\
&=p^2-(-1)^{\frac{1-p}{2}}\sum_{z=0}^{p-1}\legendre{D_z}{p}_K+p\sum_{z\in S_2}\legendre{-D_z}{p}_K+\sum_{z\in S_1}\left(r_z-p\right),\label{eq:proofThm1.6Temp1}
\end{align}
where the last equality follows from the fact $D_z=N_z$ (see Lemma \ref{lemm:a11pa22p}(2)).
Note that
\begin{equation*}
\sum_{z=0}^{p-1}\legendre{D_z}{p}_K=\sum_{z=0}^{p-1}\legendre{4a_{11}a_{22}-(a_{12}-dz)^2}{p}_K=\sum_{z=0}^{p-1}\legendre{4a_{11}a_{22}-z^2}{p}_K=-\legendre{-1}{p}_K,
\end{equation*}
where the last equality follows from Lemma \ref{lemm:legendreSum}. Inserting this into \eqref{eq:proofThm1.6Temp1} gives
\begin{equation}
\label{eq:proofThm1.6Temp2}
R=p^2+1+p\sum_{z\in S_2}\legendre{-D_z}{p}_K+\sum_{z\in S_1}\left(r_z-p\right).
\end{equation}
The rest of the proof is split into Cases (1)--(15) introduced above, in each of which we shall evaluate the two sums in the right-hand side of \eqref{eq:proofThm1.6Temp2} in terms of $\legendre{-A_{jj}}{p}_K\cdot p$, $j=1,2,3$.

Note that in Cases (1)--(5), \eqref{eq:proofThm1.6Temp2} can be further simplified to
\begin{equation}
\label{eq:proofThm1.6Temp3}
R=p^2+1+p\sum_{z\in S_2}\legendre{-D_z}{p}_K
\end{equation}
by Lemma \ref{lemm:cases1-5rz}.

\textbf{Case (1).} We have $p\nmid D$ by Lemma \ref{lemm:Dmodp}. Comparing \eqref{eq:proofThm1.6Temp3} and Theorem \ref{thm:solnumbergMarkoff}(1), we need to prove
\begin{equation}
\label{eq:proofThm1.6case1Todo}
\sum_{z\in S_2}\legendre{-D_z}{p}_K=\legendre{-A_{11}}{p}_K+\legendre{-A_{22}}{p}_K+\legendre{-A_{33}}{p}_K.
\end{equation}
Since $\legendre{A_{11}A_{22}}{p}_K=-1$, \eqref{eq:S2congzneq0} is unsolvable, and hence
\begin{equation}
\label{eq:proofThm1.6Temp4}
S_2=\left\{z\in\{0,1,\dots,p-1\}\colon 4a_{11}a_{22}-(a_{12}-dz)^2\not\equiv0\bmod{p}\right\}\cap\{0\}.
\end{equation}
If $A_{33}\not\equiv0\bmod{p}$, then $S_2=\{0\}$, and hence $\sum_{z\in S_2}\legendre{-D_z}{p}_K=\legendre{-D_0}{p}_K=\legendre{-A_{33}}{p}_K$, from which \eqref{eq:proofThm1.6case1Todo} follows. If $A_{33}\equiv0\bmod{p}$, then $S_2=\emptyset$, in which case \eqref{eq:proofThm1.6case1Todo} also holds.

\textbf{Case (2).} We have $p\mid D$ by Lemma \ref{lemm:Dmodp}. The two assumptions of this case imply \eqref{eq:S2congzneq0} has exactly one solution $z\equiv0\bmod{p}$, and hence $S_2$ is again given by \eqref{eq:proofThm1.6Temp4}. If $A_{33}\equiv0\bmod{p}$, then at least two of $A_{11}, A_{22}, A_{33}$ are divisible by $p$, so we are in the case of Theorem\ref{thm:solnumbergMarkoff}(2), whose conclusion holds by \eqref{eq:proofThm1.6Temp3} since here $S_2=\emptyset$. On the other hand, suppose $A_{33}\not\equiv0\bmod{p}$. Then $S_2=\{0\}$ and hence $R=p^2+1+p\legendre{-A_{33}}{p}_K$. The situation $A_{11}\equiv A_{22}\equiv0\bmod{p}$ cannot happen, for if it happens, then $A_{13}\equiv A_{23}\equiv0\bmod{p}$ by the first equations of \eqref{eq:Adet1} and \eqref{eq:Adet2}, and hence $A_{33}\equiv0\bmod{p}$ by \eqref{eq:Dexpansion3}, which contradicts the assumption $A_{33}\not\equiv0\bmod{p}$. Therefore, at most one of $A_{11}, A_{22}, A_{33}$ is divisible by $p$, that is, we are in the case of Theorem\ref{thm:solnumbergMarkoff}(3). Without loss of generality, we assume that $A_{11}\equiv0\bmod{p}$, so $A_{22}\not\equiv0\bmod{p}$. By \eqref{eq:Adet1}, we find that $\legendre{-A_{22}}{p}_K=\legendre{-A_{33}}{p}_K$, and hence the conclusion of Theorem\ref{thm:solnumbergMarkoff}(3) holds with $j=2$ or $3$.

\textbf{Case (3).} We have $p\nmid D$ by Lemma \ref{lemm:Dmodp}. Comparing \eqref{eq:proofThm1.6Temp3} and Theorem \ref{thm:solnumbergMarkoff}(1), we need to prove \eqref{eq:proofThm1.6case1Todo}. Let $0\leq z_1\leq p-1$ be the unique solution of \eqref{eq:S2congzneq0} (the uniqueness follows from $\legendre{A_{11}A_{22}}{p}_K=0$). Then $z_1\not\equiv0\bmod{p}$ by the assumptions of Case (3), and hence
\begin{equation}
\label{eq:proofThm1.6Case3S2}
S_2=\left\{z\in\{0,1,\dots,p-1\}\colon 4a_{11}a_{22}-(a_{12}-dz)^2\not\equiv0\bmod{p}\right\}\cap\{0,z_1\}.
\end{equation}
It follows that
\begin{equation}
\label{eq:proofThm1.6Temp5}
\sum_{z\in S_2}\legendre{-D_z}{p}_K=\legendre{-D_0}{p}_K+\legendre{-D_{z_1}}{p}_K=\legendre{-A_{33}}{p}_K+\legendre{-D_{z_1}}{p}_K.
\end{equation}
By \eqref{eq:S2congzneq0}, $a_{13}a_{23}\equiv2a_{33}(a_{12}-dz_1)\bmod{p}$, and hence
\begin{equation}
\label{eq:proofThm1.6Temp6}
-4a_{33}^2D_{z_1}\equiv4a_{33}^2((a_{12}-dz_1)^2-4a_{11}a_{22})\equiv a_{13}^2a_{23}^2-16a_{11}a_{22}a_{33}^2\bmod{p}.
\end{equation}
Since $\legendre{A_{11}A_{22}}{p}_K=0$, we assume, without loss of generality, $A_{11}\equiv0\bmod{p}$, that is, $a_{23}^2\equiv4a_{22}a_{33}\bmod{p}$. Then
\begin{equation}
\label{eq:proofThm1.6Temp7}
a_{13}^2a_{23}^2-16a_{11}a_{22}a_{33}^2\equiv a_{13}^2\cdot4a_{22}a_{33}-16a_{11}a_{22}a_{33}^2\equiv-a_{23}^2A_{22}\bmod{p}.
\end{equation}
Combining \eqref{eq:proofThm1.6Temp6} and \eqref{eq:proofThm1.6Temp7} we find that
\begin{equation}
\label{eq:proofThm1.6Temp8}
\legendre{-D_{z_1}}{p}_K=\legendre{-4a_{33}^2D_{z_1}}{p}_K=\legendre{a_{23}^2}{p}_K\legendre{-A_{22}}{p}_K=\legendre{-A_{22}}{p}_K.
\end{equation}
The last equality follows from the fact $a_{23}\not\equiv0\bmod{p}$, since $p\mid A_{11}$ but $p\nmid4a_{22}a_{33}$. Inserting \eqref{eq:proofThm1.6Temp8} into \eqref{eq:proofThm1.6Temp5}, and noting that $\legendre{-A_{11}}{p}_K=0$, we arrive at \eqref{eq:proofThm1.6case1Todo}, as desired.

\textbf{Case (4).} We have $p\mid D$ by Lemma \ref{lemm:Dmodp}. Since $\legendre{A_{11}A_{22}}{p}_K=1$, \eqref{eq:S2congzneq0} has two different solutions $z\equiv z_0,z_1\bmod{p}$. (Assume that $0\leq z_0<z_1\leq p-1$.) Since $A_{11}A_{22}-A_{12}^2\equiv0\bmod{p}$, we have $z_0=0$. Thus, $S_2$ is still given by \eqref{eq:proofThm1.6Case3S2}, and hence \eqref{eq:proofThm1.6Temp5} holds as well in this case. Since $z=z_1$ satisfies \eqref{eq:S2congzneq0}, and since $A_{11}A_{22}\equiv A_{12}^2\bmod{p}$, we have $a_{13}a_{23}-2a_{33}(a_{12}-dz_1)\equiv \pm A_{12}\bmod{p}$. The plus sign cannot occur, since $z_1\not\equiv0\bmod{p}$. Therefore, $a_{33}dz_1\equiv-A_{12}\bmod{p}$, and hence
\begin{equation}
\label{eq:proofThm1.6Temp8.5}
\legendre{-D_{z_1}}{p}_K=\legendre{-a_{33}^2D_{z_1}}{p}_K=\legendre{(a_{13}a_{23}-a_{12}a_{33})^2-4a_{11}a_{22}a_{33}^2}{p}_K.
\end{equation}
We have
\begin{align}
4A_{33}((a_{13}a_{23}-a_{12}a_{33})^2-4a_{11}a_{22}a_{33}^2)&=4A_{33}(-A_{33}a_{33}^2+a_{13}a_{23}A_{12})\notag\\
&=-4(a_{33}A_{33})^2+4a_{13}a_{23}(A_{13}A_{23}-Da_{12})\notag\\
&\equiv-(2a_{33}A_{33})^2+4(a_{13}A_{13})(a_{23}A_{23})\bmod{p}\label{eq:proofThm1.6Temp9},
\end{align}
where we have used the second equation of \eqref{eq:Adet1} and the fact $p\mid D$ in this case. By \eqref{eq:Dexpansion3} and $p\mid D$ we find that $(2a_{33}A_{33})^2\equiv(a_{13}A_{13}+a_{23}A_{23})^2$, from which and \eqref{eq:proofThm1.6Temp9} we deduce
\begin{equation*}
4A_{33}((a_{13}a_{23}-a_{12}a_{33})^2-4a_{11}a_{22}a_{33}^2)\equiv-(a_{13}A_{13}-a_{23}A_{23})^2\bmod{p}.
\end{equation*}
It follows that
\begin{equation}
\label{eq:proofThm1.6Temp10}
\legendre{-A_{33}}{p}_K\legendre{(a_{13}a_{23}-a_{12}a_{33})^2-4a_{11}a_{22}a_{33}^2}{p}_K=\legendre{(a_{13}A_{13}-a_{23}A_{23})^2}{p}_K.
\end{equation}
Taking the difference between \eqref{eq:Dexpansion1} and \eqref{eq:Dexpansion2}, and noting that $a_{11}A_{11}\not\equiv a_{22}A_{22}\bmod{p}$ (one of the assumptions of Case (4)), we obtain $a_{13}A_{13}-a_{23}A_{23}\not\equiv0\bmod{p}$. This, together with \eqref{eq:proofThm1.6Temp10}, implies $\legendre{-A_{33}}{p}_K=\legendre{(a_{13}a_{23}-a_{12}a_{33})^2-4a_{11}a_{22}a_{33}^2}{p}_K\neq0$. Combining this, \eqref{eq:proofThm1.6Temp8.5}, and \eqref{eq:proofThm1.6Temp5} (which we have shown is true as well in this case), we find that
\begin{equation}
\label{eq:proofThm1.6Temp11}
\sum_{z\in S_2}\legendre{-D_z}{p}_K=2\cdot\legendre{-A_{33}}{p}_K.
\end{equation}
By the assumption $\legendre{A_{11}A_{22}}{p}_K=1$, we know that $\legendre{-A_{11}}{p}_K=\legendre{-A_{22}}{p}_K$. Since $\legendre{-A_{33}}{p}_K\neq0$ and $p\mid D$, we conclude from the first equation of \eqref{eq:Adet1} that $\legendre{A_{22}A_{33}}{p}_K=1$, so $\legendre{A_{jj}}{p}_K$ are the same for $j=1,2,3$. Thus, we are in the case of Theorem \ref{thm:solnumbergMarkoff}(3). The desired conclusion follows from inserting \eqref{eq:proofThm1.6Temp11} into \eqref{eq:proofThm1.6Temp3}, and taking into account the fact $\legendre{A_{jj}}{p}_K$ are the same.

\textbf{Case (5).} We have $p\nmid D$ by Lemma \ref{lemm:Dmodp}. As in Case (4), \eqref{eq:S2congzneq0} has two different solutions $z\equiv z_0,z_1\bmod{p}$. (Again we assume that $0\leq z_0<z_1\leq p-1$.) The difference is that in the present case we have $z_0,z_1\not\equiv0\bmod{p}$ since $A_{11}A_{22}-A_{12}^2\not\equiv0\bmod{p}$. Therefore
\begin{equation}
\label{eq:proofThm1.6Case5S2}
S_2=\left\{z\in\{0,1,\dots,p-1\}\colon 4a_{11}a_{22}-(a_{12}-dz)^2\not\equiv0\bmod{p}\right\}\cap\{0,z_0,z_1\},
\end{equation}
and hence
\begin{equation}
\label{eq:proofThm1.6Temp12}
\sum_{z\in S_2}\legendre{-D_z}{p}_K=\legendre{-A_{33}}{p}_K+\legendre{-D_{z_0}}{p}_K+\legendre{-D_{z_1}}{p}_K.
\end{equation}
We shall prove
\begin{equation}
\label{eq:proofThm1.6Temp13}
\legendre{-D_{z_0}}{p}_K=\legendre{-D_{z_1}}{p}_K=\legendre{-A_{11}}{p}_K=\legendre{-A_{22}}{p}_K\neq0.
\end{equation}
Set
\begin{equation}
\label{eq:defZ0Z1}
Z_j=4a_{33}^2(a_{12}-dz_j)^2-16a_{11}a_{22}a_{33}^2\equiv-4a_{33}^2D_{z_j}\bmod{p},\quad j=0,1.
\end{equation}
Since $z\equiv z_0,z_1\bmod{p}$ are solutions of \eqref{eq:S2congzneq0}, we have
\begin{equation}
\label{eq:proofThm1.6Temp14}
Z_j\equiv-2a_{13}a_{23}(a_{13}a_{23}-2a_{33}(a_{12}-dz_j))+A_{11}A_{22}+a_{13}^2a_{23}^2-16a_{11}a_{22}a_{33}^2,\quad j=0,1.
\end{equation}
Multiplying $Z_0$ and $Z_1$, and using the fact $a_{13}a_{23}-2a_{33}(a_{12}-dz_0)\equiv-(a_{13}a_{23}-2a_{33}(a_{12}-dz_1))\bmod{p}$, we have
\begin{equation}
\label{eq:proofThm1.6Temp14.5}
Z_0Z_1\equiv16a_{33}^2(a_{13}^2a_{22}-a_{23}^2a_{11})^2\bmod{p}.
\end{equation}
By the assumption $a_{11}A_{11}\not\equiv a_{22}A_{22}\bmod{p}$, we have $p\nmid a_{13}^2a_{22}-a_{23}^2a_{11}$. It follows that $\legendre{Z_0}{p}_K=\legendre{Z_1}{p}_K\neq0$, and hence $\legendre{-D_{z_0}}{p}_K=\legendre{-D_{z_1}}{p}_K\neq0$. The assumption $\legendre{A_{11}A_{22}}{p}_K=1$ implies $\legendre{-A_{11}}{p}_K=\legendre{-A_{22}}{p}_K\neq0$. Therefore, to obtain \eqref{eq:proofThm1.6Temp13}, it remains to prove $\legendre{Z_1}{p}_K=\legendre{-A_{22}}{p}_K$. We have
\begin{align*}
&(a_{23}A_{22}+a_{13}(a_{13}a_{23}-2a_{33}(a_{12}-dz_1)))^2\\
\equiv&a_{23}^2A_{22}^2+a_{13}^2A_{11}A_{22}+2a_{13}a_{23}A_{22}(a_{13}a_{23}-2a_{33}(a_{12}-dz_1))\\
\equiv&-A_{22}Z_1\bmod{p},
\end{align*}
where the first congruence follows from the fact $z\equiv z_1\bmod{p}$ is a solution of \eqref{eq:S2congzneq0}, and the second congruence follows from \eqref{eq:proofThm1.6Temp14}. It follows that $\legendre{Z_1}{p}_K=\legendre{-A_{22}}{p}_K$. Therefore, \eqref{eq:proofThm1.6Temp13} holds. Since $p\nmid D$, we are in the case of Theorem \ref{thm:solnumbergMarkoff}(1). The conclusion follows by combining \eqref{eq:proofThm1.6Temp3}, \eqref{eq:proofThm1.6Temp12}, and \eqref{eq:proofThm1.6Temp13}.

In the remaining cases (Cases (6)--(15)), we should use \eqref{eq:proofThm1.6Temp2} instead of \eqref{eq:proofThm1.6Temp3}.

\textbf{Case (6).} We have $p\mid D$ by Lemma \ref{lemm:Dmodp}. Repeating\footnote{This is reasonable since the beginning part of the deduction of Case (4) only uses the assumptions $\legendre{A_{11}A_{22}}{p}_K=1$ and $A_{11}A_{22}-A_{12}^2\equiv0\bmod{p}$, which are also assumptions of Case (6).} the beginning part of the deduction of Case (4), we have \eqref{eq:proofThm1.6Temp5} and \eqref{eq:proofThm1.6Temp8.5}--\eqref{eq:proofThm1.6Temp10}. In particular,
\begin{equation}
\label{eq:proofThm1.6Temp15}
\sum_{z\in S_2}\legendre{-D_z}{p}_K=\legendre{-A_{33}}{p}_K+\legendre{(a_{13}a_{23}-a_{12}a_{33})^2-4a_{11}a_{22}a_{33}^2}{p}_K.
\end{equation}
Since $A_{23}\not\equiv0\bmod{p}$ (one of the assumptions) and $D\equiv0\bmod{p}$, we have $A_{33}\not\equiv0\bmod{p}$ by the first equation of \eqref{eq:Adet1}. Since $a_{11}A_{11}\equiv a_{22}A_{22}\bmod{p}$ (one of the assumptions), the right-hand side of \eqref{eq:proofThm1.6Temp10} is equal to $0$, and hence $\legendre{(a_{13}a_{23}-a_{12}a_{33})^2-4a_{11}a_{22}a_{33}^2}{p}_K=0$. Therefore, \eqref{eq:proofThm1.6Temp15} becomes
\begin{equation}
\label{eq:proofThm1.6Temp16}
\sum_{z\in S_2}\legendre{-D_z}{p}_K=\legendre{-A_{33}}{p}_K.
\end{equation}
Now we consider the sum $\sum_{z\in S_1}\left(r_z-p\right)$ in \eqref{eq:proofThm1.6Temp2}. We must have $S_1\neq\emptyset$, for $\legendre{A_{11}A_{22}}{p}_K=1$ and $a_{11}A_{11}\equiv a_{22}A_{22}\bmod{p}$ by the assumptions. Let $z\in S_1$. We have just proved $A_{33}\not\equiv0\bmod{p}$, so $z\not\equiv0\bmod{p}$. Setting $G=G_z$, $\mathbf{v}=\mathbf{v}_z$ in Lemma \ref{lemm:rGvSingularQuad}, we find that $g_{12}v_1-2g_{11}v_2\equiv-da_{13}z^2+A_{23}z\bmod{p}$. If $p\nmid-da_{13}z^2+A_{23}z\bmod{p}$, then $r_z-p=0$ by Lemma \ref{lemm:rGvSingularQuad}(1). Otherwise, if $p\mid-da_{13}z^2+A_{23}z\bmod{p}$, then $da_{13}z\equiv A_{23}\bmod{p}$. Since $a_{13}\not\equiv0,\, A_{23}\not\equiv0\bmod{p}$, we have $r_z-p=p\legendre{-A_{22}}{p}_K$ by Lemma \ref{lemm:rGvSingularQuad}(2). It follows that
\begin{equation}
\label{eq:proofThm1.6Temp17}
\sum_{z\in S_1}\left(r_z-p\right)=p\cdot\legendre{-A_{22}}{p}_K.
\end{equation}
Inserting \eqref{eq:proofThm1.6Temp16} and \eqref{eq:proofThm1.6Temp17} into \eqref{eq:proofThm1.6Temp2}, we obtain the conclusion of Theorem \ref{thm:solnumbergMarkoff}(3) with $j=1$. However, the conclusion with $j=2,3$ holds as well, since in Case (6) $\legendre{-A_{jj}}{p}_K$ are the same for all $j$. (For instance, $\legendre{-A_{22}}{p}_K=\legendre{-A_{33}}{p}_K$ follows from the first equation of \eqref{eq:Adet1}.)

\textbf{Case (7).} This never occurs by Lemma \ref{lemm:Dmodp}.

\textbf{Case (8).} We have $p\mid D$ by Lemma \ref{lemm:Dmodp}. The only difference between this case and Case (6) is the assumption on $A_{23}$. Consequently, some part of the deduction of Case (6) is valid here: \eqref{eq:proofThm1.6Temp15} holds, and $S_1\neq\emptyset$. We have $A_{33}\equiv0\bmod{p}$ by the first equation of \eqref{eq:Adet1}. Furthermore, we have $A_{13}\equiv0$ and $A_{12}\not\equiv0$ mod $p$, by the first equations of \eqref{eq:Adet2} and \eqref{eq:Adet3}. Moreover, it follows from \eqref{eq:Dexpansion2} that $a_{12}\not\equiv0\bmod{p}$. Now we have
\begin{align*}
\legendre{(a_{13}a_{23}-a_{12}a_{33})^2-4a_{11}a_{22}a_{33}^2}{p}_K&=\legendre{-a_{33}^2A_{33}+a_{13}a_{23}A_{12}}{p}_K\\
&=\legendre{a_{13}a_{23}A_{12}}{p}_K=\legendre{a_{12}a_{13}a_{23}\cdot a_{12}A_{12}}{p}_K\\
&=\legendre{a_{12}a_{13}a_{23}\cdot (-2a_{11}A_{11})}{p}_K\quad\text{ (by \eqref{eq:Dexpansion1})}\\
&=\legendre{-A_{11}(a_{12}a_{13})^2}{p}_K.
\end{align*}
Since $a_{13}\not\equiv0\bmod{p}$ (the first time we use this assumption in this case!), we have
\begin{equation*}
\legendre{(a_{13}a_{23}-a_{12}a_{33})^2-4a_{11}a_{22}a_{33}^2}{p}_K=\legendre{-A_{11}(a_{12}a_{13})^2}{p}_K=\legendre{-A_{11}}{p}_K.
\end{equation*}
Inserting this into \eqref{eq:proofThm1.6Temp15}, we obtain
\begin{equation}
\label{eq:proofThm1.6Temp18}
\sum_{z\in S_2}\legendre{-D_z}{p}_K=\legendre{-A_{11}}{p}_K.
\end{equation}
For the sum $\sum_{z\in S_1}\left(r_z-p\right)$, we use a reasoning similar to Case (6), whose details are omitted here. We obtain that
\begin{equation}
\label{eq:proofThm1.6Temp19}
\sum_{z\in S_1}\left(r_z-p\right)=0.
\end{equation}
Inserting \eqref{eq:proofThm1.6Temp18} and \eqref{eq:proofThm1.6Temp19} into \eqref{eq:proofThm1.6Temp2}, and noting that $\legendre{-A_{33}}{p}_K=0$, we obtain the conclusion of Theorem \ref{thm:solnumbergMarkoff}(3) with $j=2$. However, the conclusion with $j=1$ also holds since $\legendre{A_{11}A_{22}}{p}_K=1$.

\textbf{Case (9).} We have $p\mid D$ by Lemma \ref{lemm:Dmodp}. Most of the assumptions are the same to Case (8), so we can repeat the deduction there until we need the assumption on $a_{13}$. Thus,
\begin{align}
\sum_{z\in S_2}\legendre{-D_z}{p}_K&=\legendre{-A_{33}}{p}_K+\legendre{(a_{13}a_{23}-a_{12}a_{33})^2-4a_{11}a_{22}a_{33}^2}{p}_K\notag\\
&=\legendre{(a_{13}a_{23}-a_{12}a_{33})^2-4a_{11}a_{22}a_{33}^2}{p}_K=\legendre{a_{13}a_{23}A_{12}}{p}_K\notag\\
&=0.\label{eq:proofThm1.6Temp20}
\end{align}
In the last equality, we have used the assumption $a_{13}\equiv0\bmod{p}$. Now we consider the sum $\sum_{z\in S_1}\left(r_z-p\right)$ in \eqref{eq:proofThm1.6Temp2}. We must have $\abs{S_1}=2$, for $\legendre{A_{11}A_{22}}{p}_K=1$ and $a_{11}A_{11}\equiv a_{22}A_{22}\bmod{p}$ by the assumptions. Since $A_{33}\equiv0\bmod{p}$ (one of the assumptions), we have $0\in S_1$. Assume that $S_1=\{0,z_0\}$, where $z_0\not\equiv0\bmod{p}$. Then $a_{12}-dz_0\equiv-(a_{12}-d\cdot0)\bmod{p}$, and hence $dz_0\equiv2a_{12}\bmod{p}$. Setting $G=G_{z_0}$, $\mathbf{v}=\mathbf{v}_{z_0}$ in Lemma \ref{lemm:rGvSingularQuad}, we find that
$$g_{12}v_1-2g_{11}v_2\equiv-da_{13}z_0^2+A_{23}z_0\equiv 0\bmod{p},$$
where the last congruence follows from the assumptions $a_{13}\equiv A_{23}\equiv0\bmod{p}$. Therefore, we deduce from Lemma \ref{lemm:rGvSingularQuad}(2) that $r_{z_0}-p=p\legendre{-A_{22}}{p}_K$. Similarly, we have $r_{0}-p=p\legendre{0}{p}_K=0$. Thus,
\begin{equation}
\label{eq:proofThm1.6Temp21}
\sum_{z\in S_1}\left(r_z-p\right)=(r_0-p)+(r_{z_0}-p)=p\cdot\legendre{-A_{22}}{p}_K.
\end{equation}
Inserting \eqref{eq:proofThm1.6Temp20} and \eqref{eq:proofThm1.6Temp21} into \eqref{eq:proofThm1.6Temp2}, and noting that $\legendre{A_{11}A_{22}}{p}_K=1$ and $\legendre{A_{33}}{p}_K=0$, we obtain the conclusion of Theorem \ref{thm:solnumbergMarkoff}(3).

\textbf{Case (10).} This never occurs by Lemma \ref{lemm:Dmodp}.

\textbf{Case (11).} We have $p\nmid D$ by Lemma \ref{lemm:Dmodp}. The congruence \eqref{eq:S2congzneq0} has two different solutions $z\equiv z_0,z_1\bmod{p}$ and we assume that $0\leq z_0<z_1\leq p-1$. Since $A_{11}A_{22}-A_{12}^2\not\equiv0\bmod{p}$ we have $z_0,z_1\not\equiv0\bmod{p}$. Therefore, \eqref{eq:proofThm1.6Case5S2}, and hence \eqref{eq:proofThm1.6Temp12} hold. We define $Z_0$, $Z_1$ by \eqref{eq:defZ0Z1}. Then \eqref{eq:proofThm1.6Temp14} and \eqref{eq:proofThm1.6Temp14.5} hold in the present case as well. Since $a_{11}A_{11}\equiv a_{22}A_{22}\bmod{p}$ (one of the assumptions), we have $a_{13}^2a_{22}-a_{23}^2a_{11}\equiv0\bmod{p}$. This, together with \eqref{eq:proofThm1.6Temp14.5}, implies that $Z_0Z_1\equiv0\bmod{p}$. Without loss of generality, we assume that $Z_1\equiv0\bmod{p}$. Set $t=a_{13}a_{23}-2a_{33}(a_{12}-dz_0)$, then $t^2\equiv A_{11}A_{22}\bmod{p}$ and $a_{13}a_{23}-2a_{33}(a_{12}-dz_1)\equiv-t\bmod{p}$. By \eqref{eq:proofThm1.6Temp14}, we have $Z_1-Z_0\equiv4a_{13}a_{23}t\bmod{p}$. However, (congruences below are modulo $p$)
\begin{align*}
2a_{13}a_{23}t&\equiv-A_{11}A_{22}-a_{13}^2a_{23}^2+16a_{11}a_{22}a_{33}^2\quad(\text{since }Z_1\equiv0 \text{ and \eqref{eq:proofThm1.6Temp14}})\\
&=4a_{33}(a_{11}A_{11}+a_{22}A_{22})-2A_{11}A_{22}\\
&\equiv8a_{33}a_{11}A_{11}-2A_{11}A_{22}\quad(\text{since }a_{11}A_{11}\equiv a_{22}A_{22})\\
&=2a_{13}^2A_{11}.
\end{align*}
It follows that
\begin{equation*}
\legendre{Z_0}{p}_K=\legendre{-(Z_1-Z_0)}{p}_K=\legendre{-4a_{13}^2A_{11}}{p}_K=\legendre{-A_{11}}{p}_K,
\end{equation*}
where we have used (the first time in this case) the assumption $a_{13}\not\equiv0\bmod{p}$. Therefore,
\begin{align}
\sum_{z\in S_2}\legendre{-D_z}{p}_K&=\legendre{-A_{33}}{p}_K+\legendre{-D_{z_0}}{p}_K+\legendre{-D_{z_1}}{p}_K\notag\\
&=\legendre{-A_{33}}{p}_K+\legendre{Z_0}{p}_K+\legendre{Z_1}{p}_K\notag\\
&=\legendre{-A_{11}}{p}_K+\legendre{-A_{33}}{p}_K.\label{eq:proofThm1.6Temp22}
\end{align}
For $\sum_{z\in S_1}\left(r_z-p\right)$, \eqref{eq:proofThm1.6Temp17} holds as well for the present case. This can be proved using an argument similar to that of Case (6), and we omit it. (Note that $A_{33}$ can be either zero or nonzero modulo $p$ in this case.) Inserting \eqref{eq:proofThm1.6Temp22} and \eqref{eq:proofThm1.6Temp17} into \eqref{eq:proofThm1.6Temp2}, we obtain the conclusion of Theorem \ref{thm:solnumbergMarkoff}(1) in the present case.

\textbf{Cases (12)--(14).} These never occur by Lemma \ref{lemm:Dmodp}.

\textbf{Case (15).} We have $p\nmid D$ by Lemma \ref{lemm:Dmodp}. The first three assumptions are the same to Case (11), so we can repeat the deduction there until we need the assumption on $a_{13}$. In this way we obtain $Z_0Z_1\equiv0\bmod{p}$ and
\begin{equation*}
Z_1-Z_0\equiv4a_{13}a_{23}t\equiv0\bmod{p}.\quad(\text{since }a_{13}\equiv0\bmod{p})
\end{equation*}
Therefore,
\begin{align}
\sum_{z\in S_2}\legendre{-D_z}{p}_K&=\legendre{-A_{33}}{p}_K+\legendre{-D_{z_0}}{p}_K+\legendre{-D_{z_1}}{p}_K\notag\\
&=\legendre{-A_{33}}{p}_K+\legendre{Z_0}{p}_K+\legendre{Z_1}{p}_K\notag\\
&=\legendre{-A_{33}}{p}_K.\label{eq:proofThm1.6Temp23}
\end{align}
Now we consider the sum $\sum_{z\in S_1}\left(r_z-p\right)$ in \eqref{eq:proofThm1.6Temp2}. We must have $\abs{S_1}=2$, for $\legendre{A_{11}A_{22}}{p}_K=1$ and $a_{11}A_{11}\equiv a_{22}A_{22}\bmod{p}$ by the assumptions. As $A_{33}\not\equiv0\bmod{p}$ (one of the assumptions), we have $0\not\in S_1$. Assume that $S_1=\{z_2,z_3\}$, where $z_2,z_3\not\equiv0\bmod{p}$. Setting $G=G_{z_2}$, $\mathbf{v}=\mathbf{v}_{z_2}$ in Lemma \ref{lemm:rGvSingularQuad}, we find that
$$g_{12}v_1-2g_{11}v_2\equiv-da_{13}z_2^2+A_{23}z_2\equiv 0\bmod{p},$$
where the last congruence follows from the assumptions $a_{13}\equiv A_{23}\equiv0\bmod{p}$. Therefore, we deduce from Lemma \ref{lemm:rGvSingularQuad}(2) that $r_{z_2}-p=p\legendre{-A_{22}z_2^2}{p}_K=p\legendre{-A_{22}}{p}_K$. Similarly, we have $r_{z_3}-p=\legendre{-A_{22}}{p}_K$. However, $\legendre{-A_{11}}{p}_K=\legendre{-A_{22}}{p}_K$ by the first assumption, so
\begin{equation*}
\sum_{z\in S_1}\left(r_z-p\right)=(r_{z_2}-p)+(r_{z_3}-p)=p\cdot\left(\legendre{-A_{11}}{p}_K+\legendre{-A_{22}}{p}_K\right).
\end{equation*}
Inserting this and \eqref{eq:proofThm1.6Temp23} into \eqref{eq:proofThm1.6Temp2}, we obtain the conclusion of Theorem \ref{thm:solnumbergMarkoff}(1), which concludes the whole proof.
\end{proof}

\begin{rema}
Of course, one can produce an independent proof using the idea of \cite[\S V.1]{Bar91}. In our opinion, such a proof might not be simpler than ours, since it as well need to compute four $r_{G,\mathbf{v}}(m)$, where $\det(G)$ may be or may not be congruence to $0$ modulo $p$. Thus, one still need to split the proof into many cases.
\end{rema}

\section{Miscellaneous observations and open questions}
\label{sec:miscellaneous_observations_and_open_questions}
\textbf{Formulas for $\mathfrak{G}_{G}(a/c;\mathbf{w},\mathbf{x})$ for arbitrary $c$.} Theorem \ref{thm:main} gives formulas in the case $\gcd(N,c)=1$ or $N$. As we have mentioned, by combining Lemma \ref{lemm:GaussSumWeilRepr} and \cite[Remark 6.8]{Str13}, one can deduce a formula for all positive integers $c$. However, this formula involves quantities like $\xi(a,c)$ (cf. \cite[Definition 6.1]{Str13}), which requires firstly decomposing $G^{-1}\numZ^n/\numZ^n$ into Jordan components. It is natural further work to derive a formula for all $c$ that does not involve any local data. Such formulas exist for $n=1$ (Gauss) and $n=2$ (Skoruppa and Zagier \cite[Theorem 3]{SZ89}).

\textbf{On the positive definiteness restriction.} Theorem \ref{thm:main} requires $G$ to be positive definite. We have removed this restriction in Theorem \ref{thm:mainIndefinite} for $\gcd(N,c)=1$. It seems that we can merely assume $G$ is nonsingular in all results (except Theorem \ref{thm:thetaTransformation}). However, in some places in these generalized formulas, the rank $n$ should be replaced by the signature; see e.g. Remark \ref{rema:MilgramExtension2indef}. (Although in Theorem \ref{thm:mainIndefinite}, the formula remains unchanged compared to the positive definite case.) It is our further work to remove this restriction.

\textbf{Non-integral-parametric sums.} Theorem \ref{thm:main} is concerned with $\mathfrak{G}_{G}(a/c;\mathbf{w},\mathbf{x})$ of integral-parametric (see Definition \ref{deff:intPara}). We do not know any explicit formula if it is non-integral-parametric. Here we pose an open question: derive an explicit formula for $\mathfrak{G}_{G}(a/c;\mathbf{w},\mathbf{x})$ where $G\cdot\mathbf{w}\not\in\numZ^n$ or $a\cdot\mathbf{x}\not\in\numZ^n$.

\textbf{Gauss sums in finite fields.} As we have shown in \eqref{eq:GaussSumFq}, Gauss sums in finite fields can be reduced to $\mathfrak{G}_{G}(a/c;\mathbf{w},\mathbf{x})$ of integral-parametric. It is an interesting problem to derive, using Theorem \ref{thm:main}, explicit formulas for the following sum
\begin{equation*}
\sum_{x\in \mathbb{F}_q}\etp{\frac{\tr(\beta_1 x^2+\beta_2x)}{p}}.\qquad (\beta_1,\beta_2\in\mathbb{F}_q)
\end{equation*}
This has potential applications to the subject of, e.g., determining the zeta functions of varieties of characteristic $p$.

\textbf{Hecke Gauss sums.} We have discussed this topic in \S\ref{subsec:hecke_gauss_sums} and have shown that Hecke Gauss sums can always be reduced to $\mathfrak{G}_{G}(a/c;\mathbf{w},\mathbf{x})$ of integral-parametric (see Proposition \ref{prop:HeckeGaussToMatrixGauss}).Then we discussed the cases of quadratic fields and cyclotomic fields in more details. A natural direction of further work is to deduce more explicit formulas for Hecke Gauss sums in other number fields.

\textbf{Applications of the duality theorem.} Theorem \ref{thm:GaussSubsumExplicit} is an application of Theorem \ref{thm:duality1}. A problem: find an explicit formula for
\begin{equation*}
\sum_{\mathbf{v}\in\numZ^n/c\numZ^n,\,\mathbf{v}\perp\mathbf{h_1},\mathbf{h_2}}\etp{\frac{a}{c}\cdot\left(\frac{1}{2}(\mathbf{v}+\mathbf{w})^\tp\cdot G\cdot (\mathbf{v}+\mathbf{w})\right)}.
\end{equation*}
This could be done as well by applying Theorem \ref{thm:duality1}.

\textbf{Affine quadratic hypersurfaces.} A problem: generalize Theorem \ref{thm:rGvm} to the modulus $p^r$. This might be done by applying Theorem \ref{thm:affineQuadraticmodulusc} with $c$ set to $p^r$. Note that Li and Ouyang \cite{LO18} have achieved this, among other beautiful results, for diagonal $G$. Another related problem is to generalize Theorem \ref{thm:rGvm} to the corresponding Diophantine equation over  $\mathbb{F}_{p^r}$.

\textbf{Generalized Markoff equations over $\mathbb{F}_q$.} Theorem \ref{thm:solnumbergMarkoff} is concerned with the equation \eqref{eq:gMarkoff} over $\mathbb{F}_p$. It might be more involved to prove a formula for the number of solutions of \eqref{eq:gMarkoff} over $\mathbb{F}_q$, where $q=p^r$, a prime power. (But for a fixed equation with $a_{ij}$ and $d$ known, this can be done directly by the method of Baragar \cite[\S V.1]{Bar91}.) Motivated by numerical experiments, we pose the following conjecture.
\begin{conj}
Suppose that $p$ is a prime with $p\nmid a_{11}a_{22}a_{33}d$ and $r$ is a positive integer. Set $q=p^r$.

(1) If $p\nmid D$, then $p>2$, and the number of solutions of \eqref{eq:gMarkoff}, where $x,y,z\in\mathbb{F}_q$, equals
\begin{equation*}
q^2+\left(\legendre{-A_{11}}{q}_K+\legendre{-A_{22}}{q}_K+\legendre{-A_{33}}{q}_K\right)\cdot q+1.
\end{equation*}

(2) If $2<p\mid D$ and at least two of $A_{11}, A_{22}, A_{33}$ are divisible by $p$, then the solution number over $\mathbb{F}_q$ is
\begin{equation*}
q^2+1.
\end{equation*}

(3) If $2<p\mid D$ and at most one of $A_{11}, A_{22}, A_{33}$ is divisible by $p$, then the solution number over $\mathbb{F}_q$ is
\begin{equation*}
q^2+\left(\legendre{-A_{11}}{q}_K+\legendre{-A_{22}}{q}_K+\legendre{-A_{33}}{q}_K-\legendre{-A_{jj}}{q}_K\right)\cdot q+1,
\end{equation*}
where $1\leq j\leq3$ is any index such that $p\nmid A_{jj}$.

(4) If $2=p\mid D$, then the solution number over $\mathbb{F}_q$ is
\begin{equation*}
\begin{dcases}
q^2+1 &\text{ if }a_{12}\equiv a_{13}\equiv a_{23}\equiv0\bmod{2}\\
q^2+(-1)^r2q+1 &\text{ if }a_{12}\equiv a_{13}\equiv a_{23}\equiv1\bmod{2}\\
q^2+(-1)^rq+1 &\text{ otherwise.}
\end{dcases}
\end{equation*}
\end{conj}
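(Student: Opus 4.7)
The plan is to mimic the slicing proof of Theorem \ref{thm:solnumbergMarkoff} over $\mathbb{F}_q$ instead of $\mathbb{F}_p$, treating $p>2$ and $p=2$ separately. The main new ingredient for $p>2$ is an analogue of Theorem \ref{thm:rGvm} with modulus $q$: for a nonsingular even integral symmetric $G$ of size $n$ with $p \nmid \det(G)$, the number of solutions of $\tfrac{1}{2}\mathbf{x}^\tp G\mathbf{x} + \mathbf{v}^\tp\mathbf{x} \equiv m$ over $\mathbb{F}_q$ is given by the same formula as in Theorem \ref{thm:rGvm} with $p$ replaced by $q$ and $\legendre{\cdot}{p}_K$ replaced by the quadratic character $\chi_q$ of $\mathbb{F}_q$ (which equals $\legendre{\cdot}{p}_K^r = \legendre{\cdot}{q}_K$ on integer reductions). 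This analogue follows either from the Hasse--Davenport relation applied to the $c=p$ case of Theorem \ref{thm:mainIndefinite}, or directly by completing the square in $\mathbb{F}_q$ together with the elementary identity
\[
\sum_{y \in \mathbb{F}_q} \chi_q(y^2 - \alpha) = \begin{cases} -1, & \alpha \in \mathbb{F}_q^{\times},\\ q-1, & \alpha = 0. \end{cases}
\]

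With this replacement in hand, the entire machinery of Section \ref{sec:application_iv_number_of_solutions_of_generalized_markoff_equations_over_prime_finite_fields} carries over for $p$ odd. I would slice by $z \in \mathbb{F}_q$ and define $G_z$, $\mathbf{v}_z$, $D_z$, $N_z$ as in Lemma \ref{lemm:a11pa22p}, partition $\mathbb{F}_q$ into the analogous sets $S_1, S_2, S_3$, and perform the same 15-case analysis. This works verbatim because the algebraic identities \eqref{eq:Adet1}--\eqref{eq:Dexpansion3}, Lemma \ref{lemm:S2cong}, Lemma \ref{lemm:Dmodp}, Lemma \ref{lemm:cases1-5rz}, and the singular-quadric count of Lemma \ref{lemm:rGvSingularQuad} are all polynomial identities or direct counting arguments that depend only on the ambient field being of odd characteristic $p$, not on the specific size of the ground field. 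Every occurrence of ``$p \mid \cdot$'' is replaced by ``$\cdot = 0$ in $\mathbb{F}_q$'' (which for integer quantities such as $A_{ii}, D$ coincides with $p \mid \cdot$), and every Legendre symbol $\legendre{\cdot}{p}_K$ is replaced by $\chi_q$. The analogue of Lemma \ref{lemm:legendreSum} (again by completing the square) gives $\sum_{z \in \mathbb{F}_q} \chi_q(D_z) = -\chi_q(-1)$, which is what drives the $q^2 + 1$ main term. Parts (1), (2), and (3) of the conjecture follow.

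For $p=2$ the Fourier-analytic framework of Theorem \ref{thm:main} does not apply, so I would argue by direct manipulation in $\mathbb{F}_{2^r}$. Using $x^2+y^2+z^2 = (x+y+z)^2$ in characteristic 2 and the assumption $p \nmid a_{11}a_{22}a_{33}d$ (so $a_{11} = a_{22} = a_{33} = d = 1$ in $\mathbb{F}_2$), the substitution $u = x+y+z$ reduces \eqref{eq:gMarkoff} to $u^2 + \varepsilon_{12}xy + \varepsilon_{13}xz + \varepsilon_{23}yz + xyz = 0$ with $\varepsilon_{ij} = a_{ij} \bmod 2$. In the all-even sub-case this is $u^2 = xyz$, and a direct bijection (send each pair $(x,y) \in \mathbb{F}_q^2$ with $xy \neq 0$ to the unique $z$ and then to $u = \sqrt{xyz}$, which exists uniquely since Frobenius is bijective) gives the count $q^2+1$. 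In the all-odd sub-case the equation becomes a norm-trace equation governed by an Artin--Schreier polynomial, producing the factor $(-1)^r$ multiplying $2q$ (essentially because the trace of Frobenius on the relevant $\mathbb{F}_2$-étale cover is $-1$). The mixed sub-cases are intermediate and reduce to a single Artin--Schreier count giving the $(-1)^r q$ correction.

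The hard part will be the $p=2$ analysis: Theorem \ref{thm:main} provides no leverage in characteristic 2, so one must carry out the Artin--Schreier point counts by hand and verify that the predicted signs $(-1)^r$ emerge with the correct multiplicities in each of the three $\mathbb{F}_2$-reduction classes. A secondary, purely bookkeeping obstacle for $p$ odd is that $\chi_q$ can equal $+1$ where $\legendre{\cdot}{p}_K = -1$ (whenever $r$ is even), but this does not alter the case structure since the $S_i$ are defined by vanishing in $\mathbb{F}_q$, which for the integer quantities $A_{ii}, D, A_{11}A_{22}-A_{12}^2$ still coincides with vanishing mod $p$; the parity of $r$ enters only through the values of $\chi_q$ on the non-vanishing side and is thus automatically absorbed into $\legendre{\cdot}{q}_K$ in the conclusion.
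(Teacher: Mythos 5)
First, a point of orientation: the paper does \emph{not} prove this statement. It is posed explicitly as a conjecture ``motivated by numerical experiments,'' and the surrounding text only remarks that Baragar's method would settle any single fixed equation. So there is no proof of record to compare yours against; what follows is an assessment of your plan on its own terms.

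For odd $p$ your outline is essentially workable, but two points need more care than ``works verbatim.'' Once you slice by $z\in\mathbb{F}_q$, the forms $G_z$ have entries in $\mathbb{F}_q$ rather than $\numZ$ (for $z\notin\mathbb{F}_p$ they are not reductions of integer matrices at all), so the ``analogue of Theorem \ref{thm:rGvm}'' you need is the classical point count for an arbitrary quadratic form over $\mathbb{F}_q$ obtained by completing the square, and the entire apparatus of Lemma \ref{lemm:a11pa22p} (perturbing $a_{11},a_{22}$ by multiples of $p$ to force positive definiteness and to compute the level $N_z$) must be discarded in favour of discriminants in $\mathbb{F}_q^\times/(\mathbb{F}_q^\times)^2$. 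Second, the fifteen-case split must be re-indexed by $\chi_q(A_{11}A_{22})$ rather than $\legendre{A_{11}A_{22}}{p}_K$; for $r$ even these differ, Case (1) becomes empty, and its configurations migrate into the $\chi_q=1$ cases, so one must recheck that the arguments of Cases (4)--(15) use only ``$A_{11}A_{22}$ is a nonzero square in the ground field'' together with integer congruences mod $p$. They appear to, so this is re-verification rather than a gap, but it is not automatic and your last paragraph understates the bookkeeping.

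The genuine gap is at $p=2$, which you identify as the hard part but then treat incorrectly. In the all-even subcase the equation $x^2+y^2+z^2=xyz$ over $\mathbb{F}_{2^r}$ is, for fixed $(x,y)$ with $xy\neq0$, the quadratic $z^2+xyz+(x^2+y^2)=0$ in $z$; after $z=xyw$ it becomes the Artin--Schreier equation $w^2+w=(x^{-1}+y^{-1})^2$ and has $0$ or $2$ roots according as $\mathrm{Tr}_{\mathbb{F}_q/\mathbb{F}_2}(x^{-1})+\mathrm{Tr}_{\mathbb{F}_q/\mathbb{F}_2}(y^{-1})$ equals $1$ or $0$. There is no ``unique $z$,'' and your proposed bijection does not exist; the count $q^2+1$ instead falls out of a trace-balance computation, namely $2\bigl((q/2-1)^2+(q/2)^2\bigr)$ solutions with $xy\neq0$ plus $2q-1$ with $xy=0$. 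The all-odd and mixed subcases --- exactly where the factors $(-1)^r\cdot 2q$ and $(-1)^r\cdot q$ in part (4) live --- are only gestured at via ``the trace of Frobenius on the relevant \'etale cover,'' and nothing in the sketch actually produces the sign $(-1)^r$ or distinguishes the multiplicities $2q$ and $q$. Until those characteristic-two character-sum computations are carried out explicitly, part (4), and hence the conjecture, remains unproved.
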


\bibliographystyle{plain}
\bibliography{ref}

\end{document}